\newcommand{\bpm}{\begin{pmatrix}}
\newcommand{\epm}{\end{pmatrix}}
\newcommand{\bsm}{\begin{smallmatrix}}
\newcommand{\esm}{\end{smallmatrix}}
\newcommand{\bspm}{\left(\begin{smallmatrix}}
\newcommand{\espm}{\end{smallmatrix}\right)}
\newcommand{\bbm}{\begin{bmatrix}}
\newcommand{\ebm}{\end{bmatrix}}
    \newcommand{\BC}{{\mathbb {C}}} 
     \newcommand{\BF}{{\mathbb {F}}}
    \newcommand{\BQ}{{\mathbb {Q}}}
     \newcommand{\BZ}{{\mathbb {Z}}}
     \newcommand{\sH}{{\mathscr {H}}}
     \newcommand{\CB}{{\mathcal {B}}}
    \newcommand{\CC}{{\mathcal {C}}} 
     \newcommand{\CH}{{\mathcal {H}}}
     \newcommand{\CJ}{{\mathcal {J}}}
    \newcommand{\CS}{{\mathcal {S}}} 
     \newcommand{\CV}{{\mathcal {V}}}
    \newcommand{\CW}{{\mathcal {W}}}
       \newcommand{\fH}{{\mathfrak{H}}}
 \newcommand{\Mat}{{\mathrm {Mat}}}
      \newcommand{\RB}{{\mathrm {B}}}
    \newcommand{\RG}{{\mathrm {G}}} 
    \newcommand{\RI}{{\mathrm {I}}}
    \newcommand{\RO}{{\mathrm {O}}}
    \newcommand{\RU}{{\mathrm {U}}} 
    \newcommand{\RW}{{\mathrm {W}}} 
    \newcommand{\pr}{{\mathrm {pr}}}
     \newcommand{\Nm}{{\mathrm {Nm}}}
    \newcommand{\lenth}{{\mathrm {\lenth}}}
     \newcommand{\Ch}{{\mathrm{Ch}}}
     \newcommand{\GL}{{\mathrm{GL}}}
    \newcommand{\Hom}{{\mathrm{Hom}}} 
    \newcommand{\Ind}{{\mathrm{Ind}}} \newcommand{\ind}{{\mathrm{ind}}}
\renewcommand{\mod}{\ \mathrm{mod}\ }
\newcommand{\supp}{\mathrm{supp}}
 \newcommand{\SL}{{\mathrm{SL}}}
 \newcommand{\SO}{{\mathrm{SO}}}
 \newcommand{\Sp}{{\mathrm{Sp}}} \newcommand{\diag}{{\mathrm{diag}}}
    \newcommand{\bx}{{\bf {x}}}        %\newcommand{\Ch}{{\rm {Ch}}}
    \newcommand{\wt}{\widetilde} \newcommand{\wh}{\widehat} 
    \newcommand{\pair}[1]{\langle {#1} \rangle}
    \newcommand{\wpair}[1]{\left\{{#1}\right\}}
    \newcommand{\ov}{\overline}
    \newcommand{\incl}{\hookrightarrow}
     \newcommand{\ra}{\rightarrow}
    \theoremstyle{plain}
       \newtheorem*{theorem*}{Theorem}
    \newtheorem{thm}{Theorem}[section] \newtheorem{cor}[thm]{Corollary}
    \newtheorem{lem}[thm]{Lemma}  \newtheorem{prop}[thm]{Proposition}
    \newtheorem {conj}[thm]{Conjecture} 
\newtheorem{rmk}[thm]{Remark}
 \newtheorem*{Jconj}{Conjecture~$\bJ(\textbf{\textit{n}},\br)$}
\def\bJ{{\boldsymbol{\CJ}}}
\def\bN{{\boldsymbol{N}}}
\def\br{{\boldsymbol{r}}}
    \numberwithin{equation}{section}
\title[A converse theorem for $\RG_2$]{On a converse theorem for $\RG_2$ over finite fields}
\author{Baiying Liu}
\address{Department of Mathematics, Purdue University,
West Lafayette, IN 47906, USA}
\email{liu2053@purdue.edu}
\author{Qing Zhang}
\address{Department of Mathematics and Statistics,
University of Calgary, Calgary, AB, Canada, T2N 1N4}
\email{qing.zhang1@ucalgary.ca}
\subjclass[2010]{11F70, 22E50}
\keywords{exceptional group, gamma factors, local converse theorem over finite fields}
\begin{document}

\begin{abstract}
    In this paper, we prove certain multiplicity one theorems and define twisted gamma factors for irreducible generic cuspidal representations of split $\mathrm{G}_2$ over finite fields $k$ of odd characteristic. Then we prove the first converse theorem for exceptional groups, namely, $\GL_1$ and $\GL_2$-twisted gamma factors will uniquely determine an irreducible generic cuspidal representation of $\RG_2(k)$. 
\end{abstract}

\subjclass[2010]{Primary 20C33; Secondary 20G40}
\keywords{multiplicity one, exceptional group $\RG_2$, converse theorem, generic cuspidal representations}
\thanks{The first-named author is partially supported by NSF Grants DMS-1702218, and start-up funds from the Department of Mathematics at Purdue University. The second-named author is partially supported by NSFC grant 11801577 and a postdoc fellowship from Pacific Institute for the Mathematical Sciences (PIMS)}

\maketitle

%\tableofcontents

\section{Introduction}

In the theory of automorphic representations, the global converse problems aim to recover automorphic forms from their Fourier coefficients. Global converse theorems have played crucial roles in establishing global Langlands functoriality (\cite{CKPSS01, CKPSS04, CPSS11}) and are very important for the Langlands Program. 
In the theory of representations of reductive groups over local and finite fields, the converse problems aim to find a minimal complete set of invariants of twisted gamma factors uniquely determining irreducible generic representations. Local converse theorems have been used to 
prove the uniqueness of the generic local Langlands functoriality and the local Langlands correspondence (\cite{JS03, H93}).
While converse problems have been extensively studied for general linear and classical groups, they have not been studied for exceptional groups. The goal of this paper is to initiate the study of converse problems for exceptional groups and prove the first converse theorem for the split exceptional group of type $\RG_2$ over finite fields of odd characteristic. 
In the following, we first introduce the recent progress on the study of the converse problems for general linear and classical groups over local and finite fields. 

Let $F$ be a non-archimedean local field. Let $\pi$ be an irreducible generic representation of $\GL_n(F)$. The family of local twisted gamma factors $\gamma(s, \pi \times \tau, \psi)$, for $\tau$ any irreducible generic representation of $\GL_r(F)$, $\psi$ an additive character of $F$ and $s\in\BC$, can be defined using Rankin--Selberg convolution \cite{JPSS83} or the Langlands--Shahidi method \cite{S84}. The local converse problem is that which family of local twisted gamma factors will uniquely determine $\pi$? The following is the famous Jacquet's conjecture on the local converse problem, which is an unevolved conjecture for nearly 40 years.

\begin{conj}[Jacquet's conjecture on the local converse problem]\label{lcp}
Let $\pi_1,\pi_2$ be irreducible generic representations
of $\GL_n(F)$. Suppose that they have the same central character.
If
\[
\gamma(s, \pi_1 \times \tau, \psi) = \gamma(s, \pi_2 \times \tau, \psi),
\]
as functions of the complex variable $s$, for all irreducible
generic representations $\tau$ of $\GL_r(F)$ with $1 \leq r \leq 
[\frac{n}{2}]$, then $\pi_1 \cong \pi_2$.
\end{conj}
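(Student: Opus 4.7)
The plan is to start from the Rankin--Selberg integral representation of $\gamma(s,\pi\times\tau,\psi)$ and convert the gamma factor hypothesis into information about the Whittaker models $\CW(\pi_1,\psi)$ and $\CW(\pi_2,\psi)$. By the local functional equation, equality of the gamma factors for every generic $\tau$ of $\GL_r(F)$ is equivalent to the statement that the local Rankin--Selberg bilinear forms on $\pi_i\times\tau$ (after applying the appropriate intertwining operator) agree for $i=1,2$. The ultimate goal is to produce an identification $\CW(\pi_1,\psi)=\CW(\pi_2,\psi)$ as subspaces of $C^\infty(\GL_n(F))$, which forces $\pi_1\cong\pi_2$ by uniqueness of the Whittaker model.

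The main tool I would use is the theory of \emph{partial Bessel functions} and Howe-type test vectors. Concretely, I would choose Whittaker functions $W_m^{(i)}\in\CW(\pi_i,\psi)$ that are supported on small open congruence neighborhoods of fixed Weyl elements $w\in S_n$, paired against Howe vectors on the $\GL_r$ side and Schwartz sections $\Phi$ concentrated near $e_n$. With such test data the Rankin--Selberg integrals collapse to values of partial Bessel functions on explicit Bruhat cells, and the gamma factor equality translates, after normalizing by the central character, into a pointwise identity
\[
B_{\pi_1}(w\,t\,u)=B_{\pi_2}(w\,t\,u)
\]
on an open subset of the relevant Bruhat stratum. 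This reduces the problem to showing that all Bessel germs of $\pi_1$ and $\pi_2$ coincide.

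I would then run an induction on $r$ from $r=1$ to $[n/2]$, where at each step the $\GL_r$-twisted gamma factor controls the Bessel germ at a new family of Weyl elements. The combinatorial indexing of these Weyl elements by ``depth'' $\le r$, together with a careful use of the action of the maximal torus and germ expansions, allows one to propagate the equality from one stratum to the next. Once all Bessel germs agree, a standard density argument shows that the two Whittaker models coincide on all of $\GL_n(F)$.

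The hard part, and the reason Conjecture \ref{lcp} is subtle, is the cut-off at $r=[n/2]$: a priori, twists by $\GL_r$ with $r>[n/2]$ contain independent information. To get around this one exploits the $\GL_r$--$\GL_{n-r}$ duality built into the Rankin--Selberg integrals, combined with the hypothesis that $\pi_1$ and $\pi_2$ share a central character: applying the functional equation to the contragredient representations shows that the data for $r>[n/2]$ is redundant modulo the central character. Establishing this redundancy rigorously, and making the combinatorial propagation step uniform in the Weyl element, is where I expect the main technical difficulty to lie; everything else is a careful but essentially mechanical unwinding of the Rankin--Selberg formalism.
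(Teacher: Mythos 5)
The statement you are trying to prove is labelled as a \emph{conjecture} in this paper, not as a theorem the paper proves. The authors explicitly state that Conjecture~\ref{lcp} was recently proved by Chai \cite{Ch16} and, independently and by a different analytic method, by Jacquet and Liu \cite{JL18}; the paper merely quotes the statement as background for its own (different) result on $\RG_2$ over finite fields. So there is no ``paper's own proof'' to compare against here.

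As a free-standing sketch, your proposal has the right flavour for the first half --- converting the gamma-factor equalities via Rankin--Selberg integrals into identities of (partial) Bessel functions on Bruhat cells, then propagating those identities over Weyl elements, is indeed close in spirit to Chai's Bessel-function approach and to the finite-field argument of Nien \cite{N14}. But the step you yourself flag as ``the hard part'' is where your plan is not actually a plan. You propose to dispose of the $r>[n/2]$ twists by a ``$\GL_r$--$\GL_{n-r}$ duality'' and an appeal to the contragredient plus the shared central character; this is not how either known proof handles the cutoff, and it is not clear it can be made to work. The contragredient/central-character trick only swaps $\gamma(s,\pi\times\tau,\psi)$ with $\gamma(1-s,\tilde\pi\times\tilde\tau,\psi^{-1})$; it does not relate $\GL_r$ twists of $\pi$ to $\GL_{n-r}$ twists of $\pi$, so it does not show the high-rank data is redundant. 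The actual reduction from $n-1$ to $[n/2]$ is the substantive innovation in \cite{Ch16, JL18} (in Chai's argument it rests on delicate properties of Bessel functions near the relevant Bruhat cells, and in Jacquet--Liu on an analytic study of Kirillov-model partial integrals), and your proposal as written leaves that gap open. Absent a concrete mechanism for the $[n/2]$ threshold, the rest of the outline only recovers, at best, the earlier Henniart $(n-1)$ or Chen/Cogdell--Piatetski-Shapiro $(n-2)$ ranges.
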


Conjecture \ref{lcp} has recently been proved by Chai (\cite{Ch16}), and by Jacquet and the first-named author (\cite{JL18}), independently, using different analytic methods. 

One can propose a more general family of conjectures as follows (see \cite{ALSX16}). Let $\pi_1,\pi_2$ be irreducible generic representations of $\GL_n(F)$.
We say that $\pi_1$ and $\pi_2$ satisfy hypothesis $\CH_0$
if they have the same central character. 
For $m \in \BZ_{\geq 1}$, we say that they satisfy hypothesis $\CH_m$ if they satisfy hypothesis $\CH_0$ and satisfy
$$\gamma(s,\pi_1\times\tau,\psi)=\gamma(s,\pi_2\times\tau,\psi)$$
as functions
of the complex variable $s$, for all irreducible generic
representations $\tau$ of $\GL_m(F)$.
For $r \in \BZ_{\geq 0}$, we say that $\pi_1,\pi_2$ satisfy hypothesis $\CH_{\le r}$ if they satisfy hypothesis $\CH_m$,
for $0\le m\le r$.

\begin{Jconj}
If $\pi_1,\pi_2$ are irreducible generic representations
of $\GL_n(F)$ which satisfy hypothesis $\CH_{\le r}$, then $\pi_1\simeq\pi_2$.
\end{Jconj}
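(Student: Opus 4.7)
The plan is to follow the Howe vector / partial Bessel function strategy of Chai and of Jacquet--Liu, which settles the maximal case $r=[n/2]$, and to try to push it down to smaller $r$. Since an irreducible generic representation of $\GL_n(F)$ is determined up to isomorphism by its Whittaker model, and a Whittaker function is determined by its restriction to the standard maximal torus $T$, it suffices to fix a normalization of the Whittaker models of $\pi_1$ and $\pi_2$ and to show that $W_{\pi_1}$ and $W_{\pi_2}$ agree on a set of representatives of Weyl-conjugacy classes in $T$.

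I would begin with the Jacquet--Piatetski-Shapiro--Shalika local functional equation
\[
\widetilde{\Psi}(1-s,W,W_\tau)=\gamma(s,\pi\times\tau,\psi)\,\Psi(s,W,W_\tau),
\]
for each irreducible generic representation $\tau$ of $\GL_m(F)$ with $m\le r$. Under hypothesis $\CH_{\le r}$, the gamma factors for $\pi_1$ and $\pi_2$ coincide, so subtracting the functional equations yields an orthogonality statement: the formal difference $W_{\pi_1}-W_{\pi_2}$, viewed through a common Kirillov model, is annihilated by every Rankin--Selberg pairing against $W_\tau$.

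The central step is to convert this orthogonality into pointwise information on $T$. Here I would plug in Howe vectors $W_{v_k}$ on the $\tau$ side, whose Whittaker supports shrink to prescribed torus or Weyl elements as the level $k$ grows, and use a Paley--Wiener style density of matrix coefficients of Whittaker functions as $\tau$ varies through generic representations of $\GL_m(F)$. In this way one first obtains a pointwise identity of Whittaker functions on the open Bruhat cell attached to the almost-long Weyl element $w_{n-1}$. The partial Bessel function identities developed by Cogdell--Shahidi--Tsai and by Jacquet--Liu then allow one to propagate this identity inductively across smaller Bruhat cells, eventually recovering enough of $T$ to conclude $\pi_1\simeq\pi_2$.

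The main obstacle, and the reason $\bJ(n,r)$ remains open for $r<[n/2]$, lies precisely in this inductive propagation: each descent through a smaller Bruhat cell is known to require twists by $\tau$ on $\GL_m(F)$ with $m$ as large as $[n/2]$. For smaller $r$ the partial Bessel argument degenerates, so genuinely new input should be needed --- for example, a stability principle reducing $\bJ(n,r)$ to a converse theorem on a Levi subgroup, or a finer combinatorial invariant of the Whittaker function on the torus that is forced by a smaller family of twists.
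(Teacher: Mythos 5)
The statement you were given is a displayed \emph{Conjecture} recorded in the introduction of the paper for background; it is not a result the paper proves. The paper states $\CJ(\bn,\br)$ and then cites the literature for the cases that are settled: $\CJ(2,1)$ (Jacquet--Langlands), $\CJ(3,1)$ (Jacquet--Piatetski-Shapiro--Shalika), $\CJ(n,n-1)$ (Henniart; Cogdell--Piatetski-Shapiro), $\CJ(n,n-2)$ (Chen; Cogdell--Piatetski-Shapiro; Hakim--Offen), and $\CJ(n,[\frac{n}{2}])$ (Chai; Jacquet--Liu). Your outline of the Howe-vector and partial Bessel function strategy is a faithful summary of the Chai / Jacquet--Liu approach for $r=[\frac{n}{2}]$, but as you yourself note it recapitulates published work rather than supplying a new argument, so there is nothing for me to compare against an internal proof in this paper.

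The substantive error is in how you frame the range $r<[\frac{n}{2}]$. You describe $\CJ(n,r)$ there as ``open'' and speculate that ``genuinely new input'' might eventually prove it. But the paper states explicitly, in the paragraph immediately following the conjecture, that $[\frac{n}{2}]$ is \emph{sharp} for the generic dual of $\GL_n(F)$: there exist pairs of irreducible generic representations of $\GL_n(F)$ satisfying $\CH_{\le r}$ for some $r<[\frac{n}{2}]$ that are nonetheless nonisomorphic. In other words, $\CJ(n,r)$ with $r<[\frac{n}{2}]$ is \emph{false} in the generic setting of the conjecture, not merely unproven, so the ``inductive propagation'' you propose cannot be repaired; the correct move in that regime is to exhibit a counterexample. (Sharpness is also expected, and has been established in the tame prime case by Adrian--Liu--Stevens--Tam, for the supercuspidal dual, although for special subfamilies such as simple supercuspidal representations a single $\GL_1$-twist may suffice.) A correct treatment must therefore distinguish $r\ge[\frac{n}{2}]$, where the statement is a theorem, from $r<[\frac{n}{2}]$, where it fails as stated.

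\begin{rmk}
Here, $\bn$ and $\br$ refer to the parameters in the displayed Conjecture~$\bJ(\bn,\br)$; by $[\frac{n}{2}]$ we mean the floor, consistent with the paper's usage.
\end{rmk}
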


Conjecture \ref{lcp} was exactly Conjecture $\CJ(n,[\frac{n}{2}])$. Conjecture $\CJ(2,1)$ was first proved by Jacquet and Langlands \cite{JL70}.
Conjecture $\CJ(3,1)$ was first proved by Jacquet, Piatetski-Shapiro, and Shalika \cite{JPSS79}. 
For general $n$, Conjecture $\CJ(n,n-1)$ was proved by Henniart in \cite{H93}, and by Cogdell-Piatetski-Shapiro using a global method in \cite{CPS94}.
Conjecture $\CJ(n,n-2)$ (for $n \geq 3$) is a theorem due
to Chen \cite{Ch96, Ch06}, to Cogdell and Piatetski-Shapiro \cite{CPS99}, and to Hakim and Offen \cite{HO15}. 

In \cite{JNS15}, Jiang, Nien and Stevens showed that Conjecture \ref{lcp} is equivalent to the same conjecture with the adjective ``generic" replaced by ``unitarizable supercuspidal" as follows: 

\begin{conj}\label{lcp2}
Let $\pi_1,\pi_2$ be irreducible unitarizable supercuspidal representations of $\GL_n(F)$. Suppose that they have the same central character. If
\[
\gamma(s, \pi_1 \times \tau, \psi) = \gamma(s, \pi_2 \times \tau, \psi),
\]
as functions of the complex variable $s$, for all irreducible
supercuspidal representations $\tau$ of $\GL_r(F)$ with $1 \leq r \leq 
[\frac{n}{2}]$, then $\pi_1 \cong \pi_2$.
\end{conj}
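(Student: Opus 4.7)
The plan is to derive Conjecture~\ref{lcp2} from Conjecture~\ref{lcp} (now a theorem by Chai~\cite{Ch16} and independently Jacquet--Liu~\cite{JL18}) by propagating the hypothesized matching of gamma factors from supercuspidal twists to all irreducible generic twists via the multiplicativity of local gamma factors.

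First I would fix an arbitrary irreducible generic representation $\tau$ of $\GL_r(F)$ with $1 \le r \le [\frac{n}{2}]$. By Bernstein--Zelevinsky theory, $\tau$ is the unique generic irreducible constituent of a parabolically induced representation $n\text{-}\Ind_P^{\GL_r}(\rho_1 \otimes \cdots \otimes \rho_k)$, where $P$ is a standard parabolic of $\GL_r$ with Levi isomorphic to $\GL_{r_1} \times \cdots \times \GL_{r_k}$ and each $\rho_i$ is (a character twist of) an irreducible supercuspidal representation of $\GL_{r_i}(F)$. Every $r_i$ satisfies $r_i \le r \le [\frac{n}{2}]$, which is the crucial size bound that makes the reduction work.

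Next, I would invoke the multiplicativity of local Rankin--Selberg (equivalently, Langlands--Shahidi) gamma factors (\cite{JPSS83, S84}): for $\pi = \pi_1$ and $\pi = \pi_2$,
\[
\gamma(s, \pi \times \tau, \psi) \;=\; \prod_{i=1}^{k} \gamma(s, \pi \times \rho_i, \psi).
\]
Since each $\rho_i$ is supercuspidal of rank $r_i \le [\frac{n}{2}]$, the hypothesis of Conjecture~\ref{lcp2} gives $\gamma(s, \pi_1 \times \rho_i, \psi) = \gamma(s, \pi_2 \times \rho_i, \psi)$ for every $i$; taking products forces $\gamma(s, \pi_1 \times \tau, \psi) = \gamma(s, \pi_2 \times \tau, \psi)$ for \emph{all} irreducible generic $\tau$ of $\GL_r(F)$ in the range $1 \le r \le [\frac{n}{2}]$.

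Finally, since $\pi_1, \pi_2$ are unitarizable supercuspidal, hence in particular irreducible and generic, and share a central character, Conjecture~\ref{lcp} applies and yields $\pi_1 \cong \pi_2$. The main obstacle in this reduction is not the argument above --- which relies only on the classical multiplicativity of gamma factors and on the Bernstein--Zelevinsky description of generic representations --- but rather the depth of Conjecture~\ref{lcp} itself, whose known proofs require a delicate asymptotic analysis of partial Bessel functions (\cite{Ch16}) or of Whittaker functions along the ``simple shape'' open Bruhat cell (\cite{JL18}).
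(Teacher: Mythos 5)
The statement you address is presented in the paper only as a \emph{conjecture}; the paper itself gives no proof. It cites \cite{JNS15}, where Jiang, Nien and Stevens established that Conjecture~\ref{lcp2} is \emph{equivalent} to Conjecture~\ref{lcp}, and separately notes that Conjecture~\ref{lcp} has since been proved by Chai \cite{Ch16} and by Jacquet--Liu \cite{JL18}. So ``the paper's own treatment'' is a citation chain, not an argument; your task is really to supply the missing implication.

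Your argument does supply it, and correctly: given that Conjecture~\ref{lcp} is now a theorem, it suffices to show that the hypothesis of Conjecture~\ref{lcp2} (agreement of $\gamma$-factors against all supercuspidal twists up to rank $[\frac{n}{2}]$) implies the hypothesis of Conjecture~\ref{lcp} (agreement against all generic twists in the same range). Expressing a generic $\tau$ through its supercuspidal support $\{\rho_1,\dots,\rho_k\}$ with each $r_i \le r \le [\frac{n}{2}]$, and invoking the multiplicativity $\gamma(s,\pi\times\tau,\psi)=\prod_i\gamma(s,\pi\times\rho_i,\psi)$ (a theorem of \cite{JPSS83, S84}), does exactly this. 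Two small cautions: (i) it is not quite accurate that a generic $\tau$ is always ``the unique generic irreducible constituent'' sitting strictly inside a full induction from supercuspidals --- by Bernstein--Zelevinsky one first realizes $\tau$ as a full induction from discrete series and then passes to supercuspidal supports --- but multiplicativity of $\gamma$-factors along the supercuspidal support is what the argument actually uses, and that is correct; (ii) you should make explicit that the $\rho_i$ may be non-unitary twists of supercuspidals, which is fine because the hypothesis of Conjecture~\ref{lcp2}, as stated, quantifies over \emph{all} irreducible supercuspidals of $\GL_{r_i}(F)$, not just unitary ones.

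The genuinely interesting comparison is one of direction. What \cite{JNS15} proved (and what required real work --- special pairs of Whittaker functions, the Bushnell--Kutzko theory, etc.) is the \emph{converse} implication: that the supercuspidal case $\CJ_{\rm cusp}(n,[\frac{n}{2}])$ implies the generic case $\CJ(n,[\frac{n}{2}])$. That was the useful direction before Conjecture~\ref{lcp} was resolved, since it reduced an unknown statement to a seemingly more tractable one. Your proposal proves only the easy direction, $\CJ(n,[\frac{n}{2}]) \Rightarrow \CJ_{\rm cusp}(n,[\frac{n}{2}])$, which is a one-paragraph multiplicativity argument and would have been unremarkable before 2016; it becomes a valid proof of Conjecture~\ref{lcp2} only because Chai and Jacquet--Liu have since settled Conjecture~\ref{lcp}. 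So your route is correct and now available, but it is logically dependent on the hard analytic input of \cite{Ch16, JL18}, and it does not reproduce (or need) the content of \cite{JNS15}.
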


Making use of the construction of supercuspidal representations of $\GL_n(F)$ in \cite{BK93} and properties of Whittaker functions of supercuspidal representations constructed in \cite{PS08}, Jiang, Nien and Stevens introduced
the notion of a special pair of Whittaker functions for a pair of irreducible unitarizable supercuspidal representations $\pi_1$, $\pi_2$ of $\GL_n(F)$. They proved that if there is such a pair, and $\pi_1$, $\pi_2$ satisfy hypothesis $\CH_{\leq [\frac{n}{2}]}$, then $\pi_1 \cong \pi_2$. They also found special pairs of Whittaker functions in many cases, in particular the case of depth zero representations. In \cite{ALSX16}, Adrian, the first-named author, Stevens and Xu proved part of the case left open in \cite{JNS15}.
In particular, the results in \cite{JNS15} and \cite{ALSX16} together imply that Conjecture \ref{lcp2} is true for $\GL_n$, $n$ prime. 

It is easy to find pairs of generic representations showing that in Conjecture 1.1, $[\frac{n}{2}]$ is sharp for the generic dual of $\GL_n(F)$. In \cite{ALST16}, Adrian, the first-named author, Stevens and Tam showed that, in Conjecture \ref{lcp2}, $[\frac{n}{2}]$ is sharp for the supercuspidal dual of $\GL_n(F)$, for $n$ prime, in the tame case. It is believed that in Conjecture \ref{lcp2}, $[\frac{n}{2}]$ is sharp for the supercuspidal dual of $\GL_n(F)$, for any $n$, in all cases.
However, it is expected that for certain families of supercuspidal representations, $[\frac{n}{2}]$ may not be sharp, for example, for simple supercuspidal representations (of depth $\frac{1}{n}$), the upper bound may be lowered to 1 (see \cite[Proposition 2.2]{BH14} and \cite[Remark 3.18]{AL16} in general, and \cite{X13} in the tame case).

For general reducitve groups, one can consider analogue converse problems whenever the twisted gamma factors have been defined, using either the Rankin-Selberg convolution method or the Langlands-Shahidi method. 

Nien in \cite{N14} proved the finite fields analogue of Conjecture \ref{lcp} for cuspidal representations of $\GL_n$, using special properties of normalized Bessel functions and the twisted gamma factors defined by Roditty (\cite{Ro}). Adrain and Takeda in \cite{AT18} proved a local converse theorem for $\GL_n$ over archimedean local fields using $L$-functions.  
In \cite{M16}, Moss defined the twisted gamma factors for $\ell$-adic families of smooth representations of $\GL_n(F)$, where $F$ is a finite extension of $\BQ_p$ and $\ell$ is different from $p$, and proved an analogue of Conjecture $\CJ(n,n-1)$. In \cite{LM18}, joint with Moss, the first-named author proved an analogue of Conjecture $\CJ(n,[\frac{n}{2}])$ for $\ell$-adic families, using the idea in \cite{JL18}. 

For $p$-adic groups other than $\GL_n$, in particular classical groups, twisted gamma factors have been defined in many cases and the local converse problems have been vastly studied: $\mathrm{U}(2,1)$ and $\mathrm{GSp}(4)$ (Baruch,  \cite{B95} and \cite{B97}); $\SO(2n+1)$ (Jiang and Soudry,  \cite{JS03}); $\RU_{2n}$ (Morimoto  \cite{Mor}, and the second named author \cite{Zh18a}); $\rm{U}(1,1)$, $\RU(2,2)$,
$\Sp(2n)$ and $\RU_{2n+1}$(the second named author, \cite{Zh17a}, \cite{Zh17b}, \cite{Zh18a}, and \cite{Zh18b}). 

The ideas of converse theorems have been extended to distinction problems, namely, using special values of twisted gamma factors to characterize representations of $\GL_n(E)$ distinguished by $\GL_n(F)$, where $E/F$ is a quadratic extension, see Hakim and Offen \cite{HO15} (over $p$-adic fields), and Nien \cite{N18} (over finite fields).

In this paper, we initiate the project of studying converse problems for generic representations of exceptional groups.
Let $k$ be a finite field of odd characteristic $p$ and let $\psi$ be a fixed non-trivial additive character of $k$. 
We define $\GL_1$ and $\GL_2$-twisted gamma factors for irreducible generic cuspidal representations of $\mathrm{G}_2(k)$, and prove the first converse theorem for exceptional groups as follows:

\begin{thm}[Theorem \ref{thm: converse theorem}]\label{g2conversetheorem}
Let $\Pi_1,\Pi_2$ be two irreducible generic cuspidal representation of $\RG_2(k)$. If $$\gamma(\Pi_1\times\chi,\psi)=\gamma(\Pi_2\times \chi,\psi),$$
$$\gamma(\Pi_1\times \tau,\psi)=\gamma(\Pi_2\times \tau,\psi),$$
for all characters $\chi$ of $k^\times$ and all irreducible generic representations $\tau$ of $\GL_2(k)$, we have $\Pi_1\cong \Pi_2$.
\end{thm}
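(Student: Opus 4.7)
The plan is to recover the normalized Whittaker function (Bessel function) $B_{\Pi_i}$ of each $\Pi_i$ from the twisted gamma factor data, and then to invoke the fact that an irreducible generic cuspidal representation of $\RG_2(k)$ is uniquely determined by its Bessel function (a multiplicity-one statement proved earlier in the paper). The entire task therefore reduces to deducing the pointwise identity $B_{\Pi_1}=B_{\Pi_2}$ on $\RG_2(k)$ from the assumed equalities of gamma factors.

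First, I would make the definitions of $\gamma(\Pi\times\chi,\psi)$ and $\gamma(\Pi\times\tau,\psi)$ explicit as finite sums over $\RG_2(k)$ (or a unipotent coset thereof) of the Bessel function $B_\Pi$ paired against a test function built from the twisting datum. Concretely, the $\GL_1$-twisted factor should arise from a Rankin--Selberg or Shahidi-type integral attached to one of the two maximal parabolics of $\RG_2$, while $\gamma(\Pi\times\tau,\psi)$ corresponds to pairing $B_\Pi$ across the unipotent radical of the other (Heisenberg) maximal parabolic against a Whittaker function of $\tau$. Using the Bruhat decomposition $\RG_2(k)=\bigsqcup_{w\in W} UTwU$ and the quasi-invariance $B_\Pi(ugu')=\psi(u)B_\Pi(g)\psi(u')$, one isolates the contribution of each Bessel-supported Weyl element $w$; the corresponding summand is a function on the torus $T$, and on the $\GL_2$-Levi in the latter case.

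Next, I would invert by Fourier analysis. Letting $\chi$ run over all characters of $k^\times$ and applying Fourier inversion on the torus yields the equalities $B_{\Pi_1}(wt)=B_{\Pi_2}(wt)$ for those Weyl cells reached by the $\GL_1$-twist. Letting $\tau$ run over all irreducible generic representations of $\GL_2(k)$ and using the Plancherel-type completeness of Whittaker functions on $\GL_2(k)$ (after central characters are fixed by the previous step) recovers the remaining cells, in particular those Weyl elements that mix the long- and short-root directions through the Heisenberg radical. Cuspidality of the $\Pi_i$ confines the Bessel support to Bruhat cells jointly resolved by these two families of twists.

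The main obstacle will be controlling the Bessel support of $\RG_2$ precisely enough: with twelve Weyl elements, two non-conjugate maximal parabolics, and a non-abelian five-dimensional Heisenberg unipotent radical, one must verify that the test vectors produced by varying $(\chi,\tau)$ really span enough of the function space on each Bruhat cell to determine $B_\Pi$, with no cancellations arising from the interplay between the two parabolic integrals and no Bessel-compatible Weyl element left unreached. Once this spectral completeness is in place the equality $B_{\Pi_1}=B_{\Pi_2}$ follows, and the multiplicity-one theorems established earlier in the paper immediately yield $\Pi_1\cong\Pi_2$.
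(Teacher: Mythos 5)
Your plan matches the paper's strategy at the structural level: reduce to proving the Bessel-function identity $\CB_{\Pi_1}=\CB_{\Pi_2}$, decompose $\RG_2(k)$ into Bruhat cells, cut attention down to the four Weyl elements $\{1,w_1,w_2,w_\ell\}$ supporting Bessel functions, recover the $w_1$ cell from the $\GL_1$ twist by Fourier inversion on $k^\times$, and handle the remaining cells via the $\GL_2$ twist plus a density statement for $\GL_2$-Whittaker functions. But the passage you label the ``main obstacle'' is precisely where your argument has a genuine gap rather than a deferred technicality. It is not enough to appeal to completeness of Whittaker functions on $\GL_2(k)$: one must actually unfold both sides of the $\GL_2$-twisted functional equation and see which Bessel values the pairing isolates. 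The paper does this by choosing test sections $f_{\xi_v}$ supported in $H=M\ltimes Z$ and using the decomposition $\RG_2(k)=P\coprod Pw_\alpha P\coprod Pw_\alpha w_\beta w_\alpha P\coprod Pw_2P$: the untwisted side $\Psi(\CB,f_{\xi_v})$ vanishes automatically because $M\subset B\cup Bs_\beta B$ is disjoint from the cells on which $\CB:=\CB_1-\CB_2$ may still be nonzero, while the intertwined side $\Psi(\CB,M_{w_2}f_{\xi_v})$ collapses, via the computation in Lemma~\ref{lem: computation of intertwining operator}, to $q^3\sum_{m\in U_\beta\backslash M}\CB(mw_2)W_v^*(m)$. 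Only then does the density lemma give $\CB(mw_2)=0$ for \emph{all} $m\in M$, and specializing to $m=h(x,y)$ and to $m=h(x,y)w_\beta$ simultaneously kills both the $Bw_2B$ and $Bw_\ell B$ cells. Your write-up neither produces this reduction to a single sum over $U_\beta\backslash M$ nor observes that one $\GL_2$-twist identity covers two distinct Bessel-supporting Weyl cells; without these concrete steps, the claim that the test vectors obtained from varying $(\chi,\tau)$ ``span enough'' remains unsubstantiated.
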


To define $\GL_1$-twisted gamma factors, we use the finite fields analogue of Ginzburg's local zeta integral in \cite{Gi} and we prove the following multiplicity one theorem to deduce the functional equation:

\begin{thm}[Theorem \ref{thm: multiplicity one}]\label{multiplicityonethm}
Let $\Pi$ be an irreducible cuspidal representation of $\RG_2(k)$, then 
$$\dim\Hom_J(\Pi,I(\chi)\otimes \omega_\psi)\le 1,$$
where $J$ is the Jacobi group contained in the maximal parabolic subgroup of $\RG_2(k)$ with the long root in the Levi $($see $\S$$\ref{subsec: G2}$ for definitions$)$, $\chi$ is a character of $k^{\times}$, 
$I(\chi)$ is the induced representation of $\SL_2(k)$ from $\chi$, and $\omega_{\psi}$ is the Weil representation of $J$ with central character $\psi$. 
\end{thm}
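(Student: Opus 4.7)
The plan is to peel off the induced structure of $I(\chi)\otimes\omega_\psi$ in two stages, reducing the claim to a multiplicity bound for a twisted Jacquet module of $\Pi$.

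First, write $I(\chi)=\Ind_B^{\SL_2}(\chi)$ for $B=TN$ the standard Borel of $\SL_2$. Since $H$ acts trivially on $I(\chi)$, the projection formula identifies $I(\chi)\otimes\omega_\psi\cong \Ind_{BH}^{J}(\chi\otimes \omega_\psi|_{BH})$, so Frobenius reciprocity gives
$$\Hom_J\bigl(\Pi,I(\chi)\otimes\omega_\psi\bigr)\cong \Hom_{BH}\bigl(\Pi,\chi\otimes\omega_\psi\bigr).$$
By Stone--von Neumann $\omega_\psi|_H$ is the unique irreducible representation of the Heisenberg group $H$ with central character $\psi$, and the Weil representation of the $\SL_2$-factor of $J$ lifts to an honest representation over a finite field of odd characteristic. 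Hence $W\mapsto \omega_\psi\otimes W$ is a fully faithful functor from $\SL_2$-modules to $J$-modules, and stripping off $\omega_\psi$ from both sides yields
$$\Hom_{BH}\bigl(\Pi,\chi\otimes \omega_\psi\bigr)\cong \Hom_B\bigl(M_\psi,\chi\bigr)\cong \Hom_T\bigl(M_\psi^N,\chi\bigr),$$
where $M_\psi := \Hom_H(\omega_\psi,\Pi)$ is the $\SL_2$-module coming from the $\psi$-isotypic part of $\Pi|_H$. The theorem thus reduces to showing that $M_\psi^N$ is multiplicity-free as a $T$-module.

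Second, I would realize $M_\psi^N$ as a concrete twisted Jacquet module of $\Pi$. Let $Z$ denote the centre of $H$, so $V:=H/Z$ is a four-dimensional symplectic space on which the $\SL_2$-factor of $J$ acts via $\Sym^3$ of its standard representation. Choose a polarization $V=X\oplus Y$ in which $X$ is preserved by the Borel $B$ of $\SL_2$; this is possible because the upper-triangular Borel preserves the Lagrangian spanned by $x^3$ and $x^2 y$ in $\Sym^3$. The Schr\"odinger realization $\omega_\psi = \Ind_{\widetilde X Z}^H\psi$ then identifies
$$M_\psi \cong \Pi^{(\widetilde X Z,\psi)} := \{\xi\in \Pi : \widetilde X\xi=\xi,\ z\xi=\psi(z)\xi\ \text{for}\ z\in Z\},$$
and since $N$ normalizes $\widetilde X Z$ and fixes $\psi$, taking $N$-invariants gives
$$M_\psi^N \cong \Pi^{(R,\psi_R)},$$
where $R := N\cdot\widetilde X\cdot Z$ is a unipotent subgroup of the maximal unipotent $U$ of $\RG_2$ and $\psi_R$ is the character of $R$ obtained by extending $\psi$ on $Z$ by the trivial character across $N\widetilde X$. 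A root-datum computation identifies the maximal torus $T$ of $\SL_2$ with the kernel in $T_{\RG_2}$ of the highest root of $\RG_2$; in particular $T$ stabilizes $\psi_R$, and the target bound becomes $\dim\Pi^{(T\ltimes R,\eta\cdot\psi_R)}\le 1$ for every character $\eta$ of $T$.

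The main obstacle is this last inequality. The subtlety is that $\psi_R$ is nontrivial on $Z\subset [U,U]$ and therefore does not extend to a character of the full maximal unipotent $U$, so the uniqueness of the generic Whittaker model of $\Pi$ cannot be invoked directly. I would handle this by Clifford theory applied to $R\lhd S$, where $S$ is obtained from $R$ by adjoining a root subgroup of $U$ whose commutator with $\widetilde X$ lands nontrivially in $Z$: this commutator being nontrivial makes conjugation act freely on the $S$-orbit of $\psi_R$, so that $\rho_S := \Ind_R^S\psi_R$ is an irreducible representation of $S$ and $\dim\Pi^{(R,\psi_R)} = \dim\Hom_S(\rho_S,\Pi)$. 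A further Mackey analysis along the inclusion $S\lhd U$ then expresses this multiplicity as a sum of contributions from twisted Jacquet modules of $\Pi$ along characters of $U$: the generic characters each contribute at most one by the uniqueness of the Whittaker model of $\Pi$, while the degenerate ones vanish because their twisted Jacquet modules are contained in Jacquet modules of proper parabolics of $\RG_2$, which are zero by cuspidality. Tracking the $T$-weights through this analysis matches each $\eta$ to at most one Whittaker contribution, completing the proof.
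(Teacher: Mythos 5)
Your approach is structural (Stone--von Neumann, Clifford theory, Mackey theory) and is entirely different in spirit from the paper's proof, which is a brute-force verification: the authors compute $\pair{\Pi|_J,I(\chi)\otimes\omega_\psi}$ case by case for every irreducible cuspidal $\Pi$ using the full character tables of $\RG_2(\BF_q)$ (Chang--Ree for $p>3$, Enomoto for $p=3$) together with the conjugacy classes of $J$ and a collection of Gauss-sum identities. A clean structural proof would be preferable in principle, but as written your argument has a genuine gap, and the gap is structural rather than cosmetic.

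The central error is the identification of the Heisenberg group. You take $H$ to be the full unipotent radical $V$ of $P$ and claim its centre $Z$ is one-dimensional with $H/Z$ a four-dimensional symplectic space carrying the $\Sym^3$ action of $\SL_2$; that is the structure of $V'/Z(V')$ for the \emph{other} maximal parabolic $P'$ (short root in the Levi), not of $V$. In fact $Z(V)=U_{3\alpha+\beta}U_{3\alpha+2\beta}$ is two-dimensional, and the Weil representation $\omega_\psi$ of $J$ in the paper does not come from $V$ as a Heisenberg group at all: it factors through the projection $\ov{\pr}:J\to\SL_2(k)\ltimes\sH$ whose kernel $U_{3\alpha+\beta}U_{3\alpha+2\beta}$ contains $Z(V)$, and $\sH$ is the three-dimensional Heisenberg group with $\sH/Z(\sH)\cong k^2$ carrying the \emph{standard} two-dimensional $\SL_2$-action. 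Consequently $\omega_\psi$ acts on $\CS(k)$ (dimension $q$, not $q^2$), the centre of $V$ acts trivially (not by $\psi$), and your invocation of Stone--von Neumann with ``central character $\psi$ on $Z(H)$'' does not apply. This propagates: the Lagrangian $\widetilde X$, the subgroup $R=N\widetilde X Z$, the character $\psi_R$, and the stabilizing torus are all misidentified. If one runs the correct reduction, one lands at the twisted Jacquet module $\Pi_{V',\eta'}$ where $V'$ is the unipotent radical of $P'$ and $\eta'$ is nontrivial only on $U_{2\alpha+\beta}$ (a degenerate character relative to $P'$), and the required bound becomes multiplicity-freeness of this space as a module for the one-dimensional torus $\ker(2\alpha+\beta)$. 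Your final step---Clifford theory along $R\lhd S$ followed by a Mackey decomposition into ``characters of $U$'' with generic ones contributing $\le 1$ and degenerate ones vanishing by cuspidality---is too imprecise to close this: $U$ is non-abelian, its irreducibles are not characters, and the bookkeeping that each $T$-weight picks up at most one Whittaker contribution is exactly the content one would need to prove, not assume. It is telling that the authors themselves observe the bound fails for some non-cuspidal representations (e.g.\ $\theta_5$ when $p=3$), so cuspidality must enter in a delicate way, and they resort to the character-table computation rather than a Gelfand-pair argument.
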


To prove the above theorem, we need to use the classification of representations of $\RG_2(k)$ given by Enomoto (\cite{En}, for $p=3$), and by Chang and Ree (\cite{CR}, for $p >3$), and compute the dimension of the Hom space for each irreducible cuspidal representation. 

To define $\GL_2$-twisted gamma factors, we embed $\RG_2$ into $\SO_7$ and use the finite fields analogue of the local zeta integral developed by Piatetski-Shapiro, Rallis and Schiffmann (\cite{PSRS}). The functional equation in this case follows from the following multiplicity one result

\begin{prop}[Proposition \ref{prop: uniqueness of a bilinear form}]\label{introprop: uniqueness of a bilinear form}
Let $\Pi$ be an irreducible generic cuspidal representation of $\RG_2(k)$ and let $\tau$ be an irreducible generic representation of $\GL_2(k)$. Then we have 
$$\dim \Hom_{\RG_2(k)}(I(\tau)|_{\RG_2(k)},\Pi)=1,$$
where $I(\tau)=\Ind_{\wt P}^{\SO_7(k)}(\tau \otimes 1_{\SO_3})$, $\wt{P}$ is a parabolic subgroup of $\SO_7(k)$ with the Levi subgroup isomorphic to $\GL_2(k) \times \SO_3(k)$ $($see $\S$$\ref{sec: G2 GL2}$ for definitions$)$. 
\end{prop}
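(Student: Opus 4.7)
The natural approach is to apply Mackey's formula to the restriction $I(\tau)|_{\RG_2(k)}$. Picking a complete set of representatives $\{w\}$ for the double cosets $\wt P(k)\bs\SO_7(k)/\RG_2(k)$ and setting $H_w := w^{-1}\wt P(k) w \cap \RG_2(k)$, one has
\[
I(\tau)|_{\RG_2(k)} \;\cong\; \bigoplus_{w} \Ind_{H_w}^{\RG_2(k)} \bigl((\tau\otimes 1_{\SO_3})^{w}|_{H_w}\bigr).
\]
By Frobenius reciprocity this turns the desired Hom space into
\[
\bigoplus_{w} \Hom_{H_w}\bigl((\tau\otimes 1_{\SO_3})^{w},\; \Pi|_{H_w}\bigr),
\]
so the task reduces to an orbit-by-orbit count: all but one summand should vanish, and the remaining one should be one-dimensional.

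The first step is to parameterize $\wt P(k)\bs\SO_7(k)/\RG_2(k)$. Since $\SO_7/\wt P$ is the variety of isotropic $2$-planes in the seven-dimensional quadratic space on which $\RG_2$ acts through its standard representation, the orbits can be classified in terms of invariants built from the $\RG_2$-invariant alternating trilinear form on that space. The list of orbits should be finite and short, with one open orbit together with a handful of degenerate ones. For each degenerate representative $w$ I would show that $H_w$ contains the unipotent radical $U_Q$ of some proper parabolic $Q\subset\RG_2(k)$, and that $(\tau\otimes 1_{\SO_3})^{w}$ restricts trivially to $U_Q$; cuspidality of $\Pi$ then forces the Jacquet module $\Pi_{U_Q}$ to vanish, so the corresponding Hom space is zero.

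It remains to analyze the unique open orbit. Here I expect $H_w$ to be, modulo its unipotent part, essentially a diagonally embedded copy of $\GL_2$ inside both the $\GL_2$ and $\SO_3$ factors of the Levi of $\wt P$ via $\tau\otimes 1$, while the character by which the unipotent part acts on the inducing representation is forced by the embedding $\RG_2\hookrightarrow\SO_7$ to coincide with the Whittaker character of $\RG_2$ on a suitable unipotent subgroup. The Hom space then identifies with the space of bilinear forms on $\Pi\times\tau^{\vee}$ transforming in a prescribed way, and one-dimensionality follows from the uniqueness of the Whittaker model for $\RG_2(k)$ (applicable because $\Pi$ is generic) combined with the uniqueness of the Whittaker model for $\tau$. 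The main obstacle is the explicit geometry on the open orbit: one must compute $H_w$ precisely from the embedding $\RG_2\hookrightarrow\SO_7$, verify that the character inherited from $(\tau\otimes 1_{\SO_3})^{w}$ matches the generic character data on $\RG_2(k)$, and check that the genericity of $\tau$ enters exactly in preventing the open-orbit contribution from being zero, so that the count yields dimension $1$ rather than $0$.
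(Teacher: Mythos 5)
Your overall framework is the same as the paper's: a Mackey decomposition of $I(\tau)|_{\RG_2(k)}$ over $\wt P\backslash\SO_7(k)/\RG_2(k)$ (there are exactly two double cosets, cited from \cite[Lemma 1.2]{PSRS}), Frobenius reciprocity, and the observation that cuspidality of $\Pi$ kills the contribution from the orbit whose stabilizer contains a full unipotent radical $U'$ of a conjugate of $P'$. So far so good. But your description of the surviving orbit is not correct, and a key step is missing. The surviving stabilizer is $H=\RG_2(k)\cap\wt P=M\ltimes Z$ with $M\cong\GL_2(k)$ embedded as the $\GL_2$ factor of the Levi of $\wt P$ (not diagonally into $\GL_2\times\SO_3$), and the inducing datum restricted to $H$ is $\tau\otimes 1_Z$: the unipotent part $Z$ acts trivially. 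No Whittaker character of $\RG_2$ is "forced by the embedding" on $Z$ at this stage — the Mackey data give a trivial character, and Frobenius reciprocity just yields
$$\Hom_{\RG_2(k)}(I(\tau)|_{\RG_2(k)},\Pi)=\Hom_{\GL_2(k)}\bigl(\tau,\Pi_Z\bigr).$$

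The genuine gap is what comes next. One cannot immediately read off $\dim\Hom_{\GL_2(k)}(\tau,\Pi_Z)=1$ from Whittaker uniqueness for $\RG_2$ and $\tau$. The paper's argument (Lemma \ref{lem: Jacquet functor Pi_Z}) filters the Jacquet module $\Pi_Z$ as a $\GL_2$-module by a Bernstein--Zelevinsky-type derivative construction, giving short exact sequences
$$0\to \ind_{P^1}^{\GL_2(k)}(\Pi_{V,\psi_V})\to \Pi_Z\to \Pi_V\to 0,\qquad 0\to \ind_{N_{\GL_2}}^{P^1}(\psi)\to \Pi_{V,\psi_V}\to \Pi_{U,\psi'_U}\to 0,$$
and then uses cuspidality of $\Pi$ two more times (to force $\Pi_V=0$ and $\Pi_{U,\psi'_U}=0$, the latter being a Jacquet module along the unipotent radical of $P'$ after a degenerate twist) to collapse everything to $\Hom_{N_{\GL_2}}(\tau,\psi)$. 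Only at that point does uniqueness of the $\GL_2$ Whittaker model give dimension $1$; genericity of $\Pi$ enters in ensuring $\Pi_{U,\psi_U}\neq 0$ so the answer is $1$ rather than $0$. Without this filtration and the two extra cuspidality vanishings, $\Hom_{\GL_2(k)}(\tau,\Pi_Z)$ could a priori be larger (and indeed is, for non-cuspidal $\Pi$ — see Remark \ref{rmk: why do we require cuspidal}). Your sketch skips this entirely, which is where the actual work of the proof lies.
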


The proof of Theorem \ref{g2conversetheorem}  briefly goes as follows. Let $\CB_i:=\CB_{\Pi_i}\in \CW(\Pi_i,\psi)$ be the Bessel function of $\Pi_i$ for $i=1,2$, namely, the Whittaker function associated with a Whittaker vector, normalized by $\CB_i(1)=1$ (see $\S$\ref{subsec:Bessel} and Lemma \ref{lem: basic properties of Bessel function} for the basic properties of $\CB_i$). We will prove that $\CB_1(g)=\CB_2(g)$ for all $g\in \RG_2(k)$ under the assumption of Theorem \ref{g2conversetheorem}. Since $\RG_2(k)=\coprod_{w\in \RW(\RG_2)}BwB$, 
where $B$ is a fixed Borel subgroup of $\RG_2$, 
it suffices to show that $\CB_1$ agrees with $\CB_2$ on various cells $BwB$. 
Let $\RB(\RG_2)=\wpair{w\in \RW(\RG_2): \forall \gamma\in \Delta, w\gamma>0 \implies w\gamma\in \Delta}$,
where $\Delta=\{\alpha, \beta\}$ is the set of simple roots of $\RG_2$ with $\alpha$ being the short root and $\beta$ being the long root. Let $s_\alpha$ (resp. $s_\beta$) be the simple reflection defined by $ \alpha$ (resp. $\beta$). Then $\RB(\RG_2) = \{1, w_1, w_2, w_{\ell}\}$, where $w_\ell$ is the long root, $w_1=w_\ell s_\alpha$ and $w_2=w_\ell s_\beta$. By Lemma \ref{lem: weyl element which support bessel function}, if $w\notin \RB(\RG_2)$, we have $\CB_1(g)=\CB_2(g)=0$ for $g\in BwB$. If $w=1$, we also have $\CB_1(g)=\CB_2(g),\forall g\in B$ by Lemma \ref{lem: basic properties of Bessel function}. Thus it suffices to show that $\CB_1(g)=\CB_2(g),\forall g\in BwB$ with $w=w_1,w_2,w_\ell$. It turns out that the equality of $\GL_1$-twisted gamma factors implies that $\CB_1(g)=\CB_2(g),\forall g\in Bw_1B$, and the equality of $\GL_2$-twisted gamma factors implies that $\CB_1(g)=\CB_2(g),\forall g\in BwB$ with $w=w_2,w_\ell$. 
This completes the proof of Theorem \ref{g2conversetheorem}. 

% Note that on the dual groups side we have an embedding $\RG_2(\BC) \hookrightarrow \GL_7(\BC)$, by Langlands philosophy of functoriality, irreducible generic cuspidal representations of $\RG_2(\mathbb{F}_q)$ are expected to lift to irreducible generic representations of $\GL_7(\mathbb{F}_q)$ and the lifting is expected to preserve all $\GL$-twisted gamma factors. Then the converse theorem proved by Nien in \cite{N14} (which is also true for irreducible generic representations of $\GL_n(k)$ by using a similar argument as in \cite{JL18}) implies that 
% irreducible generic cuspidal representations of $\RG_2(k)$ are uniquely determined by $\GL_1$, $\GL_2$, and $\GL_3$-twisted gamma factors (once all defined). However, Theorem \ref{g2conversetheorem} shows that only $\GL_1$, $\GL_2$-twisted gamma factors will be enough to determine irreducible generic cuspidal representations of $\RG_2(k)$. 

In \cite{NZ18}, Nien and Zhang verifies the $\CJ(n,1)$-Converse Theorem for twisted gamma factors of irreducible cuspidal representations of $\GL_n(\mathbb{F}_q)$, for $n \leq 5$, and of irreducible generic representations of $\GL_n(\mathbb{F}_q)$, for $n<\frac{q-1}{2\sqrt{q}}+1$ in the appendix by Zhiwei Yun. Note that on the dual groups side we have an embedding $\RG_2(\BC) \hookrightarrow \GL_7(\BC)$. Hence, by Langlands philosophy of functoriality, it is expected that irreducible generic cuspidal representations of $\RG_2(\mathbb{F}_q)$ would be uniquely determined by $\GL_1$-twisted gamma factors when $7<\frac{q-1}{2\sqrt{q}}+1$. In future work, the authors plan to check this expectation directly by analyzing $\GL_1$-twisted gamma factors for irreducible generic cuspidal representations of $\RG_2(\mathbb{F}_q)$. 

% If the characteristic of the finite field $k$ is $2$, the character table of $\RG_2(k)$ is given in \cite{EY86} and it's still reasonable to consider the converse problem for $\RG_2(k)$. But our approach does not work in this case. For example, our definition of gamma factors for $\RG_2(k)\times \GL_1(k)$ relies on the Weil representation of $\SL_2(k)$, which is only defined when the characteristic of $k$ is odd in \cite{Ge}. 

Theorem \ref{g2conversetheorem} inspires us to consider the local converse problem for $\RG_2(F)$ when $F$ is a $p$-adic field. In this case, our proof in $\S$\ref{sec: G2 GL2} is actually valid for an analogue of Proposition \ref{introprop: uniqueness of a bilinear form} without the restriction that $\Pi$ is cuspidal, which gives us the local functional equation of the local zeta integral of Piatetski-Shapiro-Rallis-Schiffmann (\cite{PSRS}) and hence the existence of the $\GL_2$-twisted local gamma factors. However, the existence of the $\GL_1$-twisted local gamma factors relies on the following 
\begin{conj}\label{conj: multiplicityone}
Let $F$ be a $p$-adic field and $\Pi$ be an irreducible generic representation of $\RG_2(F)$. Let $\psi$ be a nontrivial additive character of $F$. Let $\wt I(\chi,\psi)$ be the genuine induced representation on the double cover $\wt\SL_2(F)$ for a character $\chi$ of $F^\times$. Then if $\wt I(\chi,\psi)$ is irreducible, we have 
$$\dim_J(\Pi,\wt I(\chi,\psi)\otimes \omega_\psi)\le 1.$$
\end{conj}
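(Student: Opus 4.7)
The plan is to mimic the strategy used in the proof of Theorem \ref{multiplicityonethm}, replacing the classification-based finite-field arguments with geometric methods available over a $p$-adic field. Write $J = \wt\SL_2(F)\ltimes H$, where $H$ is the Heisenberg group coming from the nilpotent radical of the maximal parabolic of $\RG_2$ with long-root Levi, and $\wt\SL_2(F)$ is the metaplectic double cover acting genuinely on the Weil representation $\omega_\psi$. The first step is the standard Stone--von Neumann reduction: using a mixed Schr\"odinger model for $\omega_\psi$, one identifies
$$\Hom_J\bigl(\Pi,\wt I(\chi,\psi)\otimes\omega_\psi\bigr)\;\cong\;\Hom_{\wt\SL_2(F)}\bigl(\Pi_{H,\psi},\wt I(\chi,\psi)\bigr),$$
where $\Pi_{H,\psi}$ is the genuine smooth $\wt\SL_2(F)$-module obtained as the twisted Jacquet module of $\Pi|_H$ with respect to the central character $\psi$ of $H$.

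Next I would establish that $\Pi_{H,\psi}$ is a smooth admissible representation of $\wt\SL_2(F)$ of finite length; this follows in the usual way from the admissibility of $\Pi$ together with Jacquet's lemma applied along $H$. Because $\Pi$ is generic on $\RG_2(F)$, a direct comparison of its Whittaker functional with the twisted Jacquet construction produces a nonzero ordinary $\psi$-Whittaker functional on $\Pi_{H,\psi}$, exactly parallel to the Bessel-function computation of $\S$\ref{subsec:Bessel}. Under the hypothesis that $\wt I(\chi,\psi)$ is irreducible, it carries a one-dimensional space of genuine $\psi$-Whittaker functionals by the metaplectic analogue of Rodier's theorem, so Frobenius reciprocity yields
$$\dim\Hom_{\wt\SL_2(F)}\bigl(\Pi_{H,\psi},\wt I(\chi,\psi)\bigr)\;\le\;\dim\Hom_{N}\bigl(\Pi_{H,\psi},\psi\bigr),$$
where $N$ denotes the upper triangular unipotent subgroup of $\SL_2(F)$. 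The right-hand side is, by construction, the dimension of the space of iterated twisted Jacquet functionals on $\Pi$ along $N\ltimes H$ with respect to a specific character.

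The main obstacle is therefore the uniqueness of this iterated Whittaker functional. Unpacking the construction, the composite character on $N\ltimes H$ should be exhibited as the restriction of the generic character used in the Whittaker model of $\RG_2(F)$, so that the uniqueness of the Whittaker model for irreducible generic representations of $\RG_2(F)$ forces the space to be one-dimensional. The delicate point is to verify that this composite character lies in the correct generic $\RG_2(F)$-orbit, and to rule out the possibility that non-generic subquotients of $\Pi_{H,\psi}$ contribute extra Whittaker functionals. I expect that, as in $\S$\ref{subsec:Bessel}, a Bruhat-cell analysis using Mackey theory on the cells $BwB$ in $\RG_2(F)$ will reduce the problem to the unipotent orbit combinatorics already established for the finite-field case, but carrying out this reduction rigorously in the $p$-adic setting—and in particular controlling the supports of matrix coefficients on the non-open cells without the finiteness crutch of the finite-field argument—is the step where I expect the real technical difficulty to lie.
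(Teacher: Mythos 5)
The statement you address is not proved in the paper: it is Conjecture~\ref{conj: multiplicityone}, which the authors explicitly label as ``current work in progress.'' The finite-field analogue, Theorem~\ref{thm: multiplicity one}, is established by exhaustive character-table computation (Chang--Ree, Enomoto), a method with no $p$-adic counterpart; so there is no paper proof to compare against.

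Your Stone--von~Neumann reduction, the finite-length assertion for $\Pi_{H,\psi}$, and the injection
$$\Hom_{\wt\SL_2(F)}\bigl(\Pi_{H,\psi},\wt I(\chi,\psi)\bigr)\hookrightarrow\Hom_N\bigl(\Pi_{H,\psi},\psi\bigr)$$
obtained by composing with a Whittaker functional on the irreducible $\wt I(\chi,\psi)$ are all reasonable. The genuine gap is the last step: you need $\dim\Hom_N(\Pi_{H,\psi},\psi)\le 1$, and you hope this will fall out of the uniqueness of the $\RG_2(F)$-Whittaker model after a Bruhat-cell/Mackey analysis. It will not. Unwinding the mixed model of \S\ref{subsec: G2}, the space $\Hom_N(\Pi_{H,\psi},\psi)$ is a twisted Jacquet functional of $\Pi$ along a proper, codimension-one subgroup of the maximal unipotent $U$ of $\RG_2$ (namely $N\cong U_\beta$ together with the pull-back under $\ov{\pr}$ of a polarization-plus-center of the Heisenberg quotient of $V$), with a character that, extended by zero to all of $U$, vanishes on one of the two simple root subgroups and is therefore degenerate. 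This pair is not conjugate to the Whittaker datum $(U,\psi_U)$; it is a Fourier--Jacobi datum, and Rodier/Shalika-type uniqueness of the $\RG_2$-Whittaker model says nothing about it. The paper itself observes, immediately after Conjecture~\ref{conj: multiplicityone2}, that what is needed here is the $\RG_2$-analogue of the uniqueness of Fourier--Jacobi models for $\Sp_{2n}$, which required the entirely different distribution-theoretic machinery of \cite{BR,GGP,Su}. So the difficulty you flag at the end is real, but you mischaracterize its nature: it is not a matter of controlling supports on non-open Bruhat cells without the finite-field crutch; it is a new multiplicity-one theorem in its own right, which cannot be reduced to the Whittaker case and for which a Gelfand-pair or distributional argument seems unavoidable.
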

Note that both $\wt I(\chi,\psi)$ and $\omega_\psi$ are genuine representations on a double cover of $J$ and the thus the tensor product $\wt I(\chi,\psi)$ is a representation on $J$.

In the above conjecture, we keep the requirement minimal so that it is enough to deduce the local functional equation of Ginzburg's local zeta integral (\cite{Gi}). We do expect that the following generalized conjecture is true

\begin{conj}\label{conj: multiplicityone2}
Let $F$ be a $p$-adic field and $\Pi$ be an irreducible (selfdual) representation of $\RG_2(F)$. Let $\psi$ be a nontrivial additive character of $F$. Let $\wt \pi$ be an irreducible genuine representation on the double cover $\wt\SL_2(F)$. Then we have 
$$\dim_J(\Pi,\wt \pi\otimes \omega_\psi)\le 1.$$
\end{conj}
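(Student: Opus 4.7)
The natural strategy is to adapt the Gelfand--Kazhdan method to the pair consisting of $\RG_2(F)$ and the Jacobi subgroup $J=H\rtimes\SL_2(F)$. The plan is first to reformulate the problem: by a twisted Jacquet computation along the Heisenberg radical $H$, one has
\[
\Hom_J(\Pi,\wt\pi\otimes\omega_\psi)\;\cong\;\Hom_{\wt\SL_2(F)}\!\bigl(\mathrm{FJ}_\psi(\Pi),\wt\pi\bigr),
\]
where $\mathrm{FJ}_\psi(\Pi):=(\Pi\otimes\omega_\psi^{\vee})_{H}$ is the Fourier--Jacobi coinvariant, naturally a smooth genuine $\wt\SL_2(F)$-module. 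The conjecture thus reduces to showing that $\mathrm{FJ}_\psi(\Pi)$ is multiplicity-free as a genuine $\wt\SL_2(F)$-representation.

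Next I would seek an anti-involution $\Theta$ of $\RG_2(F)$ such that (i) $\Theta$ stabilizes $J$ together with each of its unipotent subgroups, (ii) $\Theta$ lifts to $\wt J$ in a way that intertwines $\omega_\psi$ with $\omega_\psi^\vee$ up to an inner twist, and (iii) $\Pi^{\Theta}\cong\Pi^{\vee}$. The self-duality hypothesis on $\Pi$ is precisely what makes (iii) available; natural candidates for $\Theta$ come from the longest Weyl element composed with a Chevalley anti-involution on $\RG_2$. With such $\Theta$ in hand, a Gelfand--Kazhdan criterion reduces the desired multiplicity one to the statement that every $(J\times J)$-equivariant distribution on $\RG_2(F)$, transforming under the character dictated by $\omega_\psi\boxtimes\omega_\psi^{\vee}$, is $\Theta$-invariant.

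To establish this symmetry, I would apply the localization machinery of Bernstein and Aizenbud--Gourevitch: stratify $\RG_2(F)$ into its finitely many $J\times J$ orbits and check on each orbit that the associated space of equivariant distributions is $\Theta$-stable. On boundary orbits, the Heisenberg twist should force vanishing because the nontrivial character on $H$ is incompatible with the unipotent stabilizer; on the open orbit, the problem descends to an analogous multiplicity-one question on a smaller reductive stabilizer, amenable to inductive treatment.

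The principal obstacle will be this cell-by-cell analysis of $J\bs\RG_2(F)/J$. The Heisenberg parabolic of $\RG_2$ has a rich Bruhat stratification whose boundary stabilizers are not easily described in terms of familiar classical groups, so ruling out non-$\Theta$-symmetric distributions on each stratum will require a delicate orbit-by-orbit computation inside $\RG_2$. An alternative route, closer in spirit to the finite-field proof of Theorem \ref{multiplicityonethm}, would be to exploit Savin's realization of generic representations of $\RG_2(F)$ via exceptional theta lifts and verify multiplicity one by a direct model-theoretic calculation on the lifted side, bypassing the most delicate part of the Gelfand--Kazhdan analysis.
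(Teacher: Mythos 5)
The statement you are addressing is Conjecture~\ref{conj: multiplicityone2}, which the authors explicitly leave open: the paper states that Conjectures~\ref{conj: multiplicityone}--\ref{conj: local converse} are current work in progress. There is therefore no proof in the paper to compare against, and what you have written is a research outline rather than a proof.

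Your reformulation via the Fourier--Jacobi coinvariant $\mathrm{FJ}_\psi(\Pi)$ and the proposed Gelfand--Kazhdan argument, run through the Bernstein and Aizenbud--Gourevitch localization machinery, is the standard modern framework for this kind of uniqueness statement, and it is indeed the route by which the $\Sp_{2n}$ analogue cited in the paper (\cite{BR, GGP, Su}) was established. But the essential ingredients remain unconstructed: you do not exhibit a concrete anti-involution $\Theta$ of $\RG_2(F)$ with properties (i)--(iii); you do not check that the lift of $\Theta$ to the metaplectic cover $\wt J$ is compatible with the cocycle defining the cover and with the Weil representation $\omega_\psi$ (a genuinely delicate sign/cocycle issue); and you do not carry out the orbit-by-orbit vanishing on the strata of $J\backslash\RG_2(F)/J$, which you yourself identify as the crux and which is nontrivial precisely because the boundary stabilizers inside $\RG_2$ are not of a familiar classical shape. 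Until those steps are done, the argument does not establish the conjecture.

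It is also worth flagging that the paper's own proof of the finite-field analogue, Theorem~\ref{thm: multiplicity one}, is methodologically disjoint from your plan: it is a brute-force computation of inner products of characters over $J$, using the explicit character tables of $\RG_2(k)$ due to Chang--Ree (for $p>3$) and Enomoto (for $p=3$), together with Gauss-sum evaluations. That strategy has no direct $p$-adic counterpart, which is exactly why Conjecture~\ref{conj: multiplicityone2} is open; a Gelfand--Kazhdan argument (or the exceptional theta-lift alternative you mention) is the right \emph{type} of attack for the local-field case, but at present it is a plan, not a proof.
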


As explained in \cite[\S 6]{LZ}, Conjecture \ref{conj: multiplicityone2} is an analogue of the uniqueness problem of Fourier-Jacobi models for $\Sp_{2n}$, which was proved in \cite{BR} (for $n=2$) and in \cite{GGP, Su}(for general $n$). Once Conjecture \ref{conj: multiplicityone} is established, we then have the local gamma factors for irreducible generic representations of $\RG_2(F)\times \GL_1(F)$ using Ginzburg's local zeta integral. Inspired by Theorem \ref{g2conversetheorem}, we propose the following conjecture on the local converse problem for $\RG_2(F)$. 

\begin{conj}\label{conj: local converse}
Let $F$ be a $p$-adic field. Suppose that Conjecture $\ref{conj: multiplicityone}$ is true. Let $\Pi_1,\Pi_2$ be two irreducible generic representations of $\RG_2(F)$. If 
\begin{align*}
    \gamma(s,\Pi_1\times \chi,\psi)&=\gamma(s,\Pi_2\times \chi,\psi),\\
     \gamma(s,\Pi_1\times \tau,\psi)&=\gamma(s,\Pi_2\times \tau,\psi),
\end{align*}
for all characters $\chi$ of $\GL_1(F)$ and all irreducible generic representations $\tau$ of $\GL_2(F)$, then $\Pi_1\cong \Pi_2$.
\end{conj}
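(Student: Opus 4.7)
\textbf{Proof proposal for Conjecture \ref{conj: local converse}.} The plan is to adapt the finite-field strategy of Theorem \ref{g2conversetheorem} to the $p$-adic setting, using the machinery of partial Bessel functions developed by Cogdell--Shahidi--Tsai and used by the second-named author for classical groups (\cite{Zh17a}, \cite{Zh17b}, \cite{Zh18a}, \cite{Zh18b}). Granting Conjecture \ref{conj: multiplicityone}, both Ginzburg's $\GL_1$-twisted zeta integral and the Piatetski-Shapiro--Rallis--Schiffmann $\GL_2$-twisted zeta integral satisfy a local functional equation, so $\gamma(s,\Pi\times\chi,\psi)$ and $\gamma(s,\Pi\times\tau,\psi)$ are defined. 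Fix Whittaker models $\CW(\Pi_i,\psi)$, and for each $i$ fix a Bessel--type section $v_i\in\CW(\Pi_i,\psi)$ together with its associated partial Bessel functions $\CB_{i,f}$, indexed by a suitable family of smooth compactly-supported cutoffs $f$ on the Weyl--cell stratification.

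First, I would establish the analogue of the ``supports of Bessel functions lie in $\RB(\RG_2)$'' lemma, using the $p$-adic Bruhat decomposition of $\RG_2(F)$ and the standard argument that only Weyl elements mapping positive simple roots to simple roots can support a Whittaker-biinvariant germ. This reduces the problem to comparing the germs of $\CB_{1,f}$ and $\CB_{2,f}$ on the cells $B w B$ for $w\in\{1,w_1,w_2,w_\ell\}$; the cell $w=1$ is trivial by normalization. Next, for $w=w_1$, I would substitute the partial Bessel function into Ginzburg's zeta integral (the finite-field version of which is used in $\S \ref{subsec: G2}$), and perform a Mellin inversion over the family of characters $\chi$ of $F^{\times}$, using the equality of $\GL_1$-twisted gamma factors to identify the $w_1$-germ of $\CB_{1,f}$ with that of $\CB_{2,f}$. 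For $w=w_2$ and $w=w_\ell$, I would run the parallel argument with the PSRS zeta integral: unfold it against the partial Bessel function, invoke equality of $\GL_2$-twisted gamma factors for all irreducible generic $\tau$ of $\GL_2(F)$, and apply Mellin inversion on the $\GL_2(F)$-variable to recover agreement of the germs on both remaining cells.

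Once all germs agree on each cell in $\RB(\RG_2)$, a standard approximation/partition-of-unity argument (choosing cutoffs $f$ supported in shrinking neighborhoods of representative points of each cell, then varying the Howe vector) upgrades the germ agreement to pointwise agreement of the Bessel functions themselves, hence to $\CW(\Pi_1,\psi)=\CW(\Pi_2,\psi)$ and therefore $\Pi_1\cong\Pi_2$ by uniqueness of Whittaker models.

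The main obstacle will be the Mellin-inversion step on the $\GL_2$ side. Unlike $\GL_1$ where characters of $F^{\times}$ form a rich enough family via classical Mellin inversion, for $\GL_2(F)$ one needs the analogue of the density of matrix coefficients of irreducible generic representations in the appropriate Kirillov-type space; carrying this out in the $\RG_2$ setting requires either a stability-of-gamma-factors result (after highly ramified twists, the germs on $w_2$- and $w_\ell$-cells should collapse to a single orbital integral over the unipotent radical), or an explicit germ expansion in the style of Shahidi's Plancherel-measure computations. Establishing such a stability statement for the PSRS integral on $\RG_2\hookrightarrow\SO_7$, and disentangling the contributions of $w_2$ versus $w_\ell$ (which lie in different cells but are both captured by $\GL_2$-twists), is the step I expect to be genuinely hard and will likely occupy the bulk of any honest proof.
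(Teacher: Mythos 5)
You have proposed an argument for what the paper presents as an \emph{open conjecture}: the authors state explicitly, just after Conjecture~\ref{conj: local converse}, that ``Conjectures~\ref{conj: multiplicityone}--\ref{conj: local converse} are current work in progress of the authors.'' There is therefore no proof in the paper against which to compare your proposal, and the statement you were asked to prove is not a theorem of this paper. Your write-up is a strategy outline, not a proof, and you say as much yourself in the last paragraph.

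On the substance, the outline does track the shape one would expect (mirroring the finite-field cell-by-cell Bessel-function argument of \S\ref{sec: converse theorem}, with partial Bessel functions replacing Bessel functions), but the two steps you flag as hard are precisely where the argument does not yet exist. Concretely: (i) even on the $\GL_1$ side, the finite-field Lemma~\ref{lem: computation of the gamma factor} expresses $\gamma(\Pi\times\chi,\psi)$ as a finite sum of Bessel-function values on $h(a,1)w_1$ and then invokes Nien's density lemma (Lemma~\ref{lem: density}); over a $p$-adic field the analogue involves a non-compact sum over $a\in F^\times$ where the germ of the partial Bessel function near $a=0$ (equivalently, near the smaller Bruhat cells) has to be controlled before any Mellin inversion is legitimate, so your claim that this case is straightforward is overoptimistic. (ii) On the $\GL_2$ side, the finite-field proof again reduces to Lemma~\ref{lem: density}, whose $p$-adic replacement in the converse-theorem literature (\cite{Ch96,Ch06,JS03,Zh18a}) is a completeness-of-Whittaker-models argument built on Bernstein--Zelevinsky derivatives and asymptotics, not ``stability of gamma factors after highly ramified twists''; stability is a tool for \emph{matching} gamma factors, not for inverting the zeta integral to recover germs, and the problem of disentangling the $w_2$- and $w_\ell$-cell contributions you raise is, to my knowledge, genuinely open. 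So: reasonable roadmap, but there is a gap exactly where a proof would have to do real work, and the paper does not claim to have closed it.
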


Conjectures \ref{conj: multiplicityone}--\ref{conj: local converse} are current work in progress of the authors. 

Again, by Langlands philosophy of functoriality, 
% According the conjectural local Langlands functoriality, 
representations of $\RG_2(F)$ are expected to be lifted to representations of $\GL_7(F)$ and this lifting is expected to preserve $\GL$-twisted local gamma factors. Then the Jacquet's local converse conjecture for $\GL_n$, which was recently proved in \cite{Ch16, JL18}, implies that two irreducible generic  representations $\Pi_i,i=1,2$ of $\RG_2(F)$ would be isomorphic if the twisted local gamma factors $\gamma(s,\Pi_1\times \tau,\psi)$, $\gamma(s,\Pi_2\times \tau,\psi)$, are the same for all irreducible generic representations $\tau$ of $\GL_n(F)$ for all $n=1,2,3$ (once they are all defined). Theorem \ref{g2conversetheorem} says that we only need $\GL_1$ and $\GL_2$-twisted gamma factors over finite fields, and we expect the same is true over $p$-adic fields in Conjecture \ref{conj: local converse}.

The paper is organized as follows. In \S\ref{sec: Jacobi group}, we introduce the group $\RG_2$, the Fourier-Jacobi group $J$, the Weil representations $\omega_{\psi}$, and the Multiplicity One Theorem \ref{multiplicityonethm}. Theorem \ref{multiplicityonethm} is proved in \S\ref{sec: proof when p>3} (for $p > 3$) and \S\ref{sec: proof when p=3} (for $p=3$). We define $\GL_1$ and $\GL_2$-twisted gamma factors for irreducible generic cuspidal representations in \S\ref{sec: G2 GL1} and \S\ref{sec: G2 GL2}, respectively. Finally, Theorem \ref{g2conversetheorem} is proved in \S\ref{sec: converse theorem}. In Appendix \ref{sec: computation of gauss sum}, we compute certain Gauss sums which are used in the proof of Theorem \ref{multiplicityonethm}. In Appendix \ref{sec: embedding G2 into SO7}, we describe the embedding of $\RG_2$ into $\SO_7$ used in this paper.

\subsection*{Acknowledgements} 
The authors would like to thank James Cogdell, Clifton Cunningham, Dihua Jiang and Freydoon Shahidi for their interest, constant support and encouragement. This project was initiated when the second-named author was a student at The Ohio State University. The collaboration of the two authors started from the The 2016 Paul J. Sally, Jr. Midwest Representation Theory Conference in University of Iowa. Part of the work was done when the second-named author worked at Sun Yat-Sen University, Guangzhou, China. We would like to express our gratitude to the above mentioned institutes.

\section{The Fourier-Jacobi group and a multiplicity one theorem}\label{sec: Jacobi group}

\subsection{Some notations and conventions} 

Throughout this paper, unless specified otherwise, we fix the following notations. Let $p$ be an odd prime and $q$ is a power of $p$. Let $k=\BF_q$, the finite field with $q$ elements. Let $\epsilon(x)=\left(\frac{x}{q} \right)$, where $\left( \frac{\cdot}{\cdot}\right)$ denotes the Legendre symbol. Let $\epsilon_0=\epsilon(-1)$. Then we have $\epsilon_0=1$ if $q\equiv 1\mod 4$, and $\epsilon_0=-1$ if $q\equiv 3 \mod 4$. Let $k^{\times,2}=\wpair{x^2: x\in k^\times},$ and $ k^{\times,3}=\wpair{x^3:x\in k^\times}$. Let $k_2$ be the unique quadratic extension of $k$, i.e., $k_2=\BF_{q^2}$. We fix a generator $\kappa$ of the multiplicative group $k^\times$. Then we have $\kappa\in k^\times-k^{\times,2}$. Let $\psi$ be a fixed non-trivial additive character of $k$. Then there exists a 4-th root of unity $\epsilon_\psi$ such that for any $a\in k^\times$, we have
\begin{equation}\label{eq: basic guass sum}\sum_{x\in k}\psi(ax^2)=\epsilon_\psi\epsilon(a)\sqrt{q}.\end{equation}
Moreover, we have $\epsilon_\psi^2=\epsilon_0$. See \cite[Ex.4.1.14]{Bu} for example. By abuse of notation, we write $\epsilon_\psi $ as $\sqrt{\epsilon_0}$. 

We usually don't distinguish a representation and its space. Thus for a representation $\pi$ of a group $G$, a vector $v\in \pi$ means that a vector $v$ in the space of $\pi$. 

\subsection{Weil representations of \texorpdfstring{$\SL_2(k)$}{Lg}}

 Let $W=k^2$, endowed with the symplectic structure $\pair{~,~}$ defined by
\begin{equation} \pair{(x_1,y_1),(x_2,y_2)}=-2x_1y_2+2x_2y_1. \end{equation}
Let $\sH$ be the Heisenberg group associated with the symplectic space $W$. Explicitly, $\sH=W\oplus k$ with addition
$$[x_1,y_1,z_1]+[x_2,y_2,z_2]=[x_1+x_2,y_1+y_2,z_1+z_2-x_1y_2+x_2y_1].$$
Let $\SL_2(k)$ act on $\sH$ such that it acts on $W$ from the right and act on the third component $k$ in $\sH$ trivially. Then we can form the semi-direct product $\SL_2(k)\ltimes \sH.$ The product in $\SL_2(k)\ltimes \sH$ is given by 
$$(g_1,v_1)(g_1,v_2)=(g_1g_2, v_1.g_2+ v_2).$$
By \cite{Ge}, there is a Weil representation $\omega_\psi$ of on $\CS(k)$, the space of $\BC$-valued functions on $k$. The Weil representation $\omega_\psi$ is determined by several formula, which can be found in \cite{GH}. Note that the symplectic form in \cite{GH} is a little bit different from ours. Thus the formulas in \cite{GH} should be adapted to our slightly different symplectic structure on $W$. One can consult \cite{Ku} for the dependence on the symplectic structure. The Weil representation in our case is determined by the following formulas:
\begin{align}
\begin{split}\label{eq: Weil representation of SL2}
\omega_\psi([x,0,z])\phi(\xi)&=\psi(z)\phi(\xi+x),\\
\omega_\psi([0,y,0])\phi(\xi)&=\psi(-2\xi y)\phi(\xi),\\
\omega_\psi(\diag(a,a^{-1}))\phi(\xi)&=\epsilon(a)\phi(a\xi),\\
\omega_\psi\left(\begin{pmatrix}1&b\\ &1 \end{pmatrix} \right)\phi(\xi)&=\psi(-b\xi^2)\phi(\xi),\\
\omega_\psi\left(\begin{pmatrix}&b\\ -b^{-1}& \end{pmatrix} \right)\phi(\xi)&=\frac{1}{\gamma(b,\psi)}\sum_{x\in k} \phi(x)\psi(-2xb\xi),
\end{split}
\end{align}
for $\phi\in \CS(k),x,y,z,\xi,b\in k,a\in k^\times.$ Here $\gamma(b,\psi)=\sum_{x\in k}\psi(-bx^2)$, which can be computed using \eqref{eq: basic guass sum}. 

\subsection{The group \texorpdfstring{$\RG_2$}{Lg} and its Fourier-Jacobi subgroup}\label{subsec: G2}
In this subsection, we give a very brief review of some definitions and notations related to the group $\RG_2(k)$. More details can be found in \cite[$\S$5]{LZ}. 

Let $\RG_2$ be the split exceptional algebraic group of type $\RG_2$ over the field $k$. The group $\RG_2$ has two simple roots $\alpha,\beta$, where $\alpha$ is the short root and $\beta$ is the long root, and has 6 positive roots $\alpha,\beta,\alpha+\beta,2\alpha+\beta,3\alpha+\beta,3\alpha+2\beta$. Let $s_\alpha,s_\beta$ be the reflections determined by $\alpha,\beta$ respectively. One has $s_\alpha(\beta)=3\alpha+\beta,s_\beta(\alpha)=\alpha+\beta$. We use the standard notations of Chevalley groups \cite{St}. For a root $\gamma$, let $U_\gamma\subset \RG_2(k)$ be the corresponding root space and let $\bx_\gamma: k\ra \RG_2(k)$ be a fixed isomorphism which satisfies various Chevalley relations, see \cite[Chapter 3]{St}. The explicit commutator relations can be found in \cite[p.192]{Ch}. A matrix realization of $\bx_\gamma(r), r\in k$, is given in Appendix \ref{sec: embedding G2 into SO7}. The calculations in this paper could be performed using this explicit matrix realization. 

For a root $\gamma$, let $w_\gamma(t)=\bx_\gamma(t)\bx_{-\gamma}(-t^{-1})\bx_\gamma(t)$, $w_\gamma=w_\gamma(1)$, and $h_\gamma(t)=w_\gamma(t)w_\gamma^{-1}$. Note that $w_\gamma$ is a representative of the Weyl group element $s_\gamma$. Let $h(t_1,t_2)=h_\alpha(t_1t_2)h_\beta(t_1^2t_2)$. One can check that $h(t_1,t_2)$ agrees with the notation $h(t_1,t_2,t_1^{-1}t_2^{-1})$ in \cite{Ch, CR} and the notation $h(t_1,t_2)$ in \cite{Gi}. Let $T=\wpair{h(t_1,t_2):t_1,t_2\in k^\times}$ be the maximal torus of $\RG_2(k)$ and let $U$ be the subgroup generated by $U_\gamma,\gamma$ positive. Then $B=TU$ is a Borel subgroup of $\RG_2(k)$. It is known that $\RG_2$ has trivial center. 

Let $P'=M'V'$ be the parabolic subgroup with Levi $M'$ and unipotent $V'$ such that $U_\alpha\subset M'$. Then $M'\cong \GL_2(k)$ and $V'$ is the group generated by $U_\beta,U_{\alpha+\beta},U_{2\alpha+\beta},U_{3\alpha+\beta},U_{3\alpha+2\beta}$.

Let $P=MV$ be the parabolic subgroup with Levi $M$ and unipotent $V$ such that $U_\beta\subset M$. Note that $V$ is generated by $U_\alpha,U_{\alpha+\beta},U_{2\alpha+\beta},U_{3\alpha+\beta},U_{3\alpha+2\beta}.$ Let $Z\subset V$ be the subgroup generated by $U_{2\alpha+\beta},U_{3\alpha+\beta},U_{3\alpha+2\beta}$. We still have $M\cong \GL_2(k)$ and the isomorphism can be realized by 
\begin{align*}
    \bx_\beta(x)&\mapsto \bpm 1&x\\ &1 \epm,\\
    h(a,b)&\mapsto \bpm a& \\ &b \epm.
\end{align*}
Let $J\subset P$ be the subgroup $\SL_2(k)\ltimes V$, where $\SL_2(k)$ is viewed as a subgroup of $M$ via $\SL_2(k)\subset \GL_2(k)\cong M$. A typical element in $V$ is of the form 
$$(r_1,r_2,r_3,r_4,r_5):=\bx_\alpha(r_1)\bx_{\alpha+\beta}(r_2)\bx_{2\alpha+\beta}(r_3)\bx_{3\alpha+\beta}(r_4)\bx_{3\alpha+2\beta}(r_5).$$ There is a group homomorphism 
$$\ov{\textrm{pr}}: J=\SL_2(k)\ltimes V\ra \SL_2(k)\ltimes\sH$$
given by 
$$(g,(r_1,r_2,r_3,r_4,r_5))\mapsto (d_1gd_1,(r_1,r_2,r_3-r_1r_2)),$$
where $d_1=\diag(-1,1)$, see \cite[$\S$5]{LZ} for more details. Thus the Weil representation $\omega_\psi$ can be viewed as a representation of $J$ via the above group homomorphism. By \eqref{eq: Weil representation of SL2} and the description of the above group homomorphism, we have the following formulas
\begin{align}
\begin{split}\label{eq: Weil representation of J}
\omega_\psi((r_1,0,r_3,r_4,r_5))\phi(\xi)&=\psi(r_3)\phi(\xi+r_1),\\
\omega_\psi((0,r_2,0,0,0))\phi(\xi)&=\psi(-2\xi r_2)\phi(\xi),\\
\omega_\psi(h(a,a^{-1}))\phi(\xi)&=\epsilon(a)\phi(a\xi),\\
\omega_\psi (\bx_\beta(b))\phi(\xi)&=\psi(b\xi^2)\phi(\xi),\\
\omega_\psi\left(\begin{pmatrix}&b\\ -b^{-1}& \end{pmatrix} \right)\phi(\xi)&=\frac{1}{\gamma(b,\psi)}\sum_{x\in k} \phi(x)\psi(-2xb\xi).
\end{split}
\end{align}

Let $\chi$ be a character of $k^\times$ and we view $\chi$ as a character of the upper triangular subgroup $B_{\SL_2}$ of $\SL_2(k)$ by 
$$\chi\left(\bpm a &b\\ &a^{-1} \epm \right)=\chi(a).$$
Consider the induced representation $I(\chi):=\Ind_{B_{\SL_2}}^{\SL_2(k)}(\chi)$. We view $I(\chi)$ as a representation of $J$ via the natural quotient map $J\ra \SL_2(k)$. The first main result of this paper is the following
\begin{thm}\label{thm: multiplicity one}
Let $\Pi$ be an irreducible cuspidal representation of $\RG_2(k)$, then 
$$\dim\Hom_J(\Pi,I(\chi)\otimes \omega_\psi)\le 1.$$
\end{thm}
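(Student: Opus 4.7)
The plan is to reduce the problem, via Frobenius reciprocity, to a Hom space over the Heisenberg-type subgroup $B_{\SL_2}\ltimes V$, and then to verify the bound by a direct case-by-case analysis using the known classification of irreducible cuspidal representations of $\RG_2(k)$---due to Chang--Ree for $p>3$ and to Enomoto for $p=3$. The proof therefore splits naturally into the two characteristic cases, handled in \S\ref{sec: proof when p>3} and \S\ref{sec: proof when p=3} respectively.

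The first step is purely formal. Since $I(\chi)$ is viewed as a representation of $J$ via the quotient $J\to \SL_2(k)$, we have $I(\chi)\cong \Ind_{B_{\SL_2}\ltimes V}^J(\chi)$ with $\chi$ extended trivially on $V$, and the projection formula gives
$$I(\chi)\otimes \omega_\psi \;\cong\; \Ind_{B_{\SL_2}\ltimes V}^J\bigl(\chi\otimes \omega_\psi|_{B_{\SL_2}\ltimes V}\bigr).$$
Frobenius reciprocity then converts the target space into $\Hom_{B_{\SL_2}\ltimes V}(\Pi,\,\chi\otimes \omega_\psi)$. Using the explicit formulas \eqref{eq: Weil representation of J}, the subgroup $U_{3\alpha+2\beta}$ (the center of $V$) acts on $\omega_\psi$ by $\psi$ while $U_{3\alpha+\beta}$ acts trivially, so the Hom space is controlled by the $(V,\psi)$-twisted Jacquet module of $\Pi$ together with the action of $B_{\SL_2}$ on it. Cuspidality of $\Pi$ eliminates the contributions from the parabolic $P=MV$ and constrains the structure of this Jacquet module.

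The core of the argument is the case-by-case dimension computation. For each irreducible cuspidal $\Pi$ in the Chang--Ree or Enomoto list, one evaluates the multiplicity via the character-theoretic identity
$$\dim\Hom_J(\Pi, I(\chi)\otimes \omega_\psi) \;=\; \frac{1}{|J|}\sum_{g\in J}\chi_\Pi(g)\,\overline{\chi_{I(\chi)\otimes \omega_\psi}(g)},$$
in which the character of $\omega_\psi$ is the standard Weil character computable from \eqref{eq: basic guass sum}, and the character of $\Pi$ is known explicitly from the classification. The required computations reduce to Gauss sum evaluations, which is why the authors collect the relevant identities in Appendix \ref{sec: computation of gauss sum}.

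The main obstacle is the sheer volume of case analysis: each family of cuspidal representations (those parametrized by characters of maximal tori of both types, the unipotent cuspidals, and the exceptional families appearing specifically when $p=3$) must be handled separately, and the answer depends on the nature of $\chi$---in particular on whether $\chi$ is trivial, quadratic, or a general character of $k^\times$. The $p=3$ case is likely the more delicate, owing to both the richer classification and the additional Gauss-sum identities needed in that characteristic.
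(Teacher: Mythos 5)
Your second and third paragraphs describe exactly the paper's strategy: compute $\pair{\Pi|_J, I(\chi)\otimes\omega_\psi}$ directly via the character-theoretic multiplicity formula, case by case over the Chang--Ree (for $p>3$) and Enomoto (for $p=3$) classifications, with Gauss-sum identities from the appendix doing the heavy lifting. Cuspidality is used in the paper exactly as you suggest in spirit, though more concretely: it filters out the irreducibles that appear as constituents of parabolically induced representations, so the only cases one must treat are the cuspidal families $X_i(\pi_i)$ ($i=2,3,6$) plus a handful of isolated classes (and the analogous lists for $p=3$), for which the pairing is then computed by brute force.

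Your first paragraph, however, is a detour that the paper does not take and that you yourself abandon. The Frobenius-reciprocity / projection-formula reduction to $\Hom_{B_{\SL_2}\ltimes V}(\Pi,\chi\otimes\omega_\psi)$ is formally valid, but you then never use it -- the ``core of the argument'' you describe is the full character sum over $J$, not a Jacquet-module analysis over $B_{\SL_2}\ltimes V$. Worse, the intermediate claim is wrong as stated: from \eqref{eq: Weil representation of J}, the subgroup that acts on $\omega_\psi$ by $\psi$ is $U_{2\alpha+\beta}$ (the image of the Heisenberg center under $\ov{\pr}$), while \emph{both} $U_{3\alpha+\beta}$ and $U_{3\alpha+2\beta}$ act trivially; $U_{3\alpha+2\beta}$ is the center of $V$ as a group, but it is not the center of the Heisenberg group seen by $\omega_\psi$. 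A structural argument along the lines you hint at (cuspidality forcing the twisted Jacquet module of $\Pi$ along $V$ to be small) would be an interesting alternative route -- indeed, this is close in spirit to what the paper does in Lemma 6.6 for the $\GL_2$-twisted case -- but it is not what is done here, and getting it off the ground would require fixing the misidentification of the relevant root subgroup. As written, the Frobenius paragraph should either be corrected and carried through, or dropped in favor of heading straight into the character computation.
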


\begin{rmk}\rm{
(1). Note that the character table of $\RG_2(k)$ when $p>3$ (see \cite{CR}) and $p=3$ (see \cite{En}) are different. Hence, we will prove Theorem \ref{thm: multiplicity one} for $p>3$ and $p=3$ separately in the next two sections. 
Also note that, in the above theorem, we don't require that $I(\chi)$ is irreducible.

(2). One should compare Theorem \ref{thm: multiplicity one} with \cite[Remark 7.2]{LZ}, where we have shown that the dimension of the Hom space may be bigger than $1$ for general irreducible representations of $\RG_2(k)$ even when $I(\chi)$ is irreducible, however, here we show that if we consider irreducible cuspidal representations of $\RG_2(k)$, then the dimension of the Hom space is indeed less than or equal to $1$.}
\end{rmk}

\begin{cor}\label{cor: multiplicity one}
Let $\Pi$ be an irreducible cuspidal representation of $\RG_2(k)$ and $\pi$ be an irreducible representation of $\SL_2(k)$, then we have
$$\dim \Hom_J(\Pi,\pi\otimes\omega_\psi)\le 1.$$
\end{cor}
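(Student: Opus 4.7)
The plan is to reduce Corollary \ref{cor: multiplicity one} to Theorem \ref{thm: multiplicity one} by a standard embedding argument, treating the non-cuspidal and cuspidal representations of $\SL_2(k)$ separately. Since $\SL_2(k)$-representations are completely reducible, any irreducible $\pi$ that occurs as a constituent of some principal series $I(\chi)$ is in fact a direct summand of $I(\chi)$. Pulling back via the natural surjection $J\twoheadrightarrow \SL_2(k)$ and tensoring with $\omega_\psi$ then realizes $\pi\otimes\omega_\psi$ as a direct summand of $I(\chi)\otimes\omega_\psi$ as $J$-modules. Applying the left-exact functor $\Hom_J(\Pi,-)$ and invoking Theorem \ref{thm: multiplicity one} immediately yields
$$\dim\Hom_J(\Pi,\pi\otimes\omega_\psi)\le \dim\Hom_J(\Pi,I(\chi)\otimes\omega_\psi)\le 1.$$
This settles the trivial representation, the Steinberg representation, every irreducible full principal series, and the two irreducible halves of each reducible principal series of $\SL_2(k)$.

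The remaining irreducibles of $\SL_2(k)$ are the cuspidal ones, which by definition do not appear in any $I(\chi)$, so this first step does not apply. For these I would proceed by realizing each cuspidal $\pi$ as an irreducible constituent of a Weil representation $\omega_{\psi'}$ of $\SL_2(k)$ (cf.\ \cite{GH}), or more uniformly via Howe duality with an orthogonal group of rank one (producing the $(q-1)/2$-dimensional exceptional cuspidals) or with an anisotropic orthogonal group of rank two (producing the generic cuspidals). Using the Stone--von Neumann theorem for the Heisenberg quotient $V\to\sH$, one can reinterpret $\Hom_J(\Pi,\pi\otimes\omega_\psi)$ in terms of a Hom space already controlled by the first step. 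Alternatively, one can bound the dimension directly by repeating the case-by-case analysis of \S\ref{sec: proof when p>3} and \S\ref{sec: proof when p=3} with $I(\chi)$ replaced by each cuspidal $\pi$; this is feasible because the proof of Theorem \ref{thm: multiplicity one} decomposes $\Pi$ using the explicit character tables of \cite{CR, En}, and the same decomposition yields information for every irreducible $\pi$.

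The main obstacle is precisely this cuspidal case. In the Frobenius-reciprocity/Jacquet-module reformulation, Theorem \ref{thm: multiplicity one} controls only the coinvariants of the Fourier--Jacobi module of $\Pi$ under the unipotent radical of $B_{\SL_2}$, whereas cuspidal representations of $\SL_2(k)$ contribute nothing to these coinvariants. Consequently, genuinely new input is needed to cover them: either the Weil-representation realization of cuspidal $\pi$ combined with Stone--von Neumann, or a direct character-theoretic computation on $\RG_2(k)$ running parallel to the proof of Theorem \ref{thm: multiplicity one}.
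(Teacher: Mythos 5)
Your first step is correct, and it is in fact a nice observation: because Theorem~\ref{thm: multiplicity one} holds for \emph{all} characters $\chi$ (the remark following the theorem emphasizes that $I(\chi)$ is not assumed irreducible), and because complex representations of finite groups are semisimple, every non-cuspidal irreducible $\pi$ of $\SL_2(k)$ is a direct summand of some $I(\chi)$, so the direct-summand argument gives $\dim\Hom_J(\Pi,\pi\otimes\omega_\psi)\le\dim\Hom_J(\Pi,I(\chi)\otimes\omega_\psi)\le 1$. This cleanly covers the trivial representation, the Steinberg representation, the two halves of the reducible principal series, and all irreducible principal series. The paper's own one-line proof only applies Theorem~\ref{thm: multiplicity one} to the irreducible-principal-series case, so on the non-cuspidal side you actually extract more mileage from the theorem than the paper does.

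The genuine gap is the cuspidal case, which you correctly identify but do not close. The paper does not reprove this case here: it cites the main theorem of the authors' earlier paper \cite{LZ}, which establishes $\dim\Hom_J(\Pi,\pi\otimes\omega_\psi)\le 1$ for \emph{every} irreducible $\Pi$ of $\RG_2(k)$ (cuspidal or not) and every irreducible $\pi$ of $\SL_2(k)$ that is not an irreducible principal series --- in particular for all cuspidals, trivial, Steinberg, and the two halves. The dichotomy in the paper's proof is therefore ``$\pi\cong I(\chi)$ irreducible'' (Theorem~\ref{thm: multiplicity one}, the new content of this paper) versus ``everything else'' (\cite{LZ}). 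Neither of your proposed routes is actually carried out: the Stone--von Neumann/Weil-representation reduction is a sketch, and it cannot reduce to a Hom space ``controlled by the first step,'' since for $\pi$ cuspidal $\pi\otimes\omega_\psi$ is never a $J$-summand of any $I(\chi)\otimes\omega_\psi$ (by Stone--von Neumann, $\pi\otimes\omega_\psi\cong\pi'\otimes\omega_\psi$ forces $\pi\cong\pi'$, and cuspidals are not constituents of $I(\chi)$); and the character-theoretic alternative would amount to rederiving the relevant part of \cite{LZ}. As written, the cuspidal $\pi$ remain unhandled.
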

\begin{proof}
If $\pi$ is an irreducible representation of $\SL_2(k)$ which is not of the form $I(\chi)$, the assertion follows from the main theorem of \cite{LZ}. If $\pi$ is of the form $I(\chi)$, the assertion follows from Theorem \ref{thm: multiplicity one}.
\end{proof}

\section{Proof of Theorem \texorpdfstring{$\ref{thm: multiplicity one}$}{Lg} when \texorpdfstring{$p>3$}{Lg}}\label{sec: proof when p>3}
In this section, we prove Theorem \ref{thm: multiplicity one} when $p>3$. The character table of $\RG_2(k)$ when $p>3$ is given in \cite{CR}.

\subsection{Character table of \texorpdfstring{$I(\chi)\otimes\omega_\psi$}{Lg}} As a preparation for the proof of Theorem \ref{thm: multiplicity one}, in this subsection, we give the character table of the representation $I(\chi)\otimes \omega_\psi$ of $J$. Given a representation $\pi$ of a finite group, denote by $\Ch_\pi$ the character of $\pi$. It is well-known that 
\begin{equation}\label{eq: multiplicativity of character}\Ch_{I(\chi)\otimes \omega_\psi}(g)=\Ch_{I(\chi)}(g)\Ch_{\omega_\psi}(g).
\end{equation}

We first record the conjugacy classes of $\SL_2(k)$:
\begin{align*}
\begin{array}{|c|c|c|}
\hline
\textrm{Representative} & \textrm{Number of elements in class } & \textrm{ Number of classes }   \\
\hline
\begin{pmatrix}1 &\\ &1 \end{pmatrix}  &  1&1   \\
\hline
\begin{pmatrix}-1 &\\ &-1 \end{pmatrix}  & 1 &1  \\
\hline
 \begin{pmatrix}1 &1\\ &1\end{pmatrix}&(q^2-1)/2&1\\
 \hline
 \begin{pmatrix}1 &\kappa\\ &1\end{pmatrix}&(q^2-1)/2&1\\
 \hline
\begin{pmatrix}-1 &1\\ &-1\end{pmatrix}&(q^2-1)/2&1\\
\hline
 \begin{pmatrix}-1 &\kappa\\ &-1\end{pmatrix}&(q^2-1)/2&1\\
 \hline
 \begin{pmatrix}x & \\ &x^{-1} \end{pmatrix}, x\ne \pm1 &q(q+1) & (q-3)/2\\
 \hline
\begin{pmatrix}x&y\\ \kappa y&x \end{pmatrix},x\ne \pm1, y \ne 0 &q(q-1) &(q-1)/2\\
\hline
\end{array}
\end{align*}
The above table could be found in \cite{FH}, for example. As a representation of $\SL_2(k)$, the character tables of $I(\chi)$ and $\omega_\psi$ are given in \cite{LZ}. In particular, we know that 
$$\Ch_{I(\chi)}\left(\begin{pmatrix}x&y\\ \kappa y&x \end{pmatrix} \right)=0.$$
Thus by \eqref{eq: multiplicativity of character}, it suffices to consider elements of the form $gv$ with $v\in V$, and $g\in \SL_2(k)$ not of the form $ \begin{pmatrix}x&y\\ \kappa y&x \end{pmatrix}$. Recall that $Z$ is the group generated by $U_{2\alpha+\beta},U_{3\alpha+\beta},U_{3\alpha+2\beta}$. 
\begin{prop}[{\cite[Theorem 4.4, (b)]{Ge}}]
If the function $\Ch_{I(\chi)\otimes\omega_\psi}$ is nonzero on $j\in J$, then $j$ is conjugate to an element in $\SL_2(k)\ltimes Z$. 
\end{prop}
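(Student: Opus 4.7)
The plan is to combine the multiplicativity of the character with the cited character formula of Gérardin and a conjugation calculation inside $J$. Since $I(\chi)$ is inflated from the quotient $J \twoheadrightarrow \SL_2(k)$, for any decomposition $j = gv$ with $g \in \SL_2(k)$ and $v \in V$ one has
$$\Ch_{I(\chi) \otimes \omega_\psi}(gv) = \Ch_{I(\chi)}(g) \cdot \Ch_{\omega_\psi}(gv),$$
so the nonvanishing hypothesis forces $\Ch_{\omega_\psi}(gv) \neq 0$. This reduces matters to understanding the support of the Weil character on $J$.

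To analyse this support, I would use that $\omega_\psi$ factors through the projection $\ov{\mathrm{pr}}: J \to \SL_2(k) \ltimes \sH$ from $\S$\ref{subsec: G2}: writing $v = (r_1, r_2, r_3, r_4, r_5)$, its image in the Heisenberg body $\sH/Z(\sH) \cong k^2$ is $\bar v = (r_1, r_2)$. Gérardin's Theorem 4.4(b) gives a character formula for the Weil representation of $\SL_2(k) \ltimes \sH$, whose support-theoretic content is that the character vanishes on any element whose Heisenberg body part does not lie in the image of $1 - g$ acting on $k^2$ via the standard representation. Combining this with the first step, the hypothesis forces $\bar v \in \mathrm{image}(1-g)$ on $V/Z$.

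To conclude, I would carry out a direct conjugation calculation inside $J$: for any $u \in V$ with image $\bar u \in V/Z$, the conjugate $u^{-1}(gv)u$ has the form $g \cdot v'$ with $\bar{v'} = \bar v + (1 - \Ad_{g^{-1}})(\bar u)$ in $V/Z$, using that $V$ is $2$-step nilpotent with $[V,V]\subseteq Z$, so $u^{-1}vu \equiv v \pmod{Z}$. As $\bar u$ ranges over $V/Z$, the $V/Z$-component of the conjugate sweeps out the coset $\bar v + \mathrm{image}(1 - g)$, which by the previous step contains $0$. Choosing $\bar u$ appropriately makes the $V/Z$-part of the conjugate vanish, placing the $J$-conjugate of $j$ in $\SL_2(k) \ltimes Z$, as desired.

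The main obstacle will be matching the $\SL_2(k)$-action on $V/Z$ coming from $\RG_2$-conjugation with the standard $\SL_2(k)$-action on the Heisenberg body appearing in Gérardin's formula. This requires careful bookkeeping of the twist by $d_1 = \diag(-1,1)$ built into $\ov{\mathrm{pr}}$, and verifying that the two manifestations of ``image of $1 - g$'' (one governing Weil-character vanishing, the other governing the conjugation orbit in $V/Z$) coincide under this twist. Once this identification is set up, the remaining computations are the routine ones sketched above.
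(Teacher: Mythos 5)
The paper gives no proof of this proposition: it is cited as Gérardin's Theorem~4.4(b), with the translation from $\SL_2(k)\ltimes\sH$ (where Gérardin works) to the specific group $J$ left implicit. Your skeleton — multiplicativity of the character, factorization of $\omega_\psi$ through $\ov{\mathrm{pr}}$, Gérardin's support criterion for $\SL_2(k)\ltimes\sH$, then a conjugation to kill the non-central part — is the right one.

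The bookkeeping you flag as ``the main obstacle'' is, however, a genuine gap in the write-up as it stands: you assert without verification that the $\SL_2(k)$-action on $V/Z$ coming from conjugation inside $\RG_2$ matches (through the $d_1$-twist in $\ov{\mathrm{pr}}$) the one entering Gérardin's criterion. The cleaner route avoids this entirely by performing the conjugation in $\SL_2(k)\ltimes\sH$ rather than in $V/Z$. Since $\Ch_{\omega_\psi}(j)=\Ch_{\omega_\psi}(\ov{\mathrm{pr}}(j))$, Gérardin's theorem gives that $\ov{\mathrm{pr}}(j)$ is conjugate in $\SL_2(k)\ltimes\sH$ to an element of $\SL_2(k)\times Z(\sH)$; because $\ov{\mathrm{pr}}$ is surjective, the conjugating element lifts to some $h\in J$, and the explicit formula $\ov{\mathrm{pr}}(g,(r_1,\dots,r_5))=(d_1gd_1,(r_1,r_2,r_3-r_1r_2))$ shows $\ov{\mathrm{pr}}^{-1}(\SL_2(k)\times Z(\sH))=\SL_2(k)\ltimes Z$, so $hjh^{-1}\in\SL_2(k)\ltimes Z$. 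This removes the need to compare any two ``images of $1-g$.'' If you insist on running the computation directly in $V/Z$, you would need to record that the two $\SL_2$-actions are mutually inverse (compare the torus action: $\alpha(h(a,b))=b$, $(\alpha+\beta)(h(a,b))=a$ versus the right action on $W$ twisted by $d_1$) and then use $\mathrm{image}(1-g)=\mathrm{image}(1-g^{-1})$; as written your proposal stops short of that verification.
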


By the above proposition, we need to consider elements in $J$ which are $J$-conjugate to elements of the form $g(0,0,r_3,r_4,r_5), g\in \SL_2(k)$. For a group $H$ and $h_1,h_2\in H$, we write $h_1 \sim_H h_2$ if $h_1=h_0h_2h_0^{-1}$ for some $h_0\in H$.

\begin{table}
\begin{align*}
\begin{array}{|c|c|c|c|c|}
\hline
\textrm{Representative } t & C_J(t) & |J(t)| &  No.&  \Ch_{I(\chi)\otimes\omega_\psi} \\
\hline
\begin{pmatrix}1 &\\ &1 \end{pmatrix}  & J &  1 & 1 & q(q+1)  \\
\hline
\begin{pmatrix}1 &\\ &1 \end{pmatrix} (0,0,0,0,1) & N_{\SL_2}\ltimes V & q^2-1 & 1 & q(q+1)\\
\hline
(0,0,r_3,0,0),r_3\ne 0 & \SL_2\ltimes Z & q^2 & q-1 & q(q+1)\psi(r_3)\\
\hline
 \begin{pmatrix}1 &1\\ &1\end{pmatrix}(0,0,r_3,0,0), \atop r_3\in k & \mu_2 \pair{ U_\beta, U_{\alpha+\beta}, U_{2\alpha+\beta}, U_{3\alpha+2\beta}} %\footnote{ This is a group isomorphic to $\mu_2\times U_\beta\times U_{\alpha+\beta}\times U_{2\alpha+\beta}\times U_{3\alpha+\beta}$, but not exactly all of the elements of the form $\mu_2\bx_\beta(r)(0,r_2,r_3,0,r_5)$. This remark also applies to the next several similar cases.}
 & \frac{q^2-1}{2}q^2 & q & \sqrt{\epsilon_0 q}\psi(r_3)\\
 \hline
 \begin{pmatrix}1 &\kappa\\ &1\end{pmatrix}(0,0,r_3,0,0),\atop r_3\in k& \mu_2 \pair{ U_\beta, U_{\alpha+\beta}, U_{2\alpha+\beta}, U_{3\alpha+2\beta}}  & \frac{q^2-1}{2}q^2  & q & -\sqrt{\epsilon_0 q} \psi(r_3)\\
\hline
 \begin{pmatrix}1 &1\\ &1\end{pmatrix}(0,0,r_3,r_4,0), \atop r_3\in k, r_4\in k^\times/\pair{\pm 1} & \pair{U_\beta,U_{\alpha+\beta}, U_{2\alpha+\beta}, U_{3\alpha+2\beta} } & (q^2-1)q^2 & \frac{q(q-1)}{2} & \sqrt{\epsilon_0 q} \psi(r_3)\\
 \hline
 \begin{pmatrix}1 &\kappa\\ &1\end{pmatrix}(0,0,r_3,r_4,0), \atop r_3\in k, r_4\in k^\times/\pair{\pm 1} & \pair{U_\beta,U_{\alpha+\beta}, U_{2\alpha+\beta}, U_{3\alpha+2\beta} } & (q^2-1)q^2 & \frac{q(q-1)}{2} & -\sqrt{\epsilon_0 q} \psi(r_3)\\
\hline
 h(-1,-1)(0,0,r_3,0,0), \atop r_3\in k & \SL_2\ltimes U_{2\alpha+\beta} & q^4 & q & (q+1)\chi(-1)\epsilon_0\psi(r_3) \\
 \hline
h(-1,-1)\bx_\beta(1)\bx_{2\alpha+\beta}(r_3) & \mu_2\ltimes U_\beta\times U_{2\alpha+\beta} & \frac{q^2-1}{2}q^4 & q & \epsilon_0\chi(-1)\psi(r_3)\\
\hline
h(-1,-1)\bx_\beta(\kappa)\bx_{2\alpha+\beta}(r_3) & \mu_2\ltimes U_\beta\times U_{2\alpha+\beta} & \frac{q^2-1}{2}q^4 & q & \epsilon_0\chi(-1)\psi(r_3) \\
\hline
h(x,x^{-1})(0,0,r_3,0,0), \atop x\ne \pm 1 & A_{\SL_2}\ltimes U_{2\alpha+\beta} & q^5(q+1) & q\frac{q-3}{2} & \epsilon(\chi(x)+\chi(x^{-1}))\psi(r_3)\\
 \hline
\bpm x&y\\ \kappa y&x \epm(0,0,r_3,r_4,r_5),\atop x\ne \pm 1 & * & * & * & 0 \\
\hline
\end{array}
\end{align*}
\caption{Conjugacy classes of $J$}
\label{table: conjugacy class of J}
\end{table}

\begin{lem}\label{lem: conjugacy class of J when p>3}
The following is a set of representatives of $j\in J$ such that $j$ is conjugate to an element of the form $g(0,0,r_3,r_4,r_5)$ with $g\in \SL_2(k)$ and not of the form $\bpm x& y\\ \kappa y &x \epm$, $y\ne 0:$
\begin{enumerate}
    \item $1; (0,0,0,0,1); (0,0,r_3,0,0),r_3\in k^\times;$
    \item $\bx_\beta(b)(0,0,r_3,0,0); \bx_\beta(b)(0,0,r_3,r_4,0),b\in \wpair{1,\kappa},r_3\in k,r_4\in k^\times/\wpair{\pm 1};$
    \item $h(-1,-1)\bx_\beta(b)(0,0,r_3,0,0), r_3\in k, b\in \wpair{1,\kappa};$
    \item $h(x,x^{-1})(0,0,r_3,0,0),x\ne \pm 1, r_3\in k.$
\end{enumerate}
\end{lem}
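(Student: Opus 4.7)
The plan is to enumerate $J$-conjugacy classes meeting $\SL_2(k) \ltimes Z$, following the reduction provided by the Gérardin proposition above. Any such class has a representative $g z$ with $g \in \SL_2(k)$ and $z \in Z$ (with $g$ not of the excluded anisotropic form $\bpm x & y \\ \kappa y & x \epm$, $y \ne 0$), and I would organize the analysis by the $\SL_2(k)$-conjugacy class of $g$: the four families being $g = I$; $g = h(x,x^{-1})$ with $x \ne \pm 1$; the two unipotent classes $\bx_\beta(b)$ with $b \in \{1,\kappa\}$; and the central twists $h(-1,-1)\bx_\beta(b)$. For each $g$, the remaining freedom is the $C_{\SL_2}(g) \ltimes V$-action on $gZ$, together with (when $g$ acts invertibly on $V/Z$) a preliminary move by $U_\alpha U_{\alpha+\beta}$ bringing an arbitrary $g v$ into $gZ$.

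The essential structural input is the list of nontrivial Chevalley commutators inside $V$, namely $[U_\alpha, U_{\alpha+\beta}] \subseteq U_{2\alpha+\beta}$, $[U_\alpha, U_{2\alpha+\beta}] \subseteq U_{3\alpha+\beta}$, and $[U_{\alpha+\beta}, U_{2\alpha+\beta}] \subseteq U_{3\alpha+2\beta}$, with all other pairs of root subgroups in $V$ commuting. Consequently, for $z = (0,0,r_3,r_4,r_5) \in Z$, conjugation by $\bx_\alpha(s)$ sends $r_4 \mapsto r_4 + c_1 s r_3$ (and leaves the other entries untouched), while $\bx_{\alpha+\beta}(t)$ sends $r_5 \mapsto r_5 + c_2 t r_3$; the coordinate $r_3$ is fixed by every element of $V$, and $\SL_2(k)$ acts on $U_{2\alpha+\beta}$ trivially and on $U_{3\alpha+\beta} \oplus U_{3\alpha+2\beta}$ as the standard $2$-dimensional representation. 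These facts make $r_3$ a genuine invariant and allow $(r_4, r_5)$ to be killed by $V$-conjugation whenever $r_3 \ne 0$, which handles case (1) directly. For case (4), the eigenvalues $x^{-1}, x$ of $g = h_\beta(x) = h(x,x^{-1})$ on $U_\alpha, U_{\alpha+\beta}$ are both $\ne 1$, so a suitable $u \in U_\alpha U_{\alpha+\beta}$ conjugates any element of $gV$ into $gZ$, after which the split torus $A_{\SL_2} \subset C_{\SL_2}(g)$ combined with the same $V$-reductions leaves $r_3 \in k$ as the sole invariant.

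I expect cases (2) and (3) to present the main obstacle: $g$ is unipotent (or $-I$ times a unipotent), so $g - 1$ is singular on $V/Z$ and the preliminary reduction to $gZ$ fails. One must instead work directly with orbits of $C_{\SL_2}(g) \ltimes C_V(g)$ on $gV$. The two unipotent $\SL_2(k)$-classes are distinguished by $b \in k^\times / k^{\times,2}$, giving the split $b \in \{1,\kappa\}$; the further parameter $r_4 \in k^\times / \{\pm 1\}$ arises because the reduced centralizer acts on $U_{3\alpha+\beta}$ only through the $\mu_2$ part of the $\SL_2(k)$-centralizer rather than a full split torus. Carefully tracing the Chevalley relations to verify the exact stabilizers and to rule out conjugacy between the listed representatives is the main computational burden, and matching the resulting centralizer orders with those recorded in Table \ref{table: conjugacy class of J} provides a final sanity check.
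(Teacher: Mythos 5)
Your framework matches the paper's: organize by the $\SL_2(k)$-conjugacy class of $g$, then reduce the $Z$-component by conjugation. Your treatment of $g=1$ with $r_3 \neq 0$ and of $g = h(x,x^{-1})$ with $x \neq \pm 1$ is essentially the paper's argument (the latter done in the paper by an explicit $V$-conjugation rather than an eigenvalue argument, but equivalent). You also omit the subcase $r_3 = 0,\, r_4 \neq 0$ of part (1), which the paper handles by conjugating with $w_\beta \bx_\beta(-r_5/r_4)$.

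The genuine gap is cases (2) and (3), which you defer as "the main computational burden": these \emph{are} the lemma. Your stated plan — analyze orbits of $C_{\SL_2}(g)\ltimes C_V(g)$ on $gV$ using only commutators "inside $V$" — will not produce the key reduction. In the paper, $r_5$ is killed for $g = \bx_\beta(b)$ via conjugation by $\bx_{3\alpha+\beta}(t) \in Z$, using the cross-commutator $[\bx_\beta(b),\bx_{3\alpha+\beta}(t)] \in U_{3\alpha+2\beta}$, i.e. a commutator with $U_\beta \subset M$, which your list of "commutators inside $V$" does not contain and cannot substitute for (and $\bx_{3\alpha+\beta}(t)$ centralizes $z$, so nothing happens purely within $Z$). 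Without that identity, the reduction to $r_5 = 0$ in parts (2) and (3) does not go through, and neither does the identification of the residual parameter $r_4 \in k^\times/\{\pm 1\}$. Separately, the commutator list you state is inaccurate: $[U_\alpha,U_{\alpha+\beta}]$ lands in $U_{2\alpha+\beta}U_{3\alpha+\beta}U_{3\alpha+2\beta}$, not just $U_{2\alpha+\beta}$, since $3\alpha+\beta = 2\alpha+(\alpha+\beta)$ and $3\alpha+2\beta = \alpha + 2(\alpha+\beta)$ are roots; this happens not to affect the steps you do carry out, but it signals that the Chevalley structure needed to complete (2) and (3) has not been fully worked out.
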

\begin{proof}
First we notice that if $g\sim_{\SL_2(k)} g'$, then $g(0,0,r_3,r_4,r_5)\sim_J g'(0,0,r_3',r_4',r_5')$ for some $r_3',r_4',r_5'\in k$. Thus we only need to consider the case when $g$ runs over a set of representatives of $\SL_2(k)$-conjugacy classes.

We first consider the case when $g=1$. If $r_3\ne 0$, we have 
\begin{equation}\label{eq: a conjugation equation}(-r_4/(3r_3), -r_5/(3r_3),0,0,0 )(0,0,r_3,r_4,r_5) (-r_4/(3r_3), -r_5/(3r_3),0,0,0)^{-1}=(0,0,r_3,0,0).
\end{equation}
Thus for any $r_4,r_5\in k$, we have $(0,0,r_3,r_4,r_5)\sim_J (0,0,r_3,0,0)$.   If $r_3=0,r_4\ne 0$, then $(0,0,0,r_4,r_5)\sim (0,0,0,0,r_5)$. In fact, we have 
$$w_\beta \bx_\beta(-r_5/r_4) (0,0,0,r_4,r_5) (w_\beta \bx_\beta(-r_5/r_4))^{-1}= (0,0,0,0,r_5).$$
Moreover,  if $r_5\ne 0$, then $(0,0,0,0,r_5)\sim (0,0,0,0,1)$ by considering the action of $h(x,x^{-1})$. 

Next, we consider the case when $g=h(x,x^{-1}),x\ne 1$. We have 
\begin{align*}&h(x, 1/x)(0, 0, r_3,0,0)\\
=&(0, 0, 0, -r_4/(x-1), r_5x/(x-1))h(x, x^{-1})(0, 0, r_3, r_4, r_5)(0, 0, 0, -r_4/(x-1), r_5x/(x-1))^{-1}.
\end{align*}
Thus for any $r_3,r_4,r_5$, we have $h(x, x^{-1})(0, 0, r_3, r_4, r_5)\sim_J h(x,x^{-1})(0,0,r_3,0,0)$. 

Next, we consider the case when $g=\bx_\beta(b), b=1$ or $\kappa$. If, $r_5\ne 0$,  one can check that $g(0,0,r_3,r_4,r_5)\sim _J g(0,0,r_3,r_4,0)$. In fact, we have 
$$ g(0,0,r_3,r_4,0)=\bx_{3\alpha+\beta}(t)g(0,0,r_3,r_4,r_5)\bx_{3\alpha+\beta}(-t), $$
with $t=r_5/b$. Finally, we consider the action of $h(a,a^{-1})$ on $g(0,0,r_3,r_4,0)$. To preserve $r_3$, $a$ should be $\pm 1$. On the other hand, we have $h(-1,-1)g(0,0,r_3,r_4,0)h(-1,-1)=g(0,0,r_3,-r_4,0)$. 

Finally, if $g=h(x,x^{-1})\bx_\beta(b), x= -1$, then one can check that 
$$(0,0,0,s,t).g(0,0,r_3,r_4,r_5).(0,0,0,s,t)^{-1}=g.(0,0,r_3,0,0),$$
with $s=r_4/(1-x),t=-r_5x/(1-x)+br_4x^2/(1-x)^2$. This completes the proof of the lemma.
\end{proof}
Table \ref{table: conjugacy class of J} gives the conjugacy classes of $J$. In Table \ref{table: conjugacy class of J}, for an element $t\in J$, the set $C_J(t)$ is the centralizer of $t$ in $J$ and $J(t)$ is the set of $J$-congugacy classes of $t$.  The centralizer $C_J(t)$ is essentially computed in \cite{Ch}. We have $|J(t)|=|J|/|C_J(t)|$. Note that $|J|=q^6(q^2-1)$. The column ``No." means the number of classes of a given form. Note that the last column is given by \eqref{eq: multiplicativity of character} using the character tables of $I(\chi)$ and $\omega_\psi$, which can be found in \cite[$\S$2]{LZ}.

\subsection{Proof of Theorem \ref{thm: multiplicity one} when \texorpdfstring{$p>3$}{Lg}}

Following \cite{CR}, let $\fH_i, i=2,3,6$ be the 3 anisotropic torus of $\RG_2(k)$ such that $\fH_2\cong \BZ_{q+1}\times \BZ_{q+1},$ $\fH_3\cong \BZ_{q^2+q+1}$ and $\fH_6\cong \BZ_{q^2-q+1}$. 
For $i=2,3,6$, in \cite{CR}, Chang-Ree associated a class function $X_i(\pi_i)$ of $\RG_2(k)$ for each character $\pi_i$ of $\fH_i$.
\begin{prop}\label{prop: pre multiplicity one}
Let $\Pi$ be a representation of $\RG_2(k)$ of the form $X_i(\pi_i)$ with $i=2,3$, or $6$, and $\chi$ be a character of $k^\times$. Then we have
$$\pair{\Pi|_J,I(\chi)\otimes \omega_\psi}=1.$$
\end{prop}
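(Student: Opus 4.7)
The plan is to compute the inner product on $J$ directly via characters, using the formula
\begin{equation*}
\pair{\Pi|_J,I(\chi)\otimes \omega_\psi}_J = \frac{1}{|J|}\sum_{j\in J} \Ch_\Pi(j)\overline{\Ch_{I(\chi)\otimes \omega_\psi}(j)},
\end{equation*}
with $|J|=q^6(q^2-1)$. By the vanishing result quoted above from \cite{Ge}, the second factor is supported on $J$-conjugates of $\SL_2(k)\ltimes Z$, so the entire sum is over the twelve families of $J$-conjugacy class representatives listed in Table \ref{table: conjugacy class of J}. For each family, the class size and the value of $\Ch_{I(\chi)\otimes\omega_\psi}$ are already recorded there; the task that remains is to substitute the values of $\Ch_{X_i(\pi_i)}$ from the Chang--Ree tables \cite{CR}, take the weighted sum, and check that the total is exactly $1$.

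First I would fix $i\in\{2,3,6\}$ and read off the value of $\Ch_{X_i(\pi_i)}$ on each representative $t$ of Table \ref{table: conjugacy class of J}. These representatives all have a very constrained form: $t=s u$, where $s$ is one of $1$, $h(-1,-1)$, $\bx_\beta(b)$ ($b\in\{1,\kappa\}$), $h(-1,-1)\bx_\beta(b)$, or $h(x,x^{-1})$ with $x\ne\pm 1$, and $u\in Z$ commutes with $s$. Because $Z$ lies in the unipotent radical of $P$ and $X_i(\pi_i)$ is a class function built by induction from the anisotropic torus $\fH_i$, the Chang--Ree formula gives $\Ch_{X_i(\pi_i)}(t)$ as an explicit quantity depending on $s$ and on the single coordinate $r_3$ along $U_{2\alpha+\beta}$, but independent of $(r_4,r_5)$.

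The contribution from each family then factors as (class size) $\times$ $\Ch_{X_i(\pi_i)}(t)\times\overline{\Ch_{I(\chi)\otimes\omega_\psi}(t)}$. Grouping rows by the semisimple part $s$, the inner sums over $r_3$ collapse via $\sum_{r_3\in k}\psi(r_3)=0$ and $\sum_{r_3\in k^\times}\psi(r_3)=-1$, while the sums over $b\in k^\times$ produce quadratic Gauss sums of the type $\sum_{b\in k^\times}\epsilon(b)\psi(b)=\sqrt{\epsilon_0 q}$ and its variants, all of which are evaluated in Appendix \ref{sec: computation of gauss sum}. The final split-torus block yields a sum $\sum_{x\ne\pm 1}(\chi(x)+\chi(x^{-1}))\epsilon(x)\Ch_{X_i(\pi_i)}(h(x,x^{-1}))$ which is handled by orthogonality of characters on $k^\times$, using that $X_i(\pi_i)$ is parametrized by a character of the anisotropic torus $\fH_i$.

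The main obstacle will be the careful sign bookkeeping among the four ``degenerate'' rows corresponding to $\bx_\beta(b)$ and $h(-1,-1)\bx_\beta(b)$ with $b\in\{1,\kappa\}$: on the $\Ch_{I(\chi)\otimes\omega_\psi}$ side these rows come with opposite signs $\pm\sqrt{\epsilon_0 q}$, so whether they reinforce or cancel depends on the way $\pi_i$ distinguishes the two square classes of $k^\times$, hence on the parity of $\pi_i$ on the $2$-torsion subgroup of $\fH_i$. After invoking the Gauss sum identities of Appendix \ref{sec: computation of gauss sum} and carefully treating the three series $i=2,3,6$ separately (each giving a different combinatorial match), the twelve weighted blocks should telescope to exactly $|J|$, yielding the desired inner product $1$ and completing the proof.
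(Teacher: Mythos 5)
Your overall approach — computing $\pair{\Pi|_J,I(\chi)\otimes\omega_\psi}$ by summing $|J(t)|\,\ov{\Ch_\Pi}(t)\,\Ch_{I(\chi)\otimes\omega_\psi}(t)$ over the conjugacy classes of $J$, using the vanishing of $\Ch_{I(\chi)\otimes\omega_\psi}$ off $J$-conjugates of $\SL_2\ltimes Z$ — is exactly the strategy the paper follows. But there is a substantive error in your description of what the Chang--Ree character does on these classes, and it hides the actual difficulty of the computation.

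You assert that $\Ch_{X_i(\pi_i)}(t)$ depends only on the semisimple part and on the single $U_{2\alpha+\beta}$-coordinate $r_3$, ``but independent of $(r_4,r_5)$.'' This is false and is precisely where the proof bites. The Chang--Ree character is a class function on $\RG_2(k)$, so its value on a representative $\bx_\beta(b)(0,0,r_3,r_4,0)$ is determined by that element's $\RG_2(k)$-conjugacy class, and (as worked out in the paper via the parameter $t(b,r_3,r_4)$ with $t+t^{-1}=-2-b r_4^2/r_3^3$) this class genuinely depends on $r_4$: the same $(b,r_3)$ lands in different $\RG_2(k)$-classes, with different centralizer sizes and different character values, as $r_4$ varies. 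Consequently the ``sums over $r_3$'' do not simply collapse to $\sum_{r_3}\psi(r_3)$ or a single Gauss sum; one has to partition the index set $(r_3,r_4)$ according to which of several $\RG_2(k)$-classes the element lies in, and evaluate the resulting constrained sums $B^i_r$ (when $q\equiv 1\bmod 3$) and $C^i_r$ (when $q\equiv -1\bmod 3$). Evaluating those constrained Gauss-type sums is the technical heart of the argument (Appendix A, Lemmas \ref{lem: computation of B} and \ref{lem: computation of C}), and your plan doesn't account for it. Relatedly, the split-torus block $h(x,x^{-1})\bx_{2\alpha+\beta}(r_3)$ contributes zero not because of orthogonality of characters of $k^\times$ but because $X_2,X_3,X_6$ literally vanish on those classes (these are Deligne--Lusztig characters attached to anisotropic tori), and your ``parity of $\pi_i$ on the $2$-torsion of $\fH_i$'' concern is a red herring: the $\pi_i$-dependence appears only through $\epsilon(\pi_i)$ on the $h(-1,-1)u$ rows, which cancel en bloc (Lemma \ref{lem: preparation for the computation of the pair}). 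Finally, you should flag that the unipotent class structure — and hence the whole tabulation — splits into the two cases $q\equiv\pm 1\bmod 3$, which the paper treats separately.
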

\begin{rmk}
\rm{Here we do not require that $\Pi$ or $I(\chi)$ is irreducible. Note that $$\dim\Hom_J(\Pi,I(\chi)\otimes \omega_\psi)=\pair{\Pi|_J,I(\chi)\otimes\omega_\psi}.$$ Thus the above proposition shows that $\dim\Hom_J(\Pi,I(\chi)\otimes \omega_\psi)\le 1$.}
\end{rmk}

Proof of Proposition \ref{prop: pre multiplicity one} relies on a brute force computation. We first give the character table of $X_i(\pi_i)$ when restricted to $J$,
Table \ref{table: character table of Xi}, which follows from results in \cite{CR}. Here $\epsilon(\pi_2)$ is a number depending on the character $\pi_2$.
\begin{table}
\begin{align*}
\begin{array}{|c|c|c|c|c|}
\hline
\textrm{Representative } t & X_2(\pi_2) &X_3(\pi_3) &  X_6(\pi_6)&  \Ch_{I(\chi)\otimes\omega_\psi} \\
\hline
\begin{pmatrix}1 &\\ &1 \end{pmatrix}  & \frac{(q^2-1)(q^6-1)}{(q+1)^2} & \frac{(q^2-1)(q^6-1)}{q^2+q+1} &\frac{(q^2-1)(q^6-1)}{q^2-q+1} &q(q+1)  \\
\hline
\begin{pmatrix}1 &\\ &1 \end{pmatrix} (0,0,0,0,1)&-(q-1)(q^2-q+1)&(q-1)(q^2-1) &-(q+1)(q^2-1) &q(q+1)\\
\hline
(0,0,r_3,0,0),r_3\ne 0&(q-1)(2q-1)&-(q^2-1)&-(q^2-1)&q(q+1)\psi(r_3)\\
\hline
\bx_\beta(1)\bx_{2\alpha+\beta}(r_3), \atop r_3\in k&* &*&*&\sqrt{\epsilon_0 q}\psi(r_3)\\
 \hline
 \bx_\beta(\kappa)\bx_{2\alpha+\beta}(r_3),\atop r_3\in k &*&*&* &-\sqrt{\epsilon_0 q} \psi(r_3)\\
\hline
 \begin{pmatrix}1 &1\\ &1\end{pmatrix}(0,0,r_3,r_4,0), \atop r_3\in k, r_4\in k^\times/\pair{\pm 1} &*&*&* &\sqrt{\epsilon_0 q} \psi(r_3)\\
 \hline
 \begin{pmatrix}1 &\kappa\\ &1\end{pmatrix}(0,0,r_3,r_4,0), \atop r_3\in k, r_4\in k^\times/\pair{\pm 1} &* &*&* &-\sqrt{\epsilon_0 q} \psi(r_3)\\
\hline
 h(-1,-1)&(q-1)^2\epsilon(\pi_2)&0&0& (q+1)\chi(-1)\epsilon_0 \\
\hline
 h(-1,-1)(0,0,r_3,0,0), \atop r_3\in k^\times&-(q-1)\epsilon(\pi_2)&0&0& (q+1)\chi(-1)\epsilon_0\psi(r_3) \\
 \hline
h(-1,-1)\bx_\beta(1)&-(q-1)\epsilon(\pi_2)&0&0&\epsilon_0\chi(-1)\\
 \hline
h(-1,-1)\bx_\beta(1)\bx_{2\alpha+\beta}(r_3),\atop r_3\in k^\times &\epsilon(\pi_2)&0&0 &\epsilon_0\chi(-1)\psi(r_3)\\
\hline
h(-1,-1)\bx_\beta(\kappa)&-(q-1)\epsilon(\pi_2)&0&0 & \epsilon_0\chi(-1)\\
\hline
h(-1,-1)\bx_\beta(\kappa)\bx_{2\alpha+\beta}(r_3), \atop r_3\in k^\times &\epsilon(\pi_2)&0&0 & \epsilon_0\chi(-1)\psi(r_3)\\
\hline
h(x,x^{-1})(0,0,r_3,0,0), \atop x\ne \pm 1&0 &0&0&\epsilon(\chi(x)+\chi(x^{-1}))\psi(r_3)\\
 \hline
\begin{pmatrix}x&y\\ \kappa y&x \end{pmatrix}(0,0,r_3,r_4,r_5),\atop x\ne \pm1 &* &*&* & 0 \\
\hline
\end{array}
\end{align*}
\caption{Character table of $X_i(\pi_i)$}
\label{table: character table of Xi}
\end{table}

The middle missing part of Table \ref{table: character table of Xi} depends on $q\equiv 1\mod 3$ or $q\equiv -1 \mod 3$, which will be described separately below. Note that if $q\equiv 1\mod 3$, then $\kappa$ is a non-cube in $\BF_q$ since it is assumed to be a generator of $\BF_{q}^\times$. If $q\equiv -1$, we fix an element $\zeta\in \BF_q$ such that $x^3-3x-\zeta$ is irreducible over $\BF_q$.

Let $u$ be a unipotent element which appeared in the missing part. The value of the characters $X_i(\pi_i)$ depends on $|C_{\RG_2(k)}(u)|$, the size of the centralizer of $u$ in $\RG_2(k)$. The detailed information of $|C_{\RG_2(k)}(u)|$ is given in \cite{Ch}, which we will give a brief review below. 

 We first consider the case when $q\equiv 1 \mod 3$. If $rr_3\in k^{\times,2}$, then $\bx_\beta(r)\bx_{2\alpha+\beta}(r_3)\sim_{\RG_2(k)} \bx_\beta(1)\bx_{2\alpha+\beta}(1)$, whose centralizer in $\RG_2(k)$ has size $6q^4$, see \cite[p.202]{Ch}. If $rr_3\in \kappa k^{\times,2}$, then $\bx_\beta(r)\bx_{2\alpha+\beta}(r_3)\sim_{\RG_2(k)} \bx_\beta(\kappa)\bx_{2\alpha+\beta}(1) $, whose centralizer has size $2q^4$, see \cite[p.202]{Ch}.  Then, if $r/r_4\in k^{\times,3}$, one has that $\bx_\beta(r)\bx_{3\alpha+\beta}(r_4)\sim_{\RG_2(k)} \bx_\beta(1)\bx_{3\alpha+\beta}(1)\sim_{\RG_2(k)} \bx_\beta(1)\bx_{2\alpha+\beta}(1)$ (see \cite[p.197]{Ch}), whose centralizer has size $6q^4$. If $r/r_4\notin k^{\times,3}$, then $\bx_\beta(r)\bx_{3\alpha+\beta}(r_4)\sim_{\RG_2(k)} \bx_\beta(1)\bx_{3\alpha+\beta}(\kappa)$, whose centralizer in $\RG_2(k)$ has size $3q^4$, see \cite[p.202]{Ch}. Finally, we consider the conjugacy class $\bx_\beta(r)\bx_{2\alpha+\beta}(r_3)\bx_{3\alpha+\beta}(r_4), rr_3r_4\ne 0$. We first have 
\begin{align*}&\bx_\beta(r)(0,0,r_3,r_4,r_5)\\
=&h(-r_3^2/r_4,r_4/r_3)w_\alpha^{-1}\bx_\alpha(-1)\bx_\beta(1)(0,0,-1,z,0)(h(-r_3^2/r_4,r_4/r_3)w_\alpha^{-1}\bx_\alpha(-1))^{-1}
\end{align*}
for some appropriate $r_5$, where $z=-2-\frac{rr_4^2}{r_3^3}$. Thus we have 
$$\bx_\beta(r)\bx_{2\alpha+\beta}(r_3)\bx_{3\alpha+\beta}(r_4)\sim_{\RG_2(k)} \bx_\beta(r)(0,0,r_3,r_4,r_5)\sim_{\RG_2(k)} \bx_\beta(1)(0,0,-1,-2-rr_4^2/r_3^3,0). $$
We write $-2-rr_4^2/r_3^3=t+t^{-1} $. If $t=\pm1$, then according the calculation in \cite[p.196-197]{Ch}, one can check that \footnote{\label{footnote: a conjugation relation}This relation is not explicitly given in \cite{CR}. Due to its importance for our calculation, we give some details in this footnote. Let $\varphi_\alpha: \SL_2(k)\ra \RG_2(k)$ be the embedding such that $\varphi_\alpha\left( \bpm 1&x \\ &1 \epm \right)=\bx_\alpha(x)$ and $\varphi_\alpha(\diag(a,a^{-1}))=h_\alpha(a,a^{-1})$. For $g,h\in \RG_2(k)$, denote the conjugation $g^{-1}hg$ by $h.g$. The conjugation of $\varphi_\alpha(g)$ for $g\in \SL_2(k)$ on $\bx_\beta(r_0)(0,r_2,r_3,r_4,0)$ is given in \cite[p.196, (3.5)]{Ch}. From that description, one can check the following relations $\bx_\beta(1)(0,0,-1,2,0).\varphi_\alpha\bpm 1& 1/2\\ -1 &1/2 \epm=(0,0,1,1/2,0),$ $(0,0,1,1/2,0).\varphi_\alpha \bpm &1\\ -1 & \epm=\bx_\beta(1/2)(0,1,0,0,0),$ and $\bx_\beta(1/2)(0,1,0,0,0).\varphi_\alpha\bpm 1& 0\\ -1/6 & 1 \epm=\bx_{\alpha+\beta}(1)$. This shows that $\bx_\beta(1)(0,0,-1,2,0)\sim_{\RG_2(k)} \bx_{\alpha+\beta}(1)$. }
$$\bx_\beta(r)\bx_{2\alpha+\beta}(r_3)\bx_{3\alpha+\beta}(r_4)\sim_{\RG_2(k)} \bx_{\alpha+\beta}(1)\sim_{\RG_2(k)} \bx_{2\alpha+\beta}(1), $$
whose centralizer has order $q^4(q^2-1)$.  If $t\in \BF_q$, $t\ne \pm1$, then by \cite[p.197-198]{Ch}, we have 
$$\bx_\beta(r)\bx_{2\alpha+\beta}(r_3)\bx_{3\alpha+\beta}(r_4)\sim_{\RG_2(k)} \bx_\beta(1)\bx_{3\alpha+\beta}(t^{-1}), $$
whose centralizer has size $6q^4$ if $t\in k^{\times,3}$, and $3q^4$ if $t\in k^{\times}- k^{\times,3}$. If $t\in \BF_{q^2}-\BF_q$, then $ \bx_\beta(r)\bx_{2\alpha+\beta}(r_3)\bx_{3\alpha+\beta}(r_4)\sim_{\RG_2(k)} \bx_\beta(1)\bx_{2\alpha+\beta}(\kappa)$ (see \cite[p.198]{Ch}), whose centralizer has size $2q^4$. 
For $u=\bx_\beta(r)\bx_{2\alpha+\beta}(r_3)\bx_{3\alpha+\beta}(r_4)$ with $rr_3r_4\ne 0$, we write $z(r,r_3,r_4)=-2-rr_4^2/r_3^3=t+t^{-1}$. We write $t$ in the above expression as $t=t(r,r_3,r_4)$. From \cite{CR}, the missing part of Table \ref{table: character table of Xi} (from 5th row to 8th row) when $q\equiv 1\mod 3$ is given in Table \ref{table: when q equiv 1 mod 3}.
\begin{table}
\begin{align*}
\begin{array}{|c|c|c|c|c|}
\hline
u& |C_{\RG_2(k)}(u)|& X_2(\pi_2) & X_3(\pi_3) &X_6(\pi_6)\\
\hline
\bx_\beta(r),r\ne 0 &* &-(q-1)(q^2-q+1)&(q-1)(q^2-1)&-(q+1)(q^2-1)\\
\hline
\bx_\beta(1)\bx_{2\alpha+\beta}(r_3), \atop r_3\in k^{\times,2}&6q^4&-4q+1& q+1&-q+1 \\
\hline
\bx_\beta(1)\bx_{2\alpha+\beta}(r_3), \atop r_3\in \kappa k^{\times,2}&2q^4 &-2q+1&-q+1 & q+1\\
 \hline
 \bx_\beta(\kappa)\bx_{2\alpha+\beta}(r_3),\atop r_3\in k^{\times,2} &2q^4&-2q+1&-q+1 &q+1\\
\hline
 \bx_\beta(\kappa)\bx_{2\alpha+\beta}(r_3),\atop r_3\in \kappa k^{\times,2} &6q^4&-4q+1&q+1 &-q+1\\
\hline
\bx_\beta(1)\bx_{3\alpha+\beta}(r_4) \atop r_4\in k^{\times,3} &6q^4&-4q+1&q+1&-q+1\\
\hline
\bx_\beta(1)\bx_{3\alpha+\beta}(r_4) \atop r_4\notin k^{\times,3} &3q^4&-q+1&-2q+1 &2q+1\\
\hline
\bx_\beta(\kappa)\bx_{3\alpha+\beta}(r_4) \atop r_4\in \kappa k^{\times,3} &6q^4&-4q+1& q+1 & -q+1\\
\hline
\bx_\beta(\kappa)\bx_{3\alpha+\beta}(r_4) \atop r_4\notin \kappa k^{\times,3} &3q^4&-q+1& -2q+1&2q+1\\
\hline
 \bx_\beta(r)(0,0,r_3,r_4,0), \atop   t(r,r_3,r_4)=\pm 1 &q^4(q^2-1)&(q-1)(2q-1)&-q^2+1&-q^2+1\\
\hline
 \bx_\beta(r)(0,0,r_3,r_4,0), \atop   t(r,r_3,r_4)\in k^{\times,3}-\wpair{\pm 1} &6q^4&-4q+1&q+1&-q+1\\
\hline
 \bx_\beta(r)(0,0,r_3,r_4,0), \atop   t(r,r_3,r_4)\in k^\times- k^{\times,3} &3q^4&-q+1&-2q+1  & 2q+1\\
\hline
 \bx_\beta(r)(0,0,r_3,r_4,0), \atop   t(r,r_3,r_4)\notin k &2q^4&-2q+1&-q+1 & q+1\\
\hline
\end{array}
\end{align*}
\caption{Missing part of Table \ref{table: character table of Xi} when $q\equiv 1\mod 3$}
\label{table: when q equiv 1 mod 3}
\end{table}

Using Table \ref{table: conjugacy class of J}, Table \ref{table: character table of Xi} and Table \ref{table: when q equiv 1 mod 3}, we can compute the pair $\pair{\Pi|_J,I(\chi)\otimes \omega_\psi}$. Recall that 
\begin{align}
|J|\pair{\Pi|_J,I(\chi)\otimes \omega_\psi}&=\sum_{g\in J}\ov{\Ch_\Pi}(g)\Ch_{I(\chi)\otimes \omega_\psi}(g) \nonumber\\
&=\sum_{t}|J(t)|\ov{\Ch_\Pi}(t)\Ch_{I(\chi)\otimes \omega_\psi}(t), \label{eq: computation of the pair}
\end{align}
where in \eqref{eq: computation of the pair}, $t$ runs over a complete set of representatives of conjugacy classes of $J$ and $|J(t)|$ is the number of elements in the conjugacy class $J(t)$.

\begin{lem}\label{lem: preparation for the computation of the pair}
Let $\Pi$ be one of $X_i(\pi_i)$ for $i=2,3,6$. Then we have 
\begin{enumerate}
    \item The contribution of conjugacy classes of the form $h(-1,-1)u$, where $u$ is an unipotent element, to \eqref{eq: computation of the pair} is zero.
     \item The contribution of conjugacy classes of the form $\bx_\beta(r),r\in k^\times$, to \eqref{eq: computation of the pair} is zero.
    \item The contribution of conjugacy classes of the form $\bx_\beta(1)\bx_{3\alpha+\beta}(r_4), r_4\in k^{\times,3}$, and the contribution of $\bx_\beta(\kappa)\bx_{3\alpha+\beta}(r_4), r_4\in \kappa k^{\times,3}$, to \eqref{eq: computation of the pair} are cancelled out. Similarly, the contribution of  $\bx_\beta(1)\bx_{3\alpha+\beta}(r_4)$ , $r_4\in k^\times-k^{\times,3}$, and the contribution of $ \bx_\beta(\kappa)\bx_{3\alpha+\beta}(r_4)$, $r_4\in k^\times- \kappa k^{\times,3}$,  to \eqref{eq: computation of the pair} are cancelled out.
\end{enumerate}
\end{lem}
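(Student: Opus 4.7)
The plan is to expand the inner product \eqref{eq: computation of the pair} class by class, using Tables \ref{table: conjugacy class of J}, \ref{table: character table of Xi} and \ref{table: when q equiv 1 mod 3}, and to show directly that each of the three groupings of classes listed in the lemma contributes zero.

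For part (1), Table \ref{table: character table of Xi} shows that $X_3(\pi_3)$ and $X_6(\pi_6)$ vanish on every class of the form $h(-1,-1)u$ with $u$ unipotent, so only $X_2(\pi_2)$ needs analysis. I will split the relevant $J$-classes into two bundles. The \emph{diagonal bundle} $\{h(-1,-1)(0,0,r_3,0,0):r_3\in k\}$ has class size $q^4$, with $X_2(\pi_2)$ equal to $(q-1)^2\epsilon(\pi_2)$ at $r_3=0$ and $-(q-1)\epsilon(\pi_2)$ at $r_3\ne 0$, paired against $\Ch_{I(\chi)\otimes\omega_\psi}=(q+1)\chi(-1)\epsilon_0\psi(r_3)$. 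The \emph{$\bx_\beta$-bundle} $\{h(-1,-1)\bx_\beta(b)\bx_{2\alpha+\beta}(r_3):b\in\{1,\kappa\},\,r_3\in k\}$ has class size $q^4(q^2-1)/2$, with $X_2(\pi_2)$ equal to $-(q-1)\epsilon(\pi_2)$ at $r_3=0$ and $\epsilon(\pi_2)$ at $r_3\ne 0$, paired against $\epsilon_0\chi(-1)\psi(r_3)$. Summing and using $\sum_{r_3\in k^\times}\psi(r_3)=-1$, a direct computation shows that the diagonal bundle contributes $q^5(q^2-1)\epsilon_0\chi(-1)\epsilon(\pi_2)$ and the $\bx_\beta$-bundle contributes $-q^5(q^2-1)\epsilon_0\chi(-1)\epsilon(\pi_2)$; these cancel.

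Parts (2) and (3) share a common sign-flip mechanism. The key fact from Table \ref{table: when q equiv 1 mod 3} is that $X_i(\pi_i)$ takes a common value at $\bx_\beta(1)$ and at $\bx_\beta(\kappa)$ (both appear in a single row, row~1), and similarly $X_i(\pi_i)$ at $\bx_\beta(1)\bx_{3\alpha+\beta}(r_4)$ with $r_4\in k^{\times,3}$ agrees with its value at $\bx_\beta(\kappa)\bx_{3\alpha+\beta}(r_4')$ with $r_4'\in\kappa k^{\times,3}$ (both of centralizer size $6q^4$), with the parallel statement in the non-cube case (centralizer size $3q^4$). On the other hand, Table \ref{table: character table of Xi} gives $\Ch_{I(\chi)\otimes\omega_\psi}$ at $\bx_\beta(b)\bx_{3\alpha+\beta}(r_4)$ (with $r_3=0$) equal to $\sqrt{\epsilon_0 q}$ when $b=1$ and $-\sqrt{\epsilon_0 q}$ when $b=\kappa$. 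Since the paired $J$-classes on the two sides have equal size ($(q^2-1)q^2/2$ in part (2) and $q^2(q^2-1)$ in part (3)) and appear with equal multiplicity, the contributions cancel term by term.

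The main obstacle here is careful bookkeeping between the $J$-class parametrization by $(b,r_4)\in\{1,\kappa\}\times(k^\times/\{\pm 1\})$ and the $\RG_2(k)$-class parametrization by the cube class of the appropriate invariant; in particular, one uses that $-1\in k^{\times,3}$ always to verify that the quotient by $\{\pm 1\}$ respects the cube/non-cube partition, yielding the matching counts $(q-1)/6$ and $(q-1)/3$ classes per side for the two branches in part (3). The $q\equiv -1\mod 3$ case is handled by an analogous (simpler) computation: there $k^{\times,3}=k^\times$, so only the cube branch is present, but the sign-flip mechanism in parts (2) and (3) is unchanged.
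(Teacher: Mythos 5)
Your proposal is correct and takes essentially the same route as the paper's proof: both expand \eqref{eq: computation of the pair} class by class using Tables \ref{table: conjugacy class of J}, \ref{table: character table of Xi}, \ref{table: when q equiv 1 mod 3} and check the stated cancellations directly (the paper only writes out part (1) for $X_2(\pi_2)$ and declares the rest ``similar or follow from a simple observation''). Your decomposition of part (1) into a ``diagonal bundle'' and a ``$\bx_\beta$-bundle,'' and your explicit verification of the sign-flip mechanism in parts (2)--(3) including the count-matching via $-1\in k^{\times,3}$ when $q\equiv 1\pmod 3$ and $q$ odd, are correct and in fact supply details the published proof leaves implicit.
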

\begin{proof}
We only give some details for the proof of (1) when $\Pi=X_2(\pi_2)$, and the proofs of the other cases are similar or just follow from a simple observation. By Table \ref{table: conjugacy class of J} and Table \ref{table: character table of Xi}, the contribution of conjugacy classes of the form $h(-1,-1)u$ to \eqref{eq: computation of the pair} is 
\begin{align*}
    & \ q^4(q+1)\chi(-1)\epsilon_0 (q-1)^2 \epsilon(\pi_2)\\
    -& \ q^4(q+1)\chi(-1)\epsilon_0\left(\sum_{r_3\in k^\times}\psi(r_3)\right)(q-1)\epsilon(\pi_2)\\
    +&\ \frac{q^2-1}{2}q^4\epsilon_0 \chi(-1)(-(q-1))\epsilon(\pi_2)\cdot 2\\
    +&\ \frac{q^2-1}{2}q^4\epsilon_0\chi(-1)\epsilon(\pi_2)\left(\sum_{r_3\in k^\times}\psi(r_3)\right).
\end{align*}
A simple calculation shows that the above summation is zero.
\end{proof}

The following lemma is Proposition \ref{prop: pre multiplicity one} when $q\equiv 1\mod 3$.
\begin{lem}\label{lem: proof when q equiv 1 mod 3}
Let $\Pi$ be one of $X_i(\pi_i)$ for $i=2,3,6$. If $q\equiv 1\mod 3$, then 
$$\pair{\Pi|_J,I(\chi)\otimes \omega_\psi}=1.$$
\end{lem}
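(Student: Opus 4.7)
The plan is to evaluate the right hand side of \eqref{eq: computation of the pair} class by class, using Tables \ref{table: conjugacy class of J}, \ref{table: character table of Xi}, and \ref{table: when q equiv 1 mod 3}, and to reduce the count to simple character-sum identities over $k$ and $k^\times$. First, Lemma \ref{lem: preparation for the computation of the pair} already discards three families: the classes with representative $h(-1,-1)u$, the classes with representative $\bx_\beta(r)$ (which do not even enter $J$ in Table \ref{table: conjugacy class of J}, but appear only through the ambient $\RG_2(k)$-conjugation), and the pair of lines for $\bx_\beta(\ast)\bx_{3\alpha+\beta}(\ast)$ that cancel by a cube-vs-non-cube symmetry. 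So I only have to sum over the surviving classes listed in Table \ref{table: conjugacy class of J}: the identity, $(0,0,0,0,1)$, the classes $(0,0,r_3,0,0)$ with $r_3\ne 0$, the two $\bx_\beta(b)\bx_{2\alpha+\beta}(r_3)$-type families (with $b\in\{1,\kappa\}$ and $r_3\in k$), the two $\bx_\beta(b)(0,0,r_3,r_4,0)$-type families (with $r_4\in k^\times/\{\pm 1\}$), and finally the split-torus family $h(x,x^{-1})(0,0,r_3,0,0)$ with $x\ne\pm 1$.

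Next, I group the resulting terms according to the character-sum identities they produce. Over the $(0,0,r_3,0,0)$-line the inner factor $\Ch_{I(\chi)\otimes\omega_\psi}$ contributes $q(q+1)\psi(r_3)$, so summing $r_3$ over $k^\times$ produces $-q(q+1)$ via $\sum_{r_3\in k^\times}\psi(r_3)=-1$; combined with the contribution from $r_3=0$ and with $(0,0,0,0,1)$, I expect an exact cancellation against the identity term once the sizes $|J(t)|$ from Table \ref{table: conjugacy class of J} are inserted. For the two $\bx_\beta(b)\bx_{2\alpha+\beta}(r_3)$ families, the character values from Table \ref{table: when q equiv 1 mod 3} depend on the quadratic class of $r_3$ via $\bx_\beta(r)\bx_{2\alpha+\beta}(r_3)\sim_{\RG_2} \bx_\beta(1)\bx_{2\alpha+\beta}(1)$ or $\bx_\beta(\kappa)\bx_{2\alpha+\beta}(1)$, so after summing $\psi(r_3)$ over each half of $k^\times$ the contribution is expressible through the Gauss sum $\sum_{r_3\in k^\times}\epsilon(r_3)\psi(r_3)=\epsilon_\psi\epsilon(1)\sqrt{q}=\sqrt{\epsilon_0 q}$ from \eqref{eq: basic guass sum}; the factor $\sqrt{\epsilon_0 q}$ in $\Ch_{I(\chi)\otimes\omega_\psi}$ then squares to $\epsilon_0 q$, producing a clean rational contribution. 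The same quadratic Gauss-sum mechanism, combined with the cubic dichotomy encoded by $t(r,r_3,r_4)$, handles the $\bx_\beta(b)(0,0,r_3,r_4,0)$ families. Finally, for $h(x,x^{-1})(0,0,r_3,0,0)$ the inner factor is $\epsilon(\chi(x)+\chi(x^{-1}))\psi(r_3)$, while the $X_i(\pi_i)$-values vanish on this family (last but one row of Table \ref{table: character table of Xi}), so it contributes $0$.

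At this stage every summand has been reduced to a polynomial in $q$ (possibly with a $\chi(-1)\epsilon_0$ factor that has already been killed by Lemma \ref{lem: preparation for the computation of the pair}(1)), and the claim becomes an identity of the form
\begin{equation*}
\sum_t |J(t)|\,\ov{\Ch_{\Pi}}(t)\,\Ch_{I(\chi)\otimes\omega_\psi}(t) \;=\; |J| \;=\; q^6(q^2-1).
\end{equation*}
I will verify this separately for $\Pi=X_2(\pi_2)$, $X_3(\pi_3)$, $X_6(\pi_6)$, using the three different right-hand columns of Table \ref{table: when q equiv 1 mod 3}; the arithmetic is routine once the Gauss-sum and cube-class identities above are in place.

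The main obstacle is the bookkeeping in the $\bx_\beta(r)(0,0,r_3,r_4,0)$ family, because the $\RG_2(k)$-conjugacy class of the element — and hence the value of $X_i(\pi_i)$ — is governed by the parameter $t=t(r,r_3,r_4)$ defined implicitly by $t+t^{-1}=-2-rr_4^2/r_3^3$, and the four cases $t=\pm 1$, $t\in k^{\times,3}\setminus\{\pm 1\}$, $t\in k^\times\setminus k^{\times,3}$, $t\in k_2\setminus k$ must be tallied correctly. Once the number of triples $(r,r_3,r_4)$ falling into each bucket is computed (using that $x\mapsto x+x^{-1}$ is a $2$-to-$1$ map from $k^\times\cup(k_2^\times)^{\mathrm{Nm}=1}$ onto $k$), and combined with $|J(\bx_\beta(b)(0,0,r_3,r_4,0))| = (q^2-1)q^2$ from Table \ref{table: conjugacy class of J}, the three identities drop out.
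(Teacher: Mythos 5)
Your overall approach matches the paper's: expand \eqref{eq: computation of the pair} class by class using Tables~\ref{table: conjugacy class of J}, \ref{table: character table of Xi}, \ref{table: when q equiv 1 mod 3}, invoke Lemma~\ref{lem: preparation for the computation of the pair} to discard the $h(-1,-1)u$ and $\bx_\beta(r)$ families and the cube-class cancellations, and reduce the rest to Gauss sums. However, there is a concrete numerical error in your plan, and a critical ingredient is glossed over.

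First, your expectation that the contributions from the identity, from $(0,0,0,0,1)$, and from $(0,0,r_3,0,0)$ with $r_3\in k^\times$ cancel is incorrect. With the class sizes $1$, $q^2-1$, $q^2$ and the values from Table~\ref{table: character table of Xi} (take $\Pi=X_2(\pi_2)$ for concreteness), the sum of these three pieces is
\begin{equation*}
q(q-1)(q^6-1)\;-\;q(q-1)^2(q+1)^2(q^2-q+1)\;-\;q^3(q-1)(2q-1)(q+1)\;=\;q^5(q^2-1)(q-2),
\end{equation*}
which is visibly nonzero. The balance is made up precisely by the quadratic Gauss sum families: after applying \eqref{eq: gauss sum A and B} the $A$-terms and $B$-terms together contribute $2q^5(q^2-1)$, and $q^5(q^2-1)(q-2)+2q^5(q^2-1)=q^6(q^2-1)=|J|$. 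So the Gauss-sum terms are not a sanity check on a cancellation that has already happened elsewhere; they carry half the weight.

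Second, the quantities you need are not just the basic Gauss sum $\sum\epsilon(r_3)\psi(r_3)$. The sums over the two $\bx_\beta(b)(0,0,r_3,r_4,0)$ families reduce to the differences $B_1^i-B_\kappa^i$ defined in \eqref{eq: defn of B}, whose evaluation requires the explicit parametrization $(-r_3)^3=rt(r_4/(t+1))^2$ linking $r_3$ to $t(r,r_3,r_4)$ through a cube and a square; this is the content of Lemma~\ref{lem: computation of B} in the appendix. Your ``$2$-to-$1$ map'' remark is pointing in the right direction, but without actually producing the values $B_1^0-B_\kappa^0=\epsilon_0\sqrt{\epsilon_0 q}$, $B_1^1-B_\kappa^1=-\tfrac12(1+\epsilon_0)\sqrt{\epsilon_0 q}$, $B_1^2-B_\kappa^2=0$, $B_1^3-B_\kappa^3=\tfrac12(1-\epsilon_0)\sqrt{\epsilon_0 q}$, the arithmetic does not ``drop out routinely'' --- in particular the $\epsilon_0$-dependent pieces from the $A$-sum and the $B^1,B^3$-sums must cancel against each other, and you should check that they do. Finally, a small point: $\bx_\beta(r)\in\SL_2(k)\subset J$, so those classes do enter $J$; it is only the $\RG_2(k)$-character values on them that require the ambient-conjugacy analysis of Table~\ref{table: when q equiv 1 mod 3}, and Lemma~\ref{lem: preparation for the computation of the pair}(2) is what zeroes out their contribution.
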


\begin{proof} We compute \eqref{eq: computation of the pair} for $\Pi=X_2(\pi_2), X_3(\pi_3), X_6(\pi_6)$, separately. If $\Pi=X_2(\pi_2)$, by Tables \ref{table: conjugacy class of J}, \ref{table: character table of Xi}, \ref{table: when q equiv 1 mod 3} and Lemma \ref{lem: preparation for the computation of the pair}, we have 
\begin{align*}
&\ |J|\pair{X_2(\pi_2), I(\chi)\otimes \omega_\psi}\\
=&\ q(q+1)(q^2-1)(q^6-1)/(q+1)^2-(q^2-1)q(q+1)(q-1)(q^2-q+1)\\
+&\  q^2(q-1)(2q-1)q(q+1)\left(\sum_{r_3\in k^\times}\psi(r_3)\right)\\
+&\ \frac{q^2-1}{2}q^2\sqrt{\epsilon_0 q}(-4q+1)(A_1-A_\kappa)-\frac{q^2-1}{2}q^2\sqrt{\epsilon_0 q}(-2q+1)(A_1-A_\kappa)\\
+&\ q^2(q^2-1)\sqrt{\epsilon_0 q}((q-1)(2q-1)(B_1^0-B_\kappa^0))\\
+&\ q^2(q^2-1)\sqrt{\epsilon_0 q}\left[(-4q+1)(B_1^1-B_\kappa^1)+(-q+1)(B_1^2-B_\kappa^2)+(-2q+1)(B_1^3-B_\kappa^3) \right],
\end{align*}
where 
\begin{align}\label{eq: defn of A}
A_1=\sum_{r_3\in k^{\times,2}}\psi(r_3), \quad A_{\kappa}=\sum_{r_3\in \kappa k^{\times,2}}\psi(r_3),
\end{align}
and
\begin{align}\label{eq: defn of B}
\begin{split}
B_r^0&=\sum_{r_3\in k^\times,r_4\in k^{\times}/{\pm 1}, t(r,r_3,r_4)\in \wpair{\pm 1}}\psi(r_3),\\
B_r^1&=\sum_{r_3\in k^\times,r_4\in k^{\times}/{\pm 1}, t(r,r_3,r_4)\in k^{\times,3}-\wpair{\pm 1}}\psi(r_3),\\
B_r^2&=\sum_{r_3\in k^\times,r_4\in k^{\times}/{\pm 1}, t(r,r_3,r_4)\in k^\times- k^{\times,3}}\psi(r_3),\\
B_r^3&=\sum_{r_3\in k^\times,r_4\in k^{\times}/{\pm 1}, t(r,r_3,r_4)\notin k^{\times}}\psi(r_3),
\end{split}
\end{align}
for $r=1,\kappa$. The computations of $A_1-A_\kappa,B_1^i-B_\kappa^i$ for $i=0,1,2,3$ are given in the appendix, see Lemmas \ref{lem: gauss sum} and \ref{lem: computation of B}, and the results read as 
\begin{align}
\begin{split}\label{eq: gauss sum A and B}
A_1-A_\kappa&=\sqrt{\epsilon_0 q},\\
   B_1^0-B_\kappa^0&=\epsilon_0\sqrt{\epsilon_0 q},\\
    B_1^1-B_\kappa^1&=-\frac{1}{2}(1+\epsilon_0)\sqrt{\epsilon_0 q},\\
    B_1^2-B_\kappa^2&=0,\\
    B_1^3-B_\kappa^3&=\frac{1}{2}(1-\epsilon_0)\sqrt{\epsilon_0 q}.
    \end{split}
\end{align}
Plugging these formulas into the computation of $\pair{X_2(\pi_2), I(\chi)\otimes\omega_\psi}$, it follows that 
$$|J|\pair{X_2(\pi_2),I(\chi)\otimes\omega_\psi}=q^6(q^2-1).$$
Thus we have
$$\pair{X_2(\pi_2), I(\chi)\otimes\omega_\psi}=1. $$

Similarly, we have
\begin{align*}
 &\ |J|\pair{X_3(\pi_3), I(\chi)\otimes\omega_\psi}\\
=&\ q(q+1)(q^2-1)(q^6-1)/(q^2+q+1)+(q^2-1)q(q+1)(q-1)(q^2-1)\\
-&\ q^2q(q+1)(q^2-1)\left(\sum_{r_3\in k^\times}\psi(r_3)\right)\\
+&\ \frac{q^2-1}{2}q^2\sqrt{\epsilon_0 q}((q+1)(A_1-A_\kappa)-(-q+1)(A_1-A_\kappa))\\
+&\ q^2(q^2-1)\sqrt{\epsilon_0 q}((-q^2+1)(B_1^0-B_\kappa^0)\\
+&\ q^2(q^2-1)\sqrt{\epsilon_0 q}((q+1)(B_1^1-B_\kappa^1)+(-2q+1)(B_1^2-B_\kappa^2)+(-q+1)(B_1^3-B_\kappa^3)),
\end{align*}
where $A_r,B_r^i$ for $r=1,\kappa, i=0,1,2,3$ are defined in \eqref{eq: defn of A} and \eqref{eq: defn of B}. Plugging the formulas \eqref{eq: gauss sum A and B} into the computation of $\pair{X_3(\pi_3), I(\chi)\otimes\omega_\psi}$, we obtain that 
$$\pair{X_3(\pi_3),I(\chi)\otimes\omega_\psi}=1.$$

A similar calculation shows that
\begin{align*}
&\ |J|\pair{X_6(\pi_6),I(\chi)\otimes\omega_\psi}\\
=&\ q(q+1)(q^2-1)(q^6-1)/(q^2-q+1)-(q^2-1)(q+1)(q^2-1)q(q+1)\\
-&\ q^2(q^2-1)q(q+1)\left(\sum_{r_3\in k^\times}\psi(r_3)\right)\\
+ & \ \frac{q^2-1}{2}q^2\sqrt{\epsilon_0 q}((-q+1)(A_1-A_\kappa)-(q+1)(A_1-A_\kappa))\\
+&\ q^2(q^2-1)\sqrt{\epsilon_0 q}(-q^2+1)(B_1^0-B_\kappa^0)\\
+&\ q^2(q^2-1)\sqrt{\epsilon_0 q}((-q+1)(B_1^1-B_\kappa^1)+(-2q+1)(B_1^2-B_\kappa^2)+(q+1)(B_1^3-B_\kappa^3)).
\end{align*}
By formulas \eqref{eq: gauss sum A and B}, we also obtain that $\pair{X_6(\pi_6),I(\chi)\otimes\omega_\psi}=1$.
\end{proof}

We next consider the case when $q\equiv -1 \mod 3.$ In this case, we have $k^{\times}=k^{\times,3}$. If $rr_3\in k^{\times,2}$, then $\bx_\beta(r)\bx_{2\alpha+\beta}(r_3)\sim_{\RG_2(k)} \bx_\beta(1)\bx_{2\alpha+\beta}(1)$, whose centralizer in $\RG_2(k)$ has size $2q^4$, see \cite[p.202]{Ch}. If $rr_3\in \kappa k^{\times,2}$, then $\bx_\beta(r)\bx_{2\alpha+\beta}(r_3)\sim_{\RG_2(k)} \bx_\beta(\kappa)\bx_{2\alpha+\beta}(1) $, whose centralizer has size $6q^4$, see \cite[p.202]{Ch}.
For any $r_4$, we have $\bx_\beta(r)\bx_{3\alpha+\beta}(r_4)\sim_{\RG_2(k)} \bx_\beta(1)\bx_{3\alpha+\beta}(1)\sim_{\RG_2(k)} \bx_\beta(1)\bx_{2\alpha+\beta}(1)$ (see \cite[p.197]{Ch}), whose centralizer has size $2q^4$. 
 Finally, we consider the conjugacy class $\bx_\beta(r)\bx_{2\alpha+\beta}(r_3)\bx_{3\alpha+\beta}(r_4), rr_3r_4\ne 0$. As in the previous case, we still have
\begin{align*}\bx_\beta(r)(0,0,r_3,r_4,r_5)\sim_{\RG_2(k)} \bx_\beta(1)(0,0,-1,z,0),
\end{align*}
with some appropriate $r_5$, where $z=-2-\frac{rr_4^2}{r_3^3}$. Thus we have 
$$\bx_\beta(r)\bx_{2\alpha+\beta}(r_3)\bx_{3\alpha+\beta}(r_4)\sim_{\RG_2(k)} \bx_\beta(r)(0,0,r_3,r_4,r_5)\sim_{\RG_2(k)} \bx_\beta(1)(0,0,-1,-2-rr_4^2/r_3^3,0). $$
We write $-2-rr_4^2/r_3^3=t+t^{-1} $ for some $t\in k_2=\BF_{q^2}$. Let $t\in k_2^\times-k^\times$ with $t+t^{-1}\in k$. Then one can check that $t^{1+q}=1$, i.e., $t\in k_2^1$, the norm 1 subgroup of $k_2^\times$. Thus such an element $t$ with $-2-rr_4^2/r_3^3=t+t^{-1} $ must be in $k^\times$ or in $k_2^1$.  If $t\in \wpair{\pm 1}=k^\times\cap k_2^1$, then, $$\bx_\beta(r)\bx_{2\alpha+\beta}(r_3)\bx_{3\alpha+\beta}(r_4)\sim_{\RG_2(k)} \bx_{2\alpha+\beta}(1), $$
as in the previous case. If $t\in k^\times-\wpair{\pm 1}$, then by \cite[p.197-198]{Ch}, we have 
$$\bx_\beta(r)\bx_{2\alpha+\beta}(r_3)\bx_{3\alpha+\beta}(r_4)\sim_{\RG_2(k)} \bx_\beta(1)\bx_{3\alpha+\beta}(t^{-1}), $$
whose centralizer has size $2q^4$. If $t\in (k_2^{1}-\wpair{\pm 1})\cap k_2^{\times,3}$, then $ \bx_\beta(r)\bx_{2\alpha+\beta}(r_3)\bx_{3\alpha+\beta}(r_4)\sim_{\RG_2(k)} \bx_\beta(1)\bx_{2\alpha+\beta}(\kappa)$ (see \cite[p.198]{Ch}), whose centralizer has size $6q^4$. If $t\in k_2^{1}-k_2^{\times,3}$, the centralizer of $\bx_\beta(r)\bx_{2\alpha+\beta}(r_3)\bx_{3\alpha+\beta}(r_4) $ has size $3q^4$.

For $u=\bx_\beta(r)\bx_{2\alpha+\beta}(r_3)\bx_{3\alpha+\beta}(r_4)$ with $rr_3r_4\ne 0$, we write $z(r,r_3,r_4)=-2-rr_4^2/r_3^3=t+t^{-1}$. We write $t$ in the above expression as $t=t(r,r_3,r_4)$. From \cite{CR}, for $q\equiv -1 \mod 3$, the missing part of Table \ref{table: character table of Xi} (from 5th row to 8th row) is given in Table \ref{table: when q equiv -1 mod 3}.

\begin{table}
\begin{align*}
\begin{array}{|c|c|c|c|c|}
\hline
u& |C_{\RG_2(k)}(u)|& X_2(\pi) & X_3(\pi_3) &X_6(\pi_6)\\
\hline
\bx_\beta(r),r\ne 0 &* &-(q-1)(q^2-q+1)&(q-1)(q^2-1)&-(q+1)(q^2-1)\\
\hline
\bx_\beta(1)\bx_{2\alpha+\beta}(r_3), \atop r_3\in k^{\times,2}&2q^4&-2q+1& -q+1&q+1 \\
\hline
\bx_\beta(1)\bx_{2\alpha+\beta}(r_3), \atop r_3\in \kappa k^{\times,2}&6q^4 &-4q+1&q+1 & -q+1\\
 \hline
 \bx_\beta(\kappa)\bx_{2\alpha+\beta}(r_3),\atop r_3\in k^{\times,2} &6q^4&-4q+1&q+1 &-q+1\\
\hline
 \bx_\beta(\kappa)\bx_{2\alpha+\beta}(r_3),\atop r_3\in \kappa k^{\times,2} &2q^4&-2q+1&-q+1 &q+1\\
\hline
\bx_\beta(u)\bx_{3\alpha+\beta}(r_4) \atop u=1,\kappa, r_4\in k^{\times} &2q^4&-2q+1&-q+1&q+1\\
\hline
 \bx_\beta(r)(0,0,r_3,r_4,0), \atop   t(r,r_3,r_4)\in \wpair{\pm 1} &q^4(q^2-1)&(q-1)(2q-1)&-q^2+1&-q^2+1\\
 \hline
 \bx_\beta(r)(0,0,r_3,r_4,0), \atop   t(r,r_3,r_4)\in k^{\times}-\wpair{\pm 1} &2q^4&-2q+1&-q+1&q+1\\
\hline
 \bx_\beta(r)(0,0,r_3,r_4,0), \atop   t(r,r_3,r_4)\in (k_2^{1}-\wpair{\pm 1})\cap k_2^{\times,3} &6q^4&-4q+1&q+1  & -q+1\\
\hline
 \bx_\beta(r)(0,0,r_3,r_4,0), \atop   t(r,r_3,r_4)\in k_2^1-k_2^{\times,3} &3q^4&-q+1&-2q+1 & 2q+1\\
\hline
\end{array}
\end{align*}
\caption{Missing part of Table \ref{table: character table of Xi} when $q\equiv -1 \mod 3$}
\label{table: when q equiv -1 mod 3}
\end{table}

The following lemma is Proposition \ref{prop: pre multiplicity one} when $q\equiv -1 \mod 3$.
\begin{lem}\label{lem: proof when q equiv -1 mod 3}
Let $\Pi$ be one of $X_i(\pi_i)$ for $i=2,3,6$. If $q\equiv -1 \mod 3$, then we have 
$$\pair{\Pi,I(\chi)\otimes\omega_\psi}=1.$$
\end{lem}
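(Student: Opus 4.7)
The plan is to mimic the computation carried out in Lemma \ref{lem: proof when q equiv 1 mod 3}, replacing the classification of centralizers and character values from Table \ref{table: when q equiv 1 mod 3} with those from Table \ref{table: when q equiv -1 mod 3}. As before, I would apply formula \eqref{eq: computation of the pair} and use Lemma \ref{lem: preparation for the computation of the pair} to discard the contributions coming from conjugacy classes of the form $h(-1,-1)u$ and $\bx_\beta(r)$; note that when $q\equiv -1\mod 3$ the two ``cubic'' classes collapse to a single one (since $k^\times=k^{\times,3}$), so the cancellation in part (3) of that lemma is automatic and the remaining unipotent contribution comes from the single family $\bx_\beta(u)\bx_{3\alpha+\beta}(r_4)$. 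The surviving terms in \eqref{eq: computation of the pair} are then a trivial piece (contribution of the identity class and its twist by a central $V$-element), a sum over $(0,0,r_3,0,0)$, a sum over $\bx_\beta(b)\bx_{2\alpha+\beta}(r_3)$ classes indexed by $b\in\{1,\kappa\}$, a sum over $\bx_\beta(b)\bx_{3\alpha+\beta}(r_4)$ classes, and a sum over $\bx_\beta(r)(0,0,r_3,r_4,0)$ classes split according to the location of $t(r,r_3,r_4)$.

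The key step is to recognize the correct Gauss-sum building blocks. The sums $A_1-A_\kappa$ of \eqref{eq: defn of A} still appear, and as in \eqref{eq: gauss sum A and B} equal $\sqrt{\epsilon_0 q}$. In place of the partition of $k^\times$ by cubes used in the $q\equiv 1\mod 3$ case, one needs the analogues of $B_r^i$ adapted to Table \ref{table: when q equiv -1 mod 3}, namely
\begin{align*}
\wt B_r^0 &= \sum_{\substack{r_3\in k^\times,\,r_4\in k^\times/\{\pm1\}\\ t(r,r_3,r_4)\in\{\pm 1\}}}\psi(r_3),\\
\wt B_r^1 &= \sum_{\substack{r_3\in k^\times,\,r_4\in k^\times/\{\pm1\}\\ t(r,r_3,r_4)\in k^\times-\{\pm1\}}}\psi(r_3),\\
\wt B_r^2 &= \sum_{\substack{r_3\in k^\times,\,r_4\in k^\times/\{\pm1\}\\ t(r,r_3,r_4)\in (k_2^1-\{\pm1\})\cap k_2^{\times,3}}}\psi(r_3),\\
\wt B_r^3 &= \sum_{\substack{r_3\in k^\times,\,r_4\in k^\times/\{\pm1\}\\ t(r,r_3,r_4)\in k_2^1-k_2^{\times,3}}}\psi(r_3),
\end{align*}
for $r\in\{1,\kappa\}$. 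Once the differences $\wt B_1^i-\wt B_\kappa^i$ are computed, the final equality $\pair{\Pi|_J,I(\chi)\otimes\omega_\psi}=1$ will follow by the same bookkeeping that produced the value $q^6(q^2-1)$ in the previous lemma, handled case by case for $\Pi=X_2(\pi_2),X_3(\pi_3),X_6(\pi_6)$.

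The main obstacle, therefore, is the Gauss-sum identity: one must prove an analogue of \eqref{eq: gauss sum A and B} in which $k^{\times,3}$ is replaced by $k_2^{\times,3}\cap k_2^1$. The substitution $s=rr_4^2/r_3^3$ converts the inner sum over $r_4$ into a character sum over $k^\times$ weighted by the number of representations of $-2-s$ as $t+t^{-1}$ with $t$ in a prescribed subset of $k^\times$ or $k_2^1$; I expect these to reduce to quadratic Gauss sums after isolating the quadratic residue classes of $s$ and applying orthogonality of cubic characters of $k_2^1\cong\BZ_{q+1}$. I would handle this by adapting (and where necessary re-deriving) the arguments of Lemma \ref{lem: computation of B}, obtaining closed-form expressions in terms of $\epsilon_0$ and $\sqrt{\epsilon_0 q}$. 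Substituting these values back into the expressions for $|J|\pair{X_i(\pi_i)|_J,I(\chi)\otimes\omega_\psi}$ should yield $q^6(q^2-1)$ in each of the three cases, completing the proof.
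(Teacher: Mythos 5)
Your proposal matches the paper's proof. The quantities you call $\wt B_r^i$ are exactly the $C_r^i$ defined in \eqref{eq: defn of C}, the reduction via \eqref{eq: computation of the pair} and Lemma \ref{lem: preparation for the computation of the pair} (with the cubic classes collapsing since $k^\times=k^{\times,3}$) is the same, and the Gauss-sum identities you flag as the remaining obstacle are precisely the content of Lemma \ref{lem: computation of C} in the appendix.
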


\begin{proof}
From Table \ref{table: conjugacy class of J}, \ref{table: character table of Xi}, \ref{table: when q equiv -1 mod 3} and Lemma \ref{lem: preparation for the computation of the pair}, we have 
\begin{align*}
 &\ |J|\pair{X_2(\pi_2), I(\chi)\otimes\omega_\psi}\\
=&\ q(q+1)(q^2-1)(q^6-1)/(q+1)^2-(q^2-1)q(q+1)(q-1)(q^2-q+1)\\
+&\ q^2q(q+1)(q-1)(2q-1)\left(\sum_{r_3\in k^\times}\psi(r_3)\right)\\
+&\ \frac{q^2-1}{2}q^2\sqrt{\epsilon_0 q}((-2q+1)(A_1-A_\kappa)-(-4q+1)(A_1-A_\kappa))\\
+&\ q^2(q^2-1)\sqrt{\epsilon_0 q}(q-1)(2q-1)(C_1^0-C_\kappa^1)\\
+&\ q^2(q^2-1)\sqrt{\epsilon_0 q}((-2q+1)(C_1^1-C_\kappa^1)+(-4q+1)(C_1^2-C_\kappa^2)+(-q+1)(C_1^3-C_\kappa^3)),
\end{align*}
where for $r=1,\kappa$, $A_r$ are defined as before, and
\begin{align}\label{eq: defn of C}
\begin{split}
C_r^0&=\sum_{r_3\in k^\times,r_4\in k^{\times}/\wpair{\pm 1}, t(r,r_3,r_4)\in \wpair{\pm 1}}\psi(r_3),\\
C_r^1&=\sum_{r_3\in k^\times,r_4\in k^{\times}/\wpair{\pm 1}, t(r,r_3,r_4)\in k^{\times}-\wpair{\pm 1}}\psi(r_3),\\
C_r^2&=\sum_{r_3\in k^\times,r_4\in k^{\times}/\wpair{\pm 1}, t(r,r_3,r_4)\in  (k_2^{1}-\wpair{\pm 1})\cap k_2^{\times,3}}\psi(r_3),\\
C_r^3&=\sum_{r_3\in k^\times,r_4\in k^{\times}/\wpair{\pm 1}, t(r,r_3,r_4)\in k_2^{1}-k_2^{\times,3}}\psi(r_3).
\end{split}
\end{align}
The quantities $C_1^i-C_\kappa^i$ are computed in Lemma \ref{lem: computation of C}. Applying those formulas in Lemma \ref{lem: computation of C}, a straightforward calculation shows that 
$$\pair{X_2(\pi_2),I(\chi)\otimes\omega_\psi}=1.$$
It is similar to show that 
$$\pair{X_i(\pi_i),I(\chi)\otimes\omega_\psi}=1,$$
for $i=3,6$ as well. We omit the details here.
\end{proof}

\begin{proof}[\textbf{Proof of Theorem $\ref{thm: multiplicity one}$ when $p>3$}]
The irreducible representations of $\RG_2(k)$ have been classified in \cite{CR}. Let $ \fH_1$ be the maximal split torus, 
$$\fH_a=\wpair{h(z^q,z^{1-q}):z^{q^2-1}=1},$$
and 
$$\fH_b=\wpair{h(z,z^q):z^{q^2-1}=1}.$$ For $i=1,2,a,b,3,6$, and a character $\pi_i$ of $\fH_i$, there is an associated character $X_i(\pi_i)$ of $\RG_2(k)$, and when $\pi_i$ is in general position, see \cite[p.398]{CR} for the precise definition, $X_i(\pi_i)$ is irreducible. There are several other isolated classes of irreducible representations of $\RG_2(k)$ constructed using linear combinations of $X_i(\pi_i)$ (when $\pi_i$ is not in general position) and 4 other class functions $Y_i,i=1,2,3,4$. If a representation $\Pi$ is a component of $X_i(\pi_i)$ for $i=1,a,b$, then $\Pi$ is not cuspidal. From the list given in \cite{CR}, it is not hard to see that if $\Pi$ is an irreducible cuspidal representation of $\RG_2(k)$, then it has to be one of following form:
$$X_i(\pi_i), (i=2,3,6), X_{33}=-\frac{1}{3}X_2(\pi_2)+\frac{1}{3}X_6(\pi_6), X_{17}, X_{18}, X_{19}, \ov X_{19}, $$
where $\pi_i$ for $i=2,3,6$ are in general positions, $X_{33}$ appears when $q\equiv -1 \mod 3$, and $X_{17}, X_{18},X_{19},\ov X_{19}$ are defined in \cite[p.402]{CR}. 

We have shown that 
$$\pair{X_i(\pi_i),I(\chi)\otimes\omega_\psi}=1,$$
for $i=2,3,6$, no matter $\pi_i$ is in general position or not. Thus, we get 
$$\pair{X_{33},I(\chi)\otimes \omega_\psi}=0.$$

Finally, to deal with the last 4 isolated cases, we need to compute $\pair{Y_i,I(\chi)\otimes\omega_\psi}$. According to the table given in \cite[p.411]{CR}, we have 
\begin{align*}
\begin{array}{|c|c|c|c|c|c|}
\hline
\textrm{Representative } t & Y_1 &Y_2 &  Y_3 & Y_4&  \Ch_{I(\chi)\otimes\omega_\psi} \\
\hline
\begin{pmatrix}1 &\\ &1 \end{pmatrix}  & 0 & 0 &0& 0&q(q+1)  \\
\hline
\begin{pmatrix}1 &\\ &1 \end{pmatrix} (0,0,0,0,1)&0&0 &0& 0&q(q+1)\\
\hline
(0,0,r_3,0,0),r_3\ne 0&0&0&0&0&q(q+1)\psi(r_3)\\
\hline
\bx_\beta(1)\bx_{2\alpha+\beta}(r_3), \atop r_3\in k&* &0&0&0&\sqrt{\epsilon_0 q}\psi(r_3)\\
 \hline
 \bx_\beta(\kappa)\bx_{2\alpha+\beta}(r_3),\atop r_3\in k &*&0&0 &0&-\sqrt{\epsilon_0 q} \psi(r_3)\\
\hline
 \begin{pmatrix}1 &1\\ &1\end{pmatrix}(0,0,r_3,r_4,0), \atop r_3\in k, r_4\in k^\times/\pair{\pm 1} &*&0&0 &0&\sqrt{\epsilon_0 q} \psi(r_3)\\
 \hline
 \begin{pmatrix}1 &\kappa\\ &1\end{pmatrix}(0,0,r_3,r_4,0), \atop r_3\in k, r_4\in k^\times/\pair{\pm 1} &* &0&0 &0&-\sqrt{\epsilon_0 q} \psi(r_3)\\
\hline
 h(-1,-1)&0&0&0& 0&(q+1)\chi(-1)\epsilon_0 \\
\hline
 h(-1,-1)(0,0,r_3,0,0), \atop r_3\in k^\times&0&0&0&0 &(q+1)\chi(-1)\epsilon_0\psi(r_3) \\
 \hline
h(-1,-1)\bx_\beta(r),r\in k^{\times}&0&0&0&0&\epsilon_0\chi(-1)\\
 \hline
h(-1,-1)\bx_\beta(r)\bx_{2\alpha+\beta}(r_3), \atop rr_3\in k^{\times,2} &0&q&0 &0&\epsilon_0\chi(-1)\psi(r_3)\\
\hline
h(-1,-1)\bx_\beta(r)\bx_{2\alpha+\beta}(r_3), \atop rr_3\in \kappa k^{\times,2}&0&-q&0 &0& \epsilon_0\chi(-1)\psi(r_3)\\
\hline
h(x,x^{-1})(0,0,r_3,0,0), \atop x\ne \pm 1&0 &0&0&0&\epsilon(\chi(x)+\chi(x^{-1}))\psi(r_3)\\
 \hline
\begin{pmatrix}x&y\\ \kappa y&x \end{pmatrix}(0,0,r_3,r_4,r_5),\atop x\ne \pm1 &* &*&* & &0 \\
\hline
\end{array}
\end{align*}
The missing part for $Y_1$ (from 5th row to 8th row) depends on the residue of $q\mod 3$. If $q\equiv 1\mod 3$, then one has 
\begin{align*}
\begin{array}{|c|c|c|}
\hline
u& |C_G(u)|& Y_1  \\
\hline
\bx_\beta(r),r\ne 0 &* &0\\
\hline
\bx_\beta(1)\bx_{2\alpha+\beta}(r_3), \atop r_3\in k^{\times,2}&6q^4&q^2 \\
\hline
\bx_\beta(1)\bx_{2\alpha+\beta}(r_3), \atop r_3\in \kappa k^{\times,2}&2q^4 &-q^2\\
 \hline
 \bx_\beta(\kappa)\bx_{2\alpha+\beta}(r_3),\atop r_3\in k^{\times,2} &2q^4&-q^2\\
\hline
 \bx_\beta(\kappa)\bx_{2\alpha+\beta}(r_3),\atop r_3\in \kappa k^{\times,2} &6q^4&q^2\\
\hline
\bx_\beta(1)\bx_{3\alpha+\beta}(r_4) \atop  r_4\in k^{\times,3} &6q^4&q^2\\
\hline
\bx_\beta(1)\bx_{3\alpha+\beta}(r_4) \atop  r_4\notin k^{\times,3} &3q^4&q^2\\
\hline
\bx_\beta(\kappa)\bx_{3\alpha+\beta}(r_4) \atop  r_4\in \kappa k^{\times,3} &6q^4&q^2\\
\hline
\bx_\beta(\kappa)\bx_{3\alpha+\beta}(r_4) \atop  r_4\notin \kappa k^{\times,3} &3q^4&q^2\\
\hline
 \bx_\beta(r)(0,0,r_3,r_4,0), \atop   t(r,r_3,r_4)\in \wpair{\pm 1} &q^4(q^2-1)&0\\
 \hline
 \bx_\beta(r)(0,0,r_3,r_4,0), \atop   t(r,r_3,r_4)\in k^{\times,3}-\wpair{\pm 1} &6q^4&q^2\\
\hline
 \bx_\beta(r)(0,0,r_3,r_4,0), \atop   t(r,r_3,r_4)\in k^\times-k^{\times,3} &3q^4&q^2\\
\hline
 \bx_\beta(r)(0,0,r_3,r_4,0), \atop   t(r,r_3,r_4)\notin k &2q^4&-q^2\\
\hline
\end{array}
\end{align*}
and when $q\equiv -1\mod 3$, one has 
\begin{align*}
\begin{array}{|c|c|c|}
\hline
u& |C_G(u)|& Y_1  \\
\hline
\bx_\beta(r),r\ne 0 &* &0\\
\hline
\bx_\beta(1)\bx_{2\alpha+\beta}(r_3), \atop r_3\in k^{\times,2}&2q^4&q^2 \\
\hline
\bx_\beta(1)\bx_{2\alpha+\beta}(r_3), \atop r_3\in \kappa k^{\times,2}&6q^4 &-q^2\\
 \hline
 \bx_\beta(\kappa)\bx_{2\alpha+\beta}(r_3),\atop r_3\in k^{\times,2} &6q^4&-q^2\\
\hline
 \bx_\beta(\kappa)\bx_{2\alpha+\beta}(r_3),\atop r_3\in \kappa k^{\times,2} &2q^4&q^2\\
\hline
\bx_\beta(u)\bx_{3\alpha+\beta}(r_4) \atop u=1,\kappa, r_4\in k^{\times} &2q^4&q^2\\
\hline
 \bx_\beta(r)(0,0,r_3,r_4,0), \atop   t(r,r_3,r_4)\in \wpair{\pm 1} &q^4(q^2-1)&0\\
 \hline
 \bx_\beta(r)(0,0,r_3,r_4,0), \atop   t(r,r_3,r_4)\in k^{\times}-\wpair{\pm 1} &2q^4&q^2\\
\hline
 \bx_\beta(r)(0,0,r_3,r_4,0), \atop   t(r,r_3,r_4)\in (k_2^{1}-\wpair{\pm 1})\cap k_2^{\times,3} &6q^4&-q^2\\
\hline
 \bx_\beta(r)(0,0,r_3,r_4,0), \atop   t(r,r_3,r_4)\in k_2^1-k_2^{\times,3} &3q^4&-q^2\\
\hline
\end{array}
\end{align*}
It is easy to see that 
$$\pair{Y_i,I(\chi)\otimes\omega_\psi}=0,$$
for $i=2,3,4$. We next compute $\pair{Y_1,I(\chi)\otimes\omega_\psi}$ when $q\equiv 1\mod 3$. We have 
\begin{align*}
     &\ |J|\pair{Y_1,I(\chi)\otimes \omega_\psi}\\
    =&\ \frac{q^2-1}{2}q^2\sqrt{\epsilon_0 q}(q^2(A_1-A_\kappa)-(-q^2)(A_1-A_\kappa))\\
    +&\ q^2(q^2-1)\sqrt{\epsilon_0 q}(q^2(B_1^1-B_\kappa^1)+q^2(B_1^2-B_\kappa^1)+(-q^2)(B_1^3-B_\kappa^3))\\
    =&\ q^4(q^2-1)\sqrt{\epsilon_0 q}((A_1-A_\kappa)+(B_1^1-B_\kappa^1)-(B_1^3-B_\kappa^3)).
\end{align*}
From the computation of $A_1-A_\kappa$ and $B_1^i-B_\kappa^i$ for $i=0,1,2,3$, in Lemma \ref{lem: gauss sum} and Lemma \ref{lem: computation of B}, one can see that 
$$\pair{Y_1,I(\chi)\otimes\omega_\psi}=0.$$
Similarly, when $q\equiv -1\mod 3$, we also have 
$$\pair{Y_1,I(\chi)\otimes\omega_\psi}=0.$$

From the definitions of $X_{17},X_{18},X_{19},\ov{X}_{19},$ given in \cite[p.402]{CR}, one can check that 
$$\pair{\Pi,I(\chi)\otimes \omega_\psi}=0,$$
if $\Pi=X_{17},X_{18},X_{19}$, or $\ov{X}_{19}$. For example, we have 
$$X_{17}=-\frac{1}{6}X_2(1)+\frac{1}{6}X_6(1)-\frac{1}{2}Y_1+\frac{1}{2}Y_2.$$
Since $\pair{X_i(1),I(\chi)\otimes\omega_\psi}=1$ for $i=2,3,6$, and $\pair{Y_i,I(\chi)\otimes\omega_\psi}=0$, we get 
$$\pair{X_{17},I(\chi)\otimes \omega_\psi}=-\frac{1}{6}+\frac{1}{6}=0.$$
The other 3 cases can be checked similarly. This completes the proof Theorem $\ref{thm: multiplicity one}$.
\end{proof}

\section{Proof of Theorem \texorpdfstring{$\ref{thm: multiplicity one}$}{Lg} when \texorpdfstring{$p=3$}{Lg}}\label{sec: proof when p=3}
In this section let $k=\BF_{3^f}$ for some integer $f$. The character table of $\RG_2(k)$ is given in \cite{En}, which will be used to prove Theorem \ref{thm: multiplicity one}. 

\begin{lem}
The following is a complete set of representatives of $j\in J$ (up to $J$-conjugacy) of the form $j=gz$ with $z\in Z$ and $g\in \SL_2(k)$ such that $g$ is not conjugate to an element of the form $\bpm x & \kappa y\\ y &x \epm $, $y\ne 0:$
\begin{enumerate}
    \item $1;  \quad \bx_{3\alpha+2\beta}(1);  \quad \bx_{2\alpha+\beta}(r_3),r_3\in k^\times;\quad
    \bx_{2\alpha+\beta}(r_3)\bx_{3\alpha+2\beta}(1),r_3\ne 0; $
    \item $\bx_\beta(b)\bx_{2\alpha+\beta}(r_3), \bx_\beta(b)\bx_{2\alpha+\beta}(r_3)\bx_{3\alpha+\beta}(r_4), b\in \wpair{1,\kappa}, r_3\in k, r_4\in k^{\times}/\wpair{\pm 1};$
    \item $ h(-1,-1)\bx_\beta(b)\bx_{2\alpha+\beta}(r_3), r_3\in k, b\in \wpair{1,\kappa};$
    \item $ h(x,x^{-1})\bx_{2\alpha+\beta}(r_3), x\in k^{\times}-\wpair{\pm 1}, r_3\in k.$
\end{enumerate}
\end{lem}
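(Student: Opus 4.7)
The plan is to mirror the argument of Lemma \ref{lem: conjugacy class of J when p>3}, reducing first to a fixed system of $\SL_2(k)$-conjugacy class representatives for $g$ and then, for each such $g$, analyzing the $J$-orbits of $g z$ with $z\in Z$. The essential new feature in characteristic $3$ is that the reduction \eqref{eq: a conjugation equation}, which used the conjugator $(-r_4/(3r_3),-r_5/(3r_3),0,0,0)$ to normalize $(0,0,r_3,r_4,r_5)$ to $(0,0,r_3,0,0)$ when $r_3\ne 0$, is no longer available. More structurally, the Chevalley commutator formulas in $\RG_2$ of the form $[\bx_\alpha(s),\bx_{\alpha+\beta}(t)]\in U_{2\alpha+\beta}\cdot U_{3\alpha+\beta}\cdot U_{3\alpha+2\beta}$ lose their $U_{3\alpha+\beta}$ and $U_{3\alpha+2\beta}$ contributions modulo $3$, so that conjugation of $V$ by $V$ cannot alter the $r_4$ and $r_5$ coordinates. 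Consequently additional $J$-orbits appear in the cell $g=1$, and these are precisely what produce the new representatives $\bx_{3\alpha+2\beta}(1)$ and $\bx_{2\alpha+\beta}(r_3)\bx_{3\alpha+2\beta}(1)$.

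First I would dispose of the case $g=1$. When $r_3\ne 0$, the commutator relations that \emph{do} survive in characteristic $3$ still permit conjugation by $\bx_\alpha(s_1)\bx_{\alpha+\beta}(s_2)$ to eliminate the $r_1$ and $r_2$ coordinates, but the $r_4,r_5$ coordinates are preserved; passing next to the torus $h(a,a^{-1})$ and then to a short $w_\beta$-Weyl conjugation, I can scale $r_5$ to $\{0,1\}$ and similarly normalize $r_4$, yielding the representatives $\bx_{2\alpha+\beta}(r_3)$ and $\bx_{2\alpha+\beta}(r_3)\bx_{3\alpha+2\beta}(1)$. When $r_3=0$ and $r_4\ne 0$, the argument from the $p>3$ proof (conjugation by $w_\beta\bx_\beta(-r_5/r_4)$) still applies and reduces to $(0,0,0,0,r_5)$, which the torus then normalizes to $\bx_{3\alpha+2\beta}(1)$; when $r_3=r_4=0$ and $r_5=0$ we get $1$. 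This produces the four representatives of (1).

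For the remaining cases $g=h(x,x^{-1})$ with $x\ne\pm 1$, $g=\bx_\beta(b)$ with $b\in\wpair{1,\kappa}$, and $g=h(-1,-1)\bx_\beta(b)$, the conjugating elements used in the $p>3$ proof are polynomial in $r_3,r_4,r_5,x,b$ with no denominator divisible by $3$ (e.g.\ $(0,0,0,-r_4/(x-1),r_5x/(x-1))$ for the torus case, where $x\ne 1$ ensures invertibility), so those reductions transfer verbatim and yield the representatives in (2)--(4). Finally, I would verify inequivalence of the listed classes by computing the $J$-centralizers: the new classes $\bx_{3\alpha+2\beta}(1)$ and $\bx_{2\alpha+\beta}(r_3)\bx_{3\alpha+2\beta}(1)$ have strictly smaller centralizers than the analogous char-$0$ classes because the char-$3$ commutator vanishing enlarges the set of elements commuting only up to the lost terms; this invariant separates them from the $r_5=0$ representatives.

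The main obstacle is the $g=1$ case, specifically checking that after normalizing $r_1,r_2$ (which requires careful bookkeeping of the surviving commutator terms in characteristic $3$), no further $J$-conjugation can kill a nonzero $r_5$ when $r_3\ne 0$ or $r_4\ne 0$. This will require a direct computation using the explicit matrix realization of the $\bx_\gamma$ given in Appendix~\ref{sec: embedding G2 into SO7}, together with the Chevalley commutator formulas on p.\,192 of \cite{Ch} reduced modulo $3$; once this rigidity is established the enumeration is forced.
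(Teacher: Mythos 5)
Your proposal correctly identifies the structural issue---the $p>3$ normalization $(-r_4/(3r_3),-r_5/(3r_3),0,0,0)\cdot(0,0,r_3,r_4,r_5)\cdot(\cdot)^{-1}=(0,0,r_3,0,0)$ breaks down because the relevant $\RG_2$ structure constants involve $3$---and correctly observes that the cases $g=h(x,x^{-1})$, $g=\bx_\beta(b)$, $g=h(-1,-1)\bx_\beta(b)$ transfer from the $p>3$ argument because the denominators $x-1$ (with $x\ne 1$) stay invertible. This matches the paper's own route.

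However, your treatment of the $g=1$ case has a genuine gap. First, a minor point: since $j=gz$ with $z\in Z$, you already have $r_1=r_2=0$, so there is nothing to eliminate. The real problem is the claim that ``no further $J$-conjugation can kill a nonzero $r_5$ when $r_3\ne 0$ or $r_4\ne 0$.'' This is false: if $r_4\ne 0$, conjugation by $\bx_\beta(t)\in\SL_2\subset J$ uses the surviving commutator $[\bx_\beta(t),\bx_{3\alpha+\beta}(r_4)]\in U_{3\alpha+2\beta}$ to translate $r_5$ by $\pm tr_4$, and one can choose $t$ to annihilate $r_5$. The paper's proof hinges on precisely this: for $r_4\ne 0$, $w_\beta\bx_\beta(-r_5/r_4)(0,0,r_3,r_4,r_5)(w_\beta\bx_\beta(-r_5/r_4))^{-1}=(0,0,r_3,0,r_4)$. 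Your sketch via ``torus $h(a,a^{-1})$ and a short $w_\beta$-Weyl conjugation'' cannot independently normalize $r_4$ and $r_5$---the torus scales them inversely (by $a^{-1}$ and $a$ respectively) and $w_\beta$ only swaps them---so without the $\bx_\beta$ step you cannot reduce, say, $(0,0,r_3,1,1)$ to $(0,0,r_3,0,1)$. This is the key step that makes the enumeration close.

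Your proposed check of inequivalence via ``strictly smaller centralizers'' is also backwards: in characteristic $3$, $\bx_{2\alpha+\beta}(r_3)$ is central in $J$ (its centralizer is all of $J$, \emph{larger} than the $\SL_2\ltimes Z$ centralizer of the $p>3$ analogue), and the centralizer $N_{\SL_2}\ltimes V$ of $\bx_{2\alpha+\beta}(r_3)\bx_{3\alpha+2\beta}(1)$ is likewise not smaller than $\SL_2\ltimes Z$. The distinctness of these two classes is more directly seen from the first being central and the second not, or simply by reading the centralizers off the conjugacy-class table.
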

\begin{proof}
The proof of this lemma is similar to the proof of Lemma \ref{lem: conjugacy class of J when p>3}. One difference is that here we have $3=0$ in $k$ and thus \eqref{eq: a conjugation equation} is not valid. Hence,  $\bx_{2\alpha+\beta}(r_3)$ and $\bx_{2\alpha+\beta}(r_3)\bx_{3\alpha+2\beta}(1)$ are no longer in the same $J$-conjugacy class. On the other hand, if $r_3\ne 0,r_4\ne 0$, we have 
$$w_\beta \bx_\beta(-r_5/t_4)(0,0,r_3,r_4,r_5) (w_\beta \bx_\beta(-r_5/r_4))^{-1}=(0,0,r_3,0,r_4).$$
Thus any element of the form $(0,0,r_3,r_4,r_5)$ is $J$-conjugate to an element of the form $(0,0,r_3,0,r_4)$. The other parts of the proof is exactly the same as that of Lemma \ref{lem: conjugacy class of J when p>3}. We omit the details here. 
\end{proof}

The conjugacy classes in $J$ and the character of $I(\chi)\otimes \omega_\psi$ are given in Table \ref{table: conjugacy class of J when p=3}. 
\begin{table}
\begin{align*}
\begin{array}{|c|c|c|c|c|}
\hline
\textrm{Representative } t & C_J(t) & |J(t)| &  No.&  \Ch_{I(\chi)\otimes\omega_\psi} \\
\hline
1 & J &  1 & 1 & q(q+1)  \\
\hline
\bx_{3\alpha+2\beta}(1) & N_{\SL_2}\ltimes V & q^2-1 & 1 & q(q+1)\\
\hline
\bx_{2\alpha+\beta}(r_3),r_3\ne 0 & \SL_2(k)\ltimes V & 1 & q-1 & q(q+1)\psi(r_3)\\
\hline
\bx_{2\alpha+\beta}(r_3)\bx_{3\alpha+2\beta}(1),r_3\ne 0&N_{\SL_2}\ltimes V&q^2-1&q-1&q(q+1)\psi(r_3)\\
\hline
 \bx_\beta(1)\bx_{2\alpha+\beta}(r_3), r_3\in k & \mu_2 \pair{ U_\beta, U_{\alpha+\beta}, U_{2\alpha+\beta}, U_{3\alpha+2\beta}} & \frac{q^2-1}{2}q^2 & q & \sqrt{\epsilon_0 q}\psi(r_3)\\
 \hline
 \bx_{\beta}(\kappa)\bx_{2\alpha+\beta}(r_3), r_3\in k& \mu_2 \pair{ U_\beta, U_{\alpha+\beta}, U_{2\alpha+\beta}, U_{3\alpha+2\beta}}  & \frac{q^2-1}{2}q^2  & q & -\sqrt{\epsilon_0 q} \psi(r_3)\\
\hline
 \bx_\beta(1)\bx_{2\alpha+\beta}(r_3)\bx_{3\alpha+\beta}(r_4) \atop r_3\in k, r_4\in k^\times/\pair{\pm 1} & \pair{U_\beta,U_{\alpha+\beta}, U_{2\alpha+\beta}, U_{3\alpha+2\beta} } & (q^2-1)q^2 & \frac{q(q-1)}{2} & \sqrt{\epsilon_0 q} \psi(r_3)\\
 \hline
 \bx_{\beta}(\kappa)\bx_{2\alpha+\beta}(r_3)\bx_{3\alpha+\beta}(r_4), \atop r_3\in k, r_4\in k^\times/\pair{\pm 1} & \pair{U_\beta,U_{\alpha+\beta}, U_{2\alpha+\beta}, U_{3\alpha+2\beta} } & (q^2-1)q^2 & \frac{q(q-1)}{2} & -\sqrt{\epsilon_0 q} \psi(r_3)\\
\hline
 h(-1,-1)\bx_{2\alpha+\beta}(r_3), \atop r_3\in k & \SL_2\ltimes U_{2\alpha+\beta} & q^4 & q & (q+1)\chi(-1)\epsilon_0\psi(r_3) \\
 \hline
h(-1,-1)\bx_\beta(1)\bx_{2\alpha+\beta}(r_3) & \mu_2\ltimes U_\beta\times U_{2\alpha+\beta} & \frac{q^2-1}{2}q^4 & q & \epsilon_0\chi(-1)\psi(r_3)\\
\hline
h(-1,-1)\bx_\beta(\kappa)\bx_{2\alpha+\beta}(r_3) & \mu_2\ltimes U_\beta\times U_{2\alpha+\beta} & \frac{q^2-1}{2}q^4 & q & \epsilon_0\chi(-1)\psi(r_3) \\
\hline
h(x,x^{-1})\bx_{2\alpha+\beta}(r_3), \atop x\ne \pm 1 & A_{\SL_2}\ltimes U_{2\alpha+\beta} & q^5(q+1) & q\frac{q-3}{2} & \epsilon(\chi(x)+\chi(x^{-1}))\psi(r_3)\\
 \hline
\begin{pmatrix}x&y\\ \kappa y&x \end{pmatrix}(0,0,r_3,r_4,r_5),\atop x\ne \pm 1 & * & * & * & 0 \\
\hline
\end{array}
\end{align*}
\caption{Conjugacy class of $J$ when $p=3$}
\label{table: conjugacy class of J when p=3}
\end{table}
Note that in Table \ref{table: conjugacy class of J when p=3}, the element $\bx_{2\alpha+\beta}(r_3)$ is in fact in the center of $J$, see the commutator relation in \cite[p.192]{En}.

 As in $\S$\ref{sec: proof when p>3}, we still let $\fH_i,i=2,3,6$, be the 3 anisotropic torus of $\RG_2(k)$ such that $\fH_2\cong \BZ_{q+1}$, $\fH_3\cong\BZ_{q^2+q+1}$ and $\fH_6\cong \BZ_{q^2-q+1}$. Then given a character $\pi_i$ of $\fH_i$, there is a class function $X_i(\pi_i)$ on $\RG_2(k)$ as in the case of $p>3$. The notation in \cite{En} is different from that of \cite{CR}. If $i=2$, the character $\pi_2$ of $\fH_2$ is determined by two integers $k,l$ and the associated class function is denoted by $\chi_{12}(k,l)$ in \cite{En}. If $i=3,6$, the character $\pi_i$ of $\fH_i$ is determined by a single integer $k$, and the corresponding class functions are denoted by $ \chi_{13}(k)$ and $\chi_{14}(k)$ respectively in \cite{En}. The restrictions of the characters $\chi_{12}(k,l)$, $\chi_{13}(k)$ and $\chi_{14}(k)$ to $J$ can be read out directly from the table in \cite[p.246-247]{En} and are given in Table \ref{table: character table of Xi when p=3}. In Table \ref{table: character table of Xi when p=3}, $\epsilon(k,l)$ is a number depending on $k,l$.
 
 \begin{table}
\begin{align*}
\begin{array}{|c|c|c|c|c|}
\hline
\textrm{Representative } t & \chi_{12}(k,l) &\chi_{13}(k) &  \chi_{14}(k)&  \Ch_{I(\chi)\otimes\omega_\psi} \\
\hline
1  & \frac{(q^2-1)(q^6-1)}{(q+1)^2} & \frac{(q^2-1)(q^6-1)}{q^2+q+1} &\frac{(q^2-1)(q^6-1)}{q^2-q+1} &q(q+1)  \\
\hline
\bx_{3\alpha+2\beta}(1)&-(q-1)(q^2-q+1)&(q-1)(q^2-1) &-(q+1)(q^2-1) &q(q+1)\\
\hline
\bx_{2\alpha+\beta}(r_3),r_3\ne 0&-(q-1)(q^2-q+1)&(q-1)(q^2-1)&-(q+1)(q^2-1)&q(q+1)\psi(r_3)\\
\hline
\bx_{2\alpha+\beta}(r_3)\bx_{3\alpha+2\beta}(1), \atop r_3\ne 0 &2q^2-2q+1&-(q^2+q-1)&-(q^2-q-1)& q(q+1)\psi(r_3)\\
\hline
\bx_\beta(1)&-(q-1)(q^2-q+1)&(q-1)(q^2-1)&-(q+1)(q^2-1)&\sqrt{\epsilon_0 q}\\
\hline
\bx_\beta(1)\bx_{2\alpha+\beta}(r_3), \atop r_3\in k^\times&-(2q-1) &-(q-1)&q+1&\sqrt{\epsilon_0 q}\psi(r_3)\\
 \hline
 \bx_\beta(\kappa)&-(q-1)(q^2-q+1)&(q-1)(q^2-1)&-(q+1)(q^2-1)& -\sqrt{\epsilon_0 q}\\
 \hline
 \bx_\beta(\kappa)\bx_{2\alpha+\beta}(r_3),\atop r_3\in k^\times &-(2q-1)&-(q-1)&q+1 &-\sqrt{\epsilon_0 q} \psi(r_3)\\
\hline
 \bx_\beta(1)(0,0,r_3,r_4,0), \atop r_3\in k, r_4\in k^\times/\pair{\pm 1} &*&*&* &\sqrt{\epsilon_0 q} \psi(r_3)\\
 \hline
 \bx_\beta(\kappa)(0,0,r_3,r_4,0), \atop r_3\in k, r_4\in k^\times/\pair{\pm 1} &* &*&* &-\sqrt{\epsilon_0 q} \psi(r_3)\\
\hline
 h(-1,-1)&(q-1)^2\epsilon(k,l)&0&0& (q+1)\chi(-1)\epsilon_0 \\
\hline
 h(-1,-1)(0,0,r_3,0,0), \atop r_3\in k^\times&-(q-1)\epsilon(k,l)&0&0& (q+1)\chi(-1)\epsilon_0\psi(r_3) \\
 \hline
h(-1,-1)\bx_\beta(1)&-(q-1)\epsilon(k,l)&0&0&\epsilon_0\chi(-1)\\
 \hline
h(-1,-1)\bx_\beta(1)\bx_{2\alpha+\beta}(r_3),\atop r_3\in k^\times &\epsilon(k,l)&0&0 &\epsilon_0\chi(-1)\psi(r_3)\\
\hline
h(-1,-1)\bx_\beta(\kappa)&-(q-1)\epsilon(k,l)&0&0 & \epsilon_0\chi(-1)\\
\hline
h(-1,-1)\bx_\beta(\kappa)\bx_{2\alpha+\beta}(r_3), \atop r_3\in k^\times &\epsilon(k,l)&0&0 & \epsilon_0\chi(-1)\psi(r_3)\\
\hline
h(x,x^{-1})(0,0,r_3,0,0), \atop x\ne \pm 1&0 &0&0&\epsilon(\chi(x)+\chi(x^{-1}))\psi(r_3)\\
 \hline
\begin{pmatrix}x&y\\ \kappa y&x \end{pmatrix}(0,0,r_3,r_4,r_5),\atop x\ne \pm1 &* &*&* & 0 \\
\hline
\end{array}
\end{align*}
\caption{Character table of $X_i(\pi_i)$ when $p=3$}
\label{table: character table of Xi when p=3}
\end{table}
The missing part of the Table \ref{table: character table of Xi when p=3} (10th row and 11th row) is determined as follows. If $r_3=0,r_4\ne 0$, then $$\bx_\beta(r)(0,0,r_3,r_4,0)\sim_{\RG_2(k)} \bx_\beta(1)\bx_{2\alpha+\beta}(1),$$ 
since every element in $k$ has a cubic root, see the calculation in \cite[p.197]{Ch} or the discussion in the previous section. As in the previous section, if $r_3\ne 0$, we have 
$$\bx_\beta(r)(0,0,r_3,r_4,0)\sim_{\RG_2(k)} \bx_\beta(1)(0,0,-1,z,0),$$
with $z=-2-rr_4^2/r_3^3$. Write $z=t+t^{-1}$. If $t=\pm 1$, then we have 
$$\bx_\beta(1)(0,0,r_3,r_4,0)\sim_{\RG_2(k)} \bx_{\beta}(1/2)\bx_{\alpha+\beta}(1)\sim_{\RG_2(k)} \bx_{2\alpha+\beta}(1)\bx_{3\alpha+2\beta}(1),$$
see Footnote \ref{footnote: a conjugation relation} for the first relation. Note that $1/6$ is undefined in the last equation of Footnote \ref{footnote: a conjugation relation} and thus we cannot obtain that $\bx_\beta(1/2)\bx_{\alpha+\beta}(1)\sim_{\RG_2(k)} \bx_{\alpha+\beta}(1)$ here. On the other hand, we have $w_\beta w_\alpha \bx_\beta(-1)\bx_{\alpha+\beta}(1)(w_\beta w_\alpha)^{-1}=\bx_{2\alpha+\beta}(1)\bx_{3\alpha+2\beta}(-1)$. By considering a conjugation of the torus, we then get $\bx_{\beta}(1/2)\bx_{\alpha+\beta}(1)\sim_{\RG_2(k)} \bx_{2\alpha+\beta}(1)\bx_{3\alpha+2\beta}(1) $.

If $t\ne \pm 1$, using the description in \cite{Ch} and the fact that any element in $\BF_q$ and $ \BF_{q^2}$ has a cubic root, one can check that $\bx_\beta(1)(0,0,r_3,r_4,0)\sim_{\RG_2(k)} \bx_\beta(1)\bx_{2\alpha+\beta}(1)$ if $t\in k^{\times}-\wpair{\pm 1}$, and $$\bx_\beta(1)(0,0,r_3,r_4,0)\sim_{\RG_2(k)} \bx_\beta(1)\bx_{2\alpha+\beta}(\kappa)$$ if $t\in \BF_{q^2}-\BF_q$. Thus we obtain Table \ref{table: missing part of character table when p=3} following the table in \cite[p.246-247]{En}. Here, $t(r,r_3,r_4)$ is the number $t$ such that $t+t^{-1}=-2-rr_4^2/r_3^3$.
\begin{table}
\begin{align*}
\begin{array}{|c|c|c|c|}
\hline
\textrm{representative} & \chi_{12}(k,l) & \chi_{13}(k) & \chi_{14}(k)\\
\hline
\bx_\beta(1)\bx_{3\alpha+\beta}(r_4)\atop r_4\ne 0&-(2q-1)&-(q-1)&q+1\\
\hline
\bx_\beta(1)(0,0,r_3,r_4,0)\atop t(1,r_3,r_4)\in \wpair{\pm1} & 2q^2-2q+1&-(q^2+q-1)& -(q^2-q-1)\\
\hline
\bx_\beta(1)(0,0,r_3,r_4,0)\atop t(1,r_3,r_4)\notin\wpair{\pm 1} & -(2q-1)& -(q-1)& q+1\\
\hline
\bx_\beta(\kappa)\bx_{3\alpha+\beta}(r_4),\atop r_4\ne 0 &-(2q-1)&-(q-1)&q+1\\
\hline
\bx_\beta(\kappa)(0,0,r_3,r_4,0)\atop t(\kappa,r_3,r_4)\in \wpair{\pm 1} & 2q^2-2q+1&-(q^2+q-1)& -(q^2-q-1)\\
\hline
\bx_\beta(\kappa)(0,0,r_3,r_4,0)\atop t(\kappa,r_3,r_4)\notin \wpair{\pm 1} & -(2q-1)& -(q-1)& q+1\\
\hline
\end{array}
\end{align*}
\caption{Missing part of Table \ref{table: character table of Xi when p=3}}
\label{table: missing part of character table when p=3}
\end{table}

\begin{lem}\label{lem: pair is one when p=3}
Let $\Pi$ be $\chi_{12}(k,l),\chi_{13}(k)$ or $\chi_{14}(k)$. Then we have 
$$\pair{\Pi,I(\chi)\otimes\omega_\psi}=1.$$
\end{lem}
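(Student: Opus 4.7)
The strategy is to mimic the proofs of Lemmas \ref{lem: proof when q equiv 1 mod 3} and \ref{lem: proof when q equiv -1 mod 3}, computing
$$|J|\,\langle \Pi|_J, I(\chi)\otimes\omega_\psi\rangle = \sum_t |J(t)|\,\overline{\Ch_\Pi(t)}\,\Ch_{I(\chi)\otimes\omega_\psi}(t)$$
directly, where $t$ ranges over the conjugacy class representatives enumerated in Table \ref{table: conjugacy class of J when p=3}. The ingredients are the character values in Tables \ref{table: character table of Xi when p=3} and \ref{table: missing part of character table when p=3}, together with the Gauss sum identities proved in the appendix.

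First, I would establish the $p=3$ analogue of Lemma \ref{lem: preparation for the computation of the pair}(1): the contribution from the conjugacy classes of the form $h(-1,-1)u$ with $u$ unipotent vanishes. This uses only $\sum_{r_3\in k^\times}\psi(r_3)=-1$ together with the identical character values on $h(-1,-1)\bx_\beta(1)\bx_{2\alpha+\beta}(r_3)$ and $h(-1,-1)\bx_\beta(\kappa)\bx_{2\alpha+\beta}(r_3)$ in Table \ref{table: character table of Xi when p=3}. The verification is the same three-line calculation as in the proof of Lemma \ref{lem: preparation for the computation of the pair}(1).

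Second, I would handle the genuinely new feature of the $p=3$ case. Since $3=0$ in $k$, the conjugation \eqref{eq: a conjugation equation} is invalid, so $\bx_{2\alpha+\beta}(r_3)$ and $\bx_{2\alpha+\beta}(r_3)\bx_{3\alpha+2\beta}(1)$ are distinct $J$-conjugacy classes. Both classes must be included in the sum, but the characters of $\chi_{12}(k,l), \chi_{13}(k), \chi_{14}(k)$ and of $I(\chi)\otimes\omega_\psi$ on these two classes (rows 3 and 4 of Tables \ref{table: conjugacy class of J when p=3} and \ref{table: character table of Xi when p=3}) are easily read off and combine cleanly with the class sizes $1$ and $q^2-1$ into a manageable contribution. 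Also, since $p=3$ forces $k^\times=k^{\times,3}$, the splittings of the $\bx_\beta(r)(0,0,r_3,r_4,0)$ classes involve only the dichotomy $t(r,r_3,r_4)\in\{\pm 1\}$ vs.\ $t(r,r_3,r_4)\notin\{\pm 1\}$, as already reflected in Table \ref{table: missing part of character table when p=3}. The Gauss sum data needed are $A_1-A_\kappa$ from Lemma \ref{lem: gauss sum} and the $C_1^i-C_\kappa^i$ from Lemma \ref{lem: computation of C}, with the latter collapsing to fewer nonzero pieces because the distinction between $k_2^{\times,3}$ and $k_2^\times-k_2^{\times,3}$ is irrelevant when $p=3$ (or equivalently, the relevant trichotomy merges).

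With these preparations in hand, the computation for each of $\Pi=\chi_{12}(k,l),\chi_{13}(k),\chi_{14}(k)$ becomes a finite sum of twelve or so terms, and I would verify term by term, using the Gauss sum identities of the appendix, that the total equals $|J|=q^6(q^2-1)$, yielding the desired inner product value $1$. The main obstacle is purely organizational: there is no new conceptual input, but one must carefully match the new $p=3$ conjugacy class structure (notably the central element $\bx_{2\alpha+\beta}(r_3)$ and the extra class $\bx_{2\alpha+\beta}(r_3)\bx_{3\alpha+2\beta}(1)$) with the correct character values from Enomoto's table, and keep track of class sizes so that the cancellations parallel those in the $p>3$ case.
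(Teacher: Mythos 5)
Your plan matches the paper's proof strategy exactly: compute $|J|\langle\Pi,I(\chi)\otimes\omega_\psi\rangle$ term-by-term over Table \ref{table: conjugacy class of J when p=3} using Tables \ref{table: character table of Xi when p=3}--\ref{table: missing part of character table when p=3}, after first disposing of the $h(-1,-1)u$ classes as in Lemma \ref{lem: preparation for the computation of the pair when p=3}(1). Two small corrections, though. First, the Gauss sums that actually enter are the $D_r^i$ of \eqref{eq: defn of D} and Lemma \ref{lem: computation of D} --- the paper defines these separately for $p=3$ precisely because, as you note, the $C_r^i$ trichotomy collapses; citing Lemma \ref{lem: computation of C} is a mis-reference even though the underlying observation is right. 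Second, $A_1-A_\kappa$ does not appear in this computation at all: for $p=3$, the characters $\chi_{12}(k,l),\chi_{13}(k),\chi_{14}(k)$ take \emph{identical} values on $\bx_\beta(1)\bx_{2\alpha+\beta}(r_3)$ and $\bx_\beta(\kappa)\bx_{2\alpha+\beta}(r_3)$ (there is no $r_3\in k^{\times,2}$ vs.\ $r_3\in\kappa k^{\times,2}$ split as in the $p>3$ case), so those two rows cancel against the sign flip of $\Ch_{I(\chi)\otimes\omega_\psi}$ --- this is part (2) of Lemma \ref{lem: preparation for the computation of the pair when p=3}, which your plan omits but which you would in any case discover when carrying out the sum.
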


We have 
\begin{equation}\label{eq: the pair when p=3}
    |J|\pair{\Pi,I(\chi)\otimes\omega_\psi}=\sum_{t} |J(t)| \ov{\Ch_\Pi}(t)\Ch_{I(\chi)\otimes\omega_\psi}(t),
\end{equation}
where $t$ runs over a complete set of representatives of conjugacy classes of $J$ and $|J(t)|$ is the number of elements in the conjugacy class $J(t)$.
Before proving Lemma \ref{lem: pair is one when p=3}, we first record the following result.

\begin{lem}\label{lem: preparation for the computation of the pair when p=3}
Let $\Pi$ be $\chi_{12}(k,l),\chi_{13}(k)$ or $\chi_{14}(k)$.
\begin{enumerate}
    \item The contribution of conjugacy classes of the form $h(-1,-1)u$, with $ u$ in the unipotent, to $\eqref{eq: the pair when p=3}$ is zero.
    \item The contribution of conjugacy classes of the form $\bx_\beta(1)\bx_{2\alpha+\beta}(r_3),r_3\in k$, and the contribution of conjugacy classes of the form  $\bx_\beta(\kappa)\bx_{2\alpha+\beta}(r_3),r_3\in k$, to $\eqref{eq: the pair when p=3}$ are cancelled out.
\end{enumerate}
\end{lem}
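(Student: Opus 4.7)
The plan is to prove both parts by direct substitution into the sum \eqref{eq: the pair when p=3}, reading off the data from Table~\ref{table: conjugacy class of J when p=3} and Table~\ref{table: character table of Xi when p=3}. The argument is entirely parallel to Lemma~\ref{lem: preparation for the computation of the pair} in the case $p>3$; the only difference is that one must verify the bookkeeping against the $p=3$ tables, in particular against the list of $h(-1,-1)$-classes appearing in Table~\ref{table: conjugacy class of J when p=3}.

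For part (2), the key observation is that the representatives $\bx_\beta(1)\bx_{2\alpha+\beta}(r_3)$ and $\bx_\beta(\kappa)\bx_{2\alpha+\beta}(r_3)$ have the same centralizer in $J$ (namely $\mu_2\langle U_\beta,U_{\alpha+\beta},U_{2\alpha+\beta},U_{3\alpha+2\beta}\rangle$), so $|J(t)|=\tfrac{q^2-1}{2}q^2$ in both cases. Inspecting Table~\ref{table: character table of Xi when p=3}, each of $\chi_{12}(k,l)$, $\chi_{13}(k)$, $\chi_{14}(k)$ takes identical values on the two elements for every fixed $r_3\in k$ (both when $r_3=0$ and when $r_3\in k^\times$). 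On the other hand, by the same table, $\Ch_{I(\chi)\otimes\omega_\psi}$ evaluates to $\sqrt{\epsilon_0 q}\,\psi(r_3)$ on the first and to $-\sqrt{\epsilon_0 q}\,\psi(r_3)$ on the second. Summing the two families over $r_3\in k$ in \eqref{eq: the pair when p=3}, the contributions cancel pairwise.

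For part (1), if $\Pi=\chi_{13}(k)$ or $\chi_{14}(k)$ the assertion is immediate, since the character vanishes on every $h(-1,-1)$-class in Table~\ref{table: character table of Xi when p=3}. For $\Pi=\chi_{12}(k,l)$ one expands the contribution using the four relevant classes: $h(-1,-1)\bx_{2\alpha+\beta}(r_3)$ (with $r_3=0$ and $r_3\in k^\times$ separately), $h(-1,-1)\bx_\beta(b)$, and $h(-1,-1)\bx_\beta(b)\bx_{2\alpha+\beta}(r_3)$ for $b\in\{1,\kappa\}$ and $r_3\in k^\times$. Substituting the sizes $|J(t)|$ and character values from the two tables, and applying $\sum_{r_3\in k^\times}\psi(r_3)=-1$, the totals collapse to exactly the same expression as in Lemma~\ref{lem: preparation for the computation of the pair}(1), namely
\[
q^5(q^2-1)\chi(-1)\epsilon_0\epsilon(k,l)\;-\;q^5(q^2-1)\chi(-1)\epsilon_0\epsilon(k,l)\;=\;0.
\]

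The only real obstacle is clerical: making certain that the $p=3$ tables really do list the same $h(-1,-1)$-conjugacy data as in the $p>3$ case (up to reindexing the unipotent part by $\bx_{2\alpha+\beta}(r_3)$ instead of $(0,0,r_3,0,0)$), so that the cancellation of part (1) transfers verbatim, and that no additional classes of the form $\bx_\beta(b)\bx_{2\alpha+\beta}(r_3)$ are hidden under different names in Table~\ref{table: conjugacy class of J when p=3}. Once this matching is confirmed, both parts follow by routine summation.
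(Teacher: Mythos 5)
Your proposal is correct and takes essentially the same approach as the paper, which simply refers the reader to the analogous computation in Lemma~\ref{lem: preparation for the computation of the pair} and omits the details. You have correctly carried out that bookkeeping against Tables~\ref{table: conjugacy class of J when p=3} and~\ref{table: character table of Xi when p=3}, including the telescoping $q^5(q^2-1)\chi(-1)\epsilon_0\epsilon(k,l)-q^5(q^2-1)\chi(-1)\epsilon_0\epsilon(k,l)=0$ for part~(1) and the pairwise sign cancellation (equal $|J(t)|$ and $\chi_{1j}$-values, opposite $\Ch_{I(\chi)\otimes\omega_\psi}$-values) for part~(2).
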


The proof of Lemma \ref{lem: preparation for the computation of the pair when p=3} is the same as that of Lemma \ref{lem: preparation for the computation of the pair} and we omit the details.

\begin{proof}[Proof of Lemma $\ref{lem: pair is one when p=3}$]
This lemma can be checked case by case and we only give the details when $\Pi=\chi_{12}(k,l)$ and omit the details of the other two cases. We suppose that $\Pi=\chi_{12}(k,l)$. By Tables \ref{table: conjugacy class of J when p=3}, \ref{table: character table of Xi when p=3} and \ref{table: missing part of character table when p=3}, we have
\begin{align*}
    &\ |J|\pair{\Pi,I(\chi)\otimes \omega_\psi}\\
    =&\ \frac{(q^2-1)(q^6-1)}{(q+1)^2}q(q+1)-(q^2-1)(q-1)(q^2-q+1)q(q+1)\\
    -&\ (q-1)(q^2-q+1)q(q+1)\left(\sum_{r_3\in k^\times}\psi(r_3)\right)\\
    +&\ (q^2-1)(2q^2-2q+1)q(q+1)\left(\sum_{r_3\in k^\times}\psi(r_3)\right)\\
    +&\ (q^2-1)q^2\sqrt{\epsilon_0 q}((2q^2-2q+1)(D_1^0-D_\kappa^0))\\
    +&\ (q^2-1)q^2\sqrt{\epsilon_0 q}(-(2q-1)(D_1^1+D_1^2-D_\kappa^1-D_\kappa^2)),
\end{align*}
where 
\begin{align}\label{eq: defn of D}
\begin{split}
D_r^0&=\sum_{r_3\in k^\times, r_4\in k^{\times}/\wpair{\pm 1}, t(r,r_3,r_4)\in \wpair{\pm 1}} \psi(r_3)   ,\\
D_r^1&=\sum_{r_3\in k^\times, r_4\in k^{\times}/\wpair{\pm 1}, t(r,r_3,r_4)\in k^\times- \wpair{\pm 1}} \psi(r_3),\\
D_r^2&=\sum_{r_3\in k^\times, r_4\in k^{\times}/\wpair{\pm 1}, t(r,r_3,r_4)\in \BF_{q^2}-\BF_q} \psi(r_3),
\end{split}
\end{align}
for $r=1,\kappa$. The computation of $D_1^i-D_\kappa^i$ is given in Appendix \ref{sec: computation of D}. By Lemma \ref{lem: computation of D}, we have $D_1^0-D_\kappa^0=\epsilon_0 \sqrt{\epsilon_0 q}$ and $D_1^1+D_1^2-D_\kappa^1-D_\kappa^2=-\epsilon_0 \sqrt{\epsilon_0 q}$. Plugging these formulas into the computation of $|J|\pair{\Pi,I(\chi)\otimes\omega_\psi}$, we can get 
$$|J|\pair{\Pi,I(\chi)\otimes\omega_\psi}=q^6(q^2-1).$$
Thus we have $\pair{\Pi,I(\chi)\otimes\omega_\psi}=1$.
\end{proof}
We can now start the proof of Theorem \ref{thm: multiplicity one} in the case $p=3$.

\begin{proof}[\textbf{Proof of Theorem $\ref{thm: multiplicity one}$ when $p=3$}]

Irreducible representations of $\RG_2(k)$ when $k=\BF_{3^f}$ are classified in \cite{En}. Using the notation of \cite{En}, there are 12 isolated irreducible representations $\theta_i$ $0\le i\le 11$ and 15 families of irreducible representations $\theta_{12}(k)$, $\chi_i(k), 1\le i\le 11$, $\chi_{12}(k,l)$, $\chi_{13}(k)$ and $\chi_{14}(k)$, where $k,l$ are integers. From the definitions given in \cite[Section 5]{En}, the representations $\chi_{i}(k), 1\le i\le 11$, are not cuspidal. By Lemma \ref{lem: pair is one when p=3}, we only need to consider cuspidal representations among $\theta_i, 0\le i\le 11$, and $\theta_{12}(k)$. From the definitions of $\theta_i$ in \cite[Section 5]{En}, one can check that $\theta_0, \theta_1, \theta_2, \theta_3, \theta_4, \theta_6, \theta_7, \theta_8, \theta_9$, are components of parabolic induced representations and thus cannot be cuspidal. Precisely, first, from the definitions in \cite[p.204]{En}, one has that $\theta_1+\theta_2=\mu_5=\Ind_{P'}^{\RG_2(k)}(\chi_1(0))-\theta_0-\theta_3$, and hence $\theta_i,0\le i\le 3$, are components of $\Ind_{P'}^{\RG_2(k)}(\chi_1(0))$. Here $\chi_1(0)$ is a character of $P'$ and $\mu_5$ is an auxiliary representation. Note that the notation in \cite{En} is a little bit different from ours. In particular, the group $P$ in \cite{En} is our $P'$. Moreover, one has $\theta_4=\Ind_{P'}^{\RG_2(k)}(\chi_3(0))-\Ind_{P'}^{\RG_2(k)}(\chi_1(0))+\theta_3$, see \cite[p.204]{En}. Since $\theta_0+\theta_1+\theta_2+\theta_3=\Ind_{P'}^{\RG_2(k)}(\chi_1(0))$, we can get, $ \theta_0+\theta_1+\theta_2+\theta_4=\Ind_{P'}^{\RG_2(k)}(\chi_3(0))$. Hence $\theta_4$ is a component of $\Ind_{P'}^{\RG_2(k)}(\chi_3(0))$ and thus not cuspidal. Here $\chi_3(0)$ is a character on $P'$. Furthermore, from the description in \cite[p.201]{En}, we have $\theta_6+\theta_9=\Ind_{P'}^{\RG_2(k)}(\chi_1(\frac{1}{2}(q-1)))$ and $\theta_7+\theta_8=\Ind_{P'}^{\RG_2(k)}(\chi_3(\frac{1}{2}(q-1)))$. Thus $\theta_6,\theta_7,\theta_8,\theta_9$ are not cuspidal either. Consequently, it suffices to consider the cases when $\Pi=\theta_5,\theta_{10},\theta_{11},\theta_{12}(k)$.

 \begin{table}
\begin{align*}
\begin{array}{|c|c|c|c|c|}
\hline
\textrm{Representative } t & \theta_5 & \theta_{10} &  \theta_{11}& \theta_{12}(k) \\
\hline
1  & q^6 & \frac{1}{6}q(q-1)^2(q^2-q+1) &\frac{1}{2}q(q-1)(q^3-1) &\frac{1}{3}q(q^2-1)^2 \\
\hline
\bx_{3\alpha+2\beta}(1)&0&\frac{1}{6}q(q-1)(2q-1) &-\frac{1}{2}q(q-1) &-\frac{1}{3}q(q^2-1)\\
\hline
\bx_{2\alpha+\beta}(r_3),\atop r_3\ne 0&0&\frac{1}{6}q(q-1)(2q-1)&-\frac{1}{2}q(q-1) &-\frac{1}{3}q(q^2-1)\\
\hline
\bx_{2\alpha+\beta}(r_3)\bx_{3\alpha+2\beta}(1), \atop r_3\ne 0 &0&-\frac{1}{6}q(3q-1)&-\frac{1}{2}q(q-1)& \frac{1}{3}q\\
\hline
\bx_\beta(1)&0& \frac{1}{6}q(q-1)(2q-1)& -\frac{1}{2}q(q-1)& -\frac{1}{3}q(q^2-1)\\
\hline
\bx_\beta(1)\bx_{2\alpha+\beta}(r_3), \atop r_3\in k^{\times,2}&0 &\frac{1}{6}q(q+1)&-\frac{1}{2}q(q-1) &\frac{1}{3}q(q+1)\\
 \hline
 \bx_\beta(1)\bx_{2\alpha+\beta}(r_3), \atop r_3\in \kappa k^{\times,2}&0&-\frac{1}{6}q(q-1)&\frac{1}{2}q(q+1)&-\frac{1}{3}q(q-1)\\
 \hline
 \bx_\beta(\kappa)&0&\frac{1}{6}q(q-1)(2q-1)&-\frac{1}{2}q(q-1)&-\frac{1}{3}q(q^2-1)\\
 \hline
 \bx_\beta(\kappa)\bx_{2\alpha+\beta}(r_3),\atop r_3\in k^{\times,2} &0&- \frac{1}{6}q(q-1)& \frac{1}{2}q(q+1)&-\frac{1}{3}q(q-1)\\
\hline
\bx_\beta(\kappa)\bx_{2\alpha+\beta}(r_3),\atop r_3\in \kappa k^{\times,2}&0&\frac{1}{6}q(q+1)&-\frac{1}{2}q(q-1)& \frac{1}{3}q(q+1)\\
\hline
\bx_\beta(1)\bx_{3\alpha+\beta}(r_4), \atop r_4\ne 0 &0&\frac{1}{6}q(q+1)&-\frac{1}{2}q(q-1)&\frac{1}{3}q(q+1)\\
\hline
 \bx_\beta(1)(0,0,r_3,r_4,0), r_3\ne 0,\atop  r_4\in k^\times/\wpair{\pm 1}, t(1,r_3,r_4)\in \wpair{\pm 1}&0 &-\frac{1}{6}q(3q-1)&-\frac{1}{2}q(q-1)& -\frac{1}{3}q\\
 \hline
 \bx_\beta(1)(0,0,r_3,r_4,0), r_3\ne 0,\atop  r_4\in k^\times/\wpair{\pm 1}, t(1,r_3,r_4)\in k^{\times}-\wpair{\pm 1}&0 &\frac{1}{6}q(q+1)&-\frac{1}{2}q(q-1)&\frac{1}{3}q(q+1)\\
  \hline
 \bx_\beta(1)(0,0,r_3,r_4,0), r_3\ne 0,\atop  r_4\in k^\times/\wpair{\pm 1}, t(1,r_3,r_4)\in \BF_{q^2}-\BF_{q}&0 &-\frac{1}{6}q(q-1)&\frac{1}{2}q(q+1)&-\frac{1}{3}q(q-1)\\
 \hline
 \bx_\beta(\kappa)\bx_{3\alpha+\beta}(r_4) \atop r_4\ne 0&0&\frac{1}{6}q(q+1)&-\frac{1}{2}q(q-1)&\frac{1}{3}q(q+1)\\
  \hline
 \bx_\beta(\kappa)(0,0,r_3,r_4,0), r_3\ne 0,\atop  r_4\in k^\times/\wpair{\pm 1}, t(\kappa,r_3,r_4)\in k^{\times}-\wpair{\pm 1}&0 &\frac{1}{6}q(q+1)&-\frac{1}{2}q(q-1)&\frac{1}{3}q(q+1)\\
  \hline
 \bx_\beta(\kappa)(0,0,r_3,r_4,0), r_3\ne 0,\atop  r_4\in k^\times/\wpair{\pm 1}, t(\kappa,r_3,r_4)\in \BF_{q^2}-\BF_{q}&0 &-\frac{1}{6}q(q-1)&\frac{1}{2}q(q+1)&-\frac{1}{3}q(q-1)\\
 \hline
  \bx_\beta(\kappa)(0,0,r_3,r_4,0), r_3\ne 0,\atop  r_4\in k^\times/\wpair{\pm 1}, t(\kappa,r_3,r_4)\in \wpair{\pm 1}&0 &-\frac{1}{6}q(3q-1)&-\frac{1}{2}q(q-1)& -\frac{1}{3}q\\
\hline
 h(-1,-1)&q^2&-\frac{1}{2}(q-1)^2&-\frac{1}{2}(q-1)^2& 0\\
\hline
 h(-1,-1)(0,0,r_3,0,0), \atop r_3\in k^\times&0&\frac{1}{2}(q-1)&\frac{1}{2}(q-1)& 0 \\
 \hline
h(-1,-1)\bx_\beta(1)&0&\frac{1}{2}(q-1)&\frac{1}{2}(q-1)&0\\
 \hline
h(-1,-1)\bx_\beta(1)\bx_{2\alpha+\beta}(r_3),\atop r_3\in k^{\times,2} &0&-\frac{1}{2}(q+1)&\frac{1}{2}(q-1) &0\\
 \hline
h(-1,-1)\bx_\beta(1)\bx_{2\alpha+\beta}(r_3),\atop r_3\in \kappa k^{\times,2} &0&\frac{1}{2}(q-1)&-\frac{1}{2}(q+1) &0\\
\hline
h(-1,-1)\bx_\beta(\kappa)&0&\frac{1}{2}(q-1)&\frac{1}{2}(q-1)&0\\
\hline
h(-1,-1)\bx_\beta(\kappa)\bx_{2\alpha+\beta}(r_3), \atop r_3\in k^\times &0&\frac{1}{2}(q-1)&-\frac{1}{2}(q+1) & 0\\
\hline
h(-1,-1)\bx_\beta(\kappa)\bx_{2\alpha+\beta}(r_3), \atop r_3\in \kappa k^\times &0&-\frac{1}{2}(q+1)&\frac{1}{2}(q-1) & 0\\
\hline
h(x,x^{-1}), \atop x\ne \pm 1&q &0&0&0\\
\hline
h(x,x^{-1})\bx_{2\alpha+\beta}(r_3), \atop x\ne \pm 1, r_3\ne 0&0 &0&0&0\\
 \hline
\begin{pmatrix}x&y\\ \kappa y&x \end{pmatrix}(0,0,r_3,r_4,r_5),\atop x\ne \pm1 &* &*&* & * \\
\hline
\end{array}
\end{align*}
\caption{Character table of $\theta_5,\theta_{10},\theta_{11},\theta_{12}(k)$}
\label{table: character table of theta when p=3}
\end{table}

Following \cite{En}, the character table of $\theta_5,\theta_{10},\theta_{11},\theta_{12}(k)$, is given in Table \ref{table: character table of theta when p=3}. Recall that $U$ is the maximal unipotent subgroup of $\RG_2(k)$. From the character table, we see that for $u\in U$, we have $\theta_5(u)\ne 0$ if and only if $u=1$. In particular, we have 
$$\sum_{u\in U}\theta_5(u)=\theta_5(1)=q^6.$$
This implies that $\theta_5 $ is not a cupidal character.\footnote{Recall that an irreducible character $\theta$ of a reductive group $H$ over a finite field is cuspidal if and only if for any proper parabolic subgroup $Q=M_QU_Q$ with Levi $M_Q$ and unipotent $U_Q$, one has $\sum_{u \in U_Q}\theta(uh)=0$ for all $h\in H$, see \cite[Corollary 9.1.2]{Ca} for example. In fact, from the character table \ref{table: character table of theta when p=3}, one can check that
$$\pair{\theta_5,I(\chi)\otimes\omega_\psi}=\left\{\begin{array}{lll} 1, & \textrm{ if } \epsilon \chi\ne 1, \\ 2, & \textrm{ if } \epsilon \chi=1. \end{array}\right.$$ Thus $\theta_5$ indeed does not satisfy the conclusion of Theorem \ref{thm: multiplicity one} if $\chi=\epsilon^{-1}=\epsilon$.} Thus it suffices to show that $\pair{\Pi, I(\chi)\otimes \omega_\psi}\le 1$, when $\Pi=\theta_{10}, \theta_{11}, \theta_{12}(k)$. We now compute $\pair{\theta_{10},I(\chi)\otimes\omega_\psi}$. Similar to Lemma \ref{lem: preparation for the computation of the pair when p=3}, the contribution of terms of the form $h(-1,-1)u$ to $\pair{\theta_{10},I(\chi)\otimes \omega_\psi}$ is zero. Thus we get 
\begin{align*}
    &\ |J|\pair{\theta_{10},I(\chi)\otimes\omega_\psi}\\
    =&\ \frac{1}{6}q(q-1)^2(q^2-q+1)q(q+1)+(q^2-1)q(q+1)\frac{1}{6}q(q-1)(2q-1)\\
    +&\ q(q+1)\frac{1}{6}q(q-1)(2q-1)\left(\sum_{r_3\in k^\times}\psi(r_3)\right)+ (q^2-1)q(q+1)(-\frac{1}{6})q(3q-1)\left(\sum_{r_3\in k^\times}\psi(r_3)\right)\\
    +&\ \frac{q^2-1}{2}q^2\sqrt{\epsilon_0 q}\left(\frac{1}{6}q(q+1)(A_1(1)-A_\kappa(1)) +\frac{1}{6}q(q-1)(A_1(1)-A_\kappa(1))\right)\\
   +&\ q^2(q^2-1)\sqrt{\epsilon_0 q}\left(-\frac{1}{6}q(3q-1)(D_1^0-D_\kappa^0)+\frac{1}{6}q(q+1)(D_1^1-D_\kappa^1)-\frac{1}{6}q(q-1)(D_1^2-D_\kappa^2) \right).
\end{align*}
Plugging the formula of $A_1(1)-A_\kappa(1)$ from Lemma \ref{lem: gauss sum} and and the formulas of  $D_1^i-D_\kappa^i$ for $i=0,1,2$, from Lemma \ref{lem: computation of D}, into the above equation, a simple calculation shows that $\pair{\theta_{10},I(\chi)\otimes\omega_\psi} =0$. Similarly, one can check that $\pair{ \theta_{11},I(\chi)\otimes \omega_\psi}=0 $ and $\pair{\theta_{12}(k),I(\chi)\otimes\omega_\psi}=0$. We omit the details.
\end{proof}

\section{Gamma factors for \texorpdfstring{$\RG_2(k)\times \GL_1(k)$}{Lg}}\label{sec: G2 GL1}
\subsection{Generic representations and Bessel functions}\label{subsec:Bessel}

Recall that $U$ is the maximal unipotent subgroup of $\RG_2(k)$. Let $\psi_U $ be the character of $U$ defined by 
$$\psi_U(\bx_\alpha(x)\bx_\beta(y)u')=\psi(x+y), x,y\in k, u'\in [U,U].$$
We will write $\psi_U$ as $\psi$ by abuse of notation. An irreducible representation $\Pi$ of $\RG_2(k)$ is called $\psi$-generic if 
$$\Hom_U(\Pi,\psi)\ne 0.$$
It is well-known that $\dim \Hom_U(\Pi,\psi)\le 1$. 

\begin{rmk}\label{rmk: dependence on psi}
{\rm A character $\psi'$ of $U$ is called generic if $\psi'|_{U_a}$ is nontrivial for $a=\alpha,\beta$. There is only one $T$-conjugacy class of generic characters of $U$. Thus if $\Pi$ is $\psi$-generic, then it is generic with respect to any generic character of $U$.}
\end{rmk}

Let $\Pi$ be an irreducible generic representation of $\RG_2(k)$. We fix a nonzero element $l\in \Hom_U(\Pi,\psi)$. For a vector $v$ in the space of $\Pi$, we consider the function 
$$W_v(g):=l(\Pi(g)v).$$
Then the space $\CW(\Pi,\psi):=\wpair{W_v: v\in \Pi}$ is called the $\psi$-Whittaker model of $\Pi$.

Let $\Pi(U,\psi)$ be the subspace of $\Pi$ generated by elements of form $\Pi(u)v-\psi(u)v$ for $u\in U,v\in \Pi$. Let $\Pi_{U,\psi}=\Pi/\Pi(U,\psi)$ be the twisted Jacquet module. By Jacquet-Langlands Lemma \cite[Lemma 2.33]{BZ}, an element $v\in \Pi(U,\psi)$ if and only if $\sum_{u\in U}\psi^{-1}(u)\Pi(u)v=0$. Note that for an irreducible generic representation $\Pi$, we have $\dim \Pi_{U,\psi}=1$.  For a vector $v\in \Pi,v\notin \Pi(U,\psi)$, we consider the vector
$$v_0=\frac{1}{|U|}\sum_{u\in U}\psi^{-1}(u)\Pi(u)v,$$
where $|U|$ is the number of elements in $U$. From the choice of $v$ and Jacquent-Langlands Lemma \cite[Lemma 2.33]{BZ}, we have $v_0\ne 0$. On the other hand, we have 
\begin{align}\label{eq: whittaker vector}
    \Pi(u)v_0&=\psi(u)v_0, \forall u\in U.
\end{align}
A vector which satisfies the above condition is called a {\it Whittaker vector}. Let $\pair{~,~}$ be a nontrivial $\RG_2(k)$-invariant bilinear form $\Pi\times \wt \Pi\ra \BC$, where $\wt\Pi$ is the dual representation of $\Pi$. Let $v_0$ be a Whittaker vector of $\Pi$. Then $\tilde v\mapsto \pair{v_0,\tilde v}$ defines a nonzero element in $\Hom_U(\wt \Pi,\psi^{-1})$. Conversely, a nonzero element in $\Hom_U(\wt\Pi,\psi^{-1})$ can be viewed as a Whittaker vector of $\Pi$ via the natural isomorphism $\wt{\wt\Pi}\cong \Pi$. By the uniqueness of Whittaker model, the Whittaker vectors are unique up to scalar. 

Let $\CB_\Pi\in \CW(\Pi,\psi)$ be the Whittaker function associated with a Whittaker vector, normalized by $\CB_\Pi(1)=1$. By the above discussion, the function $\CB_\Pi$ is unique. The function $\CB_\Pi$ is called the Bessel function of $\Pi$. 

\begin{lem}\label{lem: basic properties of Bessel function}
We have 
 $$\CB_\Pi(u_1gu_2)=\psi(u_1u_2)\CB_\Pi(g), \forall u_1,u_2\in U, g\in \RG_2(k).$$
\end{lem}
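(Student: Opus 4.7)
The plan is to simply unwind the definition $\CB_\Pi(g) = l(\Pi(g)v_0)$, where $v_0$ is a fixed Whittaker vector of $\Pi$ (so $\Pi(u)v_0 = \psi(u)v_0$ for all $u\in U$) and $l$ is a realization of the Whittaker functional in $\Hom_U(\wt\Pi,\psi^{-1})$, identified via the pairing $\pair{\cdot,\cdot}$ with the functional $\wt{v}\mapsto \pair{v_0,\wt v}$. The two sides of the claim correspond to the two defining $(U,\psi)$-equivariance properties: left-equivariance comes from $l \in \Hom_U(\Pi,\psi)$, and right-equivariance comes from the Whittaker-vector relation satisfied by $v_0$.

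Concretely, I would first verify the right-equivariance. For any $u_2\in U$ and any $g\in\RG_2(k)$,
\begin{equation*}
\CB_\Pi(gu_2) \;=\; l\bigl(\Pi(g)\Pi(u_2)v_0\bigr) \;=\; l\bigl(\Pi(g)\psi(u_2)v_0\bigr) \;=\; \psi(u_2)\,\CB_\Pi(g),
\end{equation*}
using $\Pi(u_2)v_0=\psi(u_2)v_0$. Next I would verify the left-equivariance: for $u_1\in U$,
\begin{equation*}
\CB_\Pi(u_1 g) \;=\; l\bigl(\Pi(u_1)\Pi(g)v_0\bigr) \;=\; \psi(u_1)\,l\bigl(\Pi(g)v_0\bigr) \;=\; \psi(u_1)\,\CB_\Pi(g),
\end{equation*}
where the middle equality is the defining property $l\circ\Pi(u_1)=\psi(u_1)l$ of the Whittaker functional.

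Combining the two equivariance relations and using that $\psi\colon U\to\BC^\times$ is a character (hence a homomorphism, giving $\psi(u_1)\psi(u_2)=\psi(u_1 u_2)$), one obtains
\begin{equation*}
\CB_\Pi(u_1 g u_2)\;=\;\psi(u_1)\psi(u_2)\,\CB_\Pi(g)\;=\;\psi(u_1 u_2)\,\CB_\Pi(g),
\end{equation*}
which is the claim.

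There is essentially no obstacle here: the statement is a formal consequence of the existence and definition of the Bessel function, both of which were established just before the lemma (existence and uniqueness of $v_0$ up to scalar via Jacquet--Langlands Lemma and the uniqueness of the Whittaker model). The only very minor point to flag is that the normalization $\CB_\Pi(1)=1$ plays no role in the proof of this equivariance identity; it is used only to make the function $\CB_\Pi$ canonical in the first place.
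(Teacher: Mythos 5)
Your proof is correct and is exactly the intended unwinding of the definition $\CB_\Pi(g)=l(\Pi(g)v_0)$; the paper's proof is simply "This is a direct consequence of the definition," and your two equivariance computations spell that out. One small slip in the setup paragraph: you describe $l$ as living in $\Hom_U(\wt\Pi,\psi^{-1})$ and being identified with $\wt v\mapsto\pair{v_0,\wt v}$, but in the paper $l$ is the fixed element of $\Hom_U(\Pi,\psi)$; it is the Whittaker \emph{vector} $v_0$ that corresponds (via the pairing) to an element of $\Hom_U(\wt\Pi,\psi^{-1})$. Your actual calculation then uses the correct relation $l\circ\Pi(u_1)=\psi(u_1)l$, so the argument itself is sound -- only the prefatory sentence misattributes the duality.
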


\begin{proof}
This a direct consequence of the definition of $\CB_\Pi$. 
\end{proof}

\begin{lem}\label{lem: further properties of Bessel functions}
\begin{enumerate}
\item Let $t=h(a,b)\in T$. If $\CB_\Pi(t)\ne 0$, then $a=b=1$.
\item If $r\ne 0$, then $\CB_\Pi(h(a,1)\bx_{-\beta}(r))=0$ for all $a\in k^\times$.
\end{enumerate}
\end{lem}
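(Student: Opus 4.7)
The plan for (1) is to exploit the equivariance in Lemma \ref{lem: basic properties of Bessel function} with respect to conjugation by the torus on the simple root subgroups. From $h(t_1,t_2) = h_\alpha(t_1 t_2)h_\beta(t_1^2 t_2)$ and the Cartan integers $\pair{\alpha,\alpha^\vee}=2$, $\pair{\alpha,\beta^\vee}=-1$, $\pair{\beta,\alpha^\vee}=-3$, $\pair{\beta,\beta^\vee}=2$, one computes $\alpha(h(a,b))=b$ and $\beta(h(a,b))=ab^{-1}$. For $\gamma\in\{\alpha,\beta\}$, the identity $\bx_\gamma(x)\,t = t\,\bx_\gamma(\gamma(t)^{-1}x)$ together with Lemma \ref{lem: basic properties of Bessel function} gives $\psi(x)\CB_\Pi(t) = \psi(\gamma(t)^{-1}x)\CB_\Pi(t)$ for every $x\in k$. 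If $\CB_\Pi(t)\ne 0$, nontriviality of $\psi$ forces $\gamma(t)=1$ for both $\gamma=\alpha$ and $\gamma=\beta$; hence $b=1$ and $ab^{-1}=1$, so $a=b=1$.

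For (2), the plan is to first reduce the assertion to the vanishing of $\CB_\Pi$ on every element of the form $tw_\beta$ with $t\in T$. Inside the $\SL_2$ subgroup generated by $U_{\pm\beta}$, the standard Bruhat identity yields
\[
\bx_{-\beta}(r) = \bx_\beta(r^{-1})\,h_\beta(-r^{-1})\,w_\beta\,\bx_\beta(r^{-1}).
\]
Combining the torus relations $h(a,1)\bx_\beta(y)=\bx_\beta(ay)h(a,1)$ and $h(a,1)h_\beta(s)=h(as,s^{-1})$, this rearranges to
\[
h(a,1)\bx_{-\beta}(r) = \bx_\beta(ar^{-1})\,h(-ar^{-1},-r)\,w_\beta\,\bx_\beta(r^{-1}).
\]
Applying Lemma \ref{lem: basic properties of Bessel function} then gives $\CB_\Pi(h(a,1)\bx_{-\beta}(r)) = \psi((a+1)/r)\,\CB_\Pi\bigl(h(-ar^{-1},-r)\,w_\beta\bigr)$, so it suffices to show $\CB_\Pi(tw_\beta)=0$ for all $t\in T$.

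For this last step, the key observation is that $s_\beta(\alpha)=\alpha+\beta$ is not a simple root, so the Chevalley relation gives $\bx_\alpha(x)w_\beta = w_\beta\,\bx_{\alpha+\beta}(\pm x)$. Combined with $\bx_\alpha(x)t = t\,\bx_\alpha(\alpha(t)^{-1}x)$, one obtains
\[
\bx_\alpha(x)\cdot tw_\beta = tw_\beta\cdot \bx_{\alpha+\beta}(\pm\alpha(t)^{-1}x).
\]
Applying Lemma \ref{lem: basic properties of Bessel function} and using that $\psi$ is trivial on $U_{\alpha+\beta}\subset[U,U]$ yields $\psi(x)\CB_\Pi(tw_\beta) = \CB_\Pi(tw_\beta)$ for every $x\in k$; nontriviality of $\psi$ then forces $\CB_\Pi(tw_\beta)=0$. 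No step is conceptually hard; the only care required is the torus and Bruhat bookkeeping in the middle paragraph.
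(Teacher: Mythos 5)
Your proof of part (1) is essentially the paper's argument (the only cosmetic difference is multiplying the torus element on the other side before applying the equivariance). Part (2), however, takes a genuinely different route. The paper argues directly: it multiplies $h(a,1)\bx_{-\beta}(r)$ on the right by $\bx_{\alpha+\beta}(s)$ and uses the $\RG_2$ commutator relations to push $\bx_{\alpha+\beta}(s)$ through $\bx_{-\beta}(r)$, producing a factor $\bx_\alpha(-rs)$ on the left; since $\psi_U$ sees this factor but is blind to the rest, the equivariance of $\CB_\Pi$ forces $\psi(-rs)=1$ for all $s$, hence $r=0$. You instead use the $\SL_2$ Bruhat identity to factor $h(a,1)\bx_{-\beta}(r)$ as $\bx_\beta(ar^{-1})\,h(-ar^{-1},-r)\,w_\beta\,\bx_\beta(r^{-1})$, absorb the two $U_\beta$-factors via Lemma \ref{lem: basic properties of Bessel function}, and reduce to showing $\CB_\Pi(tw_\beta)=0$ for $t\in T$; you then prove that by noting $s_\beta(\alpha)=\alpha+\beta$ is positive but not simple, which is exactly the special case $w=s_\beta$ of the support lemma (Lemma \ref{lem: weyl element which support bessel function}) that the paper proves later in Section 7. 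Your route is more structural — it exposes that the vanishing is an instance of the general fact that Bessel functions vanish off the cells $BwB$ with $w\in\RB(\RG_2)$, and would generalize immediately to other cells — at the cost of the Bruhat and torus bookkeeping. The paper's route avoids the Bruhat decomposition but is an ad hoc computation that does not highlight the underlying structure. Both are correct; the computations you carried out (the Bruhat factorization, $h(a,1)h_\beta(-r^{-1})=h(-ar^{-1},-r)$, and the values $\alpha(h(a,b))=b$, $\beta(h(a,b))=a/b$) all check out.
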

\begin{proof}
(1) Let $a$ be the simple root $\alpha$ or $\beta$. First, we have 
$$t\bx_a(r)=\bx_a(a(t)r)t, \forall r\in k.$$
Then, by Lemma \ref{lem: basic properties of Bessel function}, we have 
$$\CB_\Pi(t)\psi(r)=\psi(a(t)r)\CB_\Pi(t).$$
Thus if $B_\Pi(t)\ne 0$, we have $\psi(r)=\psi(a(t)r)$ for all $r\in k$. Since $\psi$ is a nontrivial character, we must have $a(t)=1$. Since $\alpha(t)=b,\beta(t)=a/b$, we get $a=b=1$ if $B_\Pi(t)\ne 0$.

(2) Take $s\in k$. We have $\bx_{-\beta}(r)=w_\beta \bx_\beta(-r)w_\beta^{-1}$ and $\bx_{\alpha+\beta}(s)=w_\beta \bx_\alpha(s)w_\beta^{-1}$. Thus from the commutator relations, we have 
\begin{align*}
    \bx_{-\beta}(r)\bx_{\alpha+\beta}(s)&=w_\beta \bx_\beta(-r)\bx_\alpha(s)w_\beta^{-1}\\
    &=w_\beta u_1 \bx_{\alpha+\beta}(-rs)\bx_\alpha(s)\bx_\beta(-r)w_\beta^{-1}\\
    &=u_2\bx_\alpha(-rs) \bx_{\alpha+\beta}(s)\bx_{-\beta}(r),
\end{align*}
where $u_2=w_\beta u_1w_\beta^{-1}\in [U,U]$. Thus, we get 
$$h(a,1)\bx_{-\beta}(r)\bx_{\alpha+\beta}(s)=u_3\bx_\alpha(-rs)\bx_{\alpha+\beta}(as)h(a,1)\bx_{-\beta}(r),$$
where $u_3=h(a,1)u_2h(a^{-1},1)$. Note that $\psi(\bx_{\alpha+\beta}(s))=1$ and 
$\psi(u_2\bx_\alpha(-rs) \bx_{\alpha+\beta}(as))=\psi(-rs)$. By Lemma \ref{lem: basic properties of Bessel function}, we get 
$$ \CB_\Pi(h(a,1)\bx_{-\beta}(r))=\psi(-rs)\CB_\Pi(h(a,1)\bx_{-\beta}(r)),\forall s\in k.$$ Thus if $\CB_\Pi(h(a,1)\bx_{-\beta}(r))\ne 0$, we have $\psi(-rs)=1$ for all $s\in k$. Since $\psi$ is nontrivial, we then get $r=0$.
\end{proof}

\subsection{Ginzburg's local zeta integral}
 
Let $\Pi$ be an irreducible generic representation of $\RG_2(k)$ and let $\chi$ be a character of $k^\times$. For $W\in \CW(\Pi,\psi),f\in I(\chi),\phi\in \CS(k)$, we consider the following sum
\begin{align}\label{eq: defn of ginzburg's local zeta integral}
Z(W,\phi,f)&=\sum_{g\in N_{\SL_2}\backslash \SL_2(k)}\sum_{x,y\in k}W(\bx_{-\beta}(y)\bx_{-(\alpha+\beta)}(x)j(g))(\omega_{\psi^{-1}}(g)\phi)(x)f(g)\\
&=\sum_{g\in N_{\SL_2}\backslash \SL_2(k)}\sum_{x,y\in k}W(j(\bx_{3\alpha+\beta}(y)\bx_\alpha(x)g))(\omega_{\psi^{-1}}(g)\phi)(x)f(g),\nonumber
\end{align}
where we embed $\SL_2(k)$ in $\RG_2(k)$ by embedding it in the Levi subgroup of $P$, and 
$j(g)=w_\beta w_\alpha gw_\alpha^{-1}w_\beta^{-1}$ for $g\in \RG_2(k)$. One can easily check that the above sum on the quotient $N_{\SL_2}\backslash \SL_2(k)$ is well-defined using the commutator relations of $\RG_2$. The above sum is the finite fields analogue of Ginzburg's local zeta integral \cite{Gi}.
\begin{lem}\label{lem: nonvanishing of Ginzburg's local zeta integral}
Let $ \delta_0\in \CS(k)$ be the function $\delta_0(x)=0$ if $x\ne 0$ and $\delta_0(0)=1$. Let $f_0\in I(\chi)$ be the function such that $\supp(f_0)\subset N_{\SL_2}A_{\SL_2}$ and $f_0(1)=1$. Then 
$$Z(\CB_\Pi,\delta_0,f_0)=1.$$
\end{lem}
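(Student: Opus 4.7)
The plan is to exploit the explicit form of $f_0$ and $\delta_0$ to collapse the double sum to a single nonzero term. Since $f_0$ is supported on $B_{\SL_2} = N_{\SL_2} A_{\SL_2}$, in the outer sum over $N_{\SL_2}\bs \SL_2(k)$ only the representatives $t_a := h(a, a^{-1})$, $a \in k^\times$, contribute, and $f_0(t_a) = \chi(a)$. Applying the torus formula in \eqref{eq: Weil representation of SL2} (with $\psi$ replaced by $\psi^{-1}$; note that $\epsilon$ is the Legendre symbol, independent of the additive character) gives $(\omega_{\psi^{-1}}(t_a)\delta_0)(x) = \epsilon(a)\delta_0(ax) = \epsilon(a)\delta_0(x)$, so the inner sum over $x$ collapses to $x = 0$. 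This reduces the zeta integral to
$$Z(\CB_\Pi, \delta_0, f_0) = \sum_{a \in k^\times} \sum_{y \in k} \CB_\Pi\bigl(j(\bx_{3\alpha+\beta}(y)\, t_a)\bigr)\,\epsilon(a)\chi(a).$$

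The main computation is to identify $j(\bx_{3\alpha+\beta}(y)\, t_a)$ explicitly. The torus action on root subgroups gives $\bx_{3\alpha+\beta}(y)\, t_a = t_a\, \bx_{3\alpha+\beta}(ay)$, since one computes $(3\alpha+\beta)(t_a) = a^{-1}$ using $t_a = h_\beta(a)$ and the $G_2$ Cartan pairings. The conjugation $j = \Ad(w_\beta w_\alpha)$ corresponds to the Weyl element $s_\beta s_\alpha$; I would verify directly that $s_\beta s_\alpha(3\alpha+\beta) = -\beta$, yielding $j(\bx_{3\alpha+\beta}(ay)) = \bx_{-\beta}(\pm ay)$, and that $s_\beta s_\alpha(\beta) = 3\alpha+2\beta$ with $(3\alpha+2\beta)^\vee = \alpha^\vee + 2\beta^\vee$. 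Combined with the defining relation $h(t_1, t_2) = h_\alpha(t_1 t_2) h_\beta(t_1^2 t_2)$, this gives $j(t_a) = h_{3\alpha+2\beta}(a) = h(a, 1)$. Assembling,
$$j(\bx_{3\alpha+\beta}(y)\, t_a) = h(a, 1)\, \bx_{-\beta}(\pm ay).$$

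The conclusion then follows immediately from Lemma~\ref{lem: further properties of Bessel functions}: part (2) forces $ay = 0$, so only $y = 0$ contributes; then part (1) applied to $\CB_\Pi(h(a, 1))$ forces $a = 1$. The single surviving term is $\CB_\Pi(1)\,\epsilon(1)\,\chi(1) = 1$, proving the claim. The only step requiring care is the bookkeeping of Chevalley sign choices and the identification $h_{3\alpha+2\beta}(a) = h(a,1)$ in terms of the parametrization $h(\cdot,\cdot)$; fortunately, neither the sign in $\bx_{-\beta}(\pm ay)$ nor the fine details of the torus element matter beyond the vanishing pattern encoded in Lemma~\ref{lem: further properties of Bessel functions}, so these delicate points do not propagate into the final answer.
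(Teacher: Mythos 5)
Your proof is correct and follows essentially the same route as the paper: restrict to the torus via the support of $f_0$, use the Weil representation formula together with $\delta_0$ to collapse the $x$-sum to $x=0$, and then invoke Lemma~\ref{lem: further properties of Bessel functions}(2) and (1) to collapse the remaining sums to $y=0$ and $a=1$. The only cosmetic difference is that you start from the second expression of $Z$ in~\eqref{eq: defn of ginzburg's local zeta integral} (with $j$ applied to $\bx_{3\alpha+\beta}(y)\bx_\alpha(x)g$), which requires you to explicitly carry out the conjugation $j(\bx_{3\alpha+\beta}(y)t_a)=h(a,1)\bx_{-\beta}(\pm ay)$, whereas the paper starts from the first expression and therefore needs only the identity $j(t(a))=h(a,1)$; your extra Weyl-group and coroot bookkeeping checks out.
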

\begin{proof}
We have $\SL_2(k)=N_{\SL_2}A_{\SL_2}\coprod N_{\SL_2}A_{\SL_2}w^1N_{\SL_2}$, where $w^1=\bpm &1\\-1&\epm$. Since $f_0$ is zero on $ N_{\SL_2}A_{\SL_2}w^1N_{\SL_2}$, we have 
$$Z(\CB_\Pi,\delta_0,f_0)=\sum_{a\in k^\times}\sum_{x,y\in k}\CB_\Pi(\bx_{-\beta}(y)\bx_{-(\alpha+\beta)}(x)j(t(a)))\omega_{\psi^{-1}}(t(a))\delta_0(x) \chi(a),$$
where $t(a)=\diag(a,a^{-1})\in \SL_2(k)$. Note that $j(t(a))=h(a,1)$. On the other hand, we have $\omega_{\psi^{-1}}(t(a))\delta_0(x)=\epsilon(a)\delta_0(ax)$, which is zero if $x\ne 0$ since $a\ne 0$. Thus we have 
$$Z(\CB_\Pi,\delta_0,f_0)=\sum_{a\in k^\times}\sum_{y\in k}\CB_\Pi(\bx_{-\beta}(y)h(a,1))\epsilon(a)\chi(a).$$
By Lemma \ref{lem: further properties of Bessel functions} (2) and (1), we have 
\begin{align*}
    Z(\CB_\Pi,\delta_0,f_0)&=\sum_{a\in k^\times}\CB_\Pi(h(a,1))\epsilon(a)\chi(a)\\
    &=\CB_\Pi(1)\\
    &=1.
\end{align*}
This completes the proof of the Lemma.
\end{proof}

\begin{lem}\label{lem: invariance property of Ginzburg's local zeta integral}
The trilinear form $(W,\phi,f)\mapsto Z(W,\phi,f)$ on $\CW(\Pi,\psi)\times \omega_{\psi^{-1}}\times I(\chi)$ satisfies the property
\begin{equation}\label{eq: invariance} Z((\Pi(j(h)))W,\omega_{\psi^{-1}}(\ov{\pr}(h))\phi,r(\ov{\pr}(h))f)=Z(W,\phi,f), \forall h\in J,
\end{equation}
where $r$ denotes the right translation action, and for $(g,h) \in \SL_2(k)\ltimes \sH$, $r(g,h)f:=r(g)f$. Recall that $\ov{\pr}$ is the projection map $J\ra \SL_2(k)\ltimes \sH$ in $\S$$\ref{subsec: G2}$.
\end{lem}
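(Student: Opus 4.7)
The plan is to verify the asserted identity for $h$ in a generating set of $J = \SL_2(k)\ltimes V$, handling the two factors separately; since both sides are multiplicative in $h$, it suffices to check invariance under $V$ and under $\SL_2(k)\subset J$ independently.

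First I would handle $h=v\in V$. Here $\ov{\pr}(v)$ lies in $\{1\}\times\sH$, so $r(\ov{\pr}(v))f=f$ and only the first two arguments of $Z$ are affected. Writing $v$ in the standard order $\bx_\alpha(r_1)\bx_{\alpha+\beta}(r_2)\bx_{2\alpha+\beta}(r_3)\bx_{3\alpha+\beta}(r_4)\bx_{3\alpha+2\beta}(r_5)$, I would verify invariance on each one-parameter subgroup in turn. The central generators $\bx_{2\alpha+\beta}(r_3)$, $\bx_{3\alpha+\beta}(r_4)$, $\bx_{3\alpha+2\beta}(r_5)$ of $V$ are mapped by $j$ into $U$, so after commuting across $j(\bx_{3\alpha+\beta}(y)\bx_\alpha(x)g)$ via the Chevalley relations, the right-translation of $W$ becomes a left-translation by an element of $U$; by Lemma \ref{lem: basic properties of Bessel function} this produces $\psi$-scalars that cancel against the scalar $\psi(r_3)$ and related phase factors arising from the Weil representation formulas \eqref{eq: Weil representation of J}. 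For the remaining generators $\bx_\alpha(r_1)$ and $\bx_{\alpha+\beta}(r_2)$, the Chevalley relations allow one to rewrite $j(\bx_{3\alpha+\beta}(y)\bx_\alpha(x)g)\cdot j(v)$ in the form $u\cdot j(\bx_{3\alpha+\beta}(y')\bx_\alpha(x')g)$ for some $u\in U$ and some affine change of variables $(x,y)\mapsto(x',y')$; the corresponding substitution in the $(x,y)$-sum produces precisely the shift $\xi\mapsto \xi+r_1$ and the phase $\psi(-2\xi r_2)$ dictated by \eqref{eq: Weil representation of J}.

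Next I would handle $h=h_0\in \SL_2(k)\subset J$. By the Bruhat decomposition $\SL_2(k)=B_{\SL_2}\cup B_{\SL_2}w^1B_{\SL_2}$, where $w^1=\bpm &1\\-1&\epm$, it suffices to verify invariance for the generators $h(a,a^{-1})$, $\bx_\beta(b)$, and $w^1$. For the torus and unipotent generators, a change of summation variable $g\mapsto g\cdot d_1h_0d_1$ in the sum over $N_{\SL_2}\backslash \SL_2(k)$ transfers the right-translation on $f$ to a factor on the left of $g$; this factor can then be pushed through $\bx_{3\alpha+\beta}(y)\bx_\alpha(x)$ using the Chevalley relations, and the resulting $\psi$-characters match the Weil-representation action on $\phi$. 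Here the $d_1$-conjugation in the definition of $\ov{\pr}$ is precisely what makes the signs compatible with \eqref{eq: Weil representation of J} versus the standard formulas \eqref{eq: Weil representation of SL2}. For $h_0=w^1$, the last formula in \eqref{eq: Weil representation of SL2} shows that $\omega_{\psi^{-1}}(\ov{\pr}(w^1))$ acts as a finite Fourier transform (normalized by a Weil constant); combining this Fourier transform with the translation of $f$ by $\ov{\pr}(w^1)$ and a change of summation variable in the $y$-variable, which corresponds to the root $3\alpha+\beta$ that $j$ sends to $-\beta$, collapses the expression back to $Z(W,\phi,f)$.

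The main technical obstacle is the careful bookkeeping of the sign and structure constants from the Chevalley commutator relations of $\RG_2$ together with the $d_1$-conjugation twist built into the definition of $\ov{\pr}$. These two sources of signs are designed to match so that the scalar factors produced by the Weil-representation formulas \eqref{eq: Weil representation of J} absorb the commutator contributions exactly; once this matching is verified on each of the generators of $V$ and $\SL_2(k)$ above, the full invariance \eqref{eq: invariance} follows by multiplicativity.
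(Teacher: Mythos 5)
Your overall strategy — reduce to the generating set $\SL_2(k)$ and $V$ of $J$ using the homomorphism property of $h\mapsto(\Pi(j(h)),\omega_{\psi^{-1}}(\ov{\pr}(h)),r(\ov{\pr}(h)))$, then verify invariance on each — matches the paper's proof in outline. The paper treats all of $V$ at once (writing $h' = ghg^{-1}\in V$ and computing both the Weil-representation phase $\psi^{-1}(s_3'-2s_1's_2'-2xs_2')(\omega_{\psi^{-1}}(g)\phi)(x+s_1')$ and the Whittaker phase $\psi(s_3'-2(x+s_1')s_2')$ directly), and dismisses $\SL_2(k)$ as a simple change of variables; you instead further split $V$ into one-parameter subgroups and $\SL_2(k)$ via the Bruhat decomposition. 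Neither refinement changes the essential mechanism.

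There is, however, one concrete error in your $V$-step. You claim that the generators $\bx_{2\alpha+\beta}(r_3)$, $\bx_{3\alpha+\beta}(r_4)$, $\bx_{3\alpha+2\beta}(r_5)$ are all mapped by $j$ into $U$, so that the right-translation of $W$ turns into a left $U$-translation picking up a $\psi_U$-phase. This is false for the middle one: $j$ is conjugation by $w_\beta w_\alpha$, and $s_\beta s_\alpha(3\alpha+\beta)=s_\beta(\beta)=-\beta$, so $j(U_{3\alpha+\beta})=U_{-\beta}\not\subset U$. (Indeed $s_\beta s_\alpha(2\alpha+\beta)=\alpha$ and $s_\beta s_\alpha(3\alpha+2\beta)=3\alpha+\beta$ do land in $U$, but $3\alpha+\beta$ does not.) This is precisely why the original form of the zeta sum includes the factor $\bx_{-\beta}(y)$: the $\bx_{3\alpha+\beta}(r_4)$-contribution must be absorbed by a shift in the summation variable $y$, not by $\psi_U$-equivariance of $W$ under left $U$-translation. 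The paper's treatment makes this explicit via the quantity $s_4'''$ appearing as $\bx_{3\alpha+\beta}(y+s_4''')$, which is then removed by the change of variable $y\mapsto y-s_4'''$. Your argument as written therefore does not close the case $h=\bx_{3\alpha+\beta}(r_4)$ correctly, though the fix is routine once the $y$-shift is invoked instead of a $U$-phase.
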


\begin{proof}
Note that for $h\in \SL_2(k)$, Eq.\eqref{eq: invariance} follows from a simple changing of variables. 
Hence, we only need to check formula \eqref{eq: invariance} when $h\in V$. Suppose that $h=(s_1,s_2,s_3,s_4,s_5)$. Since 
$\ov \pr(h)=(1,(s_1,s_2, s_3-s_1s_2))\in \SL_2(k)\ltimes \sH$,
$r(\ov \pr(h))f=f$. 
For $g=\begin{pmatrix}a&b\\ c&d \end{pmatrix}$, we have 
$$h':=ghg^{-1}=(s_1',s_2',s_3',s_4',s_5'), $$
with $s_1'=ds_1+cs_2, s_2'=bs_1+as_2, s_3'-s_1's_2'=s_3-s_1s_2$. Then, $\ov \pr(h')=(1,(s'_1,s'_2, s'_3-s'_1s'_2))\in \SL_2(k)\ltimes \sH$. Thus 
\begin{equation}\label{eq: a formula of Weil} \omega_{\psi^{-1}}(g \ov\pr(h))\phi(x)=\omega_{\psi^{-1}}( \ov \pr(h') g)\phi(x)=\psi^{-1}(s_3'-2s_1's_2'-2xs_2')(\omega_{\psi^{-1}}(g)\phi)(x+s_1'),\end{equation}
by \eqref{eq: Weil representation of J}. 

Next, we compute $W(j(\bx_{3\alpha+\beta}(r_1) \bx_\alpha(r_2) g h ) )$. Using the commutator relations, see \cite[p.192]{Ch}, one can check that: 
\begin{align*}
&\ \quad \bx_{3\alpha+\beta}(y) \bx_\alpha(x) gh\\
&=\bx_{3\alpha+\beta}(y) \bx_\alpha(x) h'g\\
&=\bx_{3\alpha+2\beta}(s_5')\bx_{3\alpha+\beta}(y+s_4') \bx_{\alpha}(x+s_1')\bx_{\alpha+\beta}(s_2')\bx_{2\alpha+\beta}(s_3')g\\
&=\bx_{3\alpha+2\beta}(s_5'')\bx_{3\alpha+\beta}(y+s_4'')\bx_{\alpha+\beta}(s_2')\bx_{\alpha}(x+s_1')\bx_{2\alpha+\beta}(s_3'-2(r_2+s_1')s_2')g\\
&=\bx_{3\alpha+2\beta}(s_5'')\bx_{3\alpha+\beta}(y+s_4''')\bx_{\alpha+\beta}(s_2')\bx_{2\alpha+\beta}(s_3'-2(r_2+s_1')s_2') \bx_{\alpha}(x+s_1')g\\
&=\bx_{3\alpha+2\beta}(s_5'') \bx_{\alpha+\beta}(s_2') \bx_{2\alpha+\beta}(s_3'-2(r_2+s_1')s_2') \bx_{3\alpha+\beta}(y+s_4''') \bx_{\alpha}(x+s_1')g.
\end{align*}
where $s_5''=s_5'+3(x+s_1')(s_2')^2, s_4''=s_4'+3(x+s_1')^2s_2'$, and $s_4'''=s_4''+3(x+s_1')(s_3'-2(x+s_1')s_2')$. Note that $j(\bx_{3\alpha+2\beta}(s_5''))\in U_{3\alpha+\beta}$, $j(\bx_{\alpha+\beta}(s_2'))\in U_{2\alpha+\beta}$, $ j( \bx_{2\alpha+\beta}( s_3'-2(x+s_1')s_2'))=\bx_\alpha(s_3'-2(x+s_1')s_2')$ and $W\in \CW(\Pi, \psi)$, we get
\begin{equation}\label{eq: a formula of Whittaker} W(j(  \bx_{3\alpha+\beta}(y) \bx_\alpha(x) gh))=\psi(s_3'-2(x+s_1')s_2') W(j (\bx_{3\alpha+\beta}(y+s_4''') \bx_{\alpha}(x+s_1')g)).\end{equation}
Plugging \eqref{eq: a formula of Weil} and \eqref{eq: a formula of Whittaker} into the left hand side of \eqref{eq: invariance}, we get
\begin{align*}
&\quad\Psi( \Pi(j(h))W,  \omega_{\psi^{-1}}(\ov \pr(h))\phi, r(\ov \pr(h))f)\\
&=\sum_{g\in N_{\SL_2}\setminus \SL_2(k) }\sum_{x,y\in k} W(j (\bx_{3\alpha+\beta}(y+s_4''') \bx_{\alpha}(x+s_1')g)) (\omega_\psi(g)\phi)(x+s_1')f(g).
\end{align*}
By changing variables, we get 
$$ \Psi( \Pi(j(h))W,  \omega_{\psi^{-1}}(\ov \pr(h))\phi, r(\ov \pr(h))f)=\Psi(W,\phi,f).$$
The completes the proof of the lemma.
\end{proof}
\begin{cor}\label{cor: generic big than one}
If $\Pi$ is an irreducible generic representation of $\RG_2(k)$, then we have 
$$\dim\Hom_J(\Pi,\omega_{\psi}\otimes I(\chi))\ge 1.$$
\end{cor}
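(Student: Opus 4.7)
The plan is to extract the desired nonzero intertwining operator from the Ginzburg zeta integral $Z$ using the two preceding lemmas. By Lemma~\ref{lem: nonvanishing of Ginzburg's local zeta integral}, $Z$ is not identically zero on $\CW(\Pi,\psi) \otimes \omega_{\psi^{-1}} \otimes I(\chi)$, and by Lemma~\ref{lem: invariance property of Ginzburg's local zeta integral}, it is $J$-invariant when $J$ acts on $\CW(\Pi,\psi)$ via the twisted embedding $h \mapsto \Pi(j(h))$, with $j(h) = w_\beta w_\alpha h w_\alpha^{-1} w_\beta^{-1}$, and on $\omega_{\psi^{-1}}$ and $I(\chi)$ via $\ov{\pr}$. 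Thus $Z$ defines a nonzero element of $\Hom_J(\Pi^j \otimes \omega_{\psi^{-1}} \otimes I(\chi),\, \BC)$, where $\Pi^j$ denotes the $J$-representation on the underlying space of $\Pi$ with action $h \mapsto \Pi(j(h))$.

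The key observation is that $j$ is an inner automorphism of $\RG_2(k)$, namely conjugation by $w := w_\beta w_\alpha$. Hence $\Pi(w): \Pi|_J \to \Pi^j$ is a $J$-equivariant isomorphism (for $h \in J$ one has $\Pi(w)\Pi(h) = \Pi(wh) = \Pi(whw^{-1})\Pi(w) = \Pi^j(h)\Pi(w)$), so a nonzero $J$-invariant form on $\Pi^j \otimes \omega_{\psi^{-1}} \otimes I(\chi)$ yields a nonzero $J$-invariant form on $\Pi|_J \otimes \omega_{\psi^{-1}} \otimes I(\chi)$. Combining this with the standard duality
$$\Hom_J(\Pi \otimes \omega_{\psi^{-1}} \otimes I(\chi),\, \BC) \cong \Hom_J(\Pi,\, \omega_{\psi^{-1}}^\vee \otimes I(\chi)^\vee)$$
and the identifications $\omega_{\psi^{-1}}^\vee \cong \omega_\psi$ and $I(\chi)^\vee \cong I(\chi^{-1})$, we obtain a nonzero element of $\Hom_J(\Pi,\, \omega_\psi \otimes I(\chi^{-1}))$. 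Since the corollary is asserted for all characters $\chi$, it suffices to run the construction with $\chi$ replaced by $\chi^{-1}$; this yields $\dim \Hom_J(\Pi, \omega_\psi \otimes I(\chi)) \ge 1$, as desired.

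The argument is formal given the two preceding lemmas, so I do not expect a substantive obstacle. The one point requiring care is that $j$ does not preserve $J$ as a subgroup (it sends $J$ to a conjugate subgroup of $\RG_2(k)$), but because $j$ is \emph{inner} in $\RG_2(k)$, the twist becomes invisible on $\Pi$ after applying $\Pi(w)$, which is exactly what allows the duality argument to go through.
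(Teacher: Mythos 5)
Your proof is correct and follows the paper's own route: both extract a nonzero $J$-invariant trilinear form from the Ginzburg zeta integral via Lemmas \ref{lem: nonvanishing of Ginzburg's local zeta integral} and \ref{lem: invariance property of Ginzburg's local zeta integral}, then use that $j$ is inner to identify $\Pi^j$ with $\Pi$. Your write-up is more careful than the paper's terse proof in making explicit the duality step and the resulting $\chi \leftrightarrow \chi^{-1}$ swap (which the paper leaves implicit), but the underlying argument is the same.
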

\begin{proof}
Let $\Pi^j$ be the representation defined by $\Pi^j(g)=\Pi(j(g))$. Note that $\Pi^j\cong \Pi$ since $j$ is an inner automorphism. The assertion then follows from Lemma \ref{lem: nonvanishing of Ginzburg's local zeta integral} and Lemma \ref{lem: invariance property of Ginzburg's local zeta integral} directly.
\end{proof}
\begin{rmk}
\rm{In the proof of Theorem \ref{thm: multiplicity one} when $p>3$ in $\S$\ref{sec: proof when p>3}, we showed that if $\Pi=X_{33}, X_{17}, X_{18}, X_{19}, \ov{X}_{19}$, then $\pair{\Pi,I(\chi)\otimes \omega_\psi}=0$. Thus by Corollary \ref{cor: generic big than one}, the representations $X_{33},X_{17},X_{18},X_{19}, \ov{X}_{19}$ can not be generic. In particular, the irreducible generic cuspidal representations of $\RG_2(k)$ when $p>3$ must be in the families of the representations $X_i(\pi_i)$ for $i=2,3,6$ when $\pi_i$ are in general positions. Similarly in the proof of Theorem \ref{thm: multiplicity one} when $p=3$ in $\S$\ref{sec: proof when p=3}, we showed that if $\Pi=\theta_{10},\theta_{11},\theta_{12}(k)$, then $\pair{\Pi,I(\chi)\otimes \omega_\psi}=0$. Thus Corollary \ref{cor: generic big than one} shows that $\theta_{10},\theta_{11},\theta_{12}(k)$ cannot be generic. Consequently, the irreducible generic cuspidal representations of $\RG_2(k)$ when $p=3$ must be in the families of the representations $\chi_{12}(k,l),\chi_{13}(k),\chi_{14}(k)$.
}
\end{rmk}

\subsection{\texorpdfstring{$\GL_1$}{Lg}-twisted gamma factors for generic cuspidal representations}
Consider the standard intertwining operator $M: I(\chi)\ra I(\chi^{-1})$
defined by 
$$M(f)(g)=\sum_{x\in k}f((w^1)^{-1} n(x)g),$$
where $w^1=\bpm &1\\-1& \epm$ and $n(x)=\bpm 1&x\\ &1 \epm$. Note that under the embedding $\SL_2(k)\incl \GL_2(k)\cong M\incl \RG_2(k)$, $w^1$ is mapped to $w_\beta$.

\begin{prop}\label{prop: defn of gamma factors}
Let $\Pi$ be an irreducible generic cuspidal representation of $\RG_2(k)$ and $\chi$ be a character of $k^\times$. Then there is a number $\gamma(\Pi\times \chi,\psi)\in \BC$ such that
$$Z(W,\phi,M(f))=\gamma(\Pi\times \chi,\psi)Z(W,\phi,f),$$
for all $W\in \CW(\Pi,\psi),\phi\in \CS(k),f\in I(\chi)$.
\end{prop}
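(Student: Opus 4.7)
The plan is to realize both $Z(W,\phi,f)$ and $Z(W,\phi,M(f))$ as elements of a one--dimensional space of $J$-invariant trilinear forms, then conclude by proportionality. This is the standard recipe for deducing functional equations from multiplicity one.

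First, I would introduce the space
\[
\CT := \Hom_J\bigl(\Pi^j \otimes \omega_{\psi^{-1}} \otimes I(\chi), \BC\bigr),
\]
where $J$ acts on the first factor via $h \mapsto \Pi(j(h))$, on the second via $\omega_{\psi^{-1}} \circ \ov{\pr}$, and on the third via $r \circ \ov{\pr}$. By Lemma \ref{lem: invariance property of Ginzburg's local zeta integral}, the trilinear form $(W,\phi,f) \mapsto Z(W,\phi,f)$ lies in $\CT$. Next I would dualize: since $j$ is inner we have $\Pi^j \cong \Pi$, and the contragredients satisfy $\omega_{\psi^{-1}}^\vee \cong \omega_\psi$ and $I(\chi)^\vee \cong I(\chi^{-1})$ as $J$-modules (where both $\omega_{\psi^{-1}}$ and $I(\chi)$ are finite-dimensional, so duality is unambiguous). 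Thus
\[
\CT \cong \Hom_J\bigl(\Pi, \omega_\psi \otimes I(\chi^{-1})\bigr).
\]
Since $\Pi$ is an irreducible cuspidal representation of $\RG_2(k)$, Theorem \ref{thm: multiplicity one} (applied to the character $\chi^{-1}$) gives $\dim \CT \le 1$. On the other hand, Lemma \ref{lem: nonvanishing of Ginzburg's local zeta integral} shows $Z \not\equiv 0$, so $\dim \CT \ge 1$. Therefore $\dim \CT = 1$ and $Z$ is a basis.

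The remaining step is to check that $(W,\phi,f) \mapsto Z(W,\phi,M(f))$ also lies in $\CT$. Because $M\colon I(\chi) \to I(\chi^{-1})$ is an $\SL_2(k)$-intertwining operator (it commutes with right translation by $\SL_2(k)$, and $V$ acts trivially on $I(\chi)$ through $J \to \SL_2(k)$), composing with $M$ preserves the invariance property in Lemma \ref{lem: invariance property of Ginzburg's local zeta integral}. Hence this new trilinear form is an element of $\CT$ as well, and by one-dimensionality there exists a unique scalar $\gamma(\Pi \times \chi,\psi) \in \BC$ with
\[
Z(W,\phi,M(f)) = \gamma(\Pi \times \chi,\psi)\, Z(W,\phi,f)
\]
for all $W \in \CW(\Pi,\psi)$, $\phi \in \CS(k)$ and $f \in I(\chi)$.

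The main thing to be careful about is that the hypothesis of Theorem \ref{thm: multiplicity one} requires $\Pi$ to be cuspidal (which is why cuspidality appears in the statement), and that the argument does not require $I(\chi)$ to be irreducible; if $M$ has a kernel, the proportionality still holds, as both sides vanish on $\ker M$ after pairing with the unique invariant form. No genuine obstacle is anticipated beyond carefully bookkeeping the duality identifications of $\omega_{\psi^{\pm 1}}$ and $I(\chi^{\pm 1})$ as $J$-modules.
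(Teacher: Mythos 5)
Your proof is correct and follows the same approach as the paper's (which is given in a single sentence: both trilinear forms lie in $\Hom_J(\Pi^j\otimes\omega_{\psi^{-1}}\otimes I(\chi),\BC)$, and Theorem \ref{thm: multiplicity one} applies). Your version spells out the dualization $\Hom_J(\Pi^j\otimes\omega_{\psi^{-1}}\otimes I(\chi),\BC)\cong\Hom_J(\Pi,\omega_\psi\otimes I(\chi^{-1}))$ and the $J$-equivariance of $(W,\phi,f)\mapsto Z(W,\phi,M(f))$, which the paper leaves implicit.
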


\begin{proof}
Note that $(W,\phi,f)\mapsto Z(W,\phi,f) $ and $(W,\phi,f)\mapsto Z(W,\phi,M(f))$ define two trilinear forms in $\Hom_J(\Pi^j\otimes\omega_{\psi^{-1}}\otimes I(\chi),\BC)$. Then the assertion follows from Theorem \ref{thm: multiplicity one} directly.
\end{proof}

\begin{lem}\label{lem: computation of the gamma factor}
We have 
$$\gamma(\Pi\times\chi,\psi)=\frac{q^{5/2}}{\sqrt{\epsilon_0}}\sum_{a\in k^\times}\CB_{\Pi}(h(a,1)w_1)\epsilon \chi^{-1}(a),$$
where $w_1=w_\beta w_\alpha w_\beta w_\alpha^{-1}w_\beta^{-1}=j(w_\beta)$.
\end{lem}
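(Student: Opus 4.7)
The plan is to evaluate both sides of the functional equation in Proposition~\ref{prop: defn of gamma factors} on the distinguished triple $(W,\phi,f)=(\CB_\Pi,\delta_0,f_0)$ of Lemma~\ref{lem: nonvanishing of Ginzburg's local zeta integral}. Since that lemma gives $Z(\CB_\Pi,\delta_0,f_0)=1$, the problem reduces to computing $Z(\CB_\Pi,\delta_0,M(f_0))$, and the claim is that this equals the asserted sum.

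First I would determine $M(f_0)$. Using $M(f_0)(g)=\sum_x f_0((w^1)^{-1}n(x)g)$ together with the observation that $(w^1)^{-1}n(x)=\bpm &-1\\1&x\epm$ always has a non-zero bottom-left entry and therefore lies outside $N_{\SL_2}A_{\SL_2}=\supp(f_0)$, a short Bruhat calculation shows that $M(f_0)$ vanishes on $N_{\SL_2}A_{\SL_2}$ and satisfies $M(f_0)(t(a)w^1 n(b))=\chi^{-1}(a)$ on the open cell. Hence only representatives $g=t(a)w^1n(b)$ with $a\in k^\times$, $b\in k$ survive in the sum defining $Z(\CB_\Pi,\delta_0,M(f_0))$. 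Next I would iterate the Weil-representation formulas \eqref{eq: Weil representation of SL2}: $\omega_{\psi^{-1}}(n(b))\delta_0=\delta_0$ (supported at $0$), $\omega_{\psi^{-1}}(w^1)\delta_0\equiv(\epsilon_0 q)^{-1/2}$ (a constant, via the Gauss sum $\gamma(1,\psi^{-1})=\sqrt{\epsilon_0 q}$), and $\omega_{\psi^{-1}}(t(a))$ multiplies by $\epsilon(a)$. Thus $(\omega_{\psi^{-1}}(t(a)w^1n(b))\delta_0)(x)=\epsilon(a)/\sqrt{\epsilon_0 q}$, independent of $x$.

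The heart of the argument is on the $\RG_2$ side. Under $\SL_2(k)\incl M$, $n(b)\mapsto\bx_\beta(b)$ and $t(a)\mapsto h_\beta(a)=h(a,a^{-1})$; conjugating by $w_\beta w_\alpha$ as in the definition of $j$ and using $(s_\beta s_\alpha)(\beta)=3\alpha+2\beta$, I find $j(\bx_\beta(b))\in U_{3\alpha+2\beta}\subset[U,U]$, and, via the coroot identity $(3\alpha+2\beta)^\vee=\alpha^\vee+2\beta^\vee$ combined with $h(t_1,t_2)=h_\alpha(t_1t_2)h_\beta(t_1^2t_2)$, the crucial identification $j(t(a))=h_{3\alpha+2\beta}(a)=h(a,1)$. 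Since $j(w^1)=w_1$ by definition, it follows that $j(t(a)w^1 n(b))=h(a,1)w_1\cdot\bx_{3\alpha+2\beta}(\pm b)$, and Lemma~\ref{lem: basic properties of Bessel function} absorbs the last factor because $\psi$ vanishes on $[U,U]$. Summing over $b\in k$ therefore contributes a bare factor $q$.

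It remains to integrate out the sums over $x,y\in k$. Since $w_1=w_\ell s_\alpha$ sends $3\alpha+\beta\mapsto-\beta$ and $2\alpha+\beta\mapsto-(\alpha+\beta)$, one has $\bx_{-\beta}(y)w_1=w_1\bx_{3\alpha+\beta}(\pm y)$ and $\bx_{-(\alpha+\beta)}(x)w_1=w_1\bx_{2\alpha+\beta}(\pm x)$. Commuting $h(a,1)$ through the negative root subgroups merely rescales $x,y$ by non-zero elements, so after combining everything
\[
\bx_{-\beta}(y)\bx_{-(\alpha+\beta)}(x)h(a,1)w_1=h(a,1)w_1\bx_{3\alpha+\beta}(\pm ay)\bx_{2\alpha+\beta}(\pm ax),
\]
and a second application of Lemma~\ref{lem: basic properties of Bessel function} gives that $\CB_\Pi$ of the left-hand side equals $\CB_\Pi(h(a,1)w_1)$ for every $x,y$. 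Thus the double $(x,y)$-sum contributes $q^2$. Assembling the prefactors $q\cdot q^2\cdot(\epsilon_0 q)^{-1/2}=q^{5/2}/\sqrt{\epsilon_0}$ against $\sum_a\epsilon(a)\chi^{-1}(a)\CB_\Pi(h(a,1)w_1)$ yields the claimed formula. The main technical obstacle, and the step driving the proof, is the identification $j(t(a))=h(a,1)$ through the coroot of the highest root $3\alpha+2\beta$; the sign ambiguities that appear when conjugating Chevalley generators by Weyl representatives are harmless, because the root subgroups produced in the final commutation all lie in $[U,U]$, on which $\psi$ is trivial.
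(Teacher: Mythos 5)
Your proposal is correct and follows essentially the same route as the paper: plug the distinguished triple $(\CB_\Pi,\delta_0,f_0)$ into the functional equation, compute $M(f_0)$ on the big cell, push the Weil-representation and Bessel-function factors through the commutation identities, and collect the volume factors $q\cdot q^2\cdot\gamma(1,\psi^{-1})^{-1}=q^{5/2}/\sqrt{\epsilon_0}$. The only genuine addition is your coroot derivation of $j(t(a))=h(a,1)$ via $(3\alpha+2\beta)^\vee=\alpha^\vee+2\beta^\vee$, which the paper asserts without comment; this is a welcome clarification but not a different method.
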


\begin{proof}
As in Lemma \ref{lem: nonvanishing of Ginzburg's local zeta integral}, 
let $ \delta_0\in \CS(k)$ be the function $\delta_0(x)=0$ if $x\ne 0$ and $\delta_0(0)=1$. 
And let $f_0\in I(\chi)$ be the function such that $\supp(f_0)\subset N_{\SL_2}A_{\SL_2}$ and $f_0(1)=1$. 
Then, by Lemma \ref{lem: nonvanishing of Ginzburg's local zeta integral}, we have $Z(B_\Pi,\delta_0,f_0)=1$. Thus we get 
$$\gamma(\Pi\times \chi,\psi)=Z(\CB_\Pi,\delta_0,M(f_0)),$$
where, 
$$M(f_0)(g)=\sum_{x\in k} f_0\left((w^1)^{-1}n(x) g \right).$$
Since $M(f_0)\in I(\chi^{-1})$ and $\SL_2(k)=N_{\SL_2}A_{\SL_2}\coprod N_{\SL_2}A_{\SL_2}w^1 N_{\SL_2}$, we need to determine the value of $M(f_0)$ at $1$ and at $w^1n(r), r\in k$. Since for any $x\in k$, we have $(w^1)^{-1}n(x)\notin B_{\SL_2}$, we have $M(f_0)(1)=0$. Since 
$(w^1)^{-1}n(x)w^1 n(r)\in B_{\SL_2} \textrm{ iff } x=0,$  $M(f_0)(w^1n(r))=f_0(n(r))=1, \forall r\in k.$ Thus we get 
\begin{align*}
    \gamma(\Pi\times \chi,\psi)&=\sum_{g\in N_{\SL_2}\backslash \SL_2(k)}\sum_{x,y\in k} \CB_\Pi(\bx_{-\beta}(y)\bx_{-(\alpha+\beta)}(x)j(g))\omega_{\psi^{-1}}(g)\delta_0(x)M(f_0)(g)\\
    &=\sum_{a\in k^\times,r\in k}\sum_{x,y\in k}\CB_\Pi(\bx_{-\beta}(y)\bx_{-(\alpha+\beta)}(x)j(t(a)w^1n(r)))\\
    &\qquad \qquad \cdot \omega_{\psi^{-1}}(t(a)w^1n(r))\delta_0(x)M(f_0)(t(a)w^1n(r))\\
    &=\sum_{a\in k^\times,r\in k}\sum_{x,y\in k}\CB_\Pi(\bx_{-\beta}(y)\bx_{-(\alpha+\beta)}(x)j(t(a)w^1n(r)))\\
    &\qquad \qquad \cdot \omega_{\psi^{-1}}(w^1n(r))\delta_0(ax)\epsilon\chi^{-1}(a).
\end{align*}

 Note that $w_1=j(w^1)$ and $j(n(r))=\bx_{3\alpha+2\beta}(r) $, and
$$\bx_{-\beta}(y)\bx_{-(\alpha+\beta)}(x)h(a,1)w_1 \bx_{3\alpha+2\beta}(r)=h(a,1) w_1\bx_{3\alpha+\beta}(ay)\bx_{2\alpha+\beta}(ax)\bx_{3\alpha+2\beta}(r).$$
By Lemma \ref{lem: basic properties of Bessel function}, we have $\CB_\Pi(\bx_{-\beta}(y)\bx_{-(\alpha+\beta)}(x)j(t(a)w^1n(r)) )=\CB_{\Pi}(h(a,1)w_1)$. Thus we get 
\begin{align*}\gamma(\Pi\times \chi,\psi)&=\sum_{a\in k^\times,r\in k}\sum_{x,y\in k}\CB_{\Pi}(h(a,1)w_1)\omega_{\psi^{-1}}(w^1n(r))\delta_0(ax)\epsilon\chi^{-1}(a)\\
&=q\sum_{a\in k^\times}\sum_{x,r\in k}\CB_\Pi(h(a,1)w_1)(\omega_{\psi^{-1}}(w^1n(r))\delta_0)(ax)\epsilon\chi^{-1}(a).
\end{align*}
We have 
\begin{align*}
    (\omega_{\psi^{-1}}(w^1 n(r))\delta_0)(ax)&=\frac{1}{\gamma(1,\psi^{-1})}\sum_{y\in k}\psi^{-1}(-2axy)(\omega_{\psi^{-1}}(n(r)))\delta_0(y)\\
    &=\frac{1}{\gamma(1,\psi^{-1})}\sum_{y\in k}\psi^{-1}(-2axy)\psi(ry^2)\delta_0(y)\\
    &=\frac{1}{\gamma(1,\psi^{-1})}.
\end{align*}
Recall that $$\gamma(1,\psi^{-1})=\sum_{x\in k}\psi^{-1}(-x^2)=\sum_{x\in k}\psi(x^2)=\sqrt{\epsilon_0 q},$$
see \eqref{eq: basic guass sum}. Thus we get 
$$\gamma(\Pi\times \chi,\psi)=\frac{1}{\sqrt{\epsilon_0}}q^{5/2}\sum_{a\in k^\times}\CB_{\Pi}(h(a,1)w_1)\epsilon \chi^{-1}(a).$$
This completes the proof of the lemma.
\end{proof}

\section{Gamma factors for \texorpdfstring{$\RG_2\times \GL_2$}{Lg}}\label{sec: G2 GL2}
In this section, we review the integral for $\RG_2\times \GL_2$ (similar to $\S$\ref{sec: G2 GL1}, 
in our case, it is a sum rather than an integral) developed by Piatetski-Shapiro, Rallis and Schiffmann in \cite{PSRS} and define the $\GL_2$-twisted gamma factors. 

In this section, $k$ is a finite field of odd characteristic unless in subsection \ref{subsec: Pi_Z}, where $k$ can be either a finite field or a $p$-adic field.

\subsection{Embedding of \texorpdfstring{$\RG_2$}{Lg} into \texorpdfstring{$\SO_7$}{Lg}} 

To introduce the integral of Piatetski-Shapiro, Rallis and Schiffmann, we need to embed $\RG_2$ into $\SO_7$. We use the embedding of $\RG_2$ into $\SO_7$ given in \cite{RS}. Let $\CH$ be a quaternion algebra over $k$. We can write $\CH=ke_0\oplus \CH^0$, where $e_0$ is the neutral element and $\CH^0$ is the three dimensional subspace of pure quaternions. We denote by $e_1,e_2,e_3$ a basis of $\CH^0$ such that $e_ie_j=-e_je_i$ and $e_3=(e_1e_2-e_2e_1)/2=e_1e_2$. Let $\lambda=e_1e_1$ and $\mu=e_2e_2$. Then $e_3e_3=-\lambda \mu.$ For $x=a_0e_0+a_1e_1+a_2e_2+a_3e_3\in \CH$ with $a_i\in k$, we define $\bar x=a_0e_0-a_1e_1-a_2e_2-a_3e_3$. 

Let $\CC=\CH\times \CH$. We consider the non-associative product on $\CC$ given by 
$$(a,b)(c,d)=(ac+\bar d b, da+b\bar c).$$
With this product, $\CC$ is called a Cayley or octonion algebra. The conjugate of $(a,b)\in \CC$ is defined by $\ov{(a,b)}=(\bar a,-b)$ and its norm is $Q((a,b))=(a,b)\ov{(a,b)}=a\bar a-b\bar b$.

 Note that $k$ can be embedded in to $\CC$ by the map $a\mapsto (ae_0,0)$. We have a decomposition $\CC=k\oplus \CC^0$, where $\CC^0$ is the space of pure Cayley numbers. Note that $\dim _k \CC^0=7$. We put 
$$X^+=(1/2,1/2), X^-=(1/2,-1/2), X^0=X^+-X^-=(0,-1).$$
One can check that $\CH^0X^+, \CH^0X^-$ are totally isotropic subspaces of $\CC^0$, and we have 
$$\CC^0=\CH^0X^+\oplus kX^0\oplus \CH^0X^-,$$
cf. \cite[p.805]{RS}. The group $\RG_2(k)$ can be defined to be the automorphism group of the algebra $\CC$. Note that if $g\in \RG_2(k)$, then $g(1)=1$, where $1=(e_0,0)\in \CC$ is the unit element, and $(gu)(gv)=g(uv)$ for $u,v\in \CC$. In particular, $g\in \RG_2(k)$ preserves the norm form $Q$. Consider the bilinear form $(v_1,v_2)_Q:=Q(v_1)+Q(v_2)-Q(v_1+v_2)$. Then $g\in \RG_2(k)$ preserves the bilinear form $(~,~)_Q$. One can check that the decomposition $\CC=k\oplus \CC^0$ is an orthogonal decomposition with respect to $(~,~)_Q$. Thus $g\in \RG_2(k)$ preserves $\CC^0$ and $Q|_{\CC^0}$. In particular, we have $\RG_2(k)\subset \RO(\CC^0,Q|_{\CC^0})=\wpair{g\in \GL(\CC^0): (gv_1,gv_2)_Q=(v_1,v_2)_Q, \forall v_1,v_2\in \CC^0}$. By \cite[Corollary 4, p.810]{RS}, one has 
$$\RG_2(k)\subset \SO(\CC^0,Q|_{\CC^0})=\wpair{g\in \RO(\CC^0,Q|_{\CC^0}),\det(g)=1}.$$

Note that the group $\RG_2(k)$ defined above in fact depends on the choice of the quaternion algebra $\CH$. If $\CH$ is taken to be the split quaternion algebra, then the group $\RG_2(k)$ is also split. Since we only care about split $\RG_2$, from now on we take $\CH$ to be the split quaternion algebra. Thus we can assume that $\lambda=\mu =1$.

A basis of $\CC^0$ is given by $e_1^+:=e_1 X^+, e_2^+:=e_2 X^+, e_3^+:=e_3 X^+, e_0:=X^0, e_3^-=e_3 X^-, e_2^-:=e_2 X^-, e_1 ^-:=e_1 X^-$.  From the formulas given in \cite[p.805]{RS}, we can check that the bilinear form $(~,~)_Q$ with respect to the basis $(e_1^+,e_2^+,e_3^+,e_0,e_3^-,e_2^-,e_1^-)$ is given by the following matrix (which is still denoted by $Q$ by abuse of notation)
$$Q=\begin{pmatrix} &&s_3\\ &2& \\ {}^t \!s_3 \end{pmatrix},$$
where $$s_3=\begin{pmatrix} &&1\\ &1&\\ -1&&\end{pmatrix}.$$
Thus $\SO(\CC^0,Q|_{\CC^0})=\wpair{g\in \GL_7(k): {}^t\! g Q g=Q, \det(g)=1}$, where we view elements in $\CC^0$ as column vectors and $\SO(\CC^0,Q|_{\CC^0})$ acts on them from the left hand side. In the following, we will fix $\SO(\CC^0,Q|_{\CC^0})$ as the above form and write it as $\SO_7(k)$. We then get our desired embedding $\RG_2(k)\ra \SO_7(k)$.

Let $T_{\SO(Q)}$ be the diagonal torus of $\SO_7(k)$. A typical element in $T_{\SO(Q)}$ has the form $t=\diag(a_1,a_2,a_3,1,a_3^{-1},a_2^{-1},a_1^{-1})$. Let $\epsilon_i$ be the character of $T_{\SO(Q)}$ of the form $\epsilon_i(t)=a_i$ for $1\le i\le 3$, where $t=\diag(a_1,a_2,a_3,1,a_3^{-1},a_2^{-1},a_1^{-1})$. Then the positive roots of $\SO_7(k)$ relative to the upper triangular Borel subgroup is the set $\wpair{\epsilon_i-\epsilon_j,\epsilon_i+\epsilon_j, \epsilon_i \textrm{ with } 1\le i\le 3, 1\le j\le 3,i<j}$. Under the above embedding $\RG_2(k)\ra \SO_7(k)$, one has $\alpha=\epsilon_2, \beta=\epsilon_1-\epsilon_2, \alpha+\beta=\epsilon_1, 2\alpha+\beta=-\epsilon_3,3\alpha+\beta=\epsilon_2-\epsilon_3,3\alpha+2\beta=\epsilon_1-\epsilon_3$. See \cite[p.812]{RS}.

The embedding $\RG_2(k)\ra \SO_7(k)$ can be explicitly realized by giving matrix realizations of $\bx_\gamma(r)$ for all roots $\gamma$ of $\RG_2$, which is given in Appendix \ref{sec: embedding G2 into SO7}. From this explicit realization, one can see how subgroups of $\RG_2$ are embedded in $\SO_7$. For example, the Levi subgroup $M\cong \GL_2(k)$ is embedded into $\SO_7(k)$ by the map
$$m\mapsto \diag(m, \det(m)^{-1},1, \det(m),m^*), m\in \GL_2(k),$$ where $m^*=\begin{pmatrix}&1\\ 1& \end{pmatrix} {}^t\! m^{-1}\begin{pmatrix}&1\\ 1& \end{pmatrix}  .$

\subsection{The Piatetski-Shapiro-Rallis-Schiffmann local zeta integral for \texorpdfstring{$\RG_2\times \GL_2$}{Lg}} \label{PSRS local zeta integral}

Let $\wt P$ be the parabolic subgroup of $\SO_7(k)$ which is isomorphic to $(\GL_2(k)\times \SO_3(k))\ltimes \wt U$ where $\GL_2(k)\times \SO_3(k)$ is the Levi factor of the form
$$\wpair{\begin{pmatrix}a&&\\ & b& \\ &&a^*  \end{pmatrix},a\in \GL_2(k), b\in \SO_3(k).}$$
Here $\SO_3(k)$ is the special orthogonal group realized by the matrix $\begin{pmatrix}&&-1\\&2&\\-1&& \end{pmatrix}$. Note that the $\GL_2(k)$ part in the Levi of $\wt P$ is exactly the Levi subgroup $M$ of $P\subset \RG_2(k)$. A typical element of $\wt P$ will be written as $(x,y,u)$, where $x\in \GL_2(k),y\in \SO_3(k),u\in \wt U$. Denote $H=M\ltimes Z\subset \RG_2(k)$. Under our fixed embedding $\RG_2(k)\ra \SO_7(k)$, we have $H=\RG_2(k)\cap \wt P$, see \cite[Lemma 1.2, p.1273]{PSRS} and its proof there. One can also see this from the matrix realizations of the embedding in Appendix \ref{sec: embedding G2 into SO7}. 

 Let $(\tau,V_\tau)$ be an irreducible generic representation of $\GL_2(F)\cong M$, we consider the induced representation 
$$I(\tau)=\Ind_{\wt P}^{\SO_7(k)}(\tau \otimes 1_{\SO_3}).$$
A section $\xi\in I(\tau)$ is a map $\xi: \SO_7(k)\ra V_{\tau}$ such that 
$$\xi((x,y,u)g)= \tau(x)\xi(g), x\in \GL_2(k), y\in \SO_3(k), u\in \wt U.$$
We fix a nontrivial $\psi$-Whittaker functional $\Lambda\in \Hom_{N_{\GL_2}}(\tau,\psi^{-1})$ of $\tau$, where $N_{\GL_2}$ is the upper triangular unipotent subgroup of $\GL_2(F)$. We then consider the $\BC$-valued function $f_{\xi}$ on $\SO_7(k)\times \GL_2(k)$ by 
$$f_{\xi}(g,a)=\Lambda(\tau(a)\xi(g)),g\in \SO_7(k),a\in \GL_2(k).$$
We denote by $I(\CW(\tau,\psi^{-1}))$ the space consisting of all functions of the form $f_{\xi}$, $\xi\in I(\tau)$. 

Let $U_H$ be the subgroup of $H$ generated by root spaces of $\beta, 2\alpha+\beta, 3\alpha+\beta, 3\alpha+2\beta$. We have $U_H\subset H=G\cap \wt P$. Let $\psi_{U_H}$ be the character of $U_H$ such that $\psi_{U_H}|_{U_\beta}=\psi$ and $\psi_{U_H}|_{U_\gamma}=1$ for $\gamma=2\alpha+\beta,3\alpha+\beta,3\alpha+2\beta$. For $u\in U_H$, we have
\begin{equation}\label{eq: invariance of induced section} f_{\xi}(ug,\RI_2)=\psi^{-1}_{U_H}(u)f_{\xi}(g,\RI_2),\end{equation}
where $\RI_2$ is the $2\times 2$ identity matrix.

Let $\Pi$ be an irreducible $\psi=\psi_U$-generic representation of $\RG_2(k)$ and $\tau$ be an irreducible generic representation of $\GL_2(k)$. For $W\in \CW(\Pi,\psi)$, and $f\in I(\CW(\tau,\psi^{-1}))$, we consider the following Piatetski-Shapiro-Rallis-Schiffmann local zeta integral
\begin{equation}\label{eq: defn of PSRS integral}\Psi(W,f)=\sum_{g\in U_H\setminus \RG_2(k)} W(g)f(g,\RI_2).
\end{equation}
Note that by \eqref{eq: invariance of induced section}, the above sum $\Psi(W,f)$ is  well-defined. 

\subsection{Decomposition of \texorpdfstring{$I(\tau)|_{\RG_2(k)}$}{Lg}}
Recall that $P'=M'V'$ denotes the standard parabolic subgroup of $\RG_2(k)$ such that $U_\alpha$ is included in the Levi subgroup $M'$. Let $\wt H=w_\alpha w_\beta P'(w_\alpha w_\beta)^{-1}$, which is a conjugate of $P'$ and thus still a parabolic subgroup of $\RG_2(k)$. It is clear that $\wt H=\GL_2(k)\ltimes U'$, where $\GL_2(F)=w_\alpha w_\beta M'(w_\alpha w_\beta)^{-1}$ and $U'=w_\alpha w_\beta V'(w_\alpha w_\beta)^{-1}$. Note that $U' $ is generated by the root subgroups of $\beta, 3\alpha+2\beta,-(3\alpha+\beta), \alpha+\beta,-\alpha$, see \cite[Corollary to Lemma 1.2, p.1276]{PSRS}.

The double coset $\wt P\backslash\SO_7(k)/\RG_2(k)$ has two elements. From \cite[Lemma 1.2 and its Corollary]{PSRS} and Mackey's theory, we have the following decomposition
\begin{equation}\label{eq: decomposition of I(tau)} 0\ra \ind_H^{\RG_2(k)}(\tau\otimes 1_Z)\ra I(\tau)|_{\RG_2(k)}\ra \ind_{\wt H}^{\RG_2(k)}(\tau\otimes 1_{U'})\ra 0. 
\end{equation}
See \cite[p.1287]{PSRS} for a local fields analogue of the above decomposition. Note that over finite fields, the above exact sequence splits. 
\begin{rmk}\label{rmk: ind and Ind}\rm{In the exact sequence \ref{eq: decomposition of I(tau)}, $\ind$ also denotes the induced representation. Note that over finite fields, the notations $\ind$ and $\Ind$ have no difference, but over local fields, they are different. Here, we try to keep the notations the same as in the literature \cite{PSRS} and thus we used two different notations ($\ind$ and $\Ind$) to denote the same object (induced representation). Hopefully, this won't cause any confusion.}
\end{rmk}
\subsection{On the Jacquet functor \texorpdfstring{$\Pi_Z$}{Lg}}\label{subsec: Pi_Z}
In general, let $(\Pi,V_\Pi)$ be representation of a group $ L$ and let $\chi$ be a character of a subgroup $K\subset L$, then the twisted Jacquet functor $\Pi_{K,\chi}$ is defined to be $V_\Pi/\pair{\Pi(k)v-\chi(k)v,k\in K, v\in V_{\Pi}}$. If $\chi=1$ is the trivial character, then we write $\Pi_{K,1}$ as $\Pi_K$.

In this subsection, let $k$ be either a finite field or a $p$-adic field. We go back to our $\RG_2$ notation. Let $\Pi$ be an irreducible generic smooth representation of $\RG_2(k)$. We consider the Jacquet functor $\Pi_Z$. Let $P^1=\wpair{\bpm *&*\\ &1 \epm}$ be the mirabolic subgroup of $\GL_2(k)$. Let $\psi_V$ be the character of $V$ such that $\psi_V|_{U_\alpha}=\psi$ and $\psi_V|_{U_\gamma}=1$ for $\gamma=\alpha+\beta,2\alpha+\beta,3\alpha+\beta,3\alpha+2\beta$.
\begin{lem}\label{lem: Jacquet functor Pi_Z}
We have the following exact sequences
\begin{equation}\label{eq: exact sequence 1}
    0\ra \ind_{P^1}^{\GL_2(k)}(\Pi_{V,\psi_V})\ra \Pi_Z\ra \Pi_V\ra 0,
\end{equation}
and 
\begin{equation}\label{eq: exact sequence 2}
    0\ra \ind_{N_{\GL_2}}^{P^1}(\psi)\ra \Pi_{V,\psi_V}\ra \Pi_{U,\psi'_U}\ra 0,
\end{equation}
where $\ind$ means compact induction when $k$ is a local field, $\psi'_U$ is the degenerate character of $U$ defined by $\psi'_U|_{U_\alpha}=\psi$ and $\psi'_U|_{U_\beta}=1$, and $N_{\GL_2}$ is the upper triangular unipotent subgroup of $\GL_2(k)$.
\end{lem}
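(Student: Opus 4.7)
The plan is to apply the Mackey/Bernstein--Zelevinsky geometric lemma for semidirect products $H \ltimes N$ with $N$ abelian, in two successive stages corresponding to the two exact sequences.

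For exact sequence (1), I would first observe that $V/Z$ is abelian: the only commutator among $U_\alpha, U_{\alpha+\beta}$ lands in $U_{2\alpha+\beta} \subset Z$, so $V/Z \cong k^2$ via the coordinates $(r_1,r_2) \mapsto \bx_\alpha(r_1)\bx_{\alpha+\beta}(r_2)$. Since the Levi $M$ normalizes $V$ and $Z$, the quotient $V/Z$ inherits an $M$-action, which one reads off from the standard Chevalley commutator relations in $\RG_2$; in particular $M \cong \GL_2$ acts on $V/Z \cong k^2$ so that $\GL_2$ has just two orbits on the character group $\widehat{V/Z}$, namely the trivial character and the orbit of $\psi_V$ (all non-trivial characters, since $\GL_2$ is transitive on $k^2 \smallsetminus \{0\}$). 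A direct check using the commutator formulas shows that the stabilizer of $\psi_V$ in $M$ is the mirabolic $P^1$.

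The Jacquet module $\Pi_Z$ is then a representation of $M \ltimes (V/Z)$, and since $V/Z$ is finite abelian we get a canonical decomposition of $\Pi_Z$ into $(V/Z)$-isotypic components. Grouping these components by $M$-orbits and applying Frobenius reciprocity gives the filtration whose trivial-character piece is the full coinvariants $(\Pi_Z)_{V/Z} = \Pi_V$, and whose non-trivial-character piece is $\ind_{P^1}^{\GL_2}\bigl(\Pi_{V,\psi_V}\bigr)$, the induction of the $\psi_V$-isotypic component. This yields the exact sequence (1); over a finite field the sequence actually splits, but the filtration formulation is written so as to transfer verbatim to the $p$-adic case.

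For exact sequence (2), I would repeat the argument one level down: view $\Pi_{V,\psi_V}$ as a module for $P^1 = A^1 \ltimes N_{\GL_2}$, where $N_{\GL_2}$ is identified with $U_\beta \subset M$. The torus $A^1 \cong k^\times$ acts on $\widehat{N_{\GL_2}} \cong \widehat{k}$ with two orbits, the trivial character and the orbit of $\psi$, whose stabilizer in $A^1$ is trivial. Applying the same Mackey decomposition gives a submodule equal to $\ind_{N_{\GL_2}}^{P^1}$ of the $\psi$-isotypic component of $\Pi_{V,\psi_V}$, namely $\Pi_{U,\psi_U}$ (the full Whittaker quotient of $\Pi$, where $\psi_U|_V = \psi_V$ and $\psi_U|_{U_\beta} = \psi$), and a quotient equal to the $N_{\GL_2}$-coinvariants $(\Pi_{V,\psi_V})_{N_{\GL_2}} = \Pi_{U,\psi'_U}$, since $\psi'_U$ is precisely the extension of $\psi_V$ by the trivial character on $U_\beta$. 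For an irreducible $\psi$-generic representation $\Pi$ the space $\Pi_{U,\psi_U}$ is one-dimensional, so $\ind_{N_{\GL_2}}^{P^1}(\Pi_{U,\psi_U}) \cong \ind_{N_{\GL_2}}^{P^1}(\psi)$, giving (2).

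The main obstacle is the bookkeeping: one must verify carefully, from the commutator relations of $\RG_2$ recorded in \cite{Ch}, that the $M$-action on $V/Z$ genuinely factors as the standard $\GL_2$-action on $k^2$ (up to a choice of basis compatible with $\psi_V$), so that $\mathrm{Stab}_M(\psi_V) = P^1$ on the nose rather than a conjugate; once this is pinned down, everything else is a formal consequence of the geometric lemma together with the uniqueness of the Whittaker model for an irreducible generic $\Pi$.
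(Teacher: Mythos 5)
Your proposal is correct and follows essentially the same route as the paper: the first sequence comes from the two-orbit decomposition of the $M$-action on $\widehat{V/Z}$, with $\mathrm{Stab}_M(\psi_V)=P^1$ and the stalk at $\psi_V$ identified with $\Pi_{V,\psi_V}$, and the second comes from the standard mirabolic exact sequence for $P^1 = A^1 \ltimes N_{\GL_2}$ (which the paper simply cites as \cite[Prop.~5.12]{BZ} rather than re-deriving), followed by $\dim \Pi_{U,\psi_U}=1$. One small caveat worth making explicit: the decomposition of $\Pi_Z$ into $(V/Z)$-isotypic components is literal only over a finite field; in the $p$-adic case $\widehat{V/Z}$ is not discrete and $\Pi_Z$ does not split as a direct sum over characters, and what one actually has is the open--closed filtration of the Bernstein--Zelevinsky sheaf on $\widehat{V/Z}$ (which is exactly what the paper writes out via \cite[\S 1.13--1.16, Prop.~2.23]{BZ}), so the remark that the argument ``transfers verbatim'' should be replaced by ``the geometric lemma produces the same two-step filtration with stalks computed by the twisted Jacquet functors.''
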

\begin{rmk}
{\rm Lemma \ref{lem: Jacquet functor Pi_Z} is the finite and $p$-adic fields analogue of \cite[Theorem 5, p.824]{RS} and its proof given in the following is also parallel to the one given in \cite{RS}. Note that over finite fields, the above exact sequences split and the topology is discrete.}
\end{rmk}

\begin{proof}[\textbf{Proof of  Lemma $\ref{lem: Jacquet functor Pi_Z}$}] 

Note that $\RG_2(k)$ is an $\ell$-group in the sense of \cite{BZ}. Note that when $k$ is a finite field, the topology on $\RG_2(k)$ is discrete. We use the language of sheaf theory on $\ell$-spaces, see \cite{BZ}. 

Note that the parabolic subgroup $P$ normalizes $Z$, and thus $\Pi_Z$ can be viewed as a representation of $P$. Since $Z$ acts on $\Pi_Z$ trivially, we can view $\Pi_Z$ as a representation of $P/Z=M\ltimes (V/Z)$. Note that $V/Z\cong k^2$. Moreover, as a representation of $V/Z$, $\Pi_Z$ is smooth. Denote the space of $\Pi_Z$ by $V_{\Pi_Z}$. The smoothness of $\Pi_Z$ implies that $\CS(V/Z).V_{\Pi_Z}=V_{\Pi_Z}$, see \cite[$\S$2.5]{BZ} for example. Let $\wh{V/Z}$ be the dual group of $V/Z$, i.e., the set of characters on $V/Z$. The Fourier transform defines an isomorphism $\CS(V/Z)\cong \CS(\wh{V/Z})$. Under this isomorphism, we view $V_{\Pi_Z}$ as a module over $\CS(\wh{V/Z})$. By \cite[Proposition 1.14]{BZ}, up to isomorphism, there is a unique sheaf $\CV_{\Pi_Z}$ on $ \widehat{V/Z}$ such that as a $\CS(\wh{V/Z}) $-module, $V_{\Pi_Z}$ is isomorphic to the finite cross sections $(\CV_{\Pi_Z})_c$. For the definition of finite cross sections, see \cite[$\S$1.13]{BZ}.

Note that $V/Z$ is generated by the root space of $\alpha$ and $\alpha+\beta$. The set $\wh{V/Z}$ is consisting of characters of the form $\psi_{\kappa_1,\kappa_2}, \kappa_1,\kappa_2\in k$, where 
$$\psi_{\kappa_1,\kappa_2}(\bx_{\alpha+\beta}(r_1)\bx_\alpha(r_2))=\psi(\kappa_1 r_1+\kappa_2 r_2).$$
The map $(\kappa_1,\kappa_2)\mapsto \psi_{\kappa_1,\kappa_2}$ defines a bijection $k^2\cong \wh{V/Z}$. Under this bijection, we consider the action of $M\cong \GL_2(k)$ on $\wh{V/Z}$ given by 
$$(g, (\kappa_1,\kappa_2))={}^t\! g^{-1} \cdot  {}^t\!(\kappa_1,\kappa_2).$$
This action has two orbits: the open orbit $O=\wpair{\psi_{\kappa_1,\kappa_2}:  (\kappa_1,\kappa_2)\in k^2-\wpair{0}}$ and the closed orbit $C=\wpair{\psi_{0,0}}$. We then have the exact sequence
\begin{equation}\label{eq: sheaf exact sequence}0\ra (\CV_{\Pi_Z})_c(O)\ra (\CV_{\Pi_Z})_c\ra (\CV_{\Pi_Z})_c(C)\ra 0,\end{equation}
see \cite[$\S$1.16]{BZ}.

We consider $\psi_{0,1}\in O$. The stabilizer of $\psi_{0,1}$ in $M\cong \GL_2(k)$ is $P^1$, and the map $g\mapsto g.\psi_{0,1}$ defines a bijection $P^1\backslash \GL_2(k)\ra O$. A simple calculation shows that the stalk of the sheaf $\CV_{\Pi_Z}$ at the point $\psi_{0,1}\in O$ is 
$$(\Pi_Z)_{V/Z,\psi_{0,1}}=\Pi_{V,\psi_V},$$
see \cite[Lemma 5.10]{BZ} for a similar calculation in the $\GL_n$-case. 
Thus by \cite[Proposition 2.23]{BZ}, we have 
$$  (\CV_{\Pi_Z})_c(O)\cong \ind_{P^1}^{\GL_2(k)}(\Pi_{V,\psi_V}).$$
Similarly, consider the stalk of the sheaf $\CV_{\Pi_Z}$ at $\psi_{0,0}$, we have $$(\CV_{\Pi_Z})_c(C)=(\Pi_Z)_{V/Z,\psi_{0,0}}=(\Pi_Z)_{V/Z}=\Pi_V.$$
Now the exact sequence \eqref{eq: exact sequence 1} follows from the exact sequence \eqref{eq: sheaf exact sequence}.

In general, given any smooth representation $\rho$ of $P^1$, we have an exact sequence
\begin{equation} \label{eq: GL2 exact sequence}
0\ra \ind_{N_{\GL_2}}^{P^1}(\rho_{N_{\GL_2},\psi})\ra \rho \ra \rho_{N_{\GL_2}}\ra 0,\end{equation}
 see \cite[Proposition 5.12]{BZ} for example. We now apply the exact sequence \eqref{eq: GL2 exact sequence} to the representation $\rho=\Pi_{V,\psi_V}$. Note that $(\Pi_{V,\psi_V})_{N_{\GL_2}}=\Pi_{U,\psi'_U}$ and $(\Pi_{V,\psi_V})_{N_{\GL_2},\psi}=\Pi_{U,\psi_U}$ (transitivity of Jacquet functors, see \cite[Lemma 2.32, p.24]{BZ}). By the uniqueness of Whittaker model, we get $\dim \Pi_{U,\psi_U}=1$. Thus, as a representation of $N_{\GL_2}$, we have $(\Pi_{V,\psi_V})_{N_{\GL_2},\psi}=\psi$. Then, the exact sequence \eqref{eq: exact sequence 2} follows.
\end{proof}

\subsection{Intertwining operator} \label{intertwining operator}

Denote $w_2=h(1,-1)w_\alpha w_\beta w_\alpha w_\beta^{-1}w_\alpha^{-1}$, one can check that $w_2^2=1$. In matrix form under the embedding $\RG_2(k)\incl \SO_7(k)$, we have 
$$w_2=\bpm &&j_3\\ &-1&\\ {}^t\! j_3 \epm, \textrm{ with } j_3=\bpm &-1&\\ &&-1\\ -1&&\epm.$$ 
Recall that $\wt U$ is the unipotent subgroup of the parabolic $\wt P$ of $\SO_7(k)$ and one can check that $w_2\wt Uw_2$ is the opposite of $\wt U$. Let $\tau$ be a representation of $\GL_2(k)$ and $I(\tau)=\Ind_{\wt P}^{\SO_7(k)}(\tau\otimes 1_{\SO_3})$ as before. For $\xi\in I(\tau)$, we define
$$M_{w_2}(\xi)(g)=\sum_{u\in \wt U}\xi(w_2 ug), g\in \SO_7(k).$$
Given $g\in \GL_2(k)$, denote by $\tilde g=(g,\det(g)^{-1},1,\det(g),g^*)$, which is the matrix realization of $g$ in $\SO_7(k)$. Recall that $g^*=\bpm &1\\ 1& \epm {}^t\! g^{-1}\bpm &1\\ 1& \epm $. One can check that
$$w_2 \tilde g w_2=(g^*,\det(g),1,\det(g)^{-1},g).$$
From this observation, one can check that $M_{w_2}(\xi)\in I(\tau^*)$, where $\tau^*$ is the representation given by $\tau^*(g)=\tau(g^*)$. Notice that $\tau^*$ is isomorphic to the contragredient representation of $\tau$. For $f\in I(\CW(\tau,\psi^{-1}))$, we define 
$$M_{w_2}(f)(g,a)=\sum_{u\in \wt U} f(w_2ug, d_1a^*),$$
where $d_1=\diag(-1,1)$. Here the factor $d_1$ is added to make sure that the function $a\mapsto M_{w_2}(f)(g,a)$ is a $\psi^{-1}$-Whittaker function on $\GL_2(k)$. Hence, $M_{w_2}(f)\in I(\CW(\tau^*,\psi^{-1}))$, and for $W\in \CW(\Pi,\psi)$, one can consider the sum $$\Psi(W,M_{w_2}(f))=\sum_{g\in U_H\setminus \RG_2(k)}W(g)M_{w_2}(f)(g).$$

\subsection{\texorpdfstring{$\GL_2$}{Lg}-twisted gamma factors for generic cuspidal representations}
\begin{prop}\label{prop: uniqueness of a bilinear form}
Let $\Pi$ be an irreducible generic cuspidal representation of $\RG_2(k)$ and let $\tau$ be an irreducible generic representation of $\GL_2(k)$. Then we have 
$$\dim \Hom_{\RG_2(k)}(I(\tau)|_{\RG_2(k)},\Pi)=1.$$
\end{prop}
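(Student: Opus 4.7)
The plan is to compute the Hom space by applying $\Hom_{\RG_2(k)}(-, \Pi)$ to the short exact sequence \eqref{eq: decomposition of I(tau)}. Since $k$ is a finite field and all representations are semisimple in characteristic zero, the resulting sequence of Hom spaces splits into short exact sequences, so it suffices to compute each of the two contributions separately. For the quotient term, Frobenius reciprocity yields
$$\Hom_{\RG_2(k)}(\ind_{\wt H}^{\RG_2(k)}(\tau\otimes 1_{U'}), \Pi) \cong \Hom_{\GL_2(k)}(\tau, \Pi^{U'}),$$
and since $\wt H$ is a conjugate of the maximal parabolic $P'$, cuspidality of $\Pi$ forces $\Pi^{U'} = \Pi_{U'} = 0$, so this term vanishes.

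For the submodule $\ind_H^{\RG_2(k)}(\tau\otimes 1_Z)$, Frobenius reciprocity together with the natural isomorphism $\Pi^Z \cong \Pi_Z$ (valid over a finite group in characteristic zero via averaging) reduces the problem to computing $\Hom_{\GL_2(k)}(\tau, \Pi_Z)$, and the strategy is to identify $\Pi_Z$ explicitly with the Gelfand--Graev representation $\ind_{N_{\GL_2}}^{\GL_2(k)}(\psi)$. By \eqref{eq: exact sequence 1}, cuspidality ($\Pi_V = 0$) immediately yields $\Pi_Z \cong \ind_{P^1}^{\GL_2(k)}(\Pi_{V,\psi_V})$. For the twisted Jacquet module $\Pi_{V,\psi_V}$, the exact sequence \eqref{eq: exact sequence 2} reduces matters to showing $\Pi_{U,\psi'_U} = 0$. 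The key point, and the main technical step, is the following observation: $\psi'_U$ is trivial on $U_\beta$ by definition and automatically trivial on $[U,U] \supset U_{\alpha+\beta}, U_{2\alpha+\beta}, U_{3\alpha+\beta}, U_{3\alpha+2\beta}$, hence $\psi'_U$ restricts to the trivial character on the unipotent radical $V'$ of the \emph{other} maximal parabolic $P'$. Using the decomposition $U = V' \rtimes U_\alpha$ and transitivity of twisted Jacquet functors,
$$\Pi_{U,\psi'_U} \cong (\Pi_{V'})_{U_\alpha, \psi} = 0$$
by cuspidality via $P'$. Hence $\Pi_{V,\psi_V} \cong \ind_{N_{\GL_2}}^{P^1}(\psi)$, and by transitivity of induction $\Pi_Z \cong \ind_{N_{\GL_2}}^{\GL_2(k)}(\psi)$.

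The proposition then follows from a final application of Frobenius reciprocity and uniqueness of the Whittaker model:
$$\Hom_{\GL_2(k)}(\tau, \ind_{N_{\GL_2}}^{\GL_2(k)}(\psi)) \cong \Hom_{N_{\GL_2}}(\tau, \psi),$$
which is one-dimensional since $\tau$ is generic. I expect the main obstacle to be the clean identification $\Pi_{U,\psi'_U} = 0$: this argument crucially requires cuspidality at \emph{both} maximal parabolics of $\RG_2$, and the conclusion would fail for a merely generic but non-cuspidal $\Pi$, in agreement with the cautionary remark in \cite[Remark 7.2]{LZ}.
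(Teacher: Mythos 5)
Your proposal is correct and follows essentially the same route as the paper: split via the Mackey decomposition \eqref{eq: decomposition of I(tau)}, kill the quotient term by cuspidality at $P'$, then use Lemma \ref{lem: Jacquet functor Pi_Z} and cuspidality at both parabolics to identify $\Pi_Z$ with the Gelfand--Graev representation and conclude by Frobenius reciprocity and uniqueness of Whittaker models. The only cosmetic difference is that you collapse the two applications of Frobenius reciprocity (first to $P^1$, then to $N_{\GL_2}$) into one by invoking transitivity of induction to write $\Pi_Z\cong\ind_{N_{\GL_2}}^{\GL_2(k)}(\psi)$ directly.
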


\begin{proof}
We use the decomposition $I(\tau)|_{\RG_2(k)}$ given in \eqref{eq: decomposition of I(tau)}. By Frobenius reciprocity, we have
\begin{align*}
    \Hom_{\RG_2(k)}(\ind_{\wt H}^G(\tau\otimes 1_{U'}),\Pi)
    =&\Hom_{\wt H}(\tau\otimes 1_{U'},\Pi|_{\wt H})\\
    =&\Hom_{\GL_2(k)}(\tau,\Pi_{U'}).
\end{align*}
Since $U'$ is the unipotent of a nontrivial parabolic subgroup and $\Pi$ is cuspidal, we get $\Pi_{U'}=0$. Thus we have 
$$\Hom_{\RG_2(k)}(\ind_{\wt H}^{\RG_2(k)}(\tau\otimes 1_{U'}),\Pi)=0. $$
From the decomposition of $I(\tau)|_{\RG_2(k)}$ in \eqref{eq: decomposition of I(tau)} and Frobenius reciprocity, we have 
\begin{align*} \dim \Hom_{\RG_2(k)}(I(\tau)|_{\RG_2(k)},\Pi)&=\Hom_{\RG_2(k)}(\ind_H^{\RG_2(k)}(\tau\otimes 1_Z),\Pi)\\
&=\Hom_H(\tau\otimes 1_Z,\Pi)\\
&=\Hom_{\GL_2(k)}(\tau,\Pi_Z).
\end{align*}
We now apply exact sequences in Lemma \ref{lem: Jacquet functor Pi_Z}. Note that $V$ is a unipotent subgroup of a nontrivial parabolic, we have $\Pi_V=0$. Thus we have $\Pi_Z\cong \ind_{P^1}^{\GL_2(k)}(\Pi_{V,\psi_V})$ by \eqref{eq: exact sequence 1}. By Frobenius reciprocity again, we get 
\begin{align*}
    \Hom_{\RG_2(k)}(I(\tau)|_{\RG_2(k)},\Pi)&=\Hom_{\GL_2(k)}(\tau,\Pi_Z)\\
    &=\Hom_{\GL_2(k)}(\tau,\ind_{P^1}^{\GL_2(k)}(\Pi_{V,\psi_V}))\\
    &=\Hom_{P^1}(\tau|_{P^1},\Pi_{V,\psi_V}).
\end{align*}
Since $\psi'_U|_{U_\beta}=1$, we have $\Pi_{U,\psi_U'}=(\Pi_{V'})_{U_\alpha,\psi}=0$ since $V'$ is the unipotent of the nontrivial parabolic subgroup $P'$ and $\Pi$ is cuspidal. Thus \eqref{eq: exact sequence 2} shows that $\Pi_{V,\psi_V}\cong \ind_{N_{\GL_2}}^{P^1}(\psi)$. We then have 
\begin{align*}
    \Hom_{\RG_2(k)}(I(\tau)|_{\RG_2(k)},\Pi)&=\Hom_{P^1}(\tau|_{P^1},\Pi_{V,\psi_V})\\
    &=\Hom_{P^1}(\tau,\ind_{N_{\GL_2}}^{P^1}(\psi))\\
    &=\Hom_{N_{\GL_2}}(\tau,\psi).
\end{align*}
Since $\tau$ is irreducible generic, we have $\Hom_{N_{\GL_2}}(\tau,\psi)=1$ by the uniqueness of Whittaker model for $\GL_2(k)$. This completes the proof.
\end{proof}
\begin{rmk}\label{rmk: why do we require cuspidal}
\rm{Note that if $\Pi$ is not cuspidal, from the above proof, we cannot expect that $$\dim\Hom_{\RG_2(k)}(I(\tau)|_{\RG_2(k)},\Pi)=1$$ in general. This is because the \textit{tale} terms, say, $\Pi_{U'},\Pi_V, \Pi_{U,\psi'_U}$ can cause some trouble. For example, if $\Pi_{U'}\ne 0$ and $\Hom_{\GL_2(k)}(\tau,\Pi_{U'})\ne 0$, then the above proof shows that 
\begin{align*}
    &\quad \dim\Hom_{\RG_2(k)}(I(\tau)|_{\RG_2(k)},\Pi)\\
    &=\dim\Hom_{\RG_2(k)}(\ind_{\wt H}^G(\tau\otimes 1_{U'}),\Pi)+\dim \Hom_{\RG_2(k)}(\ind_H^{\RG_2(k)}(\tau\otimes 1_Z),\Pi)\\
    &\ge \dim\Hom_{\RG_2(k)}(\ind_{\wt H}^G(\tau\otimes 1_{U'}),\Pi)+1\\
    &\ge 2.
\end{align*}
Note that over a $p$-adic field $k$, we can introduce a complex number parameter in the induced representation $I(\tau)$ and consider the induced representation 
$$I(s,\tau):=\Ind_{\wt P}^{\SO_7(k)}(\tau|\det|^s\otimes 1_{\SO_3})$$
on $\SO_7(k)$. Then the same strategy can show that, \textit{except for a finite number of $q^s$}, where $q$ is the number of residue field of $k$,  we have 
$$\dim\Hom_{\RG_2(k)}(I(s,\tau)|_{\RG_2(k)},\Pi)=1,$$ for any irreducible generic representation $\Pi$ of $\RG_2(k)$. Here we don't need the cuspidal condition on $\Pi$, because the \textit{tale} terms $\Pi_{U'}, \Pi_V$, all have finite length as a representation of $\GL_2(k),$ and $\Pi_{U,\psi'_U}$ has finite dimension, and thus the corresponding Hom spaces are still zero if we exclude a finite number of $q^s$.}
\end{rmk}

\begin{thm}\label{thm: existence of gamma factor for G2 times GL2}
Let $\Pi$ be an irreducible generic cuspidal representation of $\RG_2(k)$ and $\tau$ be an irreducible generic representation of $\GL_2(k)$. Then there exists a number $\gamma(\Pi\times  \tau,\psi)$ such that 
$$\Psi(W,M_{w_2}(f))=\gamma(\Pi\times \tau,\psi)\Psi(W,f)$$
for all $W\in \CW(\Pi,\psi)$ and $f\in I(\CW(\tau,\psi^{-1}))$.
\end{thm}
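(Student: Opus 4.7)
The proof will follow the standard template for local functional equations: I produce two $\RG_2(k)$-invariant bilinear forms on $\Pi \times I(\tau)$ and then invoke Proposition \ref{prop: uniqueness of a bilinear form} to conclude they are proportional.

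First I would check that $\Psi$ is $\RG_2(k)$-invariant in the sense that $\Psi(\Pi(g)W, r(g)f) = \Psi(W,f)$ for every $g \in \RG_2(k)$, where $r(g)$ denotes right translation on $I(\CW(\tau, \psi^{-1}))$. This is immediate from the change of variables $h \mapsto hg^{-1}$ in the indexing set $U_H \setminus \RG_2(k)$ of the defining sum \eqref{eq: defn of PSRS integral}. Identifying $\CW(\Pi, \psi)$ with $\Pi$ and $I(\CW(\tau, \psi^{-1}))$ with $I(\tau)|_{\RG_2(k)}$, this realizes $\Psi$ as an element of
$$\Hom_{\RG_2(k)}\bigl(\Pi \otimes I(\tau)|_{\RG_2(k)}, \BC\bigr) \cong \Hom_{\RG_2(k)}\bigl(I(\tau)|_{\RG_2(k)}, \wt{\Pi}\bigr),$$
where $\wt{\Pi}$ is the contragredient of $\Pi$.

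Next, as recalled in \S\ref{intertwining operator}, the operator $M_{w_2}\colon I(\tau) \to I(\tau^*)$ is an $\SO_7(k)$-equivariant intertwiner; the correction $d_1 a^*$ in the Whittaker-model version is exactly what is needed to ensure that $M_{w_2}\colon I(\CW(\tau, \psi^{-1})) \to I(\CW(\tau^*, \psi^{-1}))$ remains $\SO_7(k)$-equivariant, once one unwinds the relation $w_2 \tilde{g} w_2 = \widetilde{g^*}$ (up to central twists absorbed by the trivial $\SO_3$ factor). In particular $M_{w_2}$ is $\RG_2(k)$-equivariant, so the composite $(W,f) \mapsto \Psi(W, M_{w_2}(f))$ is again $\RG_2(k)$-invariant on $\CW(\Pi,\psi) \times I(\CW(\tau,\psi^{-1}))$, and hence determines another element of $\Hom_{\RG_2(k)}\bigl(I(\tau)|_{\RG_2(k)}, \wt{\Pi}\bigr)$.

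Finally, since $\Pi$ is irreducible generic cuspidal, so is $\wt{\Pi}$, and applying Proposition \ref{prop: uniqueness of a bilinear form} with $\Pi$ replaced by $\wt{\Pi}$ yields
$$\dim \Hom_{\RG_2(k)}\bigl(I(\tau)|_{\RG_2(k)}, \wt{\Pi}\bigr) = 1.$$
The two bilinear forms produced above must therefore be proportional, and the scalar of proportionality is the desired $\gamma(\Pi \times \tau, \psi)$. The only delicate points in this outline are the $\RG_2(k)$-invariance of $\Psi$ and the compatibility of the correction $d_1 a^*$ with $M_{w_2}$ on the Whittaker model; I expect both to be routine manipulations with no deeper obstacle, since the essential content of the theorem is already encoded in the multiplicity-one statement of Proposition \ref{prop: uniqueness of a bilinear form}.
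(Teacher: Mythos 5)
Your argument is correct and follows the same route as the paper: realize $(W,f)\mapsto\Psi(W,f)$ and $(W,f)\mapsto\Psi(W,M_{w_2}(f))$ as two elements of $\Hom_{\RG_2(k)}(\Pi\otimes I(\tau)|_{\RG_2(k)},\BC)\cong\Hom_{\RG_2(k)}(I(\tau)|_{\RG_2(k)},\wt\Pi)$, then invoke Proposition \ref{prop: uniqueness of a bilinear form} (applied to $\wt\Pi$, which is again irreducible generic cuspidal) to conclude proportionality. One small clarification: the factor $d_1a^*$ in the definition of $M_{w_2}(f)$ is not what makes the operator $\SO_7(k)$-equivariant — equivariance in the first variable is automatic from summing over $\wt U$ — but rather what guarantees that $a\mapsto M_{w_2}(f)(g,a)$ is a $\psi^{-1}$-Whittaker function, so that $M_{w_2}(f)$ actually lies in $I(\CW(\tau^*,\psi^{-1}))$ as the paper notes in \S\ref{intertwining operator}.
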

\begin{proof}
Note that $(W,f)\mapsto \Psi(W,f)$ and $(W,f)\mapsto \Psi(W,M(f))$ define two elements in $\Hom_{\RG_2(k)}(\Pi\otimes I(\tau),\BC)$. The theorem follows from Proposition \ref{prop: uniqueness of a bilinear form} directly.
\end{proof}
\begin{rmk}
{\rm Let $k$ be a $p$-adic field, $\Pi$ be a irreducible generic  smooth representation of $\RG_2(k)$ and $\tau$ be a irreducible generic  representation of $\GL_2(k)$. Following a similar strategy, we can prove the existence of a local gamma factor $\gamma(s,\Pi\times\tau,\psi)$, which satisfies a similar local functional equation using the local Piatetski-Shapiro-Rallis-Schiffmann integral. Note that in the $p$-adic field case, we do not need to require that $\Pi$ is cuspidal, see \ref{rmk: why do we require cuspidal} for an explanation.}
\end{rmk}

\section{A converse theorem}\label{sec: converse theorem}
In this section $k$ is a finite field of odd characteristic.
\subsection{Weyl elements supporting Bessel functions}

Let $\Delta=\wpair{\alpha,\beta}$ be the set of simple roots of $\RG_2$ and let $\RW(\RG_2)$ be the Weyl group of $\RG_2$. The group $\RW(\RG_2)$ is generated by $s_\alpha,s_\beta$ and has 12 elements. Let $\RB(\RG_2)=\wpair{w\in \RW(\RG_2): \forall \gamma\in \Delta, w\gamma>0 \implies w\gamma\in \Delta}$. The set $\RB(\RG_2)$ is called the set of Weyl elements which support Bessel functions and the name is justified by the following 
\begin{lem}\label{lem: weyl element which support bessel function}
Let $\Pi$ be an irreducible generic representation of $\RG_2(k)$ and $\CB_\Pi\in \CW(\Pi,\psi)$ be the Bessel function. If $w\in \RW(\RG_2)-\RB(\RG_2)$ and $\dot w\in \RG_2(k)$ is a representative of $w$, then $$\CB_\Pi(t\dot w)=0,\forall t\in T. $$
\end{lem}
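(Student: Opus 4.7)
The plan is to exploit the definition of $\RB(\RG_2)$ together with the left/right $(U,\psi)$-equivariance of $\CB_\Pi$ given in Lemma \ref{lem: basic properties of Bessel function}. Since $w \notin \RB(\RG_2)$, by definition there exists a simple root $\gamma \in \Delta$ with $w\gamma > 0$ and $w\gamma \notin \Delta$, i.e., $w\gamma$ is a positive but non-simple root. I will use this $\gamma$ as the source of a ``twisting'' identity that forces $\CB_\Pi(t\dot w) = 0$.

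First, I would translate the element $t\dot w$ on the right by $\bx_\gamma(r)$ for $r \in k$. Using the Chevalley relation $\dot w \bx_\gamma(r) \dot w^{-1} = \bx_{w\gamma}(c r)$ for some nonzero constant $c \in k$ (depending only on the chosen representative $\dot w$), and the torus conjugation $t \bx_{w\gamma}(s) t^{-1} = \bx_{w\gamma}((w\gamma)(t)s)$, I obtain
\[
t\dot w\, \bx_\gamma(r) \;=\; \bx_{w\gamma}\!\big(c (w\gamma)(t)\, r\big)\, t\dot w.
\]
Applying $\CB_\Pi$ to both sides and using Lemma \ref{lem: basic properties of Bessel function}, the right-hand side evaluates to $\psi_U(\bx_{w\gamma}(c(w\gamma)(t)r))\, \CB_\Pi(t\dot w)$, while the left-hand side evaluates to $\psi_U(\bx_\gamma(r))\, \CB_\Pi(t\dot w) = \psi(r)\, \CB_\Pi(t\dot w)$ since $\gamma$ is simple.

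The key observation is that $w\gamma$ is a positive \emph{non-simple} root, hence $\bx_{w\gamma}(s) \in [U,U]$ for every $s \in k$, and therefore $\psi_U$ is trivial on $\bx_{w\gamma}(c(w\gamma)(t)r)$. Combining the two computations gives
\[
\psi(r)\, \CB_\Pi(t\dot w) \;=\; \CB_\Pi(t\dot w), \qquad \forall\, r \in k.
\]
Since $\psi$ is a nontrivial additive character of $k$, there exists $r \in k$ with $\psi(r) \neq 1$, and this forces $\CB_\Pi(t\dot w) = 0$, as desired.

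There is essentially no obstacle in this argument; the only point requiring care is verifying that $w\gamma$ is indeed a positive non-simple root (hence in $[U,U]$), which is exactly the content of the condition $w \notin \RB(\RG_2)$. The rest is a routine Chevalley-commutator computation combined with the two-sided equivariance of the Bessel function. The argument is moreover independent of the choice of representative $\dot w$, since changing $\dot w$ by an element of $T$ only modifies the constant $c(w\gamma)(t)$ in the identity above, which does not affect the vanishing conclusion.
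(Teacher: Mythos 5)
Your proof is correct and follows essentially the same route as the paper's: pick a simple root $\gamma$ witnessing $w\notin\RB(\RG_2)$, conjugate $\bx_\gamma(r)$ through $t\dot w$ to land in the non-simple root subgroup $U_{w\gamma}$, and use the two-sided $(U,\psi_U)$-equivariance of $\CB_\Pi$ together with the nontriviality of $\psi$ to force the vanishing. Your closing remark about independence of the chosen representative $\dot w$ is a harmless addition not present in the paper.
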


\begin{proof}
Since $w\notin \RB(\RG_2)$, there exists an element $\gamma\in \Delta$ such that $w\gamma>0$ but $w\gamma$ is not simple. For any $r\in k$, we consider the element $\bx_\gamma(r)\in U_\gamma\subset U$. We have 
$$ t\dot w \bx_\gamma(r)=t\bx_{w\gamma}(cr)\dot w=\bx_{w\gamma}(w\gamma(t)cr)t\dot w,$$
where $c\in \wpair{\pm 1}$. Note that $\psi_U(\bx_{w\gamma}(w\gamma(t)cr))=1$ since $w\gamma$ is not a simple root. By Lemma \ref{lem: basic properties of Bessel function}, we have 
$$\psi(r)\CB_\Pi(t\dot w)=\CB_\Pi(t\dot w),\forall r\in k.$$
Since $\psi$ is not trivial, we must have $\CB_\Pi(t\dot w)=0$. 
\end{proof}

Let $w_\ell=(s_\alpha s_\beta)^3$, which is the long Weyl element in $\RW(\RG_2)$. One can check that $\RB(\RG_2)=\wpair{1,w_\ell s_\alpha,w_\ell s_\beta,w_\ell}$. Note that $w_1=w_\beta w_\alpha w_\beta w_\alpha^{-1}w_\beta^{-1}$ is a representative of $w_\ell s_\alpha$ and $w_2=h(1,-1)w_\alpha w_\beta w_\alpha w_\beta^{-1}w_\alpha^{-1}$ is a representative of $w_\ell s_\beta$.
\subsection{An auxiliary lemma}
Let $t$ be a positive integer and $ N_t$ be the upper triangular unipotent subgroup of $\GL_t(k)$. Let $\psi_t$ be a generic character of $N_t$.
\begin{lem}[{\cite[Lemma 3.1]{N14}}]\label{lem: density}
Let $\phi$ be a function on $\GL_t(k)$ such that $\phi(ng)=\psi_t(n)\phi(g)$ for all $n\in N_t$ and $g\in \GL_t(k)$. If 
$$\sum_{g\in N_t\backslash \GL_t(k)}\phi(g)W(g)=0,$$
for all $W\in \CW(\pi,\psi^{-1}_t)$ and all irreducible generic representations $\pi$ of $\GL_t(k)$, then $\phi\equiv 0$.
\end{lem}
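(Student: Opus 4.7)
The plan is to interpret the hypothesis as a pairing statement for the Gelfand--Graev representation $\Gamma(\psi_t):=\Ind_{N_t}^{\GL_t(k)}(\psi_t)$. The function $\phi$ in the statement is, by definition, nothing other than an element of $\Gamma(\psi_t)$. Dually, any Whittaker function $W\in\CW(\pi,\psi_t^{-1})$ satisfies $W(ng)=\psi_t^{-1}(n)W(g)$, so it is an element of the ``opposite'' Gelfand--Graev representation $\Gamma(\psi_t^{-1}):=\Ind_{N_t}^{\GL_t(k)}(\psi_t^{-1})$. Because $\phi\cdot W$ is genuinely $N_t$-invariant on the left, the sum
\[
\pair{\phi,W}:=\sum_{g\in N_t\backslash \GL_t(k)}\phi(g)W(g)
\]
makes sense and defines a $\GL_t(k)$-invariant bilinear pairing $\Gamma(\psi_t)\times\Gamma(\psi_t^{-1})\to\BC$. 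This pairing is the standard pairing identifying $\Gamma(\psi_t^{-1})$ with the contragredient of $\Gamma(\psi_t)$, and in particular it is non-degenerate.

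Next I would invoke the Gelfand--Graev theorem over the finite field $k$: the representation $\Gamma(\psi_t^{-1})$ is multiplicity-free, and decomposes as a direct sum
\[
\Gamma(\psi_t^{-1})\;\cong\;\bigoplus_{\pi}\CW(\pi,\psi_t^{-1}),
\]
where $\pi$ runs over all isomorphism classes of irreducible generic representations of $\GL_t(k)$. Indeed, Frobenius reciprocity combined with the uniqueness of the Whittaker model gives $\dim\Hom_{\GL_t(k)}(\pi,\Gamma(\psi_t^{-1}))=\dim\Hom_{N_t}(\pi,\psi_t^{-1})\le 1$, with equality precisely for generic $\pi$; on the other hand, a dimension count (or Mackey-type argument) shows these summands exhaust $\Gamma(\psi_t^{-1})$.

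With this in hand, the hypothesis that $\pair{\phi,W}=0$ for every $W\in\CW(\pi,\psi_t^{-1})$ and every irreducible generic $\pi$ means precisely that $\phi$ pairs trivially with every summand of the decomposition above, hence with every element of $\Gamma(\psi_t^{-1})$. Non-degeneracy of the pairing then forces $\phi\equiv 0$, as desired.

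There is essentially no obstacle here beyond correctly identifying the spaces involved; the heart of the matter is the Gelfand--Graev decomposition, which is a classical theorem for $\GL_t(k)$ and has nothing to do with the exceptional group $\RG_2$. The only care required is checking the well-definedness and non-degeneracy of the pairing between $\Gamma(\psi_t)$ and $\Gamma(\psi_t^{-1})$, both of which follow at once from the left-transformation laws of $\phi$ and $W$.
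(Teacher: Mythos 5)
Your argument is correct and is essentially the proof in the cited reference \cite[Lemma 3.1]{N14}: one views $\phi$ and $W$ as elements of the mutually contragredient Gelfand--Graev representations $\Gamma(\psi_t)$ and $\Gamma(\psi_t^{-1})$, pairs them by the canonical non-degenerate $\GL_t(k)$-invariant pairing, and then uses multiplicity-one (uniqueness of Whittaker models) to decompose $\Gamma(\psi_t^{-1})$ as the direct sum of the Whittaker models $\CW(\pi,\psi_t^{-1})$ over irreducible generic $\pi$. One small remark: the exhaustion of $\Gamma(\psi_t^{-1})$ by these summands needs no separate dimension count or Mackey argument---it follows immediately from semisimplicity together with the Frobenius-reciprocity computation of multiplicities that you already carried out.
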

Note that in the above lemma, when $t=1$, $N_t$ is trivial. We will only use the above lemma for $t=1,2$.

\subsection{The converse theorem and twisting by \texorpdfstring{$\GL_1$}{Lg}}\label{subsection: twisting by GL1}

The following theorem is the main result of this paper. 

\begin{thm}\label{thm: converse theorem}
Let $k$ be a finite field with odd characteristic. Let $\Pi_1,\Pi_2$ be two irreducible generic cuspidal representation of $\RG_2(k)$. If $$\gamma(\Pi_1\times\chi,\psi)=\gamma(\Pi_2\times \chi,\psi),$$
$$\gamma(\Pi_1\times \tau,\psi)=\gamma(\Pi_2\times \tau,\psi),$$
for all characters $\chi$ of $k^\times$ and all irreducible generic representations $\tau$ of $\GL_2(k)$, then $\Pi_1\cong \Pi_2$.
\end{thm}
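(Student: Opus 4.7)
The strategy, already outlined in the introduction, is to show the Bessel functions $\CB_1 := \CB_{\Pi_1}$ and $\CB_2 := \CB_{\Pi_2}$ coincide as functions on $\RG_2(k)$; by the uniqueness of Whittaker models this will imply $\Pi_1 \cong \Pi_2$. Using the Bruhat decomposition $\RG_2(k) = \coprod_{w \in \RW(\RG_2)} BwB$ together with Lemma~\ref{lem: weyl element which support bessel function}, both $\CB_1$ and $\CB_2$ vanish identically on every cell $BwB$ with $w \notin \RB(\RG_2) = \{1, w_1, w_2, w_\ell\}$, so I only need to verify $\CB_1 = \CB_2$ on the four remaining cells.

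The cell $BwB$ with $w=1$ is immediate: by Lemma~\ref{lem: basic properties of Bessel function} and Lemma~\ref{lem: further properties of Bessel functions}(1), every Bessel function $\CB_\Pi$ satisfies $\CB_\Pi(u_1tu_2) = \psi(u_1u_2)\CB_\Pi(t)$, which vanishes unless $t=1$, and then equals $\psi(u_1 u_2)$. So $\CB_1$ and $\CB_2$ already agree on $B$.

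For $BwB$ with $w = w_1$, I will exploit Lemma~\ref{lem: computation of the gamma factor}:
$$\gamma(\Pi\times\chi,\psi)=\frac{q^{5/2}}{\sqrt{\epsilon_0}}\sum_{a\in k^\times}\CB_{\Pi}(h(a,1)w_1)\,\epsilon\chi^{-1}(a).$$
Subtracting the identity for $\Pi_1$ and $\Pi_2$ and invoking the assumption that the $\GL_1$-twisted gamma factors coincide, the resulting sum
$$\sum_{a\in k^\times}\bigl(\CB_1(h(a,1)w_1)-\CB_2(h(a,1)w_1)\bigr)\epsilon(a)\chi^{-1}(a) = 0$$
vanishes for every character $\chi$ of $k^\times$. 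Fourier inversion on $k^\times$ then forces $\CB_1(h(a,1)w_1) = \CB_2(h(a,1)w_1)$ for every $a\in k^\times$. A Bruhat-style invariance argument, entirely analogous to Lemma~\ref{lem: further properties of Bessel functions}, then pins down $\CB_\Pi$ on the whole cell $Bw_1B$: using that $w_1$ represents $w_\ell s_\alpha$, which fixes the simple root $\alpha$, the condition $\psi(\bx_\alpha(r)) = \psi(w_1^{-1}t^{-1}\bx_\alpha(r)tw_1)$ (which must hold whenever $\CB_\Pi(tw_1) \ne 0$) constrains the torus element $t = h(a,b)$ to satisfy $b=1$. Combined with the left/right $U$-equivariance of Lemma~\ref{lem: basic properties of Bessel function}, this shows that $\CB_\Pi\big|_{Bw_1B}$ is determined by the values $\CB_\Pi(h(a,1)w_1)$, and therefore $\CB_1 = \CB_2$ on $Bw_1B$.

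The cells $Bw_2B$ and $Bw_\ell B$ will be treated simultaneously by means of the $\GL_2$-twisted gamma factor. The plan is to unfold the zeta integral $\Psi(\CB_\Pi, f)$ of \S\ref{PSRS local zeta integral} via Bruhat decomposition for $\SO_7(k)$ restricted to $\RG_2(k)$, using Lemma~\ref{lem: basic properties of Bessel function} to collapse the $U_H$-sum on the left and the Whittaker property of the inducing data on the $\GL_2$-side, so as to express $\Psi(\CB_\Pi, f)$ and $\Psi(\CB_\Pi, M_{w_2}(f))$ as sums, over $N_{\GL_2}\backslash \GL_2(k)$ (times a torus/scalar factor), of products $\CB_\Pi(\cdot\, w_\ell) \cdot W_\tau(\cdot)$ and $\CB_\Pi(\cdot \, w_2)\cdot W_\tau(\cdot)$, respectively. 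The functional equation from Theorem~\ref{thm: existence of gamma factor for G2 times GL2} together with the hypothesis $\gamma(\Pi_1\times\tau,\psi) = \gamma(\Pi_2\times\tau,\psi)$ will produce, for every Whittaker function $W_\tau$ of every irreducible generic $\tau$ of $\GL_2(k)$, a vanishing relation of the form $\sum_{g\in N_{\GL_2}\backslash \GL_2(k)} \Phi(g) W_\tau(g) = 0$, with $\Phi$ encoding the differences $\CB_1 - \CB_2$ along $Tw_2$ and $Tw_\ell$. Applying Lemma~\ref{lem: density} (the $\GL_2$-density statement) then forces $\Phi \equiv 0$, hence $\CB_1 = \CB_2$ on $Bw_2B \cup Bw_\ell B$, completing the proof.

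The main obstacle is the last paragraph: disentangling the double coset decomposition for the Piatetski-Shapiro–Rallis–Schiffmann integral so that the zeta integral is written cleanly as a single $\GL_2$-sum involving Bessel values along both $w_2$ and $w_\ell$ cells, and then verifying that the intertwining operator $M_{w_2}$ transforms the $Tw_\ell$-contribution into the $Tw_2$-contribution (and vice versa) in a way that allows Lemma~\ref{lem: density} to separate the two. The preparatory lemma restricting which $t\in T$ can give $\CB_\Pi(tw_2)\ne 0$ and $\CB_\Pi(tw_\ell)\ne 0$ — analogous to Lemma~\ref{lem: further properties of Bessel functions}(1) and to the $w_1$-case above — is also needed to keep the bookkeeping finite-dimensional and to ensure the $\GL_2$-density argument applies cleanly.
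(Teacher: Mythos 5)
Your outline for the cell $B w_1 B$ is correct and matches the paper: Lemma~\ref{lem: computation of the gamma factor} gives $\gamma(\Pi\times\chi,\psi)$ as a $\chi^{-1}$-twisted sum of $\CB_\Pi(h(a,1)w_1)$, Fourier inversion on $k^\times$ (the $t=1$ case of Lemma~\ref{lem: density}) forces $\CB_1(h(a,1)w_1)=\CB_2(h(a,1)w_1)$, and the commutation $\bx_\alpha(br)h(a,b)w_1=h(a,b)w_1\bx_\alpha(r)$ with $w_1\alpha=\alpha$ forces vanishing off $b=1$; this is precisely the paper's Lemma~\ref{lem: twisting by GL1}.

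Where the proposal goes astray is the treatment of $Bw_2B$ and $Bw_\ell B$, and you are right to flag it as your main obstacle — but the mechanism you describe would not close the gap. You expect the unfolding of $\Psi(\CB,f)$ to produce values of $\CB$ along $Tw_\ell$ and $\Psi(\CB,M_{w_2}f)$ to produce values along $Tw_2$, and then to "separate" them. In fact, neither side behaves that way. The paper chooses a very specific section $f_{\xi_v}$ supported on $H=\wt P\cap \RG_2(k)$, so that the non-intertwined integral collapses to a sum over $g\in N_{\GL_2}\backslash\GL_2(k)\cong U_\beta\backslash M$ of $\CB(g)W_v(g)$ with \emph{no} Weyl twist at all. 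Since $M\subset B\cup Bs_\beta B$, which is disjoint from $Bw_2B\sqcup Bw_\ell B$ (the support of $\CB=\CB_1-\CB_2$ after the $\GL_1$ step), this side vanishes identically; it contributes no information beyond the support constraint. The functional equation then gives $\Psi(\CB,\wt f_v)=0$ for all $v$ and all generic $\tau$. The content of Lemma~\ref{lem: computation of intertwining operator} is that $\Psi(\CB,\wt f_v)$ equals $q^3\sum_{m\in U_\beta\backslash M}\CB(mw_2)W_v^*(m)$, where the $w_2$-shift comes from the intertwining operator, not from a competing Bruhat cell inside $\RG_2$. Now Lemma~\ref{lem: density} applied to the function $m\mapsto \CB(mw_2)$ forces $\CB(mw_2)=0$ for \emph{all} $m\in M$, with no further analysis. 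Both remaining cells then fall out simultaneously from the Bruhat decomposition of $M$: taking $m=h(x,y)$ kills $\CB$ on $Tw_2$ and hence on $Bw_2B$, while taking $m=h(x,y)w_\beta$ and noting $w_\beta w_2$ is a representative of $w_\ell$ kills $\CB$ on $Bw_\ell B$. In particular, the "preparatory lemma restricting which $t\in T$ can give $\CB_\Pi(tw_2)\ne 0$" that you anticipate needing is unnecessary — the density lemma gives vanishing on all of $Mw_2$ directly, and the two cells are handled by the single conclusion $\CB(mw_2)=0$ rather than by separating two contributions. The computational ingredient you would need to supply is Lemma~\ref{lem: computation of intertwining operator}, which identifies exactly when $M_{w_2}(f_{\xi_v})$ is nonzero on the big cell $Pw_2P$ and computes its value there; without this, the unfolding sketched in your last paragraph cannot be carried out.
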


The proof of Theorem \ref{thm: converse theorem} will be given in the following subsections. The strategy of the proof is as follows. Let $\CB_i:=\CB_{\Pi_i}\in \CW(\Pi_i,\psi)$ be the Bessel function of $\Pi_i$ for $i=1,2$. We will prove that $\CB_1(g)=\CB_2(g)$ for all $g\in \RG_2(k)$ under the assumption of Theorem \ref{thm: converse theorem}. Since $\RG_2(k)=\coprod_{w\in \RW(\RG_2)}BwB$, it suffices to show that $\CB_1$ agree with $\CB_2$ on various cells $BwB$. By Lemma \ref{lem: weyl element which support bessel function} and Lemma \ref{lem: basic properties of Bessel function}, if $w\notin \RB(\RG_2)$, we have $\CB_1(g)=\CB_2(g)=0$ for $g\in BwB$. If $w=1$, we also have $\CB_1(g)=\CB_2(g),\forall g\in B$ by Lemma \ref{lem: basic properties of Bessel function} and Lemma \ref{lem: further properties of Bessel functions}. Thus it suffices to show that $\CB_1(g)=\CB_2(g),\forall g\in BwB$ with $w=w_1,w_2,w_\ell$. Here we do not distinguish a Weyl element and its representative. We start from $w_1$.

\begin{lem}\label{lem: twisting by GL1}
If $\gamma(\Pi_1\times\chi,\psi)=\gamma(\Pi_2\times\chi,\psi)$ for all characters $\chi$ of $\GL_1(k)$, then $\CB_1(g)=\CB_2(g)$, for all $g\in Bw_1B$.
\end{lem}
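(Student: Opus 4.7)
\textbf{Proof plan for Lemma \ref{lem: twisting by GL1}.}
The strategy is to first extract pointwise equality of the Bessel functions on a one-parameter family of torus-times-$w_1$ elements from the equality of gamma factors, and then to propagate this to the whole Bruhat cell $Bw_1B$ using the transformation properties of Bessel functions.

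First, I invoke the explicit formula of Lemma \ref{lem: computation of the gamma factor}:
\[
\gamma(\Pi_i\times\chi,\psi)=\frac{q^{5/2}}{\sqrt{\epsilon_0}}\sum_{a\in k^\times}\CB_{i}(h(a,1)w_1)\,\epsilon\chi^{-1}(a),\qquad i=1,2.
\]
The hypothesis gives
\[
\sum_{a\in k^\times}\bigl(\CB_{1}(h(a,1)w_1)-\CB_{2}(h(a,1)w_1)\bigr)\,\epsilon\chi^{-1}(a)=0
\]
for every character $\chi$ of $k^\times$. Since $\chi\mapsto \epsilon\chi^{-1}$ ranges over all characters of $k^\times$, the density Lemma~\ref{lem: density} (with $t=1$, where it reduces to Fourier inversion on the finite abelian group $k^\times$) forces
\begin{equation}\label{eq: B1=B2 on h(a,1)w1}
\CB_{1}(h(a,1)w_1)=\CB_{2}(h(a,1)w_1),\qquad \forall\,a\in k^\times.
\end{equation}

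Next, I reduce the general case $g\in Bw_1B$ to \eqref{eq: B1=B2 on h(a,1)w1}. Every $g\in Bw_1B$ admits a Bruhat decomposition $g=u_1\,h(a,b)\,w_1\,u_2$ with $u_1,u_2\in U$ and $h(a,b)\in T$, and by Lemma~\ref{lem: basic properties of Bessel function} we have $\CB_i(g)=\psi(u_1)\psi(u_2)\,\CB_i(h(a,b)w_1)$. It therefore suffices to show that $\CB_i(h(a,b)w_1)$ vanishes unless $b=1$. Since $w_1\alpha=\alpha$ (recall $w_1=w_\ell s_\alpha$ and $w_\ell=-1$ on the root lattice of $\RG_2$), for any $r\in k$ one has, using the Chevalley commutation relations,
\[
h(a,b)\,w_1\,\bx_\alpha(r)=\bx_\alpha\bigl(c\,\alpha(h(a,b))\,r\bigr)\,h(a,b)\,w_1=\bx_\alpha(c\,b\,r)\,h(a,b)\,w_1
\]
for a sign $c=\pm1$ fixed by the structure constants. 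Applying $\CB_i$ and using Lemma~\ref{lem: basic properties of Bessel function} on both sides yields the identity $\psi(r)\CB_i(h(a,b)w_1)=\psi(cbr)\CB_i(h(a,b)w_1)$ for every $r\in k$. Since $\psi$ is nontrivial, either $cb=1$ or $\CB_i(h(a,b)w_1)=0$; normalizing the Chevalley generators so that $c=1$ (as implicitly done in the proof of Lemma~\ref{lem: computation of the gamma factor}), this forces $b=1$ outside the vanishing locus. Combining this with \eqref{eq: B1=B2 on h(a,1)w1} and the invariance formula above gives $\CB_1(g)=\CB_2(g)$ for every $g\in Bw_1B$.

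The only genuine point requiring care is the vanishing statement $\CB_i(h(a,b)w_1)=0$ for $b\neq 1$, which depends on a correct tracking of the Chevalley sign $c$ when writing $w_1\bx_\alpha(r)w_1^{-1}$; this is a routine but unavoidable check using the explicit matrix realization of the generators given in Appendix~\ref{sec: embedding G2 into SO7}. Everything else reduces to Mellin inversion on $k^\times$ (Lemma~\ref{lem: density}) and the standard Bessel-function transformation rules already established in Lemmas~\ref{lem: basic properties of Bessel function} and~\ref{lem: further properties of Bessel functions}.
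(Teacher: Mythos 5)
Your proof is correct and follows essentially the same route as the paper: invert the gamma-factor formula via Lemma~\ref{lem: density} to get $\CB_1(h(a,1)w_1)=\CB_2(h(a,1)w_1)$, then use the commutation of $\bx_\alpha(r)$ past $h(a,b)w_1$ together with Lemma~\ref{lem: basic properties of Bessel function} to show $\CB_i(h(a,b)w_1)=0$ for $b\neq 1$, and finally propagate over the full cell $Bw_1B=UTw_1U$. The only cosmetic difference is that you defer the verification of the Chevalley sign $c$ to the explicit matrix realization, whereas the paper simply states the identity $\bx_\alpha(br)h(a,b)w_1=h(a,b)w_1\bx_\alpha(r)$ (i.e., $c=1$) as something ``one can check''; note that $c=1$ is also forced by consistency with the expression in Lemma~\ref{lem: computation of the gamma factor}, so ``normalizing'' $c$ is not an extra degree of freedom but a fact to be confirmed, as you acknowledge.
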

\begin{proof}
By Lemma \ref{lem: computation of the gamma factor}, we have 
$$\gamma(\Pi_i\times \chi,\psi)=\frac{q^{5/2}}{\sqrt{\epsilon_0}}\sum_{a\in k^\times}\CB_i(h(a,1)w_1)\epsilon \chi^{-1}(a).$$
Thus the assumplition implies that 
$$\sum_{a\in k^\times} (\CB_1(h(a,1)w_1)-\CB_2(h(a,1)w_1))\epsilon\chi^{-1}(a)=0,$$
for all character $\chi$ of $k^\times$. Then we get
$$\CB_1(h(a,1)w_1)-\CB_2(h(a,1)w_1)=0 $$
for all $a\in k^\times$ by Lemma \ref{lem: density}. 

On the other hand, for any $a,b\in k^\times$, one can check the following identity
$$\bx_\alpha(br)h(a,b)w_1=h(a,b)w_1\bx_\alpha(r),\forall r\in k.$$
Thus by Lemma \ref{lem: basic properties of Bessel function}, we have 
$$\psi(br)\CB_i(h(a,b)w_1)=\psi(r)\CB_i(h(a,b)w_1),\forall r\in k.$$
Since $\psi$ is nontrivial, we get $$\CB_i(h(a,b)w_1)=0, \textrm{ if } b\ne 1.$$

Therefore, we get $\CB_1(tw_1)=\CB_2(tw_1)$ for all $t\in T$. Since $Bw_1B=UTw_1U$, we get 
$$\CB_1(g)=\CB_2(g),\forall g\in Bw_1B$$
by Lemma \ref{lem: basic properties of Bessel function}.
\end{proof}

\subsection{Sections in the induced representation \texorpdfstring{$I(\tau)$}{Lg}}\label{subsection: section of induced representation}

Let $(\tau,V_\tau)$ be an irreducible generic representation of $\GL_2(k)$. Recall that $I(\tau)=\Ind_{\wt P}^{\SO_7}(\tau\otimes 1_{\SO_3})$. Fix a nonzero $v \in V_{\tau}$, we consider the function $\xi_v$ on $\RG_2(k)$ defined by $\supp(\xi_v)=H$ and 
$$\xi_v(az)=\tau(a)v,a\in M\cong \GL_2(k),z\in Z.$$
Note that $\xi_v\in \ind_H^{\RG_2(k)}(\tau\otimes 1_Z)$. For an explanation of the notations $\ind$ and $\Ind$, see Remark \ref{rmk: ind and Ind}. By zero extension, $\xi_v$ can be viewed as an element in $\Ind_{\wt P}^{\SO_7(k)}(\tau\otimes 1_{\SO_3}\otimes 1_{\wt U})=I(\tau)$. This can be checked by a direct computation or can be checked from the exact sequence \eqref{eq: decomposition of I(tau)}. Following $\S$\ref{PSRS local zeta integral}, we fix a nonzero Whittaker functional $\Lambda\in \Hom_{N_{\GL_2}}(\tau,\psi^{-1})$ and consider the following function in 
 $I(\CW(\tau,\psi^{-1}))$,
$$ f_{\xi_v}(g,a)=\Lambda(\tau(a)\xi_v(g)),g\in \SO_7(k),a\in \GL_2(k).$$
Let 
$$\tilde f_{v}=M_{w_2}(f_{\xi_v}).$$

\begin{lem}\label{lem: computation of intertwining operator} Let $g=m\bx_\alpha(r_1)\bx_{\alpha+\beta}(r_2)w_2 \bx_{\alpha}(s_1)\bx_{\alpha+\beta}(s_2)s'$ with $m\in M, s'\in Z$. Then if $\widetilde f_v(g, I_2)\ne 0$, we have $r_1=r_2=s_1=s_2=0$. Moreover, if $r_1=r_2=s_1=s_2=0$, we have 
$$\widetilde f_v(g, I_2)=W_v^*(m),$$
where $W_v^*(m):=\Lambda(\tau(d_1m^*)v)$, recall that $d_1=\diag(1,-1)$. 

\end{lem}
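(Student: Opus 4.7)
The plan is to unpack the defining sum $\widetilde f_v(g,I_2)=M_{w_2}(f_{\xi_v})(g,I_2)=\sum_{u\in\wt U}\Lambda\bigl(\tau(d_1)\,\xi_v(w_2 u g)\bigr)$ and then exploit the support of $\xi_v$ as a section of $I(\tau)$. Since $\xi_v$ was defined on $\RG_2(k)$ with support $H$ and then extended by zero, the zero-extension together with $H\subset\wt P$ forces $\xi_v(x)\neq 0$ only when $x\in\wt P$. Hence only those $u\in\wt U$ with $w_2 u g\in\wt P$ can contribute, and $\xi_v$ on $\wt P$ is read off by $\wt P$-equivariance as $\xi_v(\tilde p)=\tau(\GL_2\text{-part of }\tilde p)\,v$.

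Next, I would rearrange $w_2 u g$. Because $m\in M$ sits in the Levi of $\wt P$, it normalizes $\wt U$; writing $u m = m\cdot u^m$ with $u^m = m^{-1}um\in\wt U$ and then inserting $w_2^2=1$, one obtains the factorization
\[
w_2 u g = (w_2 m w_2)\cdot \bigl(w_2\, u^m\,\bx_\alpha(r_1)\bx_{\alpha+\beta}(r_2)\, w_2\bigr)\cdot \bx_\alpha(s_1)\bx_{\alpha+\beta}(s_2)\, s'.
\]
The middle factor lies in $\wt U^{-}$ because $w_2$ sends $U_\alpha$ into $U_{-(\alpha+\beta)}\subset\wt U^{-}$ and $U_{\alpha+\beta}$ into $U_{-\alpha}\subset\wt U^{-}$ (computed from $w_2=w_\ell s_\beta$). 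From the matrix realization of the embedding $\RG_2(k)\hookrightarrow\SO_7(k)$ one verifies that $w_2 m w_2$ lies in the Levi of $\wt P$ with $\GL_2$-part equal to $m^*$. The trailing piece $\bx_\alpha(s_1)\bx_{\alpha+\beta}(s_2)\,s'$ is in $\wt U\cdot Z\subset \wt P$.

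The Bruhat identity $\wt U^-\cap\wt P=\{1\}$ then reduces the condition $w_2 u g\in\wt P$ to the single equation that the middle $\wt U^-$-factor be trivial, which pins down a unique candidate $u$. In the case $r_1=r_2=s_1=s_2=0$ this unique $u$ is the identity, so $w_2 u g=(w_2 m w_2)\,s'\in\wt P$ has $\GL_2$-part $m^*$, and one reads off
\[
\widetilde f_v(g,I_2)=\Lambda\bigl(\tau(d_1)\tau(m^*)v\bigr)=\Lambda(\tau(d_1 m^*)v)=W_v^*(m),
\]
as claimed.

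For the vanishing half of the statement, when some $r_i$ or $s_i$ is nonzero, the idea is that the Chevalley commutators produced by moving $\bx_\alpha(s_1)\bx_{\alpha+\beta}(s_2)$ past the $w_2$-conjugated factors in $\wt U^-$ (together with the contributions already present from $\bx_\alpha(r_1)\bx_{\alpha+\beta}(r_2)$) generate extra components outside $\wt U^-\cap\wt P$, so that no $u\in\wt U$ can simultaneously kill both the left $v_r$- and the right $v_s$-contributions. I expect this step to be the main technical obstacle: one must carefully track the Chevalley relations among the short-root and long-root spaces $U_{\pm\alpha},U_{\pm(\alpha+\beta)},U_{\pm(2\alpha+\beta)}$ in $\RG_2$ and align them with the $\SO_7$-matrix form of $w_2$ to show the incompatibility, yielding an empty sum and $\widetilde f_v(g,I_2)=0$.
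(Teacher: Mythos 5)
Your plan has a genuine gap, and it occurs exactly where you tried to shortcut the matrix calculation. The difficulty is your placement of the various $\RG_2$-root subgroups inside the $2+3+2$ block structure of $\SO_7(k)$: neither $U_\alpha$, $U_{\alpha+\beta}$, $U_{-\alpha}$, nor $U_{-(\alpha+\beta)}$ lies in $\wt P$ or in $\ov{\wt U}$. Since $T_{\RG_2}$ has rank $2$ while $T_{\SO_7}$ has rank $3$, distinct $\SO_7$-roots restrict to the same $\RG_2$-weight (for example $\epsilon_1$ and $-\epsilon_2-\epsilon_3$ both restrict to $\alpha+\beta$; $\epsilon_2$ and $-\epsilon_1-\epsilon_3$ both restrict to $\alpha$). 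Accordingly, the one-parameter subgroup $\bx_{\alpha+\beta}(r)$ in $\SO_7(k)$ has nonzero entries in both the upper-right and lower-left off-diagonal blocks — the explicit matrix in Appendix \ref{sec: embedding G2 into SO7} has nonzero $(5,2)$ and $(6,3)$ entries alongside $(1,4)$, $(1,7)$, $(4,7)$ — and likewise for $\bx_\alpha(r)$, $\bx_{-\alpha}(r)$, $\bx_{-(\alpha+\beta)}(r)$. So the claim ``$\bx_\alpha(s_1)\bx_{\alpha+\beta}(s_2) s'\in\wt U\cdot Z\subset\wt P$'' is false, the ``middle factor'' $w_2\, u^m\, \bx_\alpha(r_1)\bx_{\alpha+\beta}(r_2)\, w_2$ is \emph{not} contained in $\ov{\wt U}$, and the clean Bruhat reduction via $\ov{\wt U}\cap\wt P=\{1\}$ does not go through.

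You also flag the vanishing half of the statement (some $r_i$ or $s_i$ nonzero) as an expected ``technical obstacle,'' but that is precisely the content of the lemma, not a loose end. The paper does not attempt any conceptual Bruhat factorization. It multiplies out $\bx_{-(\alpha+\beta)}(-r_1)\bx_{-\alpha}(-r_2)\bx_\alpha(s_1)\bx_{\alpha+\beta}(s_2)$ as a $7\times 7$ matrix and compares it with $m_1\bar u^{-1}h(s')^{-1}$, exploiting the key structural fact that for any $z\in Z$, the $2\times 3$ block $x_1$ in the decomposition
$z=\begin{pmatrix}\RI_2&x_1&x_2\\ 0&b&x_1^*\\ &&\RI_2\end{pmatrix}$
has the shape $x_1=\begin{pmatrix}*&0&0\\ *&0&0\end{pmatrix}$, i.e.\ zeros in columns $2$ and $3$. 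Equating the corresponding $2\times 3$ block of the product with $a_1a_2x_1$ forces those two columns — which contain $-2s_2(1-r_2s_1)$, $r_2$, $-2s_1(1+r_1s_2)$, $-r_1$ — to vanish, and hence (using $2\ne 0$ in $k$) $r_1=r_2=s_1=s_2=0$. Without that explicit matrix computation, your factorization does not yield the vanishing, because the factorization itself does not exist in $\SO_7$.
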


\begin{proof}
By the definition of intertwining operator in $\S$\ref{intertwining operator}, we have 
\begin{align}\label{eq: intertwining operator}
\widetilde f_v(g,\RI_2)&=\sum_{u\in \wt U}f_{\xi_v}(w_2u g,d_1)\\
&=\sum_{\bar u\in \ov{\wt U}}f_{\xi_v}(\bar u m^* \bx_{-(\alpha+\beta)}(-r_1)\bx_{-\alpha}(-r_2) \bx_{\alpha}(s_1)\bx_{\alpha+\beta}(s_2)s',d_1 ),\nonumber
\end{align}
where $\ov{\wt U}$ is the opposite of $\wt U$.
If $\widetilde f_v(g, I_2)\ne 0$, then there exists $\bar u\in \ov{\wt U}$, such that 
$$\bar u m^* \bx_{-(\alpha+\beta)}(-r_1)\bx_{-\alpha}(-r_2)\bx_{\alpha}(s_1)\bx_{\alpha+\beta}(s_2)s'=h\in H=\wt P\cap \RG_2(k).$$ 
Then we have 
\begin{equation}\label{eq: an equation in the matrix computation of G2}
\bx_{-(\alpha+\beta)}(-r_1)\bx_{-\alpha}(-r_2)\bx_{\alpha}(s_1)\bx_{\alpha+\beta}(s_2)=m_1\bar u^{-1}h (s')^{-1},
\end{equation}
where $m_1=(m^*)^{-1}$. Suppose that $h=m_2 z_2$ with $m_2\in M, z_2\in Z$, and write $z=z_2(s')^{-1}\in Z$. 

Note that a typical element in $Z$ has the form 
$$ \bx_{2\alpha+\beta}(r_3)\bx_{3\alpha+\beta}(r_4)\bx_{3\alpha+2\beta}(r_5)=\bpm 1&0&r_5&0&0&-r_3&0\\ 0&1&r_4&0&0&0&r_3\\ 0&0&1&0&0&0&0\\ 0&0&-r_3&1&0&0&0\\ 0&0&r_3^2&-2r_3&1&r_4&r_5\\0 &0&0&0&0&1&0\\0 &0&0&0&0&0&1\\ \epm, $$
where the matrix form can be computed using the matrix realization of $\RG_2(k)$ given in Appendix \ref{sec: embedding G2 into SO7}. For simplicity, we write the element $z\in Z$ as
$$z=\bpm \RI_2&x_1&x_2\\ 0&b&x_1^*\\ &&\RI_2 \epm,$$
with $b\in \SO_{ 3}(k),x_2\in \Mat_{2\times 2}(k)$ and $x_1\in \Mat_{2\times 3}(k)$ of the form
$$x_1=\bpm * &0&0\\ *&0&0\epm.$$
We write $m_i=\diag(a_i,I_3,a_i^*)$ with $a_i\in \GL_2(k)$ for $i=1,2$, and
$$\bar u^{-1}=\bpm \RI_2&&\\ \bar u_1 &\RI_3 &\\ \bar u_2&\bar u_1^* &\RI_2 \epm,$$
where $\bar u_1\in \Mat_{3\times 2}(k),\bar u_2\in \Mat_{2\times 2}(k),$ and $\bar u_1^*$ is determined by $\bar u_1$. 

Then we have 
\begin{align*}
m_1\bar u^{-1}h(s')^{-1}&=m_1\bar u^{-1}m_2z\\
&=\bpm a_1a_2& a_1a_2x_1&a_1a_2x_2\\ \bar u_1 a_2 &*&*\\ a_1^*\bar u_2 a_2&*&* \epm.
\end{align*}
On the other hand, from the matrix realization given in Appendix \ref{sec: embedding G2 into SO7}, we have
$$\bx_{-(\alpha+\beta)}(-r_1)\bx_{-\alpha}(-r_2)\bx_{\alpha}(s_1)\bx_{\alpha+\beta}(s_2)=\bpm b_1&y_1&y_2\\ u_1'&*&*\\ u_2'&*&*  \epm,$$
where 
\begin{align*}
b_1&=\bpm 1-r_2s_1& r_2s_2\\  r_1s_1&1-r_1s_2  \epm,\\
y_1&=\bpm 0&-2s_2(1-r_2s_1)&r_2\\ -s_1^2s_2& -2s_1(1+r_1s_2)&-r_1  \epm, \\
u_1'&=\bpm 0&r_1r_2^2\\ r_1-r_1r_2s_1& r_2+r_1r_2s_2\\ -s_1&s_2 \epm,\\
u_2'&=\bpm 0&-r_2^2 \\ -r_1^2(1-r_2s_1) & -r_1r_2(2+r_1s_2)\epm.
\end{align*}
From the identity \eqref{eq: an equation in the matrix computation of G2}, we have
$$b_1=a_1a_2, \quad y_1=a_1a_2x_1, \quad 
y_2=a_1a_2x_2, \quad
u_1'=\bar u_1a_2, \quad 
u_2'=a_1^* \bar u_2 a_2.$$
Since $a_1a_2x_1$ is still of the form 
$$\bpm *&0&0\\ *&0&0\epm,$$
the equation $y_1=a_1a_2x_1$ implies that 
$$\bpm -2s_2(1-r_2s_1)& r_2\\ -2s_1(1+r_1s_2)& -r_1 \epm=\bpm 0&0\\ 0&0 \epm,$$
which then implies that $r_1=r_2=s_1=s_2=0$ since $2\ne 0$ in $k$. 

If  $r_1=r_2=s_1=s_2=0$, we then have $u_1'=0,u_2'=0$ and thus $\bar u_1=0,\bar u_2=0$. Hence $\bar u=1$. Thus, if $r_1=r_2=s_1=s_2=0$, by \eqref{eq: intertwining operator}, we have 
\begin{align*}
\widetilde f_v(g,\RI_2)&=f_{\xi_v}(m^*s',d_1)=\Lambda(\tau(d_1)\xi_v(m^*s'))=\Lambda(\tau(d_1m^*)v)=W_v^*(m).
\end{align*}
This completes the proof of the lemma.
\end{proof}

\subsection{Proof of Theorem \ref{thm: converse theorem}}

Denote by $\CB(g)=\CB_1(g)-\CB_2(g)$. By the discussion in $\S$\ref{subsection: twisting by GL1} and Lemma \ref{lem: twisting by GL1}, we see that $\CB$ is supported on $Bw_2B\coprod Bw_\ell B$. 

Let $(\tau,V_\tau)$ be an irreducible generic representation of $\GL_2(k)$, $v\in V_\tau$, and $f_{\xi_v}\in I(\CW(\tau,\psi^{-1}))$ be the section constructed in $\S$\ref{subsection: section of induced representation}. We now compute $\Psi(\CB_i,f_{\xi_v})$ for $i=1,2$. Since the function $\CB_i(g)f_{\xi_v}(g)$ is supported on $G_2\cap \wt P=H$, we have 
\begin{align}\label{eq: left side of local zeta integral}\Psi(\CB_i,f_{\xi_v})&=\sum_{g\in U_H\backslash \RG_2(k)}\CB_i(g)f_{\xi_v}(g,\RI_2)\\
&=\sum_{g\in U_H\backslash H}\CB_i(g)f_{\xi_v}(g,\RI_2) \nonumber\\
&=\sum_{g\in U_\beta \backslash M}\CB_i(g)f_{\xi_v}(g,\RI_2) \nonumber\\
&=\sum_{g\in N_{\GL_2}\setminus \GL_2(k)}\CB_i( g) W_v(g),\nonumber
\end{align}
where an element $ g\in \GL_2(k)$ is identified with an element of $\RG_2(k)$ via the embedding $\GL_2(k)\cong M\ra \RG_2(k)$, and $W_v(g)=\Lambda(\tau(g)v)$, which is the Whittaker function of $\tau$ associated with $v\in V_\tau$. Note that $M\subset B\cup Bs_\beta B$, which has empty intersection with $Bw_2B\coprod Bw_\ell B$. Since $\CB$ is supported on $Bw_2B\coprod Bw_\ell B$,  it vanishes on $M$. We then have
$$\Psi(\CB_1,f_{\xi_v})-\Psi(\CB_2,f_{\xi_v})=\sum_{g\in N_{\GL_2}\setminus \GL_2(k)}\CB( g) W_v(g)=0.$$
Thus the assumption $\gamma(\Pi_1\times \tau,\psi)=\gamma(\Pi_2\times \tau,\psi)$ and the functional equation, see Theorem \ref{thm: existence of gamma factor for G2 times GL2}, implies that 
$$\Psi(\CB_1,\widetilde f_v)=\Psi(\CB_2,\widetilde f_v),$$
or 
\begin{equation}\label{eq: vanishing of an integral}\Psi(\CB,\widetilde f_v)=\Psi(\CB_1,\widetilde f_v)-\Psi(\CB_2,\widetilde f_v)=0.
\end{equation}
On the other hand, we have 
\begin{align*}
    \Psi(\CB,\widetilde f_v)&=\sum_{g\in U_H\setminus \RG_2(k)}\CB(g)\widetilde f_v(g,\RI_2).
\end{align*}
Note that $\RG_2(k)$ has the following  decomposition
\begin{equation}\label{eq: a Bruhat decomposition of G2}\RG_2(k)=P\coprod Pw_\alpha P\coprod Pw_\alpha w_\beta w_\alpha P \coprod Pw_2P.\end{equation}
Since $\CB$ is supported on $ Bw_2B\cup Bw_\ell B\subset Pw_2P$, it vanishes on $$ P\coprod Pw_\alpha P\coprod Pw_\alpha w_\beta w_\alpha P.$$
Furthermore, we have 
$$U_H\setminus Pw_2P=U_H\setminus (MVw_2V)=U_\beta\backslash M\times U_\alpha U_{\alpha+\beta}\times  w_2V.$$
By the above discussion and Lemma \ref{lem: computation of intertwining operator}, we have 
\begin{align}
    \Psi(\CB,\widetilde f_v)=&\sum_{m\in U_\beta\backslash M}\sum_{r_1,r_2,s_1,s_2\in k,s'\in Z}\CB(m\bx_\alpha(r_1)\bx_{\alpha+\beta}(r_2) w_2 \bx_\alpha(s_1)\bx_{\alpha+\beta}(s_2)s') \nonumber\\
    &\qquad \cdot \widetilde f_v (m\bx_\alpha(r_1)\bx_{\alpha+\beta}(r_2) w_2 \bx_\alpha(s_1)\bx_{\alpha+\beta}(s_2)s',\RI_2) \nonumber\\
    &=q^3\sum_{m\in U_\beta\backslash M} \CB(mw_2)W_v^*(m). \label{eq: computation of the other side of the G2 GL2 integral}
\end{align}
Then the equation \eqref{eq: vanishing of an integral} implies that 
\begin{equation}\label{eq: main identiy}\sum_{m\in U_\beta\backslash M} \CB(mw_2)W_v^*(m)=0,
\end{equation}
which holds for all $v\in V_\tau$ and all irreducible generic representations $\tau$ of $\GL_2(k)$. Thus by Lemma \ref{lem: density}, we have 
\begin{equation}\label{eq: main identity}\CB(mw_2)=0, \forall m\in M.
\end{equation}

If we take $m=h(x,y)\in M$ in \eqref{eq: main identity}, we get 
\begin{equation}\label{eq: vanishing on w2}
    \CB(h(x,y)w_2)=0,\forall x,y\in k^\times.
\end{equation}
If we take $m=h(x,y)w_\beta$ in \eqref{eq: main identity}, we then get 
\begin{equation}\label{eq: vanishing on wl}
    \CB(h(x,y)w_\beta w_2)=0, \forall x,y\in k^\times.
\end{equation}
Denote $\dot w_\ell=w_\beta w_2$. Note that $\dot w_\ell$ is a representative of $w_\ell$. Together with Lemma \ref{lem: basic properties of Bessel function}, equations \eqref{eq: vanishing on w2} \eqref{eq: vanishing on wl} imply that $\CB$ vanishes on the cells $Bw_2B$ and $Bw_\ell B$. This shows that $\CB$ is identically zero. Thus 
$$\CB_1(g)=\CB_2(g),\forall g\in \RG_2(k).$$
By the uniqueness of Whittaker model and irreducibility of $\Pi_1,\Pi_2$, we get $\Pi_1\cong \Pi_2$.

This completes the proof of Theorem \ref{thm: converse theorem}. 
\begin{rmk}
\rm{In \cite{Ye18}, Ye proved a variant of Lemma \ref{lem: density}, which can be used to refine Theorem \ref{thm: converse theorem} a little bit. Let the notations be the same as in that of Lemma \ref{lem: density}. Additionally, assume that the function $\phi$ satisfies $\sum_{u\in U'}\phi(g_1ug_2)=0$ for all $g_1,g_2\in \GL_t(k)$ and all standard unipotent subgroups $U'\subset \GL_t(k)$. If $\sum_{g\in N_t\backslash \GL_t(k)}\phi(g)W(g)=0$ for all $W\in \CW(\pi,\psi_t^{-1})$ and all irreducible generic \textit{cuspidal} representations $\pi$ of $\GL_t(k)$, then $\phi\equiv 0$ by \cite[Lemma 5.1]{Ye18}. Consequently, in Theorem \ref{thm: converse theorem}, the condition can be relaxed to: $\gamma(\Pi_1\times \chi,\psi)=\gamma(\Pi_2\times \chi,\psi)$, $\gamma(\Pi_1\times \tau,\psi)=\gamma(\Pi_2\times \tau,\psi)$ for all characters $\chi$ of $k^\times$ and all irreducible generic \textit{cuspidal} representations $\tau$ of $\GL_2(k)$, i.e., one only needs the irreducible generic \textit{cuspidal} $\GL_2(k)$ twists condition in Theorem \ref{thm: converse theorem}. In fact, in the above proof, for $i=1,2$, the function $\CB_i$ satisfies the additional condition $\sum_{u\in N_{\GL_2}}\CB_i(g_1ug_2)=0$ by the cuspidality condition of $\Pi_i$.\footnote{In fact, let $0\neq l_i\in \Hom_{N_{\GL_2}}(\GL_2(k),\psi^{-1})$ and let $v_i\in \Pi_i$ be the corresponding Whittaker vector such that $\CB_i(g)=l_i(\Pi_i(g)v_i)$. Then $\sum_{u\in N_{\GL_2}}\CB_i(g_1ug_2)=\sum_{u\in N_{\GL_2}}l_i(\Pi(g_1ug_2)v_i)=l_i(\Pi(g_1)\wt v_i)$ with $\wt v_i=\sum_{u\in N_{\GL_2}} \Pi(u)\Pi(g_2)v_i$. Since $\Pi_i$ is cuspidal, we have $\wt v_i=0$ and thus $\sum_{u\in N_{\GL_2}}\CB_i(g_1ug_2)=0.$} Thus the function $\CB=\CB_1-\CB_2$ also satisfies the condition $\sum_{u\in N_{\GL_2}}\CB(g_1ug_2)=0$. Combining the above proof with \cite[Lemma 5.1]{Ye18}, we can see that we only need the irreducible generic cuspidal $\GL_2(k)$ twists condition in Theorem \ref{thm: converse theorem}.}
\end{rmk}

\appendix

\section{Computation of certain Gauss sums}\label{sec: computation of gauss sum}
\subsection{Basic Gauss sum}   Let $\psi$ be a nontrivial additive character of $k=\BF_q$. Recall that we have fixed a square root $\sqrt{\epsilon_0}$ of $\epsilon_0$ such that 
$$\sum_{x\in k}\psi(ax^2)=\epsilon(a)\sqrt{\epsilon_0 q}.$$
For $a\in k^\times$, let 
$$A_r(a)=\sum_{x\in k^{\times,2}}\psi(arx), r=1,\kappa.$$
We then have 
$$1+2A_1(a)=\sum_{x\in k}\psi(ax^2)=\epsilon(a)\sqrt{\epsilon_0 q},$$
and 
$$1+2A_\kappa(a)=\sum_{x\in k}\psi(a\kappa x^2)=-\epsilon(a)\sqrt{\epsilon_0 q}.$$
Thus we get the following
\begin{lem}\label{lem: gauss sum}
We have $A_1(a)-A_\kappa(a)=\epsilon(a)\sqrt{\epsilon_0 q}$. 
\end{lem}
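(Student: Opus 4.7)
The statement is essentially a one-line consequence of the two identities displayed immediately before the lemma, so my plan is more about organizing that observation cleanly than overcoming any real obstacle.

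First I would observe that the map $x \mapsto x^2$ from $k^\times$ to $k^{\times,2}$ is two-to-one, so for any $c \in k^\times$ the basic Gauss sum evaluation \eqref{eq: basic guass sum} rewrites as
\begin{equation*}
\sum_{x \in k} \psi(cx^2) = 1 + 2\sum_{y \in k^{\times,2}} \psi(cy) = \epsilon(c)\sqrt{\epsilon_0 q}.
\end{equation*}
Applying this with $c = a$ and then with $c = a\kappa$ gives
\begin{equation*}
1 + 2A_1(a) = \epsilon(a)\sqrt{\epsilon_0 q}, \qquad 1 + 2A_\kappa(a) = \epsilon(a\kappa)\sqrt{\epsilon_0 q},
\end{equation*}
where I use the definition of $A_r(a)$ directly.

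Next I would use that $\kappa$ is a fixed generator of $k^\times$, so in particular $\kappa$ is a non-square and $\epsilon(\kappa) = -1$. Therefore $\epsilon(a\kappa) = -\epsilon(a)$, and subtracting the two displayed identities yields
\begin{equation*}
2\bigl(A_1(a) - A_\kappa(a)\bigr) = \bigl(\epsilon(a) - \epsilon(a\kappa)\bigr)\sqrt{\epsilon_0 q} = 2\epsilon(a)\sqrt{\epsilon_0 q}.
\end{equation*}
Since $p$ is odd, division by $2$ is legal in $k$-linear combinations of complex numbers, and this gives the claimed formula $A_1(a) - A_\kappa(a) = \epsilon(a)\sqrt{\epsilon_0 q}$.

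There is no genuine obstacle here; the only point worth being careful about is the sign $\epsilon(\kappa) = -1$, which is forced by our choice of $\kappa$ as a multiplicative generator. This is also the reason the later Gauss sum computations in the appendix are organized around the difference $A_1 - A_\kappa$ rather than the individual sums: the $\epsilon(a)$ factor cancels against the contribution from the non-square coset, producing the clean $\sqrt{\epsilon_0 q}$ answer that is needed as input in Lemmas~\ref{lem: proof when q equiv 1 mod 3}, \ref{lem: proof when q equiv -1 mod 3} and \ref{lem: pair is one when p=3}.
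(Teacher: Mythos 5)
Your proof is correct and is essentially identical to the paper's own argument, which also reduces to the two identities $1+2A_1(a)=\epsilon(a)\sqrt{\epsilon_0 q}$ and $1+2A_\kappa(a)=-\epsilon(a)\sqrt{\epsilon_0 q}$ and subtracts. The only cosmetic quibble is the phrase about division by $2$ being ``legal in $k$-linear combinations of complex numbers''; the subtraction happens in $\BC$, not in $k$, so the justification is simply that $2\neq 0$ in $\BC$.
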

We write $A_r(1)$ as $A_r$ for simplicity, for $r=1,\kappa$.

\subsection{Computation of \texorpdfstring{$B_r^i$}{Lg}} We now compute the sums $B_r^i$ for $r=1,\kappa$ and $i=0,1,2,3$ in \eqref{eq: defn of B} used in $\S$\ref{sec: proof when p>3}.  We assume $q\equiv 1\mod 3$. Given $r\in \wpair{1,\kappa}, r_3\in k^\times, r_4\in k^\times/{\pm 1}$, let $z(r,r_3,r_4)=-2-\frac{rr_4^2}{r_3^3}\in k$. Note that for any $x\in k$, the equation $t+t^{-1}=a$ for $t$ is solvable over $k_2$.  Given $r,r_3,r_4$ as above, and let $t(r,r_3,r_4)$ be a solution of the equation $t+t^{-1}=z(r,r_3,r_4)$. Although there are two choices of $t(r,r_3,r_4)$ in general, one can check that the condition $t(r,r_3,r_4)\in \wpair{\pm 1}$ (resp. $t(r,r_3,r_4)\in k^{\times,3}-\wpair{\pm 1}$, $t(r,r_3,r_4)\in k^{\times}-k^{\times,3}$, $t(r,r_3,r_4)\in k_2- k^{\times}$) is independent on the choice of $t(r,r_3,r_4)$. Recall that
\begin{align*}
B_r^0&=\sum_{r_3\in k^\times,r_4\in k^{\times}/\wpair{\pm 1}, t(r,r_3,r_4)\in \wpair{\pm 1}}\psi(r_3),\\
B_r^1&=\sum_{r_3\in k^\times,r_4\in k^{\times}/\wpair{\pm 1}, t(r,r_3,r_4)\in k^{\times,3}-\wpair{\pm 1}}\psi(r_3),\\
B_r^2&=\sum_{r_3\in k^\times,r_4\in k^{\times}/\wpair{\pm 1}, t(r,r_3,r_4)\in k^\times- k^{\times,3}}\psi(r_3),\\
B_r^3&=\sum_{r_3\in k^\times,r_4\in k^{\times}/\wpair{\pm 1}, t(r,r_3,r_4)\notin k^{\times}}\psi(r_3),
\end{align*}
for $r=1,\kappa$.
\begin{lem}\label{lem: computation of B}
We have
\begin{align*}
   B_1^0-B_\kappa^0&=\epsilon_0\sqrt{\epsilon_0 q},\\
    B_1^1-B_\kappa^1&=-\frac{1}{2}(1+\epsilon_0)\sqrt{\epsilon_0 q},\\
    B_1^2-B_\kappa^2&=0,\\
    B_1^3-B_\kappa^3&=\frac{1}{2}(1-\epsilon_0)\sqrt{\epsilon_0 q}.
\end{align*}
\end{lem}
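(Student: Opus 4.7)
The plan is to decouple the sum over $r_3$ from the sum over $r_4$ by using the variable $c = r r_4^2/r_3^3$, on which the condition ``$t(r,r_3,r_4) \in S_i$'' depends only. First I would note that for fixed $r_3$ and given $c$, there is exactly one $r_4 \in k^\times/\{\pm 1\}$ with $r r_4^2 = c r_3^3$ if $c r_3^3 / r \in k^{\times,2}$, and none otherwise. Since $[x \in k^{\times,2}] - [x \in \kappa k^{\times,2}] = \epsilon(x)$ for $x \in k^\times$, taking the difference gives
\[
B_1^i - B_\kappa^i = \sum_{r_3 \in k^\times} \psi(r_3)\,\epsilon(r_3^3) \sum_{c} [t(c) \in S_i]\,\epsilon(c) = \sqrt{\epsilon_0 q}\cdot T_i,
\]
where $T_i := \sum_{c \in k^\times} [t(c) \in S_i]\epsilon(c)$ and I use $\epsilon(r_3^3) = \epsilon(r_3)$ together with Lemma~\ref{lem: gauss sum} applied to the outer $r_3$-Gauss sum.

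Next I would analyze $T_i$ using the discriminant: $t+t^{-1} = -2-c$ has solutions in $k$ iff $c(c+4) \in k^{\times,2} \cup \{0\}$, and $t \in \{\pm 1\}$ exactly when $c = -4$. Hence $T_0 = \epsilon(-4) = \epsilon_0$, which gives the first formula. For $T_3$, writing $[\epsilon(c(c+4))=-1] = \tfrac12(1-\epsilon(c)\epsilon(c+4))$ and using $\sum_c \epsilon(c) = 0$ together with the translation $c \mapsto c-4$ yields $T_3 = \tfrac12(1-\epsilon_0)$, as required. The identity $T_0+T_1+T_2+T_3 = \sum_{c\in k^\times}\epsilon(c) = 0$ then immediately pins down $T_1 + T_2 = -\tfrac12(1+\epsilon_0)$.

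The delicate step is separating $T_1$ from $T_2$, which requires pulling apart the cube condition on $t$. For this I would parametrize by $t \in k^\times - \{\pm 1\}$ via $c = -2 - t - t^{-1} = -(t+1)^2/t$, so that $\epsilon(c) = \epsilon_0 \,\epsilon(t)$; since each $c$ corresponds to the unordered pair $\{t, t^{-1}\}$ and $k^{\times,3}$ is stable under inversion, this gives
\[
T_1 = \frac{\epsilon_0}{2}\sum_{t \in k^{\times,3}\setminus\{\pm 1\}}\epsilon(t).
\]
The key observation is that $k^{\times,3} \cap k^{\times,2} = k^{\times,6}$ has index $2$ in $k^{\times,3}$ (using $q \equiv 1 \pmod 6$, which follows from $q$ odd and $q \equiv 1 \pmod 3$), so $\epsilon$ restricts nontrivially to $k^{\times,3}$, giving $\sum_{t \in k^{\times,3}}\epsilon(t) = 0$. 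Subtracting the $t = \pm 1$ contributions $\epsilon(1) + \epsilon(-1) = 1+\epsilon_0$ yields $T_1 = -\tfrac12(1+\epsilon_0)$, whence $T_2 = 0$. The main obstacle is ensuring the nontriviality of $\epsilon|_{k^{\times,3}}$ is handled cleanly; once this index computation is in hand, the four formulas follow mechanically.
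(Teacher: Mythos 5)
Your proof is correct, and it reorganizes the computation in a genuinely cleaner way than the paper. The paper computes each $B_r^i$ separately and then subtracts; you instead observe at the outset that the condition $t(r,r_3,r_4)\in S_i$ depends only on $c=rr_4^2/r_3^3$, so the difference $B_1^i-B_\kappa^i$ factors immediately into a Gauss sum in $r_3$ (giving the $\sqrt{\epsilon_0 q}$) times a pure character sum $T_i=\sum_c\epsilon(c)[t(c)\in S_i]$ over $k^\times$ --- the paper only exploits this factorization implicitly, through repeated appeals to $A_1(a)-A_\kappa(a)=\epsilon(a)\sqrt{\epsilon_0 q}$. Where the two proofs truly diverge is which of $T_2$, $T_3$ gets computed directly. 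The paper proves $B_1^2=B_\kappa^2$ by a symmetry argument on the coset decomposition $k^\times-k^{\times,3}=\kappa k^{\times,3}\sqcup\kappa^2 k^{\times,3}$ and then derives $B^3$ by subtraction; you instead compute $T_3$ directly from the discriminant $c(c+4)$ of $t^2+(2+c)t+1$ via the standard half-angle identity $[\epsilon(c(c+4))=-1]=\tfrac12(1-\epsilon(c)\epsilon(c+4))$ and a shift $c\mapsto c-4$, then recover $T_2$ by subtraction from $\sum_i T_i=0$. Your $T_1$ computation matches the paper's in substance (since for $t=a^3$ one has $\epsilon(t)=\epsilon(a)=\epsilon(t^{1/3})$, so your ``$\epsilon$ nontrivial on $k^{\times,3}$'' and the paper's ``$\tfrac{q-1}{3}$ even'' are the same fact). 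The discriminant route for $T_3$ is arguably more robust and transparent than the paper's symmetry argument for $T_2$, and your early factorization compresses four parallel computations into one.
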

\begin{proof} For any $t\in k_2^\times$, $t\ne -1$, the condition $-2-rr_4^2/r_3^3=t+t^{-1}$ implies that 
\begin{equation}\label{eq: rt}
    (-r_3)^3=rt\left(\frac{r_4}{t+1} \right)^2.
\end{equation}
We first compute $B_r^0$. The condition $t(r,r_3,r_4)\in \wpair{\pm 1}$ implies that $t=1$ since $rr_4\ne 0$. Thus \eqref{eq: rt} becomes $(-r_3)^3=(r_4/2)^2$. Since $k^{\times}$ is a cyclic group generated by $\kappa$, the condition $(-r_3)^3=(r_4/2)^2$ implies that $-r_3\in k^{\times,2}$. Moreover, for each $-r_3\in k^{\times,2}$, there exists a unique $r_4\in k^{\times}/\wpair{\pm 1}$ such that the equation $(-r_3)^3=(r_4/2)^2$ holds. Thus we get 
$$B_1^0=\sum_{-r_3\in k^{\times,2}}\psi(r_3)=A_1(-1).$$
Similarly, we have $B_\kappa^0=A_\kappa(-1)$. Thus we have $B_1^0-B_\kappa^1=A_1(-1)-A_\kappa(-1)=\epsilon_0\sqrt{\epsilon_0 q}$ by Lemma \ref{lem: gauss sum}.

We next compute $B_{r}^1$, $r=1,\kappa$. Let $t=t(r,r_3,r_4)\in k^{\times,3}-\wpair{\pm 1}$. Let $a\in k^\times$ with $t=a^3$. We first assume that $r=1$. From \eqref{eq: rt}, we have $-a^{-1}r_3\in k^{\times,2}$. Thus the contribution of each fixed $t=t(1,r_3,r_4)$ to the sum $B_1^1$ is 
$$\sum_{x\in k^{\times,2}}\psi(-t^{1/3} x),$$
where $t^{1/3}$ is any cubic root of $t$ in $k^{\times}$. Because $t$ and $t^{-1}$ contributes the same to the sum $B_1^1$, we have $$B_1^1=\frac{1}{2}\sum_{t\in k^{\times,3}-\wpair{\pm 1}}\sum_{x\in k^{\times,2}}\psi(-t^{1/3}x).$$
Similarly, we have 
$$B_\kappa^1=\frac{1}{2}\sum_{t\in k^{\times,3}-\wpair{\pm 1}}\sum_{x\in k^{\times,2}}\psi(-t^{1/3}\kappa x).$$
Thus by Lemma \ref{lem: gauss sum}, we have 
\begin{align*}
    B_1^1-B_\kappa^1&=\frac{1}{2}\sum_{t\in k^{\times,3}-\wpair{\pm 1}}(A_1(-t^{1/3})-A_\kappa(-t^{1/3}))\\
    &=\frac{1}{2}\epsilon_0\sqrt{\epsilon_0 q}\sum_{t\in k^{\times,3}-\wpair{\pm 1}}\epsilon(t^{1/3}).
\end{align*}
We have $k^{\times,3}=\wpair{\kappa^{3i}: 1\le i\le \frac{q-1}{3}}$. Thus we get 
$$\sum_{t\in k^{\times,3}}\epsilon(t^{1/3})=\sum_{i=1}^{\frac{q-1}{3}}\epsilon(\kappa)^i=0,$$
where the last equality follows from the fact that $\epsilon(\kappa)=-1$ and $\frac{q-1}{3}$ must be even. Thus we get 
$$B_1^1-B_\kappa^1=-\frac{1}{2}\epsilon_0 \sqrt{\epsilon_0 \kappa}(1+\epsilon_0)=-\frac{1}{2}(1+\epsilon_0)\sqrt{\epsilon_0 q}.$$

 We next consider $B_r^2$. Note that $ k^\times-k^{\times,3}=\kappa k^{\times,3}\coprod \kappa^2 k^{\times,3}$. For $j=1,2$, we define 
$$B_r^{2,j}=\sum_{r_3\in k^\times,r_4\in k^{\times}/\wpair{\pm 1}, t(r,r_3,r_4)\in \kappa^j k^{\times,3}}\psi(r_3).$$
We have $B_{r}^2=B_r^{2,1}+B_{r}^{2,2}$. Take an element $t\in k^\times-k^{\times,3}$ with $t(r,r_3,r_4)=t$. Then the condition $-2-\frac{rr_4^2}{r_3^3}=t+t^{-1}$ implies \eqref{eq: rt}.
Note that if $r=1$ and $t\in \kappa k^{\times,3}$, equation \eqref{eq: rt} implies that $-r_3\in \kappa k^{\times,2}$, and for such an $r_3$, there is a unique $r_4$ satisfying that equation. Thus we get 
$$B_{1}^{2,1}=\sum_{t\in \kappa k^{\times,3}}\sum_{x\in k^{\times,2}}\psi(-\kappa x)=\frac{q-1}{3}A_\kappa'.$$
For $t\in \kappa^2 k^{\times,2}$ and $r=\kappa$, we also have that $-r_3\in \kappa k^{\times,2}$ and a unque $r_4$ dertermined by these datum. This shows that 
$$B_1^{2,1}=\sum_{t\in \kappa k^{\times,3}}\sum_{x\in k^{\times,2}}\psi(-\kappa x)=\frac{q-1}{3}A_\kappa'=B_{\kappa}^{2,2}.$$ Similarly, we have $B_\kappa^{2,1}=B_1^{2,2}$. Thus we get $B_1^2=B_{\kappa}^2$. 

Finally, we consider $B_r^3$. We have 
$$B_{r}^0+B_r^1+B_r^2+B_r^3=\sum_{r_3\in k^\times, r_4\in k^{\times}/\wpair{\pm 1}}\psi(r_3)=-\frac{q-1}{2}.$$
Thus, from the previous results, we get
$$B_1^3-B_\kappa^3=-(B_1^0-B_\kappa^0)-(B_1^1-B_\kappa^1).$$
This concludes the proof of the lemma.
\end{proof}

\subsection{Computation of \texorpdfstring{$C_r^i$}{}} In this subsection, we compute the sums $C_r^i$ for $r=1,\kappa,$ and $i=0,1,2,3$ defined in \eqref{eq: defn of C}. Note that in this case, $q\equiv -1\mod 3$. Recall that $k_2$ is the unique quadratic extension of $k=\BF_q$. We can realize $k_2$ as $k[\sqrt \kappa]$. Let $\Nm:k_2\ra k$ be the norm map. We have $\Nm(x+y\sqrt\kappa)=x^2-y^2\kappa$. Recall that $k_2^1$ is the norm 1 subgroup of $k_2^\times$.

\begin{lem}\label{lem: preparation for the computation of C}
\begin{enumerate}
    \item If an element $u\in k_2^1$ has a cubic root $v\in k_2^\times$, then we must have $v\in k_2^1$.
    \item Let $t\in k_2^1$. Then $t+t^{-1}+2$ is a square in $k^\times$ if and only if $t$ is a square in $k_2^1$.
\end{enumerate}
\end{lem}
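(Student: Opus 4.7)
The plan is to prove both parts by reducing to elementary statements about the cyclic groups $k_2^1$, $k^\times$, and $(k_2^\times)^2$, using the norm and trace maps of the quadratic extension $k_2/k$.

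For part (1), the key step is to apply the norm $\Nm = \Nm_{k_2/k}$ to $v^3 = u$, yielding $\Nm(v)^3 = \Nm(u) = 1$, so that $\Nm(v)$ is a cube root of unity in $k^\times$. Since the lemma is invoked only when $q \equiv -1 \pmod 3$, the order $q-1$ of $k^\times$ is coprime to $3$, and the only cube root of unity in $k^\times$ is $1$. Thus $\Nm(v) = 1$, and $v \in k_2^1$.

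For part (2), I would exploit the identity $t + t^{-1} + 2 = (s + s^{-1})^2$ for any square root $s$ of $t$. Since $|k_2^\times / k_2^1| = q-1$ is even, $k_2^1 \subseteq (k_2^\times)^2$, so $t$ always has a square root $s \in k_2^\times$; the question becomes whether $s$ can be chosen inside $k_2^1$, equivalently whether $\Nm(s) = 1$ rather than $-1$. For the forward direction, I would take $s \in k_2^1$ with $s^2 = t$, use $\ov{s} = s^{-1}$ to see $s + s^{-1} = \Tr_{k_2/k}(s) \in k$, and conclude $t + t^{-1} + 2 = \Tr_{k_2/k}(s)^2 \in k^2$. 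For the backward direction, from $(s+s^{-1})^2 = c^2$ with $c \in k^\times$ I would get $s + s^{-1} \in k^\times$; applying the Galois automorphism then yields $(\ov{s} - s)(s\ov{s} - 1) = 0$, so either $s \in k$ or $s \in k_2^1$. The $s \in k$ case forces $t = s^2 \in k^\times$ with $\Nm(t) = t^2 = 1$, hence $t = \pm 1$; the subcase $t = -1$ produces $s + s^{-1} = 0$, contradicting $c \ne 0$, while $t = 1$ gives $s = \pm 1 \in k_2^1$ automatically.

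The main subtlety, and the only place requiring care, will be the edge case $t = -1$ in part (2): here $t + t^{-1} + 2 = 0$, which forces the interpretation of ``square in $k^\times$'' as nonzero square if the iff is to hold without qualification. This is consistent with the appendix's use of the lemma, where the sums $C_r^i$ are indexed only over parameters with $t(r,r_3,r_4) \in k_2^1 - \wpair{\pm 1}$, so $t = -1$ never arises in the application.
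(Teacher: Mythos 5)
Your proof is correct, and for part (2) it follows a genuinely different route from the paper's. Here is a comparison.

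For part (1), you apply the norm to $v^3 = u$ to get $\Nm(v)^3 = 1$ and then use $3 \nmid q-1$ (which holds because $q \equiv -1 \bmod 3$) to conclude $\Nm(v) = 1$. The paper instead works with orders directly: from $u \in k_2^1$ one gets $v^{3(q+1)} = 1$, and combined with $v^{q^2-1} = 1$ and $\gcd(3(q+1), q^2-1) = q+1$ (again using $q \equiv -1 \bmod 3$), one deduces $v^{q+1} = 1$. These are essentially the same argument; yours is slightly cleaner because the norm map absorbs the order computation.

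For part (2), the paper writes $\beta = a + b\sqrt\kappa$ and computes in coordinates: forward, $t + t^{-1} + 2 = 4a^2 = \Tr(\beta)^2$; backward, it observes $t + t^{-1} + 2 = t^{-1}(t+1)^2$ and checks directly that $\Nm(a^{-1}(t+1)) = 1$, thereby exhibiting $a^{-1}(t+1)$ as an explicit square root of $t$ in $k_2^1$. Your forward direction is a coordinate-free restatement of the same computation ($t + t^{-1} + 2 = (s+s^{-1})^2 = \Tr(s)^2$ with $\bar s = s^{-1}$). Your backward direction is genuinely different: instead of producing the square root $a^{-1}(t+1)$ directly, you take an arbitrary square root $s \in k_2^\times$ of $t$ (which exists since $k_2^1 \subset (k_2^\times)^2$, because $[k_2^\times : k_2^1] = q-1$ is even), deduce $s + s^{-1} \in k^\times$, and use Galois invariance $(\bar s - s)(s\bar s - 1) = 0$ to conclude $s \in k$ or $s \in k_2^1$, disposing of the $s \in k$ alternative by hand. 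This trades the paper's explicit formula for a shorter structural dichotomy, at the small cost of needing $k_2^1 \subset (k_2^\times)^2$.

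One small clarification to your remark about $t = -1$: since $0 \notin k^\times$, the phrase ``square in $k^\times$'' already means nonzero square, so nothing is being ``forced''; the honest statement is that the lemma as written is false at $t = -1$ whenever $q \equiv 3 \bmod 4$ (then $-1 \in (k_2^1)^2$ yet $t + t^{-1} + 2 = 0$). The paper's own forward proof silently yields $4a^2 = 0$ in that case and so does not actually cover it. You are right that the application in Lemma~\ref{lem: computation of C} only invokes this for $t \in k_2^1 \setminus \{\pm 1\}$, so no harm is done, but it is a genuine (if cosmetic) gap in the paper's statement that you were right to flag.
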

\begin{proof}
(1) Since $u=v^3\in k_2^{1}$, we have $v^{3q+3}=1$. On the other hand, we have $v^{q^2-1}=1$ since $v\in k_2^\times$. Since $q\equiv -1 \mod 3$, the greatest common divisor of $q^2-1$ and $3q+3$ is $q+1$. Thus $\beta^{q+1}=1$, which means that $\beta\in k_2^1$.

(2) Suppose that $t=\beta^2$ with $\beta\in k_2^1$. We write $\beta=a+b\sqrt{\kappa}$ with $a,b\in k$. Then $\beta\in k_2^1$ means that $a^2-b^2\kappa=1$, which implies that $b^2\kappa=a^2-1$. We have $t=\beta^2=a^2+b^2\kappa+2ab\sqrt \kappa$. Thus
\begin{align*}
    t+t^{-1}+2=2(a^2+b^2\kappa)+2=4a^2\in k^{\times,2}.
\end{align*}
Conversely, suppose that $t+t^{-1}+2\in k^{\times,2}$. Suppose that $t=x+y\sqrt \kappa$ with $x,y\in k$ and $t+t^{-1}+2=a^2$ with $a\in k^{\times,2}$. Note that $t+t^{-1}+2=2x+2$. Thus $a^2=2x+2$. On the other hand, we have 
$$a^2=t+t^{-1}+2=t^{-1}(t+1)^2.$$
Thus, we have $t=(a^{-1}(t+1))^2$. It suffices to show that $a^{-1}(t+1)\in k_2^1$. We have 
$$\Nm(t+1)=(x+1)^2-y^2\kappa=2+2x=a^2,$$
where we used $x^2-y^2\kappa=1$. Thus $\Nm(a^{-1}(t+1))=1$.
\end{proof}

\begin{lem}\label{lem: computation of C} We have
\begin{align*}
C_1^0-C_\kappa^0&=\epsilon_0\sqrt{\epsilon_0 q},\\
C_1^1-C_\kappa^1&=-\frac{1}{2}(1+\epsilon_0)\sqrt{\epsilon_0 q},\\
C_1^2-C_\kappa^2&=\frac{1}{2}(1-\epsilon_0)\sqrt{\epsilon_0 q},\\
C_1^3-C_\kappa^3&=0.
\end{align*}
\end{lem}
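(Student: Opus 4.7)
The plan is to adapt the strategy of Lemma \ref{lem: computation of B} to the setting $q \equiv -1 \mod 3$. Starting from $-2 - rr_4^2/r_3^3 = z := t+t^{-1}$, for each admissible value of $z$ (that is, $z \ne -2$) the valid pairs $(r_3, r_4) \in k^\times \times k^\times/\{\pm 1\}$ are precisely those with $r_3$ lying in the $k^{\times,2}$-coset $-r(z+2)\cdot k^{\times,2}$, with $r_4$ then uniquely determined. Consequently the partial $\psi(r_3)$-sum for each fixed $z$ is $A_1(-r(z+2))$, and the difference $A_1(-(z+2)) - A_1(-\kappa(z+2)) = \epsilon(-(z+2))\sqrt{\epsilon_0 q}$ by Lemma \ref{lem: gauss sum}. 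I will then treat the four cases in turn.

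For $C_r^0$ only $t=1$ contributes (since $t=-1$ would force $r_4=0$), giving $C_r^0 = A_r(-1)$ and hence $C_1^0 - C_\kappa^0 = \epsilon_0 \sqrt{\epsilon_0 q}$. For $C_r^1$ I will exploit that cubing is a bijection on $k^\times$ (as $\gcd(3,q-1)=1$) to reparametrize by $t \in k^\times - \{\pm 1\}$, with a factor $1/2$ to compensate for the $t \leftrightarrow t^{-1}$ double-count. Applying the bijection $t \mapsto t^{1/3}$ (which fixes $\pm 1$) and using $\sum_{s \in k^\times} \epsilon(s) = 0$ together with $\epsilon(-1) = \epsilon_0$ then yields the claimed $-\frac{1}{2}(1+\epsilon_0)\sqrt{\epsilon_0 q}$. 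Finally $C_r^3$ will be read off from the total identity $\sum_i C_r^i = \sum_{(r_3, r_4)} \psi(r_3) = -(q-1)/2$, which forces $\sum_i (C_1^i - C_\kappa^i) = 0$.

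The core work lies in case $C_r^2$. Here $t \in (k_2^1 - \{\pm 1\}) \cap k_2^{\times,3}$, and I will use the identity $z+2 = \Nm(t+1)$ valid for $t \in k_2^1$, together with Lemma \ref{lem: preparation for the computation of C}(1) to identify $k_2^1 \cap k_2^{\times,3}$ with the subgroup $(k_2^1)^3$, and Lemma \ref{lem: preparation for the computation of C}(2) to interpret $\epsilon(\Nm(t+1))$ as $+1$ or $-1$ according as $t$ lies in $k_2^{1,2}$ or not. This reduces $C_1^2 - C_\kappa^2$ to $\frac{\epsilon_0 \sqrt{\epsilon_0 q}}{2}$ times the signed count of $((k_2^1)^3 - \{\pm 1\}) \cap k_2^{1,2}$ against its complement in $(k_2^1)^3 - \{\pm 1\}$. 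Since $q \equiv -1 \mod 6$ forces $6 \mid q+1$, I have $|(k_2^1)^3| = (q+1)/3$ and $|(k_2^1)^3 \cap k_2^{1,2}| = |(k_2^1)^6| = (q+1)/6$. The main subtlety — and the expected obstacle — is that while $\pm 1$ are always cubes in $k_2^1$, the element $-1$ is a square in $k_2^1$ precisely when $4 \mid q+1$, i.e., when $\epsilon_0 = -1$; carefully bookkeeping this $\epsilon_0$-dependence will give the signed count as $\epsilon_0 - 1$, and hence $C_1^2 - C_\kappa^2 = \frac{1}{2}(1-\epsilon_0)\sqrt{\epsilon_0 q}$.
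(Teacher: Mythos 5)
Your proposal is correct, and it reaches the result by a genuinely different route from the paper's. For $C_r^0$ and $C_r^1$ your arguments coincide with the paper's (reparametrize by cube roots, use $\sum_{t\in k^\times}\epsilon(t)=0$). The difference is in which of $C_r^2$, $C_r^3$ you compute directly: you compute $C_r^2$ head-on and then read off $C_r^3$ from the total-sum identity, whereas the paper does the reverse. The paper's direct computation of $C_r^3 = 0$ is a symmetry argument: writing $k_2^1 - k_2^{\times,3} = \{\alpha^i : 3\nmid i\}$ for a generator $\alpha$ of $k_2^1$ and splitting by the parity of $i$ into pieces $S_1, S_2$, it shows $C_1^{3,1} = C_\kappa^{3,2}$ and $C_1^{3,2} = C_\kappa^{3,1}$, so the difference cancels; from this $C_1^2 - C_\kappa^2$ is then extracted from $\sum_i C_1^i = \sum_i C_\kappa^i$. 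Your direct computation of $C_r^2$ instead reduces, via $z+2 = \Nm(t+1)$ and Lemma~\ref{lem: preparation for the computation of C}, to the signed count $\sum_{t \in (k_2^1)^3 - \{\pm 1\}} \epsilon(\Nm(t+1))$, where the whole subtlety is whether $-1$ lies in $(k_2^1)^2$, which happens precisely when $4\mid q+1$, i.e., when $\epsilon_0 = -1$; your bookkeeping gives the signed count $\epsilon_0 - 1$ and hence $C_1^2 - C_\kappa^2 = \tfrac{\epsilon_0(\epsilon_0-1)}{2}\sqrt{\epsilon_0 q} = \tfrac{1}{2}(1-\epsilon_0)\sqrt{\epsilon_0 q}$, as claimed. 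Both arguments are valid and of comparable length; the paper's symmetry trick avoids the $\epsilon_0$-casework by getting the cancellation to $0$ structurally, while your version makes the $\epsilon_0$-dependence of $C_1^2 - C_\kappa^2$ visible directly rather than inheriting it from the other three differences. One small notational nit: in line two you write the partial sum as $A_1(-r(z+2))$ with a subsequent difference $A_1(-(z+2)) - A_1(-\kappa(z+2))$; with the paper's convention $A_r(a) = \sum_{x\in k^{\times,2}}\psi(arx)$, the cleaner formulation is that the partial sum equals $A_r\bigl(-1/(z+2)\bigr)$, which lies in the same $k^{\times,2}$-coset so the evaluated difference $\epsilon(-(z+2))\sqrt{\epsilon_0 q}$ is unchanged.
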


\begin{proof}
Note that $C_r^0=B_r^0$ and thus $C_1^0-C_2^0=\epsilon_0\sqrt{\epsilon_0 q}$ follows from Lemma \ref{lem: computation of B}. To compute $C_r^2$, we take an element $t\in k^\times-\wpair{\pm 1}$ and let $t(r,r_3,r_4)=t$, which implies 
$$(-r_3)^3=rt\left(\frac{r_4}{t+1}\right)^2, $$
see \eqref{eq: rt}. Note that any $t\in k^{\times}$ is has a cubic root in $k^{\times}$. Let $t^{1/3}\in k^\times$ be one cubic root of $t$. Then the above equation implies that 
$$(-r_3/t^{1/3})^3=r\left(\frac{r_4}{t+1}\right)^2.$$
If $r=1$, this implies that $r_3\in -t^{1/3}k^{\times,2}$, and for such an $r_3$ (and a fixed $t$), there is a unique $r_4\in k^\times/\wpair{\pm 1}$ such that $(-r_3/t^{1/3})^3=r\left(\frac{r_4}{t+1}\right)^2 $. Thus the contribution of a single $t$ with $t(1,r_3,r_4)$ to the sum $C_1^1$ is $$\sum_{k^{\times,2}}\psi(-t^{1/3}x).$$
Since $t$ and $t^{-1}$ have the same contribution, we have
$$C_1^1=\frac{1}{2}\sum_{t\in k^{\times}-\wpair{\pm 1}}\sum_{x\in k^{\times,2}}\psi(-t^{1/3}x).$$
Since $t\mapsto t^{3}$ is a bijection from $k^\times-\wpair{\pm 1}$ to itself, we get 
$$C_1^1=\frac{1}{2}\sum_{t\in k^{\times}-\wpair{\pm 1}}\sum_{x\in k^{\times,2}}\psi(-tx)=\frac{1}{2}\sum_{t\in k^\times-\wpair{\pm 1}}A_1(-t).$$
Similarly, we have 
$$C_\kappa^1=\frac{1}{2}\sum_{t\in k^\times-\wpair{\pm 1}}A_\kappa(-t).$$
Thus by Lemma \ref{lem: gauss sum}, we have 
\begin{align*}
    C_1^1-C_\kappa^1&=\frac{1}{2}\sum_{t\in k^\times-\wpair{\pm 1}}(A_1(-t)-A_\kappa(-t))\\
    &=\frac{1}{2}\sum_{t\in k^\times-\wpair{\pm 1}} \epsilon_0 \epsilon(t)\sqrt{\epsilon_0 q}.
\end{align*}
Since $\epsilon$ is a nontrivial character on $k^\times$, we have $\sum_{t\in k^\times}\epsilon(t)=0$. Thus we have 
$$C_1^1-C_\kappa^1=-\frac{1}{2}(1+\epsilon_0)\sqrt{\epsilon_0 q}.$$

We next consider $C_r^3$. Let $\alpha$ be a generator of $k_2^1$. Note that $\alpha$ has no cubic root in $k_2^1$. By Lemma \ref{lem: preparation for the computation of C} (1), we have 
$$k_2^1-k_2^{\times,3}=\wpair{\alpha^i:0\le i\le q, 3\nmid i}.$$
Consider the subsets $S_1,S_2$ of $k_2^1-k_2^{\times,3}:$
$$S_1=\wpair{\alpha^i:0\le i\le q, 3\nmid i,2\nmid i},S_2=\wpair{\alpha^i:0\le i\le q, 3\nmid i, 2|i}.$$
Note that $|S_1|=|S_2|=\frac{q+1}{3}$. For $i=1,2$, let 
$$C_r^{3,i}=\sum_{r_3\in k^{\times},r_4\in k^{\times}/\wpair{\pm 1},t(r,r_3,r_4)\in S_i}\psi(r_3).$$
We have $C_r^{3}=C_{r}^{3,1}+C_{r}^{3,2}$. Take $t\in S_i$, the condition $t(r,r_3,r_4)=t$ implies that 
$$(-r_3)^3=\frac{rr_4^2}{t+t^{-1}+2}.$$
If $t\in S_1$, by Lemma \ref{lem: preparation for the computation of C}, we have $t+t^{-1}+2\in \kappa k^{\times,2}$. Thus for $r=1,t\in S_1$, we have $-r_3\in \kappa k^{\times,2}$, and for each $-r_3\in \kappa k^{\times,2}$, there is a unique $r_4\in k^{\times}/\wpair{\pm 1}$ such that $t(1,r_3,r_4)=t$ (for fixed $t$). Thus, we get 
$$C_1^{3,1}=\frac{1}{2}\sum_{t\in S_1}\sum_{x\in k^{\times,2}}\psi(-\kappa x)=\frac{q+1}{6}A_{\kappa}(-1),$$
where the $1/2$ was appeared since $t$ and $t^{-1}$ have the same contribution to the above sum. Similarly, we have 
$$C_\kappa^{3,2}=\frac{1}{2}\sum_{t\in S_2}\sum_{x\in k^{\times,2}}\psi(-\kappa x)=\frac{q+1}{6}A_{\kappa}(-1).$$
In particular, we have $C_{1}^{3,1}=C_{\kappa}^{3,2}$. Similarly, we have $C_1^{3,2}=C_\kappa^{3,1}$. Thus we have $C_1^3-C_\kappa^3=0$. 

Finally, to compute $C_1^2-C_{\kappa}^2$, it suffices to notice that 
$$\sum_{i=0}^3 C_1^i=\sum_{i=0}^3C_\kappa^i,$$
and thus $$C_1^2-C_\kappa^2=-(C_1^0-C_\kappa^0)-(C_1^1-C_\kappa^1)-(C_1^3-C_\kappa^3).$$
One can also compute $C_1^2-C_\kappa^2$ directly from Lemma \ref{lem: preparation for the computation of C}.
\end{proof}
\subsection{Computation of \texorpdfstring{$D_r^i$}{Lg}}\label{sec: computation of D}
In this subsection, let $q=3^f$ and $k=\BF_q$. We compute the Gauss sums in $\S$\ref{sec: proof when p=3}. Recall that (see \eqref{eq: defn of D}), 
$$D_r^0=\sum_{r_3\in k^\times, r_4\in k^{\times}/\wpair{\pm 1}, t(r,r_3,r_4)\in \wpair{\pm 1}} \psi(r_3)$$
and $$D_r^1=\sum_{r_3\in k^\times, r_4\in k^{\times}/\wpair{\pm 1}, t(r,r_3,r_4)\notin \wpair{\pm 1}} \psi(r_3).$$
\begin{lem}\label{lem: computation of D}
We have 
\begin{align*}
    D_1^0-D_\kappa^0&= \epsilon_0 \sqrt{\epsilon_0 q},\\
    D_1^1-D_\kappa^1&=-\frac{1}{2}(1+\epsilon_0)\sqrt{\epsilon_0 q},\\
    D_1^2-D_\kappa^2&=\frac{1}{2}(1-\epsilon_0)\sqrt{\epsilon_0 q}.
\end{align*}
\end{lem}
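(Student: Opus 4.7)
The plan is to mimic the computations in Lemma \ref{lem: computation of B} and, especially, Lemma \ref{lem: computation of C}, adapted to characteristic $3$. The key structural simplification is that in char $3$, the map $x\mapsto x^3$ is the Frobenius, hence a bijection on both $k^\times$ and $k_2^\times$, and this bijection preserves square classes; so every element has a unique cube root and $\epsilon(a^{1/3})=\epsilon(a)$. Throughout I will use the identity $(-r_3)^3 = rt\bigl(r_4/(t+1)\bigr)^2$, valid for any solution $t\ne -1$ of $t+t^{-1}=-2-rr_4^2/r_3^3$, together with the equivalent form $(-r_3)^3 = rr_4^2/(t+t^{-1}+2)$.

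For $D_r^0$, I will observe that $t=-1$ forces $rr_4^2=0$, which is impossible, so only $t=1$ contributes. Using $1/2=-1$ in characteristic $3$, the relation reduces to $(-r_3)^3=rr_4^2$, and since cubing preserves square classes this yields $D_1^0=A_1(-1)$ and $D_\kappa^0=A_\kappa(-1)$; Lemma \ref{lem: gauss sum} finishes this case. For $D_r^1$, I will parameterize by pairs $(t,r_4)\in (k^\times-\{\pm 1\})\times k^\times/\{\pm 1\}$, each determining a unique $r_3$. Letting $y=(r_4/(t+1))^{2/3}$ (which ranges over $k^{\times,2}$), I obtain
\[
2D_r^1 \;=\; \sum_{t\in k^\times-\{\pm 1\}} A_1\bigl(-(rt)^{1/3}\bigr).
\]
The substitution $s=t^{1/3}$, a bijection of $k^\times-\{\pm 1\}$, then reduces the difference $2(D_1^1-D_\kappa^1)$ to $A_1(\kappa^{1/3})+A_1(-\kappa^{1/3})-A_1(1)-A_1(-1)$. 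Using $A_1(a)+A_\kappa(a)=-1$ together with $A_1(a)-A_\kappa(a)=\epsilon(a)\sqrt{\epsilon_0 q}$ to derive $A_1(a)+A_1(-a)=-1+\tfrac{1+\epsilon_0}{2}\epsilon(a)\sqrt{\epsilon_0 q}$, and the observation that $\epsilon(\kappa^{1/3})=\epsilon(\kappa)=-1$, I recover the stated formula.

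The main obstacle is $D_r^2$, for which I need an analog of Lemma \ref{lem: preparation for the computation of C}(2) valid in characteristic $3$: for $t\in k_2^1-\{\pm 1\}$, the quantity $t+t^{-1}+2\in k^\times$ is a square in $k$ if and only if $t$ is a square in $k_2^1$. I will verify this directly by writing $\beta^2=t$ with $\beta=a+b\sqrt\kappa\in k_2^\times$; the alternative $\beta\bar\beta=\pm 1$ yields either $t+t^{-1}+2=a^2\in k^{\times,2}$ (when $\beta\in k_2^1$) or $t+t^{-1}+2=b^2\kappa\in\kappa k^{\times,2}$ (when $\beta\notin k_2^1$), with the degenerate case $a=0$ or $b=0$ corresponding exactly to the excluded $t=\pm 1$. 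Then the same parameterization as for $D_r^1$, combined with the identity $A_1(\kappa^{1/3}a)=A_\kappa(a)$ (since $\kappa^{1/3}\in\kappa k^{\times,2}$), collapses the computation to
\[
2(D_1^2-D_\kappa^2) \;=\; \epsilon_0\sqrt{\epsilon_0 q}\sum_{t\in k_2^1-\{\pm 1\}}\epsilon(t+t^{-1}+2).
\]
A short case split on whether $-1\in(k_2^1)^2$ (equivalently, on the sign of $\epsilon_0$) evaluates the inner sum to $0$ if $\epsilon_0=1$ and to $-2$ if $\epsilon_0=-1$, giving the stated value $\tfrac{1}{2}(1-\epsilon_0)\sqrt{\epsilon_0 q}$.
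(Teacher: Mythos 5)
Your computations of $D_1^0-D_\kappa^0$ and $D_1^1-D_\kappa^1$ are correct and follow essentially the same path as the paper: for $D_r^0$ the paper simply observes that the computation is literally that of $B_r^0$ in Lemma \ref{lem: computation of B} (with $1/2=-1$ in characteristic $3$), and for $D_r^1$ it mirrors the $C_1^1-C_\kappa^1$ computation from Lemma \ref{lem: computation of C}; your telescoping reduction to $A_1(\kappa^{1/3})+A_1(-\kappa^{1/3})-A_1(1)-A_1(-1)$ is algebraically equivalent to the paper's direct evaluation of $\tfrac{1}{2}\epsilon_0\sqrt{\epsilon_0 q}\sum_{t\in k^\times-\{\pm1\}}\epsilon(t)$.

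The genuine point of divergence is the third identity. The paper dispatches it in one line using the relation $D_1^0+D_1^1+D_1^2=D_\kappa^0+D_\kappa^1+D_\kappa^2$ (which holds because both sides equal $\sum_{r_3\in k^\times,\,r_4\in k^\times/\{\pm1\}}\psi(r_3)=-(q-1)/2$), so $D_1^2-D_\kappa^2$ is determined by the first two identities. You instead compute $D_1^2-D_\kappa^2$ directly, which requires (a) a characteristic-$3$ version of Lemma \ref{lem: preparation for the computation of C}(2) characterizing when $t+t^{-1}+2$ is a square for $t\in k_2^1$, and (b) a count of squares and non-squares in $k_2^1-\{\pm1\}$, split on whether $-1\in(k_2^1)^2$, i.e.\ on the parity of $(q+1)/2$. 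Your verification of both points is correct, and your reformulation of Lemma \ref{lem: preparation for the computation of C}(2) via $\beta\bar\beta=\pm1$ is a clean alternative to the paper's argument. The paper's route is shorter; yours is self-contained, makes explicit the characteristic-$3$ adaptation of the square lemma (which the paper leaves implicit in saying "similar to $C_1^1-C_\kappa^1$"), and the final agreement with the sum identity gives a useful consistency check on all three formulas.
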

\begin{proof}
Note that we have $D_r^0=B_r^0$. Thus the first identity follows from Lemma \ref{lem: computation of B}. The second identity can be computed similarly as the computation of $C_1^1-C_\kappa^1$. Since $D_1^0+D_1^1+D_1^2=D_\kappa^0+D_\kappa^1+D_\kappa^2$, the last identity follows from the first one.
\end{proof}

\section{Embedding of \texorpdfstring{$\RG_2$}{Lg} into \texorpdfstring{$\SO_7$}{Lg}}\label{sec: embedding G2 into SO7} In this appendix, based on \cite{RS}, we give an explicit matrix realization of $\bx_\gamma(r)$ for each root $\gamma$ of $\RG_2$, which gives an explicit embedding of $\RG_2(k)$ into $\SO_7(k)$. Here $\SO_7(k)=\wpair{g\in \GL_7(k): {}^t g Qg=Q}$, with $$Q=\begin{pmatrix} &&s_3\\ &2& \\ {}^t \!s_3 \end{pmatrix},$$
where $$s_3=\begin{pmatrix} &&1\\ &1&\\ -1&&\end{pmatrix}.$$ The explicit realization of $\bx_\gamma(r)$ is given as follows. 
\begin{align*}
    \bx_\alpha(r)=\bpm 1&0&0&0&0&0&0\\ 0&1&0&-2r&0&-r^2&0\\ 0&0&1&0&0&0&0\\ 0&0&0&1&0&r&0\\ -r&0&0&0&1&0&0\\0 &0&0&0&0&1&0\\ 0&0&-r&0&0&0&1\epm, &\quad \bx_{-\alpha}(r)=\bpm 1&0&0&0&-r&0&0\\ 0&1&0&0&0&0&0 \\ 0&0&1&0&0&0&-r \\0 &-r&0&1&0&0&0 \\ 0&0&0&0&1&0&0 \\ 0&-r^2&0&2r&0&1&0 \\ 0&0&0&0&0&0&1\epm,
\end{align*}
\begin{align*}
    \bx_{\alpha+\beta}(r)=\bpm 1&0&0&-2r&0&0&-r^2\\ 0&1&0&0&0&0&0 \\ 0&0&1&0&0&0&0 \\0&0&0&1&0&0&r \\0&r&0&0&1&0&0 \\ 0&0&r&0&0&1&0 \\0&0&0&0&0&0&1 \epm,& \quad 
    \bx_{-(\alpha+\beta)}(r)=\bpm 1&0&0&0&0&0&0\\ 0&1&0&0&r&0&0 \\ 0&0&1&0&0&r&0 \\-r&0&0&1&0&0&0 \\0&0&0&0&1&0&0 \\ 0&0&0&0&0&1&0 \\-r^2&0&0&2r&0&0&1 \epm,
\end{align*}
\begin{align*}
    \bx_{2\alpha+\beta}(r)&=\bpm 1&0&0&0&0&-r&0\\ 0&1&0&0&0&0&r \\ 0&0&1&0&0&0&0 \\0&0&-r&1&0&0&0 \\0&0&r^2&-2r&1&0&0 \\ 0&0&0&0&0&1&0 \\0&0&0&0&0&0&1 \epm, & \quad \bx_{-(2\alpha+\beta)}(r)&=\bpm 1&0&0&0&0&0&0\\ 0&1&0&0&0&0&0 \\ 0&0&1&-2r&r^2&0&0 \\0&0&0&1&-r&0&0 \\0&0&0&0&1&0&0 \\ -r&0&0&0&0&1&0 \\0&r&0&0&0&0&1 \epm,
\end{align*}
\begin{align*}
    \bx_\beta(r)=\bpm 1&r&0&0&0&0&0\\ 0&1&0&0&0&0&0 \\ 0&0&1&0&0&0&0 \\0&0&0&1&0&0&0 \\0&0&0&0&1&0&0 \\ 0&0&0&0&0&1&-r \\0&0&0&0&0&0&1 \epm, & \quad \bx_{3\alpha+\beta}(r)=\bpm 1&0&0&0&0&0&0\\ 0&1&r&0&0&0&0 \\ 0&0&1&0&0&0&0 \\0&0&0&1&0&0&0 \\0&0&0&0&1&r&0 \\ 0&0&0&0&0&1&0 \\0&0&0&0&0&0&1 \epm,\\
   \bx_{3\alpha+2\beta}(r)=\bpm 1&0&r&0&0&0&0\\ 0&1&0&0&0&0&0 \\ 0&0&1&0&0&0&0 \\0&0&0&1&0&0&0 \\0&0&0&0&1&0&r \\ 0&0&0&0&0&1&0 \\0&0&0&0&0&0&1 \epm, &
\end{align*}
and
$$\bx_{-\beta}(r)={}^t \bx_\beta(r), \bx_{-(3\alpha+\beta)}(r)={}^t \bx_{3\alpha+\beta}(r), \bx_{-(3\alpha+2\beta)}(r)={}^t\bx_{3\alpha+\beta}(r).$$

\end{document}